\documentclass[a4paper,10pt]{amsart}

\usepackage[
  margin=30mm,
  marginparwidth=25mm,     
  marginparsep=2mm,       
  bottom=25mm,
  ]{geometry}

\usepackage[bbgreekl]{mathbbol}
\usepackage{amsfonts}
\usepackage{latexsym,amssymb,amsthm,mathrsfs,amsmath,amscd,enumerate,enumitem, amscd,color}
\usepackage{mathrsfs}
\usepackage{tikz}

\DeclareSymbolFontAlphabet{\mathbb}{AMSb}
\DeclareSymbolFontAlphabet{\mathbbl}{bbold}
\DeclareFontEncoding{FMS}{}{}
\DeclareFontSubstitution{FMS}{futm}{m}{n}
\DeclareFontEncoding{FMX}{}{}
\DeclareFontSubstitution{FMX}{futm}{m}{n}
\DeclareSymbolFont{fouriersymbols}{FMS}{futm}{m}{n}
\DeclareSymbolFont{fourierlargesymbols}{FMX}{futm}{m}{n}
\DeclareMathDelimiter{\VERT}{\mathord}{fouriersymbols}{152}{fourierlargesymbols}{147}



\newtheorem{thm}{Theorem}[section]
 \newtheorem{cor}[thm]{Corollary}

\theoremstyle{definition}

 \theoremstyle{remark}
 \newtheorem{rem}[thm]{Remark}

\usepackage{hyperref}


\newcommand{\N}{\mathbb{N}}

\newcommand{\R}{\mathbb{R}}
\newcommand{\Rn}{\mathbb{R}^n}
\newcommand{\supp}{\mathop{\mathrm{supp}}}

\newcommand{\eps}{\varepsilon}

\newcommand{\CcR}{C_{c}^{\infty}(\Rn)}
\newcommand{\Cc}{C_{c}^{\infty}}
\newcommand{\LpO}{L^{p}(\Omega)}
\newcommand{\LpOB}{L^{p}(\Omega,B)}
\newcommand{\LpR}{L^{p}(\Rn)}
\newcommand{\LpRB}{L^{p}(\Rn,B)}
\newcommand{\co}{((0,\infty), \frac{dt}{t})}
\newcommand{\coN}{((\frac{1}{N},\infty), \frac{dt}{t})}


\numberwithin{equation}{section}
\allowdisplaybreaks

\begin{document}

\footnotetext{Last modification: \today.}

\title[]
 {Uniformly convex and smooth Banach spaces and $L^p$-boundedness properties of Littlewood-Paley and area functions associated with semigroups}

\author[J.J. Betancor]{Jorge J. Betancor}
\address{Jorge J. Betancor, Juan C. Fari\~na\newline
	Departamento de An\'alisis Matem\'atico, Universidad de La Laguna,\newline
	Campus de Anchieta, Avda. Astrof\'isico S\'anchez, s/n,\newline
	38721 La Laguna (Sta. Cruz de Tenerife), Spain}
\email{jbetanco@ull.es;
jcfarina@ull.es
}

\author[J.C. Fari\~na]{Juan C. Fari\~na}

\author[V. Galli]{Vanesa Galli}
\address{Vanesa Galli, Sandra M. Molina\newline
Departamento de Matem\'atica, Facultad de Ciencias Exactas y Naturales,\newline
Universidad Nacional de Mar del Plata, \newline
Funes 3350, 7600 Mar del Plata,
Argentina.}
\email{vggalli@mdp.edu.ar; smolina@mdp.edu.ar}

\author[S.M. Molina] {Sandra M. Molina}

\thanks{The authors are partially supported by MTM2016-79436-P}

\subjclass[2010]{43A15, 46B20, 47D03}

\keywords{Uniformly convex, uniformly smooth, Littlewood-Paley, area integrals.}

\date{}


\begin{abstract}
In this paper we obtain new characterizations of the uniformly convex and smooth Banach spaces. These characterizations are established by using $L^{p}$-boundedness properties of Littlewood-Paley functions and area integrals associated with heat semigroups and involving fractional derivatives. Our results apply for instance by considering the heat semigroups defined by Hermite and Laguerre operators that do not satisfy the Markovian property.
\end{abstract}

\dedicatory{Dedicated to Professor Fernando P\'erez Gonz\'alez on the occasion of his retirement}

\maketitle
%

\section{Introduction}
It is a well known fact that the geometry of Banach spaces is closely related to vector valued harmonic analysis.  We study in this paper  some questions concerning to this connection. Our problems are included in the following general context. Suppose that $(\Omega, \mu)$ is a measure space and $T$ is a bounded operator from $L^p(\Omega) \equiv L^{p}(\Omega, \mu)$ into itself, where $1<p<\infty$.  We can think that $T$ is, for instance, a bounded singular integral in $\LpR$. By $B$ we denote a Banach space. The question is if the operator $T$ can be extended to the B\"{o}chner Lebesgue space $L^{p}(\Omega,\mu;B)$ as a bounded operator from $L^p(\Omega,B) \equiv L^{p}(\Omega,\mu;B)$ into itself. If the answer to the last question is not affirmative for every Banach space $B$, then the objective is to describe (geometric) properties that characterize those Banach spaces $B$ for which $T$ can be boundly extended to $L^{p}(\Omega,\mu;B)$.

The Banach space $B$ such that the Hilbert transform $\mathcal{H}$ in $\R$ can be extended to $\LpRB$ as a bounded operator from $\LpRB$ into itself with $1<p<\infty$, were characterized by Bourgain (\cite{Bo}) and Burkholder (\cite{Bu}) (see also \cite{Rub}) as those ones that satisfy the UMD (Unconditional Martingale Difference) property or, equivalently, those ones that are $\zeta$-convex (\cite{Bu1}). Other characterizations of UMD Banach spaces by using $L^{p}$-boundedness properties of singular integrals and multipliers were established in \cite{AT1}, \cite{BCCR}, \cite{BCFR}, \cite{Gu}, and \cite{Xu98}.

Our results concern to uniformly convex and smooth Banach spaces. We now recall definitions (see \cite{LT} for details). Let $B$ be a Banach space. The modulus of convexity of $B$ is defined by

$$
\delta_{B}(\eps)=\inf
\left\{
1-\left\|\frac{a+b}{2}\right\|: \quad a,b\in B, \:\|a\|=\|b\|=1, \:\|a-b\|=\eps
\right\},
\quad 0<\eps<2.
$$
The modulus of smoothness of $B$ is given as follows
$$
\rho_{B}(t)=\sup
\left\{\frac{\|a+tb\|+\|a-tb\|}{2}-1: \quad a,b\in B, \:\|a\|=\|b\|=1
\right\},
\quad t>0.
$$
Here $\|\cdot\|$ denotes the norm in $B$.

We say that $B$ is uniformly convex when $\delta_{B}(\eps)>0$, for every $0<\eps<2$, and that $B$ is uniformly smooth when $\lim\limits_{t\to 0}\rho_{B}(t)/t=0$. If $q>1$, $B$ is said to be $q$-uniformly convex (respectively $q$-uniformly smooth) provided that there exists $C>0$ such that $\delta_{B}(\eps)\geq C\eps^{q}$, $0<\eps<2$ (respectively, $\rho_{B}(t)\leq Ct^{q}$, $t>0$).

The notions of martingale type and cotype of a Banach space were introduced by Pisier (\cite{Pi5} and \cite{Pi2}). We say that $B$ has martingale cotype (respectively, type) $q$, when there exists $C>0$ such that for every martingale $(M_{n})_{n\in\N}$ on a certain probability space with values in $B$
$$
\sum_{n=1}^{\infty} E\|M_{n}-M_{n-1}\|^{q}+E\|M_{0}\|^{q}
\leq C\sup_{n\in\N} E\|M_{n}\|^{q},
$$
(respectively, $\sup\limits_{n\in\N} E\|M_{n}\|^{q}\leq C\left(\sum\limits_{n=1}^{\infty} E\|M_{n}-M_{n-1}\|^{q}+E\|M_{0}\|^{q}\right)$).
Here $E$ denotes, the corresponding expectation. We recall that if $B$ has martingale cotype (respectively, type) $q$, then $2\leq q<\infty$ (respectively, $1<q\leq 2$).

If $M=(M_{n})_{n\in\N}$ is a martingale on some probability space with values in $B$ and $1<q<\infty$, the $q$-square function $S_{q}(M)$ of $M$ is defined by
\[S_{q}(M)=\left(\sum_{n=1}^{\infty} \|M_{n}-M_{n-1}\|^{q}+\|M_{0}\|^{q}\right)^{1/q}.\]

The square function $S_{q}$
 allows us to characterize the $q$-uniformly convex and smooth Banach space (\cite[Theorem 4.51 and 4.52]{Pi3}). The Banach space $B$ is of martingale cotype (respectively, type) $q$ if, and only if, for every $1<p<\infty$ (or, equivalently, for some $1<p<\infty$), there exists $C>0$ such that, for every $L^{p}$-martingale $M$ with values in $B$,
$$
 E[S_{q}(M)]^{p}\leq C \sup_{n\in\N} E\|M_{n}\|^{p}
$$
(respectively, $\sup\limits_{n\geq 0} E\|M_{n}\|^{p}\leq C E[S_{q}(M)]^{p}$).

We define, for every $f\in\LpR$, $1\leq p\leq \infty$, and $t>0$,
$$
W_{t}(f)(x)=\frac{1}{(4\pi)^{n/2}}\int_{\Rn}
\frac{e^{-|x-y|^{2}/4t}}{t^{n/2}} f(y)\:dy, \quad x\in\Rn,
$$
and
$$
P_{t}(f)(x)=\frac{\Gamma(\frac{n+1}{2})}{\pi^{(n+1)/2}}\int_{\Rn}
\frac{t}{(t^{2}+|x-y|^{2})^{\frac{n+1}{2}}} f(y)\:dy, \quad x\in\Rn.
$$
$\{W_{t}\}_{t>0}$ and $\{P_{t}\}_{t>0}$ represent the classical heat and Poisson semigroup, respectively.

$T_{t}$ represents now to $W_{t}$ and $P_{t}$, for every $t>0$. The (vertical) Littlewood-Paley function $g_{q,T_{t}}$, $1<q<\infty$, associated
with $\{T_{t}\}_{t>0}$ is defined by
$$
g_{q,T_{t}}(f)(x)=\left(
\int_{0}^{\infty}|t\partial_{t}T_{t}(f)(x)|^{q} \frac{dt}{t}
\right)^{1/q},\quad x\in\Rn.
$$

It is well known that, there exists $C>0$ such that, for every $1<p<\infty$,
$$
\frac{1}{C}\|f\|_{\LpR}\leq
\|g_{2, T_{t}}(f)\|_{\LpR}\leq
C\|f\|_{\LpR},\quad f\in \LpR.
$$

Since, for every $t>0$ and $1<p<\infty$, $T_{t}$ is a positive bounded operator from $\LpR$ into itself, the tensor extension $T_{t}\otimes I_{B}$ is bounded from $\LpRB$ into itself and $\|T_{t}\otimes I_{B}\|_{\LpRB\to\LpRB}=\|T_{t}\|_{\LpR\to\LpR}$. Thus, the family $\{T_{t}\}_{t>0}$ can be also seen as a bounded semigroup of operators in $\LpRB$, $1<p<\infty$.

For every $f\in\LpRB$, $1<p<\infty$, and $1<q<\infty$, we define
\begin{equation}\label{1.1}
g_{q,T_{t};B}(f)(x)=\left(
\int_{0}^{\infty}\|t\partial_{t}T_{t}(f)(x)\|^{q}\frac{dt}{t}
\right)^{1/q}, \quad x\in\Rn.
\end{equation}
Here $\|\cdot\|$ denotes the norm in $B$.

It is well known (see \cite{Kw}) that, for some $1<p<\infty$,
\begin{equation}\label{1.2}
\frac{1}{C}\|f\|_{\LpRB}\leq
\|g_{2, T_{t};B}(f)\|_{\LpR}\leq
C\|f\|_{\LpRB},\quad f\in \LpRB,
\end{equation}
if and only if $B$ is isomorphic to a Hilbert space. Then, it is an interesting question to characterize, for instance in a geometric way, those Banach space $B$ for which one of the two inequalities in \eqref{1.2} holds.

By taking as a starting point the Pisier's characterizations of uniformly convex and smooth Banach spaces by martingale square functions, Xu (\cite{Xu98}) obtained a version of the Pisier's results by using Littlewood-Paley functions $g_{q,P_{t};B}$ associated with the Poisson semigroup $\{P_{t}\}_{t>0}$. Actually Xu (\cite{Xu98}) considered Poisson semigroup in the unit disc but the corresponding results hold for $\{P_{t}\}_{t>0}$ (see also \cite{MTX}).

\begin{thm}[\cite{Xu98}]\label{thm1.1}
Let $B$ be a Banach space and $1<p<\infty$.
\begin{itemize}
\item [(a)] Assume that $2\leq q<\infty$. Then, there exists $C>0$ such that
\[
\|g_{q,P_{t};B}(f)\|_{\LpR}
\leq C \|f\|_{\LpRB}, \quad f\in\LpRB,
\]
if and only if, there exists a norm $\left\VERT\cdot\right\VERT$ on $B$ that is equivalent to $\|\cdot\|$ and such that $(B,\left\VERT\cdot\right\VERT)$ is $q$-uniformly convex.

\item [(b)] Assume that $1<q\leq 2$. Then, there exists $C>0$ such that
\[
\|f\|_{\LpRB}
\leq C \|g_{q,P_{t};B}(f)\|_{\LpR}, \quad f\in\LpRB,
\]
if and only if, there exists a norm $\left\VERT\cdot\right\VERT$ in $B$ that is equivalent to $\|\cdot\|$ and such that $(B, \left\VERT\cdot\right\VERT)$ is $q$-uniformly smooth.
\end{itemize}
\end{thm}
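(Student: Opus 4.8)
The plan is to derive Theorem~\ref{thm1.1} from the Pisier characterizations recalled above. Via \cite[Theorems~4.51 and 4.52]{Pi3} --- the equivalences (martingale cotype $q$) $\Leftrightarrow$ ($q$-uniformly convex after an equivalent renorming) and (martingale type $q$) $\Leftrightarrow$ ($q$-uniformly smooth after an equivalent renorming), together with the square-function inequalities stated there --- part~(a) reduces to: the upper estimate $\|g_{q,P_t;B}(f)\|_{\LpR}\le C\|f\|_{\LpRB}$ (for one, hence for all, $p\in(1,\infty)$) holds if and only if $B$ has martingale cotype $q$; and part~(b) reduces to the analogous equivalence between the lower estimate and martingale type $q$. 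Two structural facts will be used throughout. First, the Poisson semigroup is $P_t=e^{-t\sqrt{-\Delta}}$, subordinated to the Gaussian semigroup $W_t=e^{t\Delta}$, and $\{W_t\}_{t>0}$ is a symmetric diffusion semigroup on $(\Rn,dx)$ in Stein's sense: positivity preserving, contractive on every $L^r$, self-adjoint on $L^2$, and $W_t1=1$. Second, on the Fourier transform side $t\partial_tP_t$ is multiplication by $-t|\xi|e^{-t|\xi|}$, so $\int_0^\infty(t|\xi|e^{-t|\xi|})^2\frac{dt}{t}=\tfrac14$ yields the Calder\'on reproducing identity $4\int_0^\infty (t\partial_tP_t)(t\partial_tP_t\,\cdot\,)\,\frac{dt}{t}=\mathrm{Id}$, valid in $L^2$ and hence on a dense class of functions with values in any Banach space.

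For the sufficiency in~(a) I would first treat the Gaussian semigroup. Assuming $B$ has martingale cotype $q$ (equivalently, after renorming, $q$-uniformly convex), Rota's dilation theorem realizes $W_t^2=W_{2t}$ as a product of two conditional expectation operators on an auxiliary probability space; feeding this into Pisier's square-function inequality $E[S_q(M)]^p\le C\sup_n E\|M_n\|^p$ for the $B$-valued martingales it produces and running Stein's argument gives $\|g_{q,W_t;B}(f)\|_{\LpR}\le C\|f\|_{\LpRB}$ for every $1<p<\infty$ (this is the vector-valued extension of Stein's theorem; see \cite{Xu98,MTX}). One then passes to $P_t$ by subordination: the classical formula writing $P_t$ as an average of the $W_s$ against a kernel of fast decay, after differentiation in $t$ and an integration by parts in $s$, expresses $t\partial_tP_tf$ as an average of $s\partial_sW_sf$; Minkowski's integral inequality in $L^q(\frac{dt}{t};B)$ together with a change of variable then dominates $g_{q,P_t;B}(f)(x)$ by a constant times $g_{q,W_t;B}(f)(x)$. (Alternatively, $P_tf(x)$ can be realized as a boundary value of Brownian motion in $\Rn\times(0,\infty)$ and one applies the continuous-time martingale cotype $q$ inequality directly, at the price of handling $B$-valued continuous martingales.)

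The sufficiency in~(b) follows from~(a) by duality. If $B$ is renormably $q$-uniformly smooth then $B^*$ is renormably $q'$-uniformly convex, $1/q+1/q'=1$, so by the previous step the map $h\mapsto t\partial_tP_th$ is bounded from $L^{p'}(\Rn,B^*)$ into $L^{p'}(\Rn;L^{q'}(\frac{dt}{t};B^*))$; its adjoint $\mathcal R\Phi(x)=\int_0^\infty (t\partial_tP_t)[\Phi(\cdot,t)](x)\,\frac{dt}{t}$ is then bounded from $L^{p}(\Rn;L^{q}(\frac{dt}{t};B))$ into $L^{p}(\Rn,B)$ (here $B$ is superreflexive, so the $L^p$--$L^{p'}$ pairings are isometric), and composing with the reproducing identity gives $\mathcal R\big((x,t)\mapsto t\partial_tP_tf(x)\big)=\tfrac14 f$, whence $\|f\|_{\LpRB}\le 4\|\mathcal R\|\,\|g_{q,P_t;B}(f)\|_{\LpR}$. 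For the necessity halves of both parts it suffices to verify the relevant Pisier criterion on dyadic $B$-valued martingales $M=(M_k)_{k\ge0}$: a transference step identifies, up to errors controlled in the vector-valued $L^p$ norm, $M_k$ with $P_{2^{-k}}h$ for a single function $h$ with $\|h\|_{\LpRB}\sim\sup_k(E\|M_k\|^p)^{1/p}$, so that $M_k-M_{k-1}$ corresponds to $-\int_{2^{-k}}^{2^{-k+1}}(t\partial_tP_t)h\,\frac{dt}{t}$; Hölder's inequality over each dyadic block (together with the near-constancy of $t\partial_tP_th$ there) matches $\sum_k\|M_k-M_{k-1}\|^q$ with $g_{q,P_t;B}(h)^q$, and the hypothesised one-sided estimate converts this into $E[S_q(M)]^p\lesssim\sup_k E\|M_k\|^p$ (from the upper estimate, giving martingale cotype $q$) or into $\sup_k E\|M_k\|^p\lesssim E[S_q(M)]^p$ (from the lower estimate, giving martingale type $q$). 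This is the $\Rn$-translation of Xu's original argument, which he phrases on the torus using lacunary Fourier series.

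The main obstacle is this transference: making rigorous, with the error terms genuinely dominated in $\LpRB$, the comparison between the Poisson semigroup --- which is not generated by any filtration --- and honest martingale conditional expectations. A secondary difficulty, intrinsic to the sufficiency direction, is the complete absence of inner-product structure in $B$: this is exactly why the route from the abstract martingale cotype $q$ inequality to the $g_q$-bound cannot use any Plancherel-type identity and must instead pass through Rota's dilation theorem (or a probabilistic representation of the semigroup).
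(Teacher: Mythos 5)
First, a point of comparison: the paper does not prove Theorem \ref{thm1.1} at all --- it is quoted from \cite{Xu98} (with the remark that Xu works on the unit disc and that the $\Rn$ version follows as in \cite{MTX}), and the places in the paper closest to these facts (the proofs of Theorems \ref{thm2}, \ref{thm6} and \ref{thm7}) again delegate the Poisson-semigroup characterization to \cite[Theorem 5.2]{MTX} rather than reprove it. So your proposal can only be measured against the literature it is meant to reproduce, and there it has two genuine gaps.

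The first is in your sufficiency argument for (a). You claim that Rota's dilation plus Pisier's martingale cotype inequality plus ``Stein's argument'' yields the heat-semigroup estimate $\|g_{q,W_t;B}(f)\|_{\LpR}\le C\|f\|_{\LpRB}$, citing \cite{Xu98,MTX}; neither reference contains this. What Rota plus martingale cotype gives, as in \cite{MTX}, is the bound for the \emph{subordinated Poisson} Littlewood--Paley function; the corresponding bound for $t\partial_tT_t$ itself (even for the classical heat semigroup) was precisely the open problem raised in \cite[p.~474]{MTX}, settled only later in \cite{HN} (for $p=q$) and \cite{Xu18} --- Stein's scalar argument for $g_1$ uses orthogonality/interpolation that does not vectorize. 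The gap is avoidable: since your target is the Poisson $g$-function, you should run the Rota/cotype argument directly on $P_t$ (as \cite{MTX} does); your subordination inequality $g_{q,P_t;B}\lesssim g_{q,W_t;B}$ is correct (it is the one used in the proof of Theorem \ref{thm2}), but it points in the wrong logical direction for your plan ``heat first, then Poisson''. The second gap is the necessity halves: you reduce them to a ``transference'' identifying dyadic martingale increments with $-\int_{2^{-k}}^{2^{-k+1}}t\partial_tP_th\,\frac{dt}{t}$ up to errors controlled in $\LpRB$, and you yourself flag this as the main obstacle without supplying the mechanism. That comparison between conditional expectations and Poisson integrals at lacunary times, with summable errors, is the actual content of Xu's converse (and of \cite[Theorem 5.2]{MTX}); as written, your sketch does not establish that the one-sided $g_q$ inequality forces martingale cotype (resp.\ type) $q$. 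By contrast, your duality derivation of (b) from (a) --- boundedness of $h\mapsto t\partial_tP_th$ on $L^{p'}(\Rn,B^{*})$, its adjoint, and the Calder\'on reproducing identity with constant $1/4$ --- is sound, and is essentially the polarization argument the paper itself uses in Theorem \ref{thm6}.
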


Note that Theorem \ref{thm1.1} has the same flavour than \cite[Theorem 4.51 and 4.52]{Pi3}.

Theorem \ref{thm1.1} was extended to subordinated Poisson symmetric diffusion semigroup by Mart\'inez, Torrea and Xu (\cite{MTX}).

Let $(\Omega,\mathcal{A},\mu)$ be a $\sigma$-finite measure space. A uniparametric family $\{T_{t}\}_{t>0}$ of linear mappings from $\LpO$ into itself, for every $1\leq p\leq\infty$, is a symmetric diffusion semigroup in the Stein's sense when the following properties are satisfied

\begin{itemize}
\item[(i)] $T_{t}$ is a contraction on $\LpO$, for every $1\leq p\leq\infty$ and $t>0$;
\item[(ii)] $T_{t+s}=T_{t}T_{s}$, on $\LpO$, for every $1\leq p\leq\infty$ and $t,s>0$;
\item[(iii)] $\lim\limits_{t\to 0^{}+}T_{t}f=f$, in $L^{2}(\Omega)$, for every $f\in L^{2}(\Omega)$;
\item[(iv)] $T_{t}$ is selfadjoint in $L^{2}(\Omega)$;
\item[(v)] $T_{t}$ is positive preserving, that is, $T_{t}f\geq 0$ when $f\geq 0$, for every $t>0$;
\item[(vi)] $T_{t}$ is markovian, that is, $T_{t}1=1$, for every $t>0$.
\end{itemize}

Property (v) allows us to extend $T_{t}$ to $\LpO\otimes B$ and then to $\LpOB$ as a bounded operator from $\LpOB$ into itself, for every $1\leq p<\infty$ and $t>0$. Thus, $\{T_{t}\}_{t>0}$ is a semigroup of contractions in $\LpOB$, for every $1\leq p<\infty$.

The Poisson subordinated semigroup $\{P_{t}\}_{t>0}$ to $\{T_{t}\}_{t>0}$ is defined as follows: for every $t>0$,
\[
P_{t}f=\frac{1}{\sqrt{\pi}}\int_{0}^{\infty}\frac{e^{-s}}{\sqrt{s}} T_{s^{2}/4s}f\;ds.
\]
Thus, $\{P_{t}\}_{t>0}$ is also a symmetric diffusion semigroup.

For every $1<q<\infty$, the Littlewood-Paley functions $g_{q,T_{t};B}$ and $g_{q,P_{t};B}$ for a symmetric diffusion semigroup $\{T_{t}\}_{t>0}$ and its Poisson subordinated semigroup is defined as in \eqref{1.1}

We denote by $\mathbb{F}$ the subspace of $L^{2}(\Omega)$ that consists of all the fixed points of $\{T_{t}\}_{t>0}$, i.e., of all those $f\in L^{2}(\Omega)$ such that $T_{t}f=f$, for every $t>0$. Note that $\mathbb{F}$  coincides with the  subspace of $L^{2}(\Omega)$ constituted by all those $f\in D(A)$, where $A$ is the infinitesimal generator of $\{T_{t}\}_{t>0}$ and $D(A)$ is the domain of $A$, such that $Af=0$. By $\mathcal{F}:L^{2}(\Omega)\to\mathbb{F}$ we define the orthogonal projection from $L^{2}(\Omega)$ onto $\mathbb{F}$. It is a classical fact that $\mathcal{F}$ can be extended to a contractive projection (that we will continue denoting by $\mathcal{F}$) from $\LpO$ onto $\mathcal{F}(\LpO)$, for every $1\leq p<\infty$. Here, $\mathcal{F}(\LpO)$ consists of all the fixed points of $\{T_{t}\}_{t>0}$ in $\LpO$, $1\leq p<\infty$. Also, for every Banach space $B$ and $1\leq p<\infty$, $\mathcal{F}$ extends to a contractive projection from $\LpOB$ onto $\mathcal{F}(\LpOB)$, where $\mathcal{F}(\LpOB)$ is the subspace of $\LpOB$ constituted by all the fixed points of $\{T_{t}\}_{t>0}$ in $\LpOB$.

The following extension of Theorem \ref{thm1.1} was established by \cite{MTX}.

\begin{thm}[\cite{MTX}]\label{thm1.2}
Let $B$ be a Banach space and $1<p<\infty$. Suppose that $\{P_{t}\}_{t>0}$ is a Poisson subordinated semigroup to a symmetric diffusion semigroup.
\begin{itemize}
\item [(a)] If $2\leq q<\infty$ and there exists a norm $\left\VERT\cdot\right\VERT$ on $B$ that is equivalent to $\|\cdot\|$ and such that $(B,\left\VERT\cdot\right\VERT)$ is $q$-uniformly convex, then there exists $C>0$ such that
\begin{equation}\label{1.3}
\|g_{q,P_{t};B}(f)\|_{\LpR}
\leq C \|f\|_{\LpRB}, \quad f\in\LpRB.
\end{equation}

 \item [(b)] If $1<q\leq 2$ and there exists a norm $\left\VERT\cdot\right\VERT$ on $B$ that is equivalent to $\|\cdot\|$ and such that $(B, \left\VERT\cdot\right\VERT)$ is $q$-uniformly smooth, then there exists $C>0$ such that
\begin{equation}\label{1.4}
\|f\|_{\LpRB}
\leq C \left(
\|\mathcal{F}(f)\|_{\LpRB}+\|g_{q,P_{t};B}(f)\|_{\LpR}
\right), \quad f\in\LpRB.
\end{equation}
\end{itemize}

\end{thm}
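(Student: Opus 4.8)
The plan is to establish part (a) first and then to deduce part (b) from it by a duality argument, so that the real work is concentrated in (a). \emph{Part (a).} Since $\{P_t\}_{t>0}$ is the Poisson semigroup subordinated to the symmetric diffusion semigroup $\{T_t\}_{t>0}$, the subordination formula reads $P_tf=\int_0^\infty\mu(u)\,T_{t^2u}f\,du$ with $\mu(u)=(2\sqrt\pi)^{-1}u^{-3/2}e^{-1/(4u)}$ a probability density, and differentiating in $t$ gives $t\partial_tP_tf=2\int_0^\infty\mu(u)\,(s\partial_sT_sf)\big|_{s=t^2u}\,du$. Applying Jensen's inequality to the convex function $\|\cdot\|^q$ (recall $q\ge2$) and the substitution $s=t^2u$ in the $t$-variable yields the pointwise estimate $g_{q,P_t;B}(f)(x)\le 2^{1-1/q}\,g_{q,T_t;B}(f)(x)$, so it is enough to prove the vector-valued Littlewood--Paley--Stein inequality $\|g_{q,T_t;B}(f)\|_{\LpR}\le C\|f\|_{\LpRB}$ for the semigroup $\{T_t\}_{t>0}$ itself. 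By renorming I may assume that $B$ is $q$-uniformly convex, which by \cite[Theorems 4.51 and 4.52]{Pi3} is equivalent to $B$ having martingale cotype $q$: $E[S_q(M)]^p\le C\sup_nE\|M_n\|^p$ for every $L^p$-martingale $M$ with values in $B$.

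To transfer this martingale estimate to the semigroup $g$-function I would discretize the $t$-integral over the dyadic blocks $[2^k,2^{k+1})$, $k\in\mathbb{Z}$. By routine manipulations with the analytic semigroup $\{T_t\}$ (writing, for instance, $t\partial_tT_tf=t\partial_tT_{t/2}(T_{t/2}f)$), each block contributes, modulo terms controlled by the ergodic maximal operator of $\{T_t\}$, at most the increment $T_{2^{k-1}}f-T_{2^{k+1}}f$, so that $\|g_{q,T_t;B}(f)\|_{\LpR}$ becomes comparable to the $\LpR$-norm of the discrete square function $\big(\sum_k\|T_{2^{k-1}}f-T_{2^{k+1}}f\|^q\big)^{1/q}$ plus a maximal term. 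Because $\{T_t\}$ is Markovian, positive and selfadjoint, Rota's dilation theorem realises the family $(T_{2^k}f)_k$ --- after composition with an isometric embedding into an $L^p$-space over a larger probability space --- as the values of a $B$-valued martingale $M$; since conditional expectations contract vector-valued $\ell^q$-norms, the discrete square function above is dominated in $\LpR$ by the $q$-square function $S_q(M)$, whose $L^p$-norm is at most $C\|f\|_{\LpRB}$ by Pisier's inequality together with $\sup_nE\|M_n\|^p\le\|f\|_{\LpRB}^p$, and the Hopf--Dunford--Schwartz maximal theorem absorbs the maximal term. This transference --- carrying out the discretization uniformly and invoking Rota's theorem simultaneously at all dyadic times --- is the step I expect to be the main obstacle.

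\emph{Part (b).} Let $1<q\le2$ and suppose $B$ carries an equivalent $q$-uniformly smooth norm. It is classical (see \cite{LT}, \cite{Pi3}) that $B^*$ then carries an equivalent $q'$-uniformly convex norm, $q'=q/(q-1)\ge2$; in particular $B$ is reflexive and $L^{p'}(\Rn,B^*)=(\LpRB)^*$ with $1/p+1/p'=1$. Part (a), applied to $B^*$ and to the exponent $q'$, gives $\|g_{q',P_t;B^*}(k)\|_{L^{p'}(\Rn)}\le C\|k\|_{L^{p'}(\Rn,B^*)}$. Fix $f\in\LpRB$; since $P_t\mathcal F(f)=\mathcal F(f)$ for every $t$, one has $g_{q,P_t;B}(f)=g_{q,P_t;B}(h)$ with $h:=f-\mathcal F(f)$ and $\mathcal F(h)=0$, so by the triangle inequality it suffices to prove $\|h\|_{\LpRB}\le C\|g_{q,P_t;B}(h)\|_{\LpR}$, after which \eqref{1.4} follows. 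For $h\in\LpRB\cap L^2(\Rn,B)$ with $\mathcal F(h)=0$, the spectral calculus for the generator of $\{P_t\}$, the selfadjointness of $P_t$ in $L^2$, and the elementary identity $\int_0^\infty t\lambda^2e^{-2t\lambda}\,dt=1/4$ give the polarisation formula $\langle h,k\rangle=4\int_{\Rn}\int_0^\infty\langle t\partial_tP_th(x),t\partial_tP_tk(x)\rangle\,\frac{dt}{t}\,dx$ for $k\in L^{p'}(\Rn,B^*)\cap L^2(\Rn,B^*)$ --- only $\mathcal F(h)=0$ is needed, the factor $\lambda^2$ killing the fixed-point part --- where $\langle\cdot,\cdot\rangle$ denotes the pairing between $B$ and $B^*$ integrated over $\Rn$. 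Applying Hölder's inequality in $t$ (exponents $q,q'$), then in $x$ (exponents $p,p'$), and finally the bound for $B^*$ above gives $|\langle h,k\rangle|\le 4C\|g_{q,P_t;B}(h)\|_{\LpR}\|k\|_{L^{p'}(\Rn,B^*)}$; taking the supremum over $k$ with $\|k\|_{L^{p'}(\Rn,B^*)}\le1$ and using density yields $\|h\|_{\LpRB}\le 4C\|g_{q,P_t;B}(h)\|_{\LpR}$. Here the only delicate points --- the approximation justifying the polarisation formula and the identification $(\LpRB)^*=L^{p'}(\Rn,B^*)$ --- are routine once the reflexivity of $B$ is in hand.
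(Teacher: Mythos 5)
Your part (a) starts from a correct observation: the subordination formula together with Jensen's inequality (using $q\ge 2$ and the fact that the subordination density is a probability density) gives the pointwise bound $g_{q,P_t;B}(f)\le 2^{1-1/q}\,g_{q,T_t;B}(f)$. But this reduces Theorem \ref{thm1.2}(a) to the corresponding inequality for the diffusion semigroup $\{T_t\}_{t>0}$ itself, i.e.\ to Theorem \ref{thm1.4}(a) with $k=1$, which is a strictly stronger statement: it is exactly the problem that \cite{MTX} left open (see the discussion of \cite[p.~474]{MTX} in the Introduction) and that was settled only much later by Hyt\"onen--Naor \cite{HN} and Xu \cite{Xu18}. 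The present paper does not reprove Theorem \ref{thm1.2}; it quotes it from \cite{MTX} and notes that the proof there rests on Rota's dilation theorem (whence the Markovian hypothesis). In that argument the subordination is used in a much deeper way than your Jensen step: $t\partial_tP_tf$ is expressed as an average of increments of $T$ against a kernel whose cancellation comes from the subordination density, and only after that can Rota's theorem and Pisier's martingale cotype inequality be brought to bear. Your sketch bypasses precisely this point.

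The step you call ``routine manipulations'' --- dominating each dyadic block $\bigl(\int_{2^k}^{2^{k+1}}\|t\partial_tT_tf\|^q\,\frac{dt}{t}\bigr)^{1/q}$ by the increment $\|T_{2^{k-1}}f-T_{2^{k+1}}f\|$ plus maximal terms --- is the genuine gap, and as stated it does not go through. Analyticity of $\{T_t\}$ on $\LpOB$ (valid when $B$ has martingale cotype $q$) only yields $\|t\partial_tT_tf\|\le C\|T_{t/2}f\|$, with no decay in $k$, so the blocks do not sum; and the natural comparison runs the other way: semigroup differences $T_af-T_bf=\int_a^b s\partial_sT_sf\,\frac{ds}{s}$ are controlled by the $g$-function, not conversely. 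If such a domination were routine, the heat-semigroup version would not have remained open for over a decade, and Xu's proof of Theorem \ref{thm1.4} proceeds along quite different lines. Your part (b) --- duality, the spectral polarisation identity (which indeed only needs $\mathcal F(h)=0$), and H\"older --- is the standard argument, the same in spirit as the proof of Theorem \ref{thm6} in this paper for the fractional heat setting, and is fine modulo routine approximation; but since it invokes part (a) for $B^*$ and $q'$, it inherits the gap. Within the framework of this paper the simplest repair is to combine your Jensen reduction with Theorem \ref{thm1.4}(a) for $k=1$ (and its dual use for (b)), or else to carry out the transference for the subordinated Poisson semigroup as in \cite{MTX} rather than for $\{T_t\}$ itself.
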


\bigskip
When \eqref{1.3} (respectively, \eqref{1.4}) is satisfied is usually said that $B$ has Lusin cotype (respectively, type) $q$ with respect to the semigroup $\{P_{t}\}_{t>0}$.

In order to prove Theorem \ref{thm1.2} in \cite{MTX} a result due to Rota \cite[Chapter 4]{SteinLp} is used where the markovian property (vi) above is needed. However, Theorems \ref{thm1.1} and \ref{thm1.2} can be established by using Poisson semigroups associated with Bessel operators (\cite{BFMT}) and Laguerre operators (\cite{BFRST}) that are not Markovian.

In order to prove the results in the Bessel and Laguerre setting an argument that take advantage of the fact that Bessel and Laguerre operators are nice perturbations of the Laplace operator is used. This fact allows us to establish some tricky estimates for the difference of the Littlewood-Paley functions defined for the Laguerre and Bessel Poisson semigroups and the classical Poisson semigroups close to the diagonal, in the so called local region.

Torrea and Zhang (\cite{TZ}) extended the results in \cite{MTX} by using Littlewood-Paley functions involving fractional derivatives.

Let $\alpha>0$. We choose $m\in\N$ such that $m-1\leq\alpha < m$. If $\phi\in C^{m}(0,\infty)$ the $\alpha$-derivative $\partial_t^{\alpha}\phi(t)$ of $\phi$ in $t\in (0,\infty)$ is given by
$$
\partial_{t}^{\alpha}\phi(t)=
\frac{1}{\Gamma(m-\alpha)}\int_{t}^{\infty}
(\partial_u^{m}\phi)(u)(u-t)^{m-\alpha-1}du,
$$
provided that the last integral exists. The fractional derivative is usually named Weyl derivative.

\bigskip
Suppose that $\{T_{t}\}_{t>0}$ is a symmetric diffusion semigroup in $(\Omega,\mathcal{A},\mu)$. Then, for every $\alpha>0$, $f\in\LpO$, $1\leq p<\infty$, and $t>0$, we have that $\int_{t}^{\infty}\|\partial_{u}^{m}T_{u}(f)\|_{\LpO}(u-t)^{m-\alpha-1} du<\infty$, where $m-1\leq\alpha < m$ (see Section 2), and we define

\begin{equation}\label{1.5}
\partial_{t}^{\alpha}T_{t}(f)=\frac{1}{\Gamma(m-\alpha)}
\int_{t}^{\infty}\partial_{u}^{m}T_{u}(f)(u-t)^{m-\alpha-1} du.
\end{equation}
Also $\partial_{t}^{\alpha}T_{t}(f)$ is defined by \eqref{1.5} when $f\in\LpO\otimes B$, for every $\alpha,t>0$.

For every $\alpha>0$ and  $1<q<\infty$, the fractional Littlewood-Paley function $g_{q,T_{t};B}^{\alpha}$ associated with $\{T_{t}\}_{t>0}$ is defined by

$$
g_{q, T_{t};B}^{\alpha}(f)(x)=
\left(
\int_{0}^{\infty}
\|t^{\alpha}\partial_{t}^{\alpha}T_{t}f(x)\|^{q}
\frac{dt}{t}
\right)^{1/q}, \quad f\in\LpO\otimes B,\: 1\leq p<\infty.
$$
\begin{thm}(\cite[Theorems 1.1 and 1.2]{TZ})\label{thm1.3}
Let $B$ be a Banach space, $1<p<\infty$ and $\alpha>0$. Suppose that $\{P_{t}\}_{t>0}$ is a Poisson subordinated semigroup to a symmetric diffusion semigroup.
\begin{itemize}
\item[(a)] If $2\leq q<\infty$ and there exists a norm $\left\VERT\cdot\right\VERT$ on $B$ that is equivalent to $\|\cdot\|$ and such that  $(B,\left\VERT\cdot\right\VERT)$ is $q$-uniformly convex, then there exists $C>0$ such that
\begin{equation}\label{1.6}
\|g_{q, P_{t};B}^{\alpha}(f)\|_{\LpO}
\leq C \|f\|_{\LpOB}, \quad f\in\LpOB.
\end{equation}
\item[(b)] If $1<q\leq 2$ and there exists a norm $\left\VERT\cdot\right\VERT$ on $B$  that is equivalent to $\|\cdot\|$ and such that $(B,\left\VERT\cdot\right\VERT)$ is $q$-uniformly smooth, then there exists $C>0$ such that
\begin{equation}\label{1.7}
\|f\|_{\LpOB}
\leq C \left(
\|\mathcal{F}(f)\|_{\LpOB}+\|g_{q,P_{t};B}^{\alpha}(f)\|_{\LpO}
\right), \quad f\in\LpOB.
\end{equation}
\item[(c)] If $2\leq q<\infty$ and \eqref{1.6} holds when $\{P_{t}\}_{t>0}$ is the classical Poisson semigroup in $\Rn$, then there exists a norm $\left\VERT\cdot\right\VERT$ on $B$ that is equivalent to $\|\cdot\|$ and such that $(B,\left\VERT\cdot\right\VERT)$ is $q$-uniformly convex.
\item[(d)] If $1< q\leq 2$ and \eqref{1.7} holds when $\{P_{t}\}_{t>0}$ is the classical Poisson semigroup in $\Rn$, then there exists a norm $\left\VERT\cdot\right\VERT$ on $B$ that is equivalent to $\|\cdot\|$ and such that $(B,\left\VERT\cdot\right\VERT)$ is $q$-uniformly smooth.
\end{itemize}
\end{thm}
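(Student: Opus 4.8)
The plan is to reduce all four statements to the case $\alpha=1$, i.e.\ to Theorems \ref{thm1.1} and \ref{thm1.2}, by establishing two comparison estimates between the fractional Littlewood--Paley functions $g^{\alpha}_{q,P_t;B}$ of different orders. The starting point is that, if $A$ denotes the generator of the underlying symmetric diffusion semigroup $\{T_t\}_{t>0}$ (so $T_t=e^{-tA}$ and $P_t=e^{-t\sqrt A}$), then $\partial_t^{\alpha}P_tf=(-1)^{m}A^{\alpha/2}P_tf$ when $m-1\le\alpha<m$; thus, the sign being immaterial for a square function, the integrand defining $g^{\alpha}_{q,P_t;B}(f)$ is $\phi_{\alpha}(t\sqrt A)f$ with $\phi_{\alpha}(\lambda)=\lambda^{\alpha}e^{-\lambda}$. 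As a preliminary step I would verify that \eqref{1.5} is well defined in $\LpOB$ --- the convergence announced in Section~2 --- using the analyticity estimates $\|\partial_u^{m}T_uf\|_{\LpOB}\le C\,u^{-m}\|f\|_{\LpOB}$ together with the uniform contractivity of $\{T_u\}_{u>0}$.

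The first comparison is a pointwise bound: for $0<\alpha\le\beta$ one has $g^{\alpha}_{q,P_t;B}(f)(x)\le C_{\alpha,\beta}\,g^{\beta}_{q,P_t;B}(f)(x)$ for $x\in\Omega$. To obtain it one uses that $\lambda^{\alpha-\beta}e^{-\lambda}$ is completely monotone (as $\beta-\alpha\ge0$): combining this with $\lambda^{\alpha-\beta}=\Gamma(\beta-\alpha)^{-1}\int_0^{\infty}r^{\beta-\alpha-1}e^{-r\lambda}\,dr$ gives $\phi_{\alpha}(\lambda)=\int_0^{\infty}\phi_{\beta}(s\lambda)\,d\nu_{\alpha,\beta}(s)$ for the finite positive measure $d\nu_{\alpha,\beta}(s)=\Gamma(\beta-\alpha)^{-1}(s-1)_+^{\beta-\alpha-1}s^{-\beta}\,ds$ on $(0,\infty)$ (the case $\alpha=\beta$ being trivial). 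Plugging this into the definition of $g^{\alpha}_{q,P_t;B}$, applying Minkowski's integral inequality with respect to $\nu_{\alpha,\beta}$, and exploiting the invariance of $dt/t$ under $t\mapsto st$ --- which makes the $L^{q}((0,\infty),dt/t;B)$-norm of $t\mapsto\phi_{\beta}(st\sqrt A)f(x)$ independent of $s$ --- yields the estimate with $C_{\alpha,\beta}=\nu_{\alpha,\beta}((0,\infty))$.

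The second comparison, which I expect to be the main obstacle, is a reverse estimate modulo the fixed-point projection: for $0<\alpha\le\beta$ and some $C>0$,
\[
\|g^{\beta}_{q,P_t;B}(f)\|_{\LpO}\le C\big(\|\mathcal F(f)\|_{\LpOB}+\|g^{\alpha}_{q,P_t;B}(f)\|_{\LpO}\big),\qquad f\in\LpOB .
\]
I would prove it by a square-function comparison: from a Calder\'on reproducing identity $c_{\alpha,\beta}\,(f-\mathcal F(f))=\int_0^{\infty}\phi_{\beta}(t\sqrt A)\phi_{\alpha}(t\sqrt A)f\,\frac{dt}{t}$ valid on $(\mathbb F)^{\perp}$, one expresses $\phi_{\beta}(s\sqrt A)(f-\mathcal F(f))$ as the integral over $t$ of $\phi_{\alpha}(t\sqrt A)f$ against the operator kernel $\phi_{\beta}(s\sqrt A)\phi_{\beta}(t\sqrt A)$, and then dominates the $\LpOB$-operator norm of this kernel by a scalar kernel $m(s/t)$ with $\int_0^{\infty}m(u)\,\frac{du}{u}<\infty$; the resulting integral operator is bounded on $L^{p}(\Omega;L^{q}((0,\infty),dt/t;B))$ by Young's inequality for convolution on $((0,\infty),dt/t)$. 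The delicate point is that this scalar domination must be obtained using only the contractivity and the analyticity of $\{P_t\}_{t>0}$ on $\LpOB$ (e.g.\ through bounds of the type $\|A^{N/2}P_{s+t}\|_{\LpOB\to\LpOB}\le C(s+t)^{-N}$), so that no geometric hypothesis on $B$ enters. A less economical alternative, parallel to \cite{MTX}, would be to sidestep this lemma and instead adapt the Rota-dilation and martingale square-function machinery to fractional derivatives, proving a ``fractional discretization lemma'' that compares $\int_0^{\infty}\|t^{\alpha}\partial_t^{\alpha}T_tf\|^{q}\,\tfrac{dt}{t}$ pointwise with a dyadic martingale $q$-square function.

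Granted the two comparisons, the four parts follow quickly. In (a), (b) we may replace $\|\cdot\|$ by the equivalent norm and assume $B$ is $q$-uniformly convex (resp.\ smooth). For (a): if $\alpha\le1$, the first comparison and \eqref{1.3} give $\|g^{\alpha}_{q,P_t;B}(f)\|_{\LpO}\le C\|g^{1}_{q,P_t;B}(f)\|_{\LpO}\le C\|f\|_{\LpOB}$; if $\alpha>1$, the second comparison, \eqref{1.3} and the $\LpOB$-boundedness of $\mathcal F$ give the same. Part (b) is symmetric with \eqref{1.4}: for $\alpha>1$ the first comparison gives $g^{1}_{q,P_t;B}(f)\le C\,g^{\alpha}_{q,P_t;B}(f)$ pointwise; for $\alpha\le1$ the second comparison bounds $\|g^{1}_{q,P_t;B}(f)\|_{\LpO}$ by $C(\|\mathcal F(f)\|_{\LpOB}+\|g^{\alpha}_{q,P_t;B}(f)\|_{\LpO})$, which inserted into \eqref{1.4} yields \eqref{1.7}. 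In (c), (d) the semigroup is the classical Poisson semigroup on $\Rn$, for which the only fixed point in $\LpR$ with $1<p<\infty$ is $0$, hence $\mathcal F=0$; the two comparisons then read $g^{\alpha}_{q,P_t;B}(f)\le C\,g^{\beta}_{q,P_t;B}(f)$ pointwise and $\|g^{\beta}_{q,P_t;B}(f)\|_{\LpR}\le C\|g^{\alpha}_{q,P_t;B}(f)\|_{\LpR}$ for $0<\alpha\le\beta$, and using them to pass from the order-$\alpha$ hypothesis to the order-$1$ inequality, Theorem \ref{thm1.1}(a) (resp.\ (b)) provides the required equivalent norm on $B$.
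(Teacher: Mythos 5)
The paper itself gives no proof of Theorem \ref{thm1.3}: it is quoted from \cite{TZ}, and the only ingredient of your plan that the paper certifies is your first comparison, the pointwise bound $g^{\alpha}_{q,P_t;B}(f)\le C\,g^{\beta}_{q,P_t;B}(f)$ for $\alpha<\beta$ (this is \cite[Proposition 3.1]{TZ}, used in the paper as \eqref{A1}). The genuine gap is in your second comparison. The operator-norm domination $\|\phi_{\beta}(s\sqrt A)\phi_{\beta}(t\sqrt A)\|_{\mathcal L(\LpOB)}\le C\,m(s/t)$ with $\int_0^\infty m(u)\,\frac{du}{u}<\infty$, fed into Minkowski and Young's inequality on $((0,\infty),\frac{dt}{t})$, only gives boundedness of the induced operator on $L^{q}((0,\infty),\frac{dt}{t};\LpOB)$, i.e.\ on the mixed norm with the $t$-integration outermost; the $g$-function norm lives in $L^{p}(\Omega;L^{q}((0,\infty),\frac{dt}{t};B))$, with the $\Omega$-integration outermost. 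These coincide only when $p=q$. There is no pointwise scalar-kernel identity expressing $\partial_t^{\beta}P_t$ through $\partial_s^{\alpha}P_s$ when $\beta>\alpha$ (the inverse Laplace transform of $\lambda^{\beta-\alpha}e^{-t\lambda}$ is not a finite measure), so the obstruction is structural, not technical. Since part (a) for $\alpha>1$, part (b) for $\alpha<1$, and parts (c), (d) for $\alpha$ on the corresponding side of $1$ all rest on this second comparison at an arbitrary fixed $p$, your argument as written only covers the diagonal $p=q$ (plus the half already handled by the pointwise comparison).

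Getting off the diagonal is exactly where the real work lies, and it is done differently in the two halves of the theorem. For (c) and (d) the semigroup is the classical Poisson semigroup on $\Rn$, and the standard remedy is vector-valued Calder\'on--Zygmund theory applied to the kernel $t\mapsto t^{\alpha}\partial_t^{\alpha}P_t(x-y)$ with values in $L^{q}((0,\infty),\frac{dt}{t};B)$ — this is precisely how the present paper treats its heat analogues in Theorems \ref{thm2}, \ref{thm4} and \ref{thm7}, first obtaining the estimate at $p=q$ by a composition argument and then extending to all $r\in(1,\infty)$; Young's inequality alone cannot replace this. For (a) and (b) the measure space is abstract, no Calder\'on--Zygmund machinery exists, and your claim that ``no geometric hypothesis on $B$ enters'' is exactly what fails for $p\neq q$: the usual device is to apply the $\alpha=1$ result (Theorem \ref{thm1.2}) with $B$ replaced by $\tilde B=L^{q}((0,\infty),\frac{dt}{t};B)$, which inherits $q$-uniform convexity (resp.\ smoothness), so that the outer factor in the composition is itself a bounded $g$-function on $L^{p}(\Omega,\tilde B)$; alternatively one reworks the Rota/martingale argument, which is what your final sentence gestures at but does not carry out. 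A minor further point: your spectral identities ($\partial_t^{\alpha}P_t=(-1)^{m}A^{\alpha/2}P_t$ and the Calder\'on reproducing formula on $\mathbb F^{\perp}$) are $L^{2}$ statements, and transferring them to $\LpOB$ requires the density and duality bookkeeping that the paper performs in the proof of Theorem \ref{thm6}; that part is routine but should be made explicit.
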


If $\alpha>0$ and $1<q<\infty$, the area integral $\mathcal{A}_{q,T_{t};B}^{\alpha}$ associated with the symmetric diffusion semigroup $\{T_{t}\}_{t}$ on $\Rn$ is defined by
$$
\mathcal{A}_{q,T_{t};B}^{\alpha}(f)(x)=
\left(
\int_{\Gamma(x)}
\|s^{\alpha}\partial^{\alpha}T_{s}f(y)_{|s=t^{2}}\|^{q}
\frac{dy\:dt}{t^{n+1}}
\right)^{1/q}, \quad f\in\LpR\otimes B,\: 1\leq p<\infty,
$$
where $\Gamma(x)=\{(y,t) \in \mathbb R^n \times (0,\infty): |y-x| < t\}$, $x\in \mathbb R^n$.

Versions of Theorem \ref{thm1.1} where the Littlewood-Paley function $g_{q,P_{t};B}$ is replaced by the area integral $\mathcal{A}_{q,T_{t};B}^{\alpha}$ were proved in \cite[p. 474]{MTX} where $\alpha=1$ and in \cite[Theorems 5.3 and 5.4]{TZ} for every $\alpha>0$.

\bigskip
In \cite[p. 474]{MTX} the following problem is posed. Is $(a)$ and $(b)$ in Theorem \ref{thm1.2} true when Poisson subordinated semigroups are replaced for any symmetric diffusion semigroup ?. Recently, Hytonen and Naor
 (\cite{HN}) proved that the answer is affirmative for the classical heat semigroup in $\Rn$ and for $p=q$. After this, the problem was solved in \cite{Xu18} establishing the following result.

\begin{thm}{(\cite[Theorem 2]{Xu18})}\label{thm1.4}
Let $B$ be a Banach space, $k\in\N$, $k\geq 1$, and  $1<p<\infty$. Suppose that $\{T_{t}\}_{t>0}$ is  a symmetric diffusion semigroup.
\begin{itemize}
\item[(a)] If $2\leq q<\infty$ and there exists a norm $\left\VERT\cdot\right\VERT$ on $B$ that is equivalent to $\|\cdot\|$ and such that  $(B,\left\VERT\cdot\right\VERT)$ is $q$-uniformly convex, then there exists $C>0$ such that
\[
\|g_{q, T_{t};B}^{k}(f)\|_{\LpO}
\leq C \|f\|_{\LpOB}, \quad f\in\LpOB.
\]

\item[(b)] If $1<q\leq 2$ and there exists a norm $|||\cdot|||$ on $B$  that is equivalent to $\|\cdot\|$ and such that $(B,\left\VERT\cdot\right\VERT)$ is $q$-uniformly smooth, then there exists $C>0$ such that
\[
\|f\|_{\LpOB}
\leq C \left(
\|\mathcal{F}(f)\|_{\LpOB}+\|g_{q,T_{t};B}^{k}(f)\|_{\LpO}
\right), \quad f\in\LpOB.
\]

\end{itemize}
\end{thm}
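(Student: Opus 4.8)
The plan is to derive part (b) from part (a) by duality, so the real work is in (a). By the renorming built into the statement we may assume that $B$ itself carries the norm $\left\VERT\cdot\right\VERT$, so that in (a) the space $B$ is $q$-uniformly convex and in (b) it is $q$-uniformly smooth; by the Pisier characterization recalled in the Introduction this means $B$ has martingale cotype $q$ in case (a) and martingale type $q$ in case (b), and, in particular, the $L^{p}$-martingale square-function inequality $E[S_{q}(M)]^{p}\leq C\sup_{n}E\|M_{n}\|^{p}$ (respectively its reverse) holds for our fixed $p$.

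\emph{Proof of (a).} Let $A$ be the (negative) infinitesimal generator of $\{T_{t}\}_{t>0}$, so $\partial_{t}^{k}T_{t}=(-A)^{k}T_{t}$ and $g_{q,T_{t};B}^{k}(f)(x)=(\int_{0}^{\infty}\|(tA)^{k}T_{t}f(x)\|^{q}\,dt/t)^{1/q}$. Two ingredients are needed. First, the $B$-valued analyticity estimate
\begin{equation}\label{pp:an}
\|(tA)^{m}T_{u}\|_{\LpOB\to\LpOB}\leq C_{m}\Big(\frac{t}{u}\Big)^{m},\qquad 0<t\leq u,\ m\in\N,
\end{equation}
which follows from Stein's analyticity theorem for symmetric diffusion semigroups on $\LpO$ together with the fact that $T_{u}$, being positivity preserving (property (v)), tensors to a contraction on $\LpOB$, so that the usual proof of analyticity passes to the $B$-valued setting. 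Second, Rota's dilation theorem: since $\{T_{t}\}$ is a symmetric diffusion semigroup, there are a probability space $(\widehat\Omega,\widehat\mu)$, a decreasing filtration $(\mathcal{B}_{n})_{n\geq 0}$, an isometric embedding $Q\colon\LpO\hookrightarrow L^{p}(\widehat\Omega)$ and conditional expectations such that $T_{2^{-n+1}}=Q^{-1}E(\cdot\,|\,\mathcal{B}_{n})Q$ on $\LpO$, for every $1\leq p<\infty$; everything tensors with $B$. Hence, with $\tau_{n}:=2^{-n+1}$, the sequence $N_{n}:=E(Qf\,|\,\mathcal B_n)$ is a reverse $B$-valued $L^{p}$-martingale with $\|N_{n}\|_{L^{p}(\widehat\Omega;B)}=\|T_{\tau_{n}}f\|_{\LpOB}\leq\|f\|_{\LpOB}$, and the $L^{p}$ form of martingale cotype $q$ yields
\begin{equation}\label{pp:disc}
\Big\|\Big(\sum_{n\geq 1}\big\|(T_{\tau_{n}}-T_{\tau_{n-1}})f\big\|^{q}+\|T_{\tau_{0}}f\|^{q}\Big)^{1/q}\Big\|_{\LpO}\leq C\|f\|_{\LpOB}.
\end{equation}

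The heart of the proof is to deduce $\|g_{q,T_{t};B}^{k}(f)\|_{\LpO}\leq C\|f\|_{\LpOB}$ from \eqref{pp:disc}. Put $D_{n}=(T_{\tau_{n}}-T_{\tau_{n-1}})f=T_{\tau_{n}}(I-T_{\tau_{n}})f$ (the left factor is a contraction, so $D_{n}$ is controlled by the terms in \eqref{pp:disc}), and note that $(tA)^{k}T_{t}$ annihilates $\mathcal{F}(f)$. For $t$ in a dyadic block one decomposes $(tA)^{k}T_{t}f=\sum_{j\geq 0}(tA)^{k}T_{t}(T_{2^{j}t}-T_{2^{j+1}t})f$ (plus an easily handled large-scale remainder) and estimates each summand by \eqref{pp:an}: the operator $(tA)^{k}e^{-(1+2^{j})tA}$ has norm $\leq C_{k}2^{-jk}$, which turns the $j$-sum into a geometrically convergent average of quantities comparable, up to uniformly bounded operators, to the $D_{m}$'s. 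A Minkowski/Mellin argument in $t$ together with Jensen's inequality in the convolution sum then upgrades \eqref{pp:disc} to $\|(\int_{0}^{\infty}\|(tA)^{k}T_{t}f\|^{q}\,dt/t)^{1/q}\|_{\LpO}\leq C\|(\sum_{m}\|D_{m}\|^{q})^{1/q}\|_{\LpO}\leq C\|f\|_{\LpOB}$. The delicate point here --- and the reason the case of a \emph{general} symmetric diffusion semigroup lies deeper than Theorem~\ref{thm1.3} --- is that $\{T_{t}\}$ carries no integral kernel and is not in general the Poisson semigroup subordinated to another symmetric diffusion semigroup (one would need $e^{-sA^{2}}$ to be Markovian, which fails), so the comparison with martingales must be carried out purely at the operator level on $\LpOB$, with \eqref{pp:an} playing the role of pointwise kernel estimates; this is exactly the step that goes beyond the $p=q$, classical-heat-semigroup case of \cite{HN}.

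\emph{Proof of (b) from (a).} Let $q'$ be the conjugate of $q$, so $2\leq q'<\infty$; since $B$ is $q$-uniformly smooth, $B^{*}$ is $q'$-uniformly convex and superreflexive, hence $L^{p'}(\Omega;B^{*})=(\LpOB)^{*}$ with $1/p+1/p'=1$. Selfadjointness of $T_{t}$ (property (iv)) gives $A=A^{*}$, and from $\int_{0}^{\infty}(u\lambda)^{2k}e^{-2u\lambda}\,du/u=2^{-2k}\Gamma(2k)=:1/c_{k}$ for $\lambda>0$ one gets the reproducing identity $f-\mathcal{F}(f)=c_{k}\int_{0}^{\infty}(tA)^{k}T_{t}\big[(tA)^{k}T_{t}f\big]\,dt/t$, valid weakly against $L^{p'}(\Omega;B^{*})$. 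For $g\in L^{p'}(\Omega;B^{*})$, moving one factor $(tA)^{k}T_{t}$ by selfadjointness and applying Hölder's inequality in $t$ (exponents $q,q'$) and then in $\Omega$ (exponents $p,p'$),
\[
|\langle f-\mathcal{F}(f),g\rangle|\leq c_{k}\int_{\Omega}g_{q,T_{t};B}^{k}(f)\,g_{q',T_{t};B^{*}}^{k}(g)\,d\mu\leq c_{k}\,\|g_{q,T_{t};B}^{k}(f)\|_{\LpO}\,\|g_{q',T_{t};B^{*}}^{k}(g)\|_{L^{p'}(\Omega)}.
\]
By part (a) applied to the $q'$-uniformly convex space $B^{*}$ and the exponent $p'$, the last factor is $\leq C\|g\|_{L^{p'}(\Omega;B^{*})}$; taking the supremum over $\|g\|_{L^{p'}(\Omega;B^{*})}\leq 1$ gives $\|f-\mathcal{F}(f)\|_{\LpOB}\leq C\|g_{q,T_{t};B}^{k}(f)\|_{\LpO}$, and the triangle inequality yields (b), the term $\|\mathcal{F}(f)\|_{\LpOB}$ appearing precisely because the reproducing formula only recovers $f-\mathcal{F}(f)$. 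Thus the only genuinely hard part is the continuous-to-discrete comparison in (a): transferring \eqref{pp:disc} to the continuous heat $g$-function with no kernel and no Poisson subordination, i.e.\ making the $\ell^{q}$-valued Mellin-convolution argument work on $\LpOB$ using only \eqref{pp:an}; the Rota dilation, the passage through martingale (co)type, and the duality step above are otherwise routine.
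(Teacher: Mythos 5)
The paper itself contains no proof of this theorem---it is quoted from \cite[Theorem 2]{Xu18}---so your argument has to stand on its own, and as written it has genuine gaps at exactly the two places where the real difficulty sits. First, the vector-valued analyticity bound $\|(tA)^{m}T_{u}\|_{\LpOB\to\LpOB}\leq C_{m}(t/u)^{m}$ does not follow from Stein's scalar analyticity theorem ``together with the fact that $T_{u}$ tensors to a contraction''. The operators $(tA)^{m}T_{u}$ are not positivity preserving, and a bounded operator on $\LpO$ extends boundedly to $\LpOB$ for an arbitrary $B$ only if it is regular (dominated by a positive bounded operator); Stein's complex-interpolation proof of analyticity exploits the Hilbert structure of $L^{2}(\Omega)$ and does not tensorize. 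Analyticity of $\{T_{t}\}_{t>0}$ on $\LpOB$ when $B$ has martingale cotype $q$ is itself a nontrivial theorem---precisely the fact the authors cite from \cite[p. 5]{Xu18} and \cite{Pi5} at the start of Section 2---so assuming it with this justification begs a substantial part of the question you are trying to answer.

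Second, Rota's theorem is misapplied: for a fixed self-adjoint Markovian operator $S$ it gives a dilation with $S^{2n}=\hat{E}E_{n}$, i.e.\ it realizes the semigroup along an increasing arithmetic progression of times, not along the decreasing dyadic times $\tau_{n}=2^{-n+1}$; a single reverse martingale $E(\cdot\,|\,\mathcal{B}_{n})Qf$ cannot represent $T_{\tau_{n}}f$ for all $n$, since reverse-martingale convergence as $n\to\infty$ is incompatible with $T_{\tau_{n}}f\to f$ unless $T_{t}f=f$. Hence your discrete square-function inequality over dyadic times is not justified as stated. Even granting it, the announced continuous-to-discrete transfer is only a sketch, and the sketch conflates $\LpOB$-operator-norm bounds with the pointwise-in-$x$ bounds needed inside the $L^{q}(dt/t)$ integral: analyticity gives $\|(tA)^{k}T_{(1+2^{j})t}h\|_{\LpOB}\lesssim 2^{-jk}\|h\|_{\LpOB}$, not $\|(tA)^{k}T_{(1+2^{j})t}h(x)\|_{B}\lesssim 2^{-jk}\|h(x)\|_{B}$, and for a general symmetric diffusion semigroup there are no kernel estimates or positive dominations allowing such bounds to be pulled inside the square function. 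This is exactly the obstruction that makes the general case harder than Theorem \ref{thm1.3}, and it is the step that \cite{Xu18} is designed to overcome; your proposal identifies it but does not resolve it. By contrast, your deduction of (b) from (a) via the spectral reproducing formula, selfadjointness and duality (using that $B^{*}$ has martingale cotype $q'$) is sound---it is essentially the argument the authors themselves give for Theorem \ref{thm6}---but it of course rests on having (a).
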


It is remarkable that the constants $C$ in Theorems \ref{thm1.2} and \ref{1.4} do not depend on the semigroups. They only depend on $k,p,q$ and the geometric constants of $B$. This fact allows to obtain dimension free $L^{p}$-estimates for Littlewood-Paley functions associated with semigroups in $\Rn$.

Our objective in this paper is to complete the study in \cite{Xu98} in some directions.

Our first results are concerned with uniformly convex Banach spaces.

\begin{thm}\label{thm1.5}
Let B be a Banach space, $2\leq q<\infty$, $1<p<\infty$ and $\alpha>0$.
\begin{itemize}
\item[(a)] If there exists a norm $\left\VERT\cdot\right\VERT$  on $B$ that is equivalent to $\|\cdot\|$ and such that $(B,\left\VERT\cdot\right\VERT)$ is $q$-uniformly convex and $\{T_{t}\}_{t>0}$ is a symmetric diffusion semigroup, then there exists $C>0$ such that
\[
\|g_{q,T_{t};B}^{\alpha}(f)\|_{\LpO}
\leq C \|f\|_{\LpOB}, \quad f\in\LpOB.
\]

\item[(b)] If there exists $C>0$ such that
\[
\|g_{q,W_{t};B}^{\alpha}(f)\|_{\LpR}
\leq C \|f\|_{\LpRB}, \quad f\in\LpRB.
\]
then there exists a norm $\left\VERT\cdot\right\VERT$  on $B$ that is equivalent to $\|\cdot\|$ and such that $(B,\left\VERT\cdot\right\VERT)$ is $q$-uniformly convex.
\end{itemize}
\end{thm}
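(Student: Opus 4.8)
The idea is to reduce both assertions to results already recorded above: part (a) to Theorem~\ref{thm1.4} (the case of integer order) by a subordination argument that passes from integer to fractional differentiation order, and part (b) to Theorem~\ref{thm1.3}(c), by comparing the Littlewood--Paley function attached to the heat semigroup with the one attached to the classical Poisson semigroup.

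For part (a), fix $m\in\N$ with $m-1\le\alpha<m$. Multiplying \eqref{1.5} by $t^{\alpha}$ and making the change of variables $u=tv$, $v\in(1,\infty)$, one gets
\begin{equation*}
t^{\alpha}\partial_{t}^{\alpha}T_{t}f=\frac{1}{\Gamma(m-\alpha)}\int_{1}^{\infty}(tv)^{m}\partial_{u}^{m}T_{u}f\big|_{u=tv}\,\omega_{\alpha,m}(v)\,dv,\qquad \omega_{\alpha,m}(v)=v^{-m}(v-1)^{m-\alpha-1},
\end{equation*}
and $\omega_{\alpha,m}$ is integrable on $(1,\infty)$: near $v=1$ because $m-\alpha-1>-1$, and near $v=\infty$ because $\alpha>0$ (it is precisely here that the hypothesis $\alpha>0$ enters). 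Taking the $L^{q}((0,\infty),dt/t;B)$-norm in the variable $t$, using Minkowski's integral inequality and the invariance of $dt/t$ under the dilation $t\mapsto tv$, one obtains the pointwise estimate $g_{q,T_{t};B}^{\alpha}(f)(x)\le C\,g_{q,T_{t};B}^{m}(f)(x)$, $x\in\Omega$. Since $\{T_{t}\}_{t>0}$ is a symmetric diffusion semigroup and $(B,\left\VERT\cdot\right\VERT)$ is $q$-uniformly convex, Theorem~\ref{thm1.4}(a) yields $\|g_{q,T_{t};B}^{m}(f)\|_{\LpO}\le C\|f\|_{\LpOB}$, and (a) follows. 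All the integrals occurring above converge absolutely thanks to the estimates established in Section~2, so the manipulations are legitimate.

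For part (b), by Theorem~\ref{thm1.3}(c), applied with the parameter $2\alpha$ in place of $\alpha$, it is enough to deduce from the hypothesis that
\begin{equation*}
\|g_{q,P_{t};B}^{2\alpha}(f)\|_{\LpR}\le C\|f\|_{\LpRB},\qquad f\in\LpRB,
\end{equation*}
where $\{P_{t}\}_{t>0}$ is the classical Poisson semigroup in $\Rn$, which is subordinated to $\{W_{t}\}_{t>0}$. This will follow from the pointwise inequality $g_{q,P_{t};B}^{2\alpha}(f)(x)\le C\,g_{q,W_{t};B}^{\alpha}(f)(x)$, $x\in\Rn$. To obtain it one starts from $P_{t}f=\frac{1}{\sqrt{\pi}}\int_{0}^{\infty}\frac{e^{-v}}{\sqrt{v}}W_{t^{2}/(4v)}f\,dv$ and uses the spectral identity $\partial_{t}^{\beta}e^{-tx}=\pm\,x^{\beta}e^{-tx}$ (valid for the Weyl derivative, with $x=|\xi|$ for $P_{t}$ and $x=|\xi|^{2}$ for $W_{t}$, both semigroups being generated by $-\Delta$) to arrive, after extracting the factor $s^{\alpha}=(t^{2}/4v)^{\alpha}$, at the identity
\begin{equation*}
t^{2\alpha}\partial_{t}^{2\alpha}P_{t}f=c_{\alpha}\int_{0}^{\infty}e^{-v}v^{\alpha-1/2}\big(s^{\alpha}\partial_{s}^{\alpha}W_{s}f\big)\big|_{s=t^{2}/(4v)}\,dv .
\end{equation*}
The decisive feature is that the explicit power $t^{2\alpha}$ cancels, so that Minkowski's integral inequality in $L^{q}((0,\infty),dt/t;B)$, the substitution $s=t^{2}/(4v)$ (under which $dt/t$ becomes $\tfrac12\,ds/s$ and hence the resulting inner norm is independent of $v$), and the finiteness of $\int_{0}^{\infty}e^{-v}v^{\alpha-1/2}\,dv=\Gamma(\alpha+\tfrac12)$ close the estimate. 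Combining the pointwise inequality with the hypothesis gives the required bound for $g_{q,P_{t};B}^{2\alpha}$, and Theorem~\ref{thm1.3}(c) then provides a norm $\left\VERT\cdot\right\VERT$ on $B$, equivalent to $\|\cdot\|$, for which $(B,\left\VERT\cdot\right\VERT)$ is $q$-uniformly convex. Note that the doubling of the fractional parameter (from $\alpha$ for the heat semigroup to $2\alpha$ for the Poisson semigroup) is harmless precisely because Theorem~\ref{thm1.3}(c) holds for every positive order.

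The point requiring the most care is the rigorous proof, in the $B$-valued setting, of the subordination identity for the $2\alpha$-th Weyl derivative of $P_{t}$, since Fourier analysis is not directly available there. The natural route is to verify the identity first on a dense subspace of $\LpRB$ (for instance on functions $g\otimes b$ with $g$ in the Schwartz class and $b\in B$), where all the integrals involved converge absolutely and the computation can be performed on the Fourier transform side, and then to extend it by density, using that the operators $f\mapsto t^{2\alpha}\partial_{t}^{2\alpha}P_{t}f$ and $f\mapsto s^{\alpha}\partial_{s}^{\alpha}W_{s}f$ are bounded on $\LpRB$ for each fixed $t,s>0$; the interchange of the Weyl-derivative integral \eqref{1.5} with the subordination integral is then justified by Fubini's theorem once the relevant absolute integrability is checked by means of the bounds in Section~2.
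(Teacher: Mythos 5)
Your proposal is correct. For part (a) you follow essentially the paper's route: the change of variables $u=tv$ in the Weyl integral, Minkowski's inequality and the dilation invariance of $dt/t$ give exactly the pointwise comparison $g_{q,T_{t};B}^{\alpha}(f)\leq C\,g_{q,T_{t};B}^{m}(f)$ that the paper imports from \cite[Proposition 3.1]{TZ} as \eqref{A1}, after which Theorem \ref{thm1.4}(a) finishes the argument, just as in Corollary \ref{cor1} (one should also note, as the paper does in Section 2, that $\partial_t^{\alpha}T_tf$ is meaningful on $\LpOB$ because martingale cotype $q$ makes $\{T_t\}_{t>0}$ analytic there). For part (b) you take a genuinely different and much shorter route than the paper's Theorem \ref{thm2}: the paper first extrapolates the hypothesis to every $L^{r}(\Rn,B)$, $1<r<\infty$, by vector-valued Calder\'on--Zygmund theory, then runs an iteration based on the semigroup property to bound $g_{q,W_t;B}^{k\alpha}$ for all $k$, picks $k\alpha\geq 1$ to control $g_{q,W_t;B}^{1}$, subordinates to get $g_{q,P_t;B}^{1}$, and concludes with \cite[Theorem 5.2]{MTX}; you instead differentiate the subordination formula fractionally, obtaining $t^{2\alpha}\partial_t^{2\alpha}P_tf=c_{\alpha}\int_{0}^{\infty}e^{-v}v^{\alpha-1/2}\,(s^{\alpha}\partial_s^{\alpha}W_sf)_{|s=t^{2}/(4v)}\,dv$ (which checks out on the multiplier side, the sign $(-1)^{m}$ being absorbed into $c_{\alpha}$; the parenthetical ``both generated by $-\Delta$'' is loose but harmless), hence the pointwise bound $g_{q,P_t;B}^{2\alpha}(f)\leq C\,g_{q,W_t;B}^{\alpha}(f)$, and you invoke the fractional converse of Torrea--Zhang, Theorem \ref{thm1.3}(c), at order $2\alpha$, which is legitimate since that result holds for every positive order. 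Your argument bypasses the Calder\'on--Zygmund and iteration machinery entirely; what the paper's longer proof buys is that it rests only on the $\alpha=1$ Poisson result of \cite{MTX} rather than on the full fractional converse of \cite{TZ}, and its intermediate kernel estimates and $L^{r}$-bounds for $g_{q,W_t;B}^{k\alpha}$ are reused later for the area-integral and Hermite/Laguerre theorems. Do carry out the justification you sketch at the end — the kernel-level identity plus Fubini, or a dense-class argument followed by a Fatou extension of the resulting $L^{p}$ inequality using bounds of the type \eqref{A01} and \eqref{A.2} — since Theorem \ref{thm1.3}(c) is applied to all $f\in\LpRB$.
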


\begin{thm}\label{thm1.6}
Let B be a Banach space, $2\leq q<\infty$, $1<p<\infty$ and $\alpha>0$. The following assertions are equivalent.
\begin{itemize}
\item[(a)] There exists a norm $\left\VERT\cdot\right\VERT$  on $B$ that is equivalent to $\|\cdot\|$ and such that $(B,\left\VERT\cdot\right\VERT)$ is $q$-uniformly convex.

\item[(b)] There exists $C>0$ such that
\[
\|\mathcal{A}_{q,W_{t};B}^{\alpha}(f)\|_{\LpR}
\leq C \|f\|_{\LpRB}, \quad f\in\LpRB.
\]
\end{itemize}
\end{thm}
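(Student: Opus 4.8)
The plan is to deduce Theorem \ref{thm1.6} from Theorem \ref{thm1.5}. The classical heat semigroup $\{W_{t}\}_{t>0}$ is a symmetric diffusion semigroup in Stein's sense --- properties (i)--(vi) are classical --- so Theorem \ref{thm1.5}(a) already shows that (a) implies $\|g_{q,W_{t};B}^{\alpha}(f)\|_{\LpR}\leq C\|f\|_{\LpRB}$ for every $1<p<\infty$, while Theorem \ref{thm1.5}(b) recovers (a) from any such bound for $g_{q,W_{t};B}^{\alpha}$. Hence it suffices to establish the $L^{p}$-equivalence
\begin{equation*}
C^{-1}\|g_{q,W_{t};B}^{\alpha}(f)\|_{\LpR}\leq\|\mathcal{A}_{q,W_{t};B}^{\alpha}(f)\|_{\LpR}\leq C\|g_{q,W_{t};B}^{\alpha}(f)\|_{\LpR},\qquad f\in\LpRB,
\end{equation*}
with $C=C(n,p,q,\alpha)$. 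After the change of variable $s=t^{2}$, setting $F(y,s)=s^{\alpha}\partial_{s}^{\alpha}W_{s}f(y)$, one has $g_{q,W_{t};B}^{\alpha}(f)(x)=\big(\int_{0}^{\infty}\|F(x,s)\|^{q}\tfrac{ds}{s}\big)^{1/q}$ and, up to a dimensional constant, $\mathcal{A}_{q,W_{t};B}^{\alpha}(f)(x)=\big(\int_{0}^{\infty}s^{-n/2}\int_{|y-x|<\sqrt{s}}\|F(y,s)\|^{q}\,dy\,\tfrac{ds}{s}\big)^{1/q}$; thus the problem is the comparison of a vertical with a conical (tent-space) square function of the same $B$-valued function $F$ on the upper half-space.

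Two observations settle the ``flat'' cases. First, for $p=q$ the two functionals coincide up to a constant: integrating $\mathcal{A}_{q,W_{t};B}^{\alpha}(f)(x)^{q}$ in $x$ and using Fubini, the inner cone $\{|y-x|<\sqrt{s}\}$ contributes exactly its measure $c_{n}s^{n/2}$, which cancels the factor $s^{-n/2}$ and leaves $c_{n}\|g_{q,W_{t};B}^{\alpha}(f)\|_{L^{q}(\Rn)}^{q}$; together with Theorem \ref{thm1.5} at the exponent $p=q$ this gives ``(a) $\Longleftrightarrow$ $\|\mathcal{A}_{q,W_{t};B}^{\alpha}(f)\|_{L^{q}(\Rn)}\leq C\|f\|_{L^{q}(\Rn,B)}$''. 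Second, when $p\geq q$ the inequality $\|\mathcal{A}_{q,W_{t};B}^{\alpha}(f)\|_{\LpR}\leq C\|g_{q,W_{t};B}^{\alpha}(f)\|_{\LpR}$ is elementary: bound $s^{-n/2}\int_{|y-x|<\sqrt{s}}\|F(y,s)\|^{q}\,dy$ by the Hardy--Littlewood maximal function of $\|F(\cdot,s)\|^{q}$ at $x$, then apply Minkowski's inequality in $L^{p/q}$, the $L^{p/q}$-boundedness of the maximal operator, and Minkowski's integral inequality. With Theorem \ref{thm1.5}(a) this proves $(a)\Rightarrow(b)$ whenever $p\geq q$.

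For the remaining ranges the structure of the heat semigroup must be used, and the key tool is the reproducing identity
\begin{equation*}
s^{\alpha}\partial_{s}^{\alpha}W_{s}f=2^{\alpha}\,W_{s/2}\big(F(\cdot,s/2)\big),
\end{equation*}
which follows from the heat equation $\partial_{s}^{m}W_{s}=\Delta^{m}W_{s}$, the semigroup law $W_{s}=W_{s/2}W_{s/2}$, and the integral representation \eqref{1.5} of the Weyl derivative. Inserting the Gaussian bound for the kernel of $W_{s/2}$ yields the sub-mean-value estimate $\|F(x,s)\|\lesssim s^{-n/2}\int_{\Rn}e^{-c|x-z|^{2}/s}\|F(z,s/2)\|\,dz$; decomposing this Gaussian factor into dyadic shells and using the standard change-of-aperture lemma for conical square functions then gives $\|g_{q,W_{t};B}^{\alpha}(f)\|_{\LpR}\leq C\|\mathcal{A}_{q,W_{t};B}^{\alpha}(f)\|_{\LpR}$ for all $1<p<\infty$; with Theorem \ref{thm1.5}(b) this proves $(b)\Rightarrow(a)$. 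Finally, to obtain $(a)\Rightarrow(b)$ in the range $1<p<q$, one views $x\mapsto\mathcal{A}_{q,W_{t};B}^{\alpha}(f)(x)$ as the norm over the cone $\Gamma(x)$ of a $B$-valued singular integral whose convolution kernel --- obtained from Gaussian size and gradient estimates for $s^{\alpha}\partial_{s}^{\alpha}W_{s}$, themselves deduced from those for $\partial_{s}^{m}W_{s}$ via \eqref{1.5} --- satisfies the standard Calderón--Zygmund size and Hörmander conditions; its boundedness at $p=q$ (equivalent to (a) by the previous paragraph), together with the Calderón--Zygmund decomposition, then propagates to every $1<p<q$.

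I expect the main obstacle to be precisely these Gaussian-type kernel estimates for the fractional-time operator $s^{\alpha}\partial_{s}^{\alpha}W_{s}$: one has to show, uniformly in $s>0$ and with Gaussian decay, size and Hölder (respectively gradient) bounds for its convolution kernel, starting only from the classical Gaussian bounds for the heat kernel and its time derivatives $\partial_{s}^{m}W_{s}$ and integrating the representation \eqref{1.5} over $u\in(t,\infty)$ --- the integrability near $u=t$ and the exact powers of $s$ having to be tracked carefully. Checking the reproducing identity for the Weyl derivative and recalling the change-of-aperture lemma are routine by comparison, and the remaining reduction steps are standard once these kernel estimates are in hand.
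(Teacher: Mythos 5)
Your proposal is correct and, for the implication $(a)\Rightarrow(b)$, essentially coincides with the paper: the Fubini identity $\|\mathcal{A}_{q,W_{t};B}^{\alpha}(f)\|_{L^{q}(\Rn)}^{q}=\tfrac{c_{n}}{2}\|g_{q,W_{t};B}^{\alpha}(f)\|_{L^{q}(\Rn)}^{q}$ at $p=q$, the bound at $p=q$ from Theorem \ref{thm1.5}(a) (Corollary \ref{cor1}), and then vector-valued Calder\'on--Zygmund theory for the operator $T_{\alpha}f(x)=\bigl(s^{\alpha}\partial_{s}^{\alpha}W_{s}(f)(x+y)\bigr)_{|s=t^{2}}$, with the size bound $C|z|^{-n}$ and gradient bound $C|z|^{-n-1}$ for its $H_{N}$-valued kernel, uniformly in the truncation $N$, followed by Fatou; the paper runs this CZ argument for every $1<p<\infty$ rather than only $1<p<q$, and only mentions the maximal-function comparison with $g$ (your $p\geq q$ case, i.e.\ \cite[Prop.\ 2.1(a)]{AHM}) as an aside. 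Where you genuinely diverge is $(b)\Rightarrow(a)$: the paper reuses the same CZ kernel estimates to upgrade the assumed area bound at the given $p$ to every $r\in(1,\infty)$, specializes to $r=q$, converts it into the $g$-function bound through the exact $p=q$ identity, and then applies Theorem \ref{thm2}; you instead prove the vertical-by-conical domination $\|g_{q,W_{t};B}^{\alpha}(f)\|_{\LpR}\leq C\|\mathcal{A}_{q,W_{t};B}^{\alpha}(f)\|_{\LpR}$ at the given $p$ itself, via the reproducing identity $s^{\alpha}\partial_{s}^{\alpha}W_{s}f=2^{\alpha}W_{s/2}\bigl(F(\cdot,s/2)\bigr)$ (this is correct, and the commutation facts it needs are exactly the identities the paper establishes as \eqref{A5}--\eqref{A6}), Jensen against the normalized Gaussian, a dyadic-shell decomposition, and the change-of-aperture estimate for conical functionals with internal exponent $q$ (which reduces to the scalar tent space $T^{p}_{q}$ applied to $\|F\|_{B}$, and whose polynomial aperture growth is absorbed by the factor $e^{-c4^{k}}$), before invoking Theorem \ref{thm1.5}(b). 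Both routes are sound: yours avoids transferring the hypothesis to the exponent $q$ and gives an intrinsic comparison of the two square functions valid in all of $L^{p}$, at the price of importing the (standard but here unproven) change-of-aperture lemma; the paper's route is more economical in ingredients, since one CZ machine with the same kernel estimates serves both implications and no apertures other than $1$ ever appear.
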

We now establish two results related to uniformly smooth Banach spaces.

\begin{thm}\label{thm1.7}
Let $B$ be a Banach space, $1<q\leq 2$, $1<p<\infty$ and $\alpha>0$.
\begin{itemize}
\item[(a)] If there exists a norm $\left\VERT\cdot\right\VERT$ on $B$  that is equivalent to $\|\cdot\|$ and such that $(B,\left\VERT\cdot\right\VERT)$ is $q$-uniformly smooth and $\{T_{t}\}_{t>0}$ is a symmetric diffusion semigroup, then there exists $C>0$ such that

\[
\|f\|_{\LpOB}
\leq C \left(
\|\mathcal{F}(f)\|_{\LpOB}+\|g_{q,T_{t};B}^{\alpha}(f)\|_{\LpO}
\right), \quad f\in\LpOB.
\]

\item[(b)] If there exists $C>0$ such that
\[
\|f\|_{\LpRB}
\leq C \|g_{q,W_{t};B}^{\alpha}(f)\|_{\LpR}, \quad f\in\LpRB,
\]
then there exists a norm $\left\VERT \cdot\right\VERT$ on $B$  that is equivalent to $\|\cdot\|$ and such that $(B,\left\VERT\cdot\right\VERT)$ is $q$-uniformly smooth.
\end{itemize}
\end{thm}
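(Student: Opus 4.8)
\emph{Strategy.} Both parts will be deduced from the ``uniformly convex'' results of Theorem~\ref{thm1.5} applied to the dual space $B^{*}$, together with the classical Lindenstrauss duality: $B$ admits an equivalent $q$-uniformly smooth norm if and only if $B^{*}$ admits an equivalent $q'$-uniformly convex norm, where $1/q+1/q'=1$; note that $2\le q'<\infty$ since $1<q\le 2$, which is exactly the range in Theorem~\ref{thm1.5}.

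\emph{Part (a).} Let $A$ be the generator of $\{T_{t}\}_{t>0}$ and $\mathcal F$ the projection onto its fixed points. The key identity is the polarization formula
\[
\int_{\Omega}\langle (I-\mathcal F)f,g\rangle\,d\mu
=c_{\alpha}\int_{\Omega}\int_{0}^{\infty}\big\langle t^{\alpha}\partial_{t}^{\alpha}T_{t}f(x),\,t^{\alpha}\partial_{t}^{\alpha}T_{t}g(x)\big\rangle\,\frac{dt}{t}\,d\mu(x),\qquad c_{\alpha}=\frac{2^{2\alpha}}{\Gamma(2\alpha)},
\]
valid for $f$ in the dense subspace $(L^{2}\cap L^{p})(\Omega)\otimes B$ of $\LpOB$ and $g$ in $(L^{2}\cap L^{p'})(\Omega)\otimes B^{*}$. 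By bilinearity it reduces to the scalar statement $\int_{0}^{\infty}t^{2\alpha}(\partial_{t}^{\alpha}T_{t})^{2}\frac{dt}{t}=c_{\alpha}^{-1}(I-\mathcal F)$ on $L^{2}(\Omega)$, which follows from the spectral theorem for $A$ once one observes (cf.\ Section~2) that $\partial_{t}^{\alpha}T_{t}$ acts on the spectral side as multiplication by $(-1)^{m}\lambda^{\alpha}e^{-t\lambda}$, $m-1\le\alpha<m$, and that $\int_{0}^{\infty}t^{2\alpha}\lambda^{2\alpha}e^{-2t\lambda}\frac{dt}{t}=c_{\alpha}^{-1}$ for every $\lambda>0$. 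Estimating the right-hand side by Hölder's inequality in $t$ with exponents $q,q'$ and then in $x$ with exponents $p,p'$ gives
\[
\Big|\int_{\Omega}\langle (I-\mathcal F)f,g\rangle\,d\mu\Big|\le c_{\alpha}\,\big\|g_{q,T_{t};B}^{\alpha}(f)\big\|_{\LpO}\,\big\|g_{q',T_{t};B^{*}}^{\alpha}(g)\big\|_{L^{p'}(\Omega)}.
\]
Since $B^{*}$ is $q'$-uniformly convex after an equivalent renorming, Theorem~\ref{thm1.5}(a) applied to $B^{*}$ with exponents $q'$ and $p'$ bounds the last factor by $C\|g\|_{L^{p'}(\Omega,B^{*})}$; taking the supremum over $g$ in a norming subset of the dense class with $\|g\|_{L^{p'}(\Omega,B^{*})}\le 1$ yields $\|f-\mathcal F f\|_{\LpOB}\le C\|g_{q,T_{t};B}^{\alpha}(f)\|_{\LpO}$, and the triangle inequality plus a routine density/Fatou argument give (a).

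\emph{Part (b).} Write $\mathcal G f(x,t)=t^{\alpha}\partial_{t}^{\alpha}W_{t}f(x)$, so that $g_{q,W_{t};B}^{\alpha}(f)(x)=\|\mathcal G f(x,\cdot)\|_{L^{q}((0,\infty),dt/t;B)}$ and, for the heat semigroup, $\mathcal F=0$. Using the self-adjointness of each $t^{\alpha}\partial_{t}^{\alpha}W_{t}$ on $L^{2}(\Rn)$, the formal adjoint of $\mathcal G$ is $\mathcal G^{*}H(x)=\int_{0}^{\infty}t^{\alpha}\partial_{t}^{\alpha}W_{t}[H(\cdot,t)](x)\,\frac{dt}{t}$. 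By the norming duality between $L^{p}(\Rn;L^{q}((0,\infty),dt/t;B))$ and $L^{p'}(\Rn;L^{q'}((0,\infty),dt/t;B^{*}))$,
\[
\big\|g_{q',W_{t};B^{*}}^{\alpha}(g)\big\|_{L^{p'}(\Rn)}=\|\mathcal G g\|_{L^{p'}(\Rn;L^{q'}(dt/t;B^{*}))}=\sup_{H}|\langle g,\mathcal G^{*}H\rangle|\le\Big(\sup_{H}\|\mathcal G^{*}H\|_{\LpRB}\Big)\|g\|_{L^{p'}(\Rn,B^{*})},
\]
the suprema over $H$ in the dense class of functions compactly supported in $t$ with values in $(L^{2}\cap L^{p})(\Rn)\otimes B$ and $\|H\|\le 1$. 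By hypothesis $\|\mathcal G^{*}H\|_{\LpRB}\le C\|g_{q,W_{t};B}^{\alpha}(\mathcal G^{*}H)\|_{\LpR}=C\|\mathcal G\mathcal G^{*}H\|_{L^{p}(\Rn;L^{q}(dt/t;B))}$, so it suffices to bound $\mathcal G\mathcal G^{*}$ on this mixed-norm space. A heat-kernel computation gives
\[
\mathcal G\mathcal G^{*}H(x,s)=\int_{0}^{\infty}\frac{s^{\alpha}t^{\alpha}}{(s+t)^{2\alpha}}\,(\Psi_{s+t}*H(\cdot,t))(x)\,\frac{dt}{t},
\]
with $\Psi_{u}(y)=u^{-n/2}\Psi_{1}(y/\sqrt u)$ and $\widehat{\Psi_{1}}(\xi)=|\xi|^{4\alpha}e^{-|\xi|^{2}}$; since $|\Psi_{u}|$ is dominated, uniformly in $u$, by the $u$-dilate of a fixed radially decreasing integrable function, we get $|\Psi_{s+t}*H(\cdot,t)|(x)\le C\,\mathcal M(\|H(\cdot,t)\|_{B})(x)$ with $\mathcal M$ the Hardy--Littlewood maximal operator. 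As $r\mapsto r^{\alpha}/(1+r)^{2\alpha}$ lies in $L^{1}((0,\infty),dr/r)$, the one-dimensional operator $v\mapsto\int_{0}^{\infty}\frac{s^{\alpha}t^{\alpha}}{(s+t)^{2\alpha}}v(t)\frac{dt}{t}$ is a multiplicative convolution bounded on $L^{q}((0,\infty),dt/t)$; combining this with the Fefferman--Stein vector-valued maximal inequality (valid for all $1<p,q<\infty$, applied to the scalar function $(x,t)\mapsto\|H(x,t)\|_{B}$) yields $\|\mathcal G\mathcal G^{*}H\|_{L^{p}(\Rn;L^{q}(dt/t;B))}\le C\|H\|_{L^{p}(\Rn;L^{q}(dt/t;B))}$ for \emph{every} Banach space $B$. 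Hence $\|g_{q',W_{t};B^{*}}^{\alpha}(g)\|_{L^{p'}(\Rn)}\le C\|g\|_{L^{p'}(\Rn,B^{*})}$, and Theorem~\ref{thm1.5}(b) applied to $B^{*}$, $q'$, $p'$ furnishes an equivalent $q'$-uniformly convex norm on $B^{*}$. Its dual norm on $B^{**}$ is then equivalent to $\|\cdot\|_{B^{**}}$ and $q$-uniformly smooth (Lindenstrauss's formula expressing $\rho_{X^{*}}$ through $\delta_{X}$), and its restriction to the canonical copy of $B$ in $B^{**}$ is an equivalent $q$-uniformly smooth norm on $B$, proving (b).

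\emph{Main obstacle.} The one genuinely analytic step is the boundedness of $\mathcal G\mathcal G^{*}$ on $L^{p}(\Rn;L^{q}((0,\infty),dt/t;B))$ for arbitrary $B$: identifying its kernel and running the two-parameter estimate (a one-dimensional convolution/Young bound in $t$ against the Hardy--Littlewood maximal operator in $x$, closed by Fefferman--Stein). Everything else is either a citation (Theorem~\ref{thm1.5}, Lindenstrauss duality, Fefferman--Stein) or bookkeeping: the density and Fubini/convergence justifications for the polarization identity in Part~(a) --- which rest on the $L^{2}$-bound $\int_{0}^{\infty}\|t^{\alpha}\partial_{t}^{\alpha}T_{t}h\|_{L^{2}}^{2}\frac{dt}{t}\le C\|h\|_{L^{2}}^{2}$ --- and the agreement of the Weyl derivative \eqref{1.5} with the spectral-calculus operator; and, in Part~(b), the justification of the adjoint identity and of the interchange of the nonlocal operator $\partial_{s}^{\alpha}$ with the $t$-integral, both most safely carried out first on a dense class of $H$ and then extended.
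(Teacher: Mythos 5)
Your proposal is correct, and for part (a) it is essentially the paper's own argument (Theorem \ref{thm6}): the spectral-calculus polarization identity $\int_0^\infty\langle t^{\alpha}\partial_t^{\alpha}T_tf,t^{\alpha}\partial_t^{\alpha}T_tg\rangle\frac{dt}{t}=\frac{\Gamma(2\alpha)}{2^{2\alpha}}\langle f-\mathcal F f,g-\mathcal F g\rangle$, H\"older in $t$ and $x$, and the cotype-$q'$ bound for $g^{\alpha}_{q',T_t;B^*}$ applied to $B^{*}$; the paper splits off $\alpha\ge 1$ by citing Xu and does the spectral computation only for $\alpha\in(0,1)$, whereas you run the identity for all $\alpha$ at once, and like the paper you only really prove the inequality on the dense class $(L^p\cap L^2)(\Omega)\otimes B$, the extension to all of $\LpOB$ being asserted rather than carried out in both treatments.

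For part (b) your duality skeleton is the same as the paper's (Theorem \ref{thm7}): pair $g^{\alpha}_{q',W_t;B^*}(f)$ against a norming $H$, apply the hypothesis to $\mathcal G^{*}H$ (the paper's $S_{\alpha}H$), and reduce everything to the boundedness of $\mathcal G\mathcal G^{*}$, whose kernel is $(st)^{\alpha}\partial_s^{\alpha}\partial_t^{\alpha}W_{s+t}(x-y)$, on $L^{p}(\Rn,L^{q}((0,\infty),\frac{dt}{t};B))$; you then conclude via Theorem \ref{thm1.5}(b) for $B^{*}$ and Lindenstrauss duality plus restriction to $B\subset B^{**}$, where the paper instead passes through martingale cotype of $B^{*}$ and \cite[Lemma 3.2]{OX} — an equivalent bookkeeping. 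The genuine difference is how $\mathcal G\mathcal G^{*}$ is bounded: the paper proves an $L^{q}(L^{q}_B)$ bound by a Schur-type kernel estimate and then invokes operator-valued Calder\'on--Zygmund theory (size and H\"ormander estimates for $\mathbb K_{\alpha}(x,y)$ acting on $L^{q}((0,\infty),\frac{dt}{t};B)$) to reach every $r\in(1,\infty)$, whereas you identify the kernel on the Fourier side, dominate $\|\Psi_{s+t}*H(\cdot,t)(x)\|_B$ by $C\,\mathcal M(\|H(\cdot,t)\|_B)(x)$, use Young's inequality for the multiplicative convolution kernel $\frac{s^{\alpha}t^{\alpha}}{(s+t)^{2\alpha}}$ on $((0,\infty),\frac{dt}{t})$, and close with the Fefferman--Stein vector-valued maximal inequality. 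Your route is more elementary (no operator-valued CZ theory, and it treats all $1<p<\infty$ in one stroke for arbitrary $B$); its one step that deserves an explicit line of proof is the claim that $\Psi_1=(|\xi|^{4\alpha}e^{-|\xi|^2})^{\vee}$ admits a radially decreasing integrable majorant — true, but not automatic when $4\alpha$ is not an even integer (the multiplier is not smooth at the origin, and the decay is only $|y|^{-n-4\alpha}$); alternatively you can get the same maximal-function domination directly from the Gaussian-type bounds for $(st)^{\alpha}\partial_s^{\alpha}\partial_t^{\alpha}W_{s+t}(z)$ that the paper itself derives, avoiding the Fourier transform altogether. What the paper's heavier CZ machinery buys in exchange is a template that transfers verbatim to the Hermite and Laguerre semigroups later in the paper, where no convolution structure or Fourier multiplier is available.
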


\begin{thm}\label{thm1.8}
Let B be a Banach space, $1<q\leq 2$, $1<p<\infty$ and $\alpha>0$. The following assertions are equivalent.
\begin{itemize}
\item[(a)] There exists a norm $\left\VERT\cdot\right\VERT$  on $B$ that is equivalent to $\|\cdot\|$ and such that $(B,\left\VERT\cdot\right\VERT)$ is $q$-uniformly smooth.

\item[(b)] There exists $C>0$ such that
\[
\|f\|_{\LpRB}
\leq C \|\mathcal{A}_{q,W_{t};B}^{\alpha}(f)\|_{\LpR}, \quad f\in\LpRB.
\]
\end{itemize}
\end{thm}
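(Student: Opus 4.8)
The plan is to recast both implications as mapping properties of the \emph{area operator}
\[
T_Bf(y,t):=t^{2\alpha}\bigl(\partial_s^{\alpha}W_sf\bigr)(y)\big|_{s=t^{2}},\qquad (y,t)\in\R^{n+1}_{+},
\]
from $\LpRB$ into the Banach-valued tent space $T^{p,q}(B)$, whose norm is the $L^p(\Rn)$-norm in $x$ of the $L^q\bigl(\Gamma(x),\tfrac{dy\,dt}{t^{n+1}};B\bigr)$-norm of the section over the cone $\Gamma(x)$; thus $\|T_Bf\|_{T^{p,q}(B)}=\|\mathcal{A}_{q,W_t;B}^{\alpha}(f)\|_{\LpR}$, and $(b)$ is exactly ``$T_B$ is bounded below''. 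I would first collect the standard tools in $B$-valued form: the tent H\"older inequality $|\langle H,K\rangle|\le c_n^{-1}\|H\|_{T^{p,q}(B)}\|K\|_{T^{p',q'}(B^{*})}$ for the pairing $\langle H,K\rangle=\int_{\R^{n+1}_+}\langle H(y,t),K(y,t)\rangle_{B,B^{*}}\tfrac{dy\,dt}{t}$ (which by Fubini equals $c_n^{-1}\int_{\Rn}\int_{\Gamma(x)}\langle H,K\rangle\tfrac{dy\,dt}{t^{n+1}}\,dx$), the fact that $T^{p,q}(B)$ is norming for $T^{p',q'}(B^{*})$ under this pairing, the identity $\|f\|_{\LpRB}=\sup\{|\langle f,g\rangle|:g\in L^{p'}(\Rn)\otimes B^{*},\ \|g\|_{L^{p'}(\Rn,B^{*})}\le1\}$, and --- since the heat semigroup on $\Rn$ has no nonzero fixed point in $L^p$, $1\le p<\infty$ --- the Calder\'on reproducing formula $c_\alpha\,g=\int_0^{\infty}Q_{t^2}^{2}g\,\tfrac{dt}{t}$ on $L^2(\Rn)\cap\LpR$, where $Q_u=u^{\alpha}\partial_u^{\alpha}W_u$ and $c_\alpha=2^{-2\alpha-1}\Gamma(2\alpha)$ (a one-line Plancherel computation, using that $\partial_s^{\alpha}W_s$ has spatial Fourier symbol $\pm|\xi|^{2\alpha}e^{-s|\xi|^{2}}$). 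Since $\partial_s^{\alpha}W_s$ is self-adjoint in the spatial variable, this is equivalent to $\langle T_Bf,T_{B^{*}}g\rangle=c_\alpha\langle f,g\rangle$ for $f,g$ in dense classes.

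For $(a)\Rightarrow(b)$: a $q$-uniformly smooth renorming of $B$ is equivalent, by a classical duality, to a $q'$-uniformly convex renorming of $B^{*}$ (with $1/q+1/q'=1$, so $q'\ge2$); hence Theorem \ref{thm1.6} applied to $B^{*}$ with exponents $q'$ and $p'$ gives a constant $C$ with $\|T_{B^{*}}g\|_{T^{p',q'}(B^{*})}=\|\mathcal{A}_{q',W_t;B^{*}}^{\alpha}(g)\|_{L^{p'}(\Rn)}\le C\|g\|_{L^{p'}(\Rn,B^{*})}$. Combining the reproducing identity with tent H\"older,
\begin{align*}
c_\alpha\,|\langle f,g\rangle|
&=\bigl|\langle T_Bf,\,T_{B^{*}}g\rangle\bigr|\\
&\le c_n^{-1}\,\|\mathcal{A}_{q,W_t;B}^{\alpha}(f)\|_{\LpR}\,\|T_{B^{*}}g\|_{T^{p',q'}(B^{*})}\\
&\le \tfrac{C}{c_n}\,\|\mathcal{A}_{q,W_t;B}^{\alpha}(f)\|_{\LpR}\,\|g\|_{L^{p'}(\Rn,B^{*})},
\end{align*}
and taking the supremum over $g$ and extending by density yields $(b)$.

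For $(b)\Rightarrow(a)$: by Theorem \ref{thm1.6} for $B^{*}$ (and the same convex/smooth duality) it suffices to prove $\|\mathcal{A}_{q',W_t;B^{*}}^{\alpha}(g)\|_{L^{p'}(\Rn)}\le C\|g\|_{L^{p'}(\Rn,B^{*})}$. Fix such a $g$ and an $H\in T^{p,q}(B)$ in a dense, nice class with $\|H\|_{T^{p,q}(B)}\le1$; spatial self-adjointness of $Q_{t^2}$ gives $\langle T_{B^{*}}g,H\rangle=\langle g,RH\rangle$, where $RH:=\int_0^{\infty}Q_{t^2}\bigl[H(\cdot,t)\bigr]\,\tfrac{dt}{t}\in\LpRB$, so $|\langle T_{B^{*}}g,H\rangle|\le\|g\|_{L^{p'}(\Rn,B^{*})}\|RH\|_{\LpRB}$. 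Applying the hypothesis $(b)$ to $f=RH$ and noting $T_B(RH)=\Pi H$ with
\[
\Pi H(y,t):=\int_0^{\infty}\bigl(Q_{t^2}Q_{u^2}\bigr)\!\bigl[H(\cdot,u)\bigr](y)\,\tfrac{du}{u},
\]
we obtain $\|RH\|_{\LpRB}\le C\|\Pi H\|_{T^{p,q}(B)}$. The heart of the matter is then the lemma that $\Pi$ is bounded on $T^{p,q}(B)$: on the Fourier side $Q_{t^2}Q_{u^2}$ has symbol $(tu)^{2\alpha}|\xi|^{4\alpha}e^{-(t^2+u^2)|\xi|^{2}}$, so it is convolution with an $L^1(\Rn)$-kernel which is an approximate identity at scale $\sqrt{t^2+u^2}$ of mass comparable to $\bigl(\min\{t/u,u/t\}\bigr)^{2\alpha}$, and $\int_0^{\infty}(\min\{t/u,u/t\})^{2\alpha}\,\tfrac{du}{u}=1/\alpha$ uniformly in $t$; consequently $\Pi$ is dominated by a harmless Schur-type averaging operator, and its $T^{p,q}(B)$-boundedness follows from the usual atomic/molecular (or Schur) arguments for tent spaces, with the geometry of $B$ entering nowhere (so the constant may even be taken independent of $B$). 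Granting the lemma, $\|RH\|_{\LpRB}\le C\,C_\Pi$, hence $|\langle T_{B^{*}}g,H\rangle|\le C\,C_\Pi\|g\|_{L^{p'}(\Rn,B^{*})}$, and taking the supremum over such $H$ --- using that $T^{p,q}(B)$ norms $T^{p',q'}(B^{*})$ --- gives the desired bound; alternatively one could route through Theorem \ref{thm1.7}(b) once $\|\mathcal{A}_{q,W_t;B}^{\alpha}(f)\|_{\LpR}\le C\|g_{q,W_t;B}^{\alpha}(f)\|_{\LpR}$ is known, but that comparison is delicate for $p<q$ and I would avoid relying on it. I expect the main obstacle to be the boundedness of the Calder\'on projection $\Pi$ on the Banach-valued tent space $T^{p,q}(B)$ (this is what, in the area setting, plays the role of the reproducing formula used for the $g$-function), together with the care needed to make the $B$-valued tent-space duality and reproducing calculus rigorous on a dense subspace; the remaining steps are routine.
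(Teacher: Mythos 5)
Your plan is correct and takes essentially the same route as the paper's proof of Theorem \ref{thm9}: the implication $(a)\Rightarrow(b)$ is exactly the paper's polarization argument (the Plancherel reproducing identity for $t^{\alpha}\partial_t^{\alpha}W_t$, the averaging-over-cones trick of Coifman--Meyer--Stein, H\"older, and the area-integral convexity bound for $B^{*}$ from Theorem \ref{thm4}), while $(b)\Rightarrow(a)$ is the paper's duality argument with the adjoint map $S_{\alpha}$ and the composed operator $T_{\alpha}$, which is precisely your Calder\'on projection $\Pi$ acting on $L^{p}(\Rn,L^{q}(\Gamma(0),\frac{dy\,dt}{t^{n+1}};B))$. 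The one lemma you grant --- boundedness of $\Pi$ on the $B$-valued tent space --- is where the paper does most of its work, establishing it by a Schur-type kernel estimate at exponent $r=q$ together with Calder\'on--Zygmund theory for operator-valued kernels to cover all $1<r<\infty$; this is consistent with your sketch, and indeed the resulting constant does not involve the geometry of $B$.
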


Markovian property (property (vi)) is needed in the proof of the results in \cite{Xu98} (Theorem \ref{thm1.4}). We now establish characterization of uniformly convex and smooth Banach spaces via Littlewood-Paley functions and area integrals associated to heat semigroups defined by Hermite and Laguerre operators. It is remarkable that these semigroups do not satisfy the markovian property and the results in \cite{Xu98} does not apply.

The Hermite operator (also called harmonic oscillator) in $\Rn$ is defined by
$$
H(f)=-\Delta+|x|^{2},
$$
where $\Delta$ denotes the Laplace operator. For every $k=(k_1,\ldots,k_n)\in\N^{n}$ we define
$$
h_{k}(x)=\prod_{i=1}^{n} H_{k_i}(x_i),
\quad x=(x_1,\ldots,x_n)\in\Rn,
$$
where, for every $l\in\N$, $H_{l}$ represents the $l$-th Hermite function defined by
$$
H_{l}(z)=(\sqrt{\pi}2^{l}l!)^{-1/2}e^{-x^{2}/2}p_{l}(z), \quad z\in\R,
$$
being $p_{l}$ the $l$-th Hermite polynomial (\cite{Sz3}). It is well-known  that $\{h_{k}\}_{k\in\N^{n}}$ is a complete orthonormal system in $L^{2}(\Rn)$. Also, we have that
$$
Hh_{k}=\lambda_{k}h_{k},
$$
where $\lambda_{k}=2(k_1+\cdots+k_n+n)$, for every $k=(k_1,\ldots,k_n)\in\N^{n}$.

For every $f\in L^{2}(\Rn)$ and $k\in\N^{n}$ we define
$$
c_{k}(f)=\int_{\Rn}f(y)h_{k}(y)\:dy.
$$
We consider the operator $\mathfrak{H}$ defined by
\[
\mathfrak{H}(f)=\sum\limits_{k\in\N^{n}} c_{k}(f)\lambda_{k}h_{k},
\]
provided that $f\in D(\mathfrak{H})=
\left\{f\in L^{2}(\Rn):\sum\limits_{k\in\N^{n}} |\lambda_{k}|^{2}\:|c_{k}(f)|^{2}<\infty\right\}$. Note that $\mathfrak{H}(f)=H(f)$, $f \in C_c(\mathbb R^n)$ the space of smooth functions in $\mathbb R^n$ with compact support.
$\mathfrak{H}$ generates a semigroup of contractions $\{W_t^{\mathfrak{H}}\}_{t>0}$ in $L^{2}(\Rn)$. Here, for every $t>0$,
$$
W_{t}^{\mathfrak{H}}(f)=
\sum\limits_{k\in\N^{n}}
e^{-\lambda_{k}t}c_{k}(f)h_{k},
\quad f\in L^{2}(\Rn).
$$
We can write, for every $t>0$ and $f\in L^{2}(\Rn)$,
\begin{equation}\label{1.8}
W_{t}^{\mathfrak{H}}(f)(x)=
\int_{\Rn}W_{t}^{\mathfrak{H}}(f)(x,y)f(y)\:dy,
\quad x\in\Rn,
\end{equation}
where, for every $x,y \in \mathbb R^n$ and $t>0$,
$$
W_{t}^{\mathfrak{H}}(x,y)= \frac{1}{\pi^{\frac{n}{2}}} \left( \frac{e^{-2t}}{1-e^{-4t}}\right)^{ n/2} \exp\left(-\frac{1}{4}\left(|x-y|^2\frac{1+e^{-2t}}{1-e^{-2t}}+ |x+y|^2\frac{1 -e^{-2t}}{1 + e^{-2t}}\right)\right).
$$

Integral representation \eqref{1.8} allows to define a semigroup of contractions $\{W_{t}^{\mathfrak{H}}\}_{t>0}$ in $L^{p}(\Rn)$, for every $1\leq p\leq\infty$.

According to \cite[Proposition 3.3]{ST1}, we have that
$$
W_{t}^{\mathfrak{H}}(1)(x)=
(2\pi\cosh(2t))^{-n/2}
\exp\left(-\frac{1}{2}
\tanh(2t)|x|^{2}\right), \quad x\in\Rn\quad\text{and}\quad t>0.
$$
Hence $\{W_{t}^{\mathfrak{H}}\}_{t>0}$ does not satisfy the markovian property.

Littlewood-Paley functions and area integrals in the Hermite setting were studied in \cite{ST2} and \cite{BMR}, respectively.

We establish the following characterization of uniformly convex and smooth Banach spaces via Littlewood-Paley functions and area integrals associated with the Hermite heat semigroup.

\begin{thm}\label{thm1.9}
Let $B$ be a Banach space, $2\leq q<\infty$, $1<p<\infty$, and $\alpha>0.$ The following assertions are equivalent.
\begin{itemize}
\item [(a)] There exists a norm $\left\VERT\cdot\right\VERT$ on $B$ that is equivalent to $\|\cdot\|$ and such that $(B,\left\VERT\cdot\right\VERT)$ is $q$-uniformly convex.

\item[(b)] There exists $C>0$ such that
$$
\|g_{q,W_{t}^{\mathfrak{H}};B}^{\alpha}(f)\|_{\LpR}
\leq C \|f\|_{\LpRB},\quad f\in\LpRB.
$$

\item[(c)] There exists $C>0$ such that
$$
\|\mathcal{A}_{q,W_{t}^{\mathfrak{H}};B}^{\alpha}(f)\|_{\LpR}
\leq C \|f\|_{\LpRB},\quad f\in\LpRB.
$$
\end{itemize}
\end{thm}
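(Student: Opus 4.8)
The plan is to deduce all three implications by combining the general results for symmetric diffusion semigroups (Theorems \ref{thm1.4}, \ref{thm1.5}, \ref{thm1.6}) with a perturbation argument that compares the Hermite heat semigroup $\{W_t^{\mathfrak H}\}_{t>0}$ with the classical heat semigroup $\{W_t\}_{t>0}$. The obstacle is precisely that $\{W_t^{\mathfrak H}\}_{t>0}$ is not Markovian, so Theorem \ref{thm1.4} does not apply directly; however, it is positivity preserving and contractive on every $L^p(\mathbb R^n)$, so it does extend to a contraction semigroup on $L^p(\mathbb R^n,B)$, and the missing ingredient is only an estimate for the difference of the associated fractional Littlewood-Paley (resp. area) functions and their classical counterparts.

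\textbf{Step 1: (b) and (c) each imply (a).} Suppose the $g$-function inequality in (b) holds for $\{W_t^{\mathfrak H}\}_{t>0}$. I would show it then holds for the classical heat semigroup $\{W_t\}_{t>0}$ on $\mathbb R^n$, and conclude by Theorem \ref{thm1.5}(b). The comparison is carried out by writing, for a fixed reference scale, $t^\alpha\partial_t^\alpha W_t f = t^\alpha\partial_t^\alpha W_t^{\mathfrak H} f + t^\alpha\partial_t^\alpha(W_t-W_t^{\mathfrak H})f$ and estimating the kernel of $\partial_t^\alpha(W_t-W_t^{\mathfrak H})$ near the diagonal and for small $t$; for large $t$ the Hermite term decays exponentially (because $\lambda_k\ge 2n>0$) while the classical term is handled separately using that on any ball the two heat kernels agree to high order. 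Concretely, I would split $\mathbb R^n$ into a ``local'' region (where $|x-y|\lesssim 1$ and $t\lesssim 1$) in which $|W_t^{\mathfrak H}(x,y)-W_t(x,y)|$ and its $t$-derivatives are controlled by a harmless integrable kernel (this is the standard Hermite-vs-Laplacian local estimate, cf. \cite{ST2}, \cite{BMR}), and a ``global'' region handled by size estimates plus the exponential gap. This gives $\|g_{q,W_t;B}^\alpha(f)\|_{L^p}\le C(\|f\|_{L^p(B)}+\|g_{q,W_t^{\mathfrak H};B}^\alpha(f)\|_{L^p})$, and then Theorem \ref{thm1.5}(b) yields (a). The argument for (c)$\Rightarrow$(a) is identical with $\mathcal A^\alpha$ in place of $g^\alpha$, invoking Theorem \ref{thm1.6}.

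\textbf{Step 2: (a) implies (b) and (c).} For the converse I cannot quote Theorem \ref{thm1.4} directly. Instead I would use the subordination/comparison idea in the other direction: assuming $B$ admits an equivalent $q$-uniformly convex norm, Theorem \ref{thm1.5}(a) (or Theorem \ref{thm1.6}) gives the bound $\|g_{q,W_t;B}^\alpha(f)\|_{L^p}\le C\|f\|_{L^p(B)}$ for the \emph{classical} heat semigroup, which is a genuine symmetric diffusion semigroup. Then, as in Step 1, the difference $\partial_t^\alpha(W_t^{\mathfrak H}-W_t)$ contributes a vector-valued operator whose kernel is dominated, in the local region, by an integrable kernel and, in the global region, controlled by the exponential decay of $W_t^{\mathfrak H}$ together with the fact that $\partial_t^\alpha$ of such an exponentially decaying object is still exponentially decaying. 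Bounding the $L^p(\mathbb R^n)$-norm of the $L^q((0,\infty),dt/t;B)$-valued error operator by $C\|f\|_{L^p(B)}$ finishes (a)$\Rightarrow$(b). For (a)$\Rightarrow$(c) one repeats the estimate with the extra spatial integration over the cone $\Gamma(x)$; since $|y-x|<t$ in the area integral, the local region is automatically respected and the same kernel bounds apply after integrating $dy\,dt/t^{n+1}$.

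\textbf{Main obstacle.} The crux is the kernel comparison in Step 1 and Step 2: one must show that the Weyl fractional derivative $\partial_t^\alpha$ applied to $W_t^{\mathfrak H}(x,y)-W_t(x,y)$ produces, after multiplication by $t^\alpha$, a function that is pointwise controlled (in the local region, uniformly; in the global region, with exponential smallness) by a kernel $K(x,y)$ with $\sup_x\int K(x,y)\,dy<\infty$ and $\sup_y\int K(x,y)\,dx<\infty$, so that a scalar Young/Schur-type argument upgrades to an $L^p(\mathbb R^n,B)$ bound for any $B$. The fractional derivative forces one to differentiate the explicit Hermite kernel an integer number $m>\alpha$ of times and then integrate against $(u-t)^{m-\alpha-1}$; the combinatorics of these derivatives of $\exp(-\tfrac14(|x-y|^2\coth t+|x+y|^2\tanh t))$-type expressions, and the verification that the singularities at $t=0$ cancel between the Hermite and classical kernels to leading order, is the technically heavy part. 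Everything else is assembled from Theorems \ref{thm1.5} and \ref{thm1.6} and the observation that positivity preserving plus $L^p$-contractivity suffices to extend $\{W_t^{\mathfrak H}\}_{t>0}$ to the Bochner spaces $L^p(\mathbb R^n,B)$.
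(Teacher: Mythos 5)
There is a genuine gap, and it sits exactly where your ``Main obstacle'' paragraph claims the problem is solvable: the assertion that $t^{\alpha}\partial_t^{\alpha}\bigl(W_t-W_t^{\mathfrak H}\bigr)(x,y)$ is dominated by a kernel $K(x,y)$ with $\sup_x\int K(x,y)\,dy<\infty$ is false, and with it the Schur/Young upgrade that your Step 1 rests on. The cancellation between the two kernels is only a small-time, near-diagonal phenomenon. In the global regime ($|x-y|\gtrsim\rho(x)$, $t\gtrsim |x-y|^{2}$) the Hermite kernel is exponentially small (cf. \eqref{H9}--\eqref{H10}), so the difference is essentially the classical kernel itself, $t\partial_tW_t(x-y)\sim t^{-n/2}$, and its $L^{q}\bigl((0,\infty),\frac{dt}{t}\bigr)$-norm is of the exact order $|x-y|^{-n}$; since $\int_{|x-y|>\rho(x)}|x-y|^{-n}\,dy=\infty$, no size estimate can bound the error operator on $L^{p}(\mathbb R^n,B)$ for an arbitrary $B$. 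This is not a removable technicality: the far-off-diagonal, large-time part of the classical vector-valued $g$-function is precisely where the geometry of $B$ enters, so in the implication (b)$\Rightarrow$(a) (where no cotype information is yet available and the classical bound is equivalent to what you want to prove) the comparison cannot be closed. The paper therefore does not prove this direction by comparison at all: it first reduces the general exponent $\alpha$ to $\alpha=1$ (Calder\'on--Zygmund theory for the $L^{q}\bigl((\frac1N,\infty),\frac{dt}{t},B\bigr)$-valued kernel, an inductive composition argument giving $g^{k\alpha}$, and the monotonicity \eqref{A1}), then passes by subordination to the Hermite \emph{Poisson} semigroup and invokes the known characterization \cite[Theorem A]{AST}; the same issue affects your (c)$\Rightarrow$(a), which the paper instead handles by a separate Calder\'on--Zygmund argument (Theorem \ref{thm2.2}) reducing the area integral to the $g$-function at $p=q$ and then quoting Theorem \ref{thm2.1}.

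Your Step 2 is much closer to what the paper actually does for (a)$\Rightarrow$(b), but as written it has the same flaw: you claim the global region of the difference is ``controlled by the exponential decay of $W_t^{\mathfrak H}$'', which only disposes of the Hermite half; the classical half in the global region has no exponential decay and is not Schur-integrable. The repair — and this is the paper's argument — is to use the hypothesis (a): by Theorem \ref{thm1.4} one has the classical vector-valued bound \eqref{H7}, and then both $G_{W_t,loc}$ (via the covering by critical balls $B(x_k,\rho(x_k))$, applying \eqref{H7} to $\chi_{B(x_k,C_0\rho(x_k))}f$, plus the maximal function) and $G_{W_t,glob}$ are controlled, while only the Hermite global part and the local difference (via the perturbation formula of \cite{DGMTZ}) are handled by kernel size estimates. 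Note also that the paper never runs the comparison with fractional derivatives: it first proves the equivalence of (b) with its $\alpha=1$ version, so the ``heavy combinatorics'' of $\partial_t^{\alpha}$ applied to the Hermite--minus--Gaussian kernel that you flag is avoided entirely.
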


\begin{thm}\label{thm1.10}
Let $B$ be a Banach space, $1<q\leq 2$, $1<p<\infty$, and $\alpha>0.$ The following assertions are equivalent.
\begin{itemize}
\item [(a)] There exists a norm $\left\VERT\cdot\right\VERT$ on $B$ that is equivalent to $\|\cdot\|$ and such that $(B,\left\VERT\cdot\right\VERT)$ is $q$-uniformly smooth.

\item[(b)] There exists $C>0$ such that
\[
\|f\|_{\LpRB}\leq C
\|g_{q,W_{t}^{\mathfrak{H}};B}^{\alpha}(f)\|_{\LpR},
\quad f\in\LpRB.
\]

\item[(c)] There exists $C>0$ such that
\[
\|f\|_{\LpRB} \leq C
\|\mathcal{A}_{q,W_{t}^{\mathfrak{H}};B}^{\alpha}(f)\|_{\LpR}, \quad f\in\LpRB.
\]
\end{itemize}
\end{thm}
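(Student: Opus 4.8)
The plan is to derive Theorem~\ref{thm1.10} from the convex-side result Theorem~\ref{thm1.9} (applied to the dual space $B^{*}$), together with the classical-heat-semigroup characterizations in Theorems~\ref{thm1.7}(b) and~\ref{thm1.8}. I would establish the ``sufficiency'' implications (a)$\Rightarrow$(b) and (a)$\Rightarrow$(c) by a duality argument, and the ``necessity'' implications (b)$\Rightarrow$(a) and (c)$\Rightarrow$(a) by rescaling, which transfers the estimate from $\{W_{t}^{\mathfrak{H}}\}_{t>0}$ to the classical heat semigroup $\{W_{t}\}_{t>0}$. Two preliminary observations are used throughout. First, since $\mathfrak{H}\ge 2nI>0$, the semigroup $\{W_{t}^{\mathfrak{H}}\}_{t>0}$ has no nonzero fixed point in $L^{p}(\mathbb R^{n})$, which is why (b) and (c) carry no $\mathcal{F}$-term. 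Second, $t^{\alpha}\partial_{t}^{\alpha}W_{t}^{\mathfrak{H}}=\psi_{t}(\mathfrak{H})$ with $\psi_{t}(\lambda)=(-1)^{m}(t\lambda)^{\alpha}e^{-t\lambda}$ a real bounded function of $\lambda>0$; hence this operator is bounded and selfadjoint on $L^{2}(\mathbb R^{n})$, and by the spectral theorem one has the Calder\'on reproducing identity $\int_{0}^{\infty}(t^{\alpha}\partial_{t}^{\alpha}W_{t}^{\mathfrak{H}})^{2}\,\frac{dt}{t}=c_{\alpha}I$ with $c_{\alpha}=\int_{0}^{\infty}s^{2\alpha}e^{-2s}\,\frac{ds}{s}$. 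Using the scalar Hermite Littlewood--Paley inequalities to control the truncated integrals, this identity extends to $L^{p}(\mathbb R^{n})\otimes B$ for every $1<p<\infty$ and every Banach space $B$.

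For (a)$\Rightarrow$(b): if $(B,\left\VERT\cdot\right\VERT)$ is $q$-uniformly smooth under an equivalent norm, then $B^{*}$ admits an equivalent $q'$-uniformly convex norm, and since $2\le q'<\infty$ Theorem~\ref{thm1.9} (applied to $B^{*}$ with exponent $q'$) gives $\|g_{q',W_{t}^{\mathfrak{H}};B^{*}}^{\alpha}(h)\|_{L^{p'}(\mathbb R^{n})}\le C\|h\|_{L^{p'}(\mathbb R^{n},B^{*})}$. For $f\in L^{p}(\mathbb R^{n},B)$ and $h\in L^{p'}(\mathbb R^{n},B^{*})$ in a suitable dense class, the reproducing identity and selfadjointness of $t^{\alpha}\partial_{t}^{\alpha}W_{t}^{\mathfrak{H}}$ give
\[
\langle f,h\rangle=c_{\alpha}^{-1}\int_{\mathbb R^{n}}\int_{0}^{\infty}\bigl\langle t^{\alpha}\partial_{t}^{\alpha}W_{t}^{\mathfrak{H}}f(x),\,t^{\alpha}\partial_{t}^{\alpha}W_{t}^{\mathfrak{H}}h(x)\bigr\rangle\,\frac{dt}{t}\,dx ,
\]
and two applications of H\"older's inequality (in $\frac{dt}{t}$ with exponents $q,q'$, then in $dx$ with exponents $p,p'$) bound $|\langle f,h\rangle|$ by $c_{\alpha}^{-1}\|g_{q,W_{t}^{\mathfrak{H}};B}^{\alpha}(f)\|_{L^{p}}\,\|g_{q',W_{t}^{\mathfrak{H}};B^{*}}^{\alpha}(h)\|_{L^{p'}}\le C\|g_{q,W_{t}^{\mathfrak{H}};B}^{\alpha}(f)\|_{L^{p}}\,\|h\|_{L^{p'}(\mathbb R^{n},B^{*})}$; taking the supremum over $h$ yields (b). The implication (a)$\Rightarrow$(c) is proved in the same way, after replacing $\langle u,v\rangle_{L^{2}(\mathbb R^{n})}$ inside the reproducing identity by $(c_{n}s^{n/2})^{-1}\int_{\mathbb R^{n}}\int_{|y-x|<\sqrt{s}}\langle u(y),v(y)\rangle\,dy\,dx$ and using the area-integral clause of Theorem~\ref{thm1.9} for $B^{*}$.

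For (b)$\Rightarrow$(a) and (c)$\Rightarrow$(a), I would use a rescaling argument: fix $f\in\CcR\otimes B$ and put $f_{\lambda}=f(\lambda\,\cdot)$, $\lambda>0$. A change of variables gives $\|f_{\lambda}\|_{L^{p}(\mathbb R^{n},B)}^{p}=\lambda^{-n}\|f\|_{L^{p}(\mathbb R^{n},B)}^{p}$, while the Mehler kernel expansion $W_{\tau/\lambda^{2}}^{\mathfrak{H}}(\xi/\lambda,z/\lambda)=\lambda^{n}(4\pi\tau)^{-n/2}e^{-|\xi-z|^{2}/4\tau}(1+O(\lambda^{-2}))$ as $\lambda\to\infty$ (uniform on compact subsets of the $(\xi,z,\tau)$-variables), combined with the exponential decay $\|W_{t}^{\mathfrak{H}}\|_{L^{p}\to L^{p}}\lesssim e^{-nt}$ for large $t$ and the standard small-$t$ kernel bounds, gives by dominated convergence
\[
\lim_{\lambda\to\infty}\lambda^{n}\,\|g_{q,W_{t}^{\mathfrak{H}};B}^{\alpha}(f_{\lambda})\|_{L^{p}}^{p}=\|g_{q,W_{t};B}^{\alpha}(f)\|_{L^{p}}^{p}
\]
and the analogous limit for $\mathcal{A}^{\alpha}$, where $\{W_{t}\}_{t>0}$ is the classical heat semigroup. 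Inserting $f_{\lambda}$ into hypothesis (b) (resp.\ (c)) and letting $\lambda\to\infty$ produces $\|f\|_{L^{p}(\mathbb R^{n},B)}\le C\|g_{q,W_{t};B}^{\alpha}(f)\|_{L^{p}}$ (resp.\ $\le C\|\mathcal{A}_{q,W_{t};B}^{\alpha}(f)\|_{L^{p}}$) for every such $f$, hence for all of $L^{p}(\mathbb R^{n},B)$ by density; Theorem~\ref{thm1.7}(b) (resp.\ Theorem~\ref{thm1.8}) then yields an equivalent $q$-uniformly smooth norm on $B$.

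The hard part will be the rescaling step: one must produce, for the $g$-function and for the cone version, a dominating function for $t^{\alpha}\partial_{t}^{\alpha}W_{t}^{\mathfrak{H}}f_{\lambda}$ that is uniform in $\lambda$ and integrable against $\frac{dt}{t}$ on $(0,\infty)$. This forces a split into the regimes $t\lesssim\lambda^{-2}$, $\lambda^{-2}\lesssim t\lesssim 1$ and $t\gtrsim 1$: the first two are handled by comparing the Mehler kernel (and its $t$-derivatives) with the Gauss--Weierstrass kernel, while the third uses the exponential operator-norm decay above. A secondary, more routine, technical point is the justification of the Calder\'on reproducing identity on $L^{p}(\mathbb R^{n},B)$ for an arbitrary $B$, where one leans on the scalar Hermite Littlewood--Paley theory.
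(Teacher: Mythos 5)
Your sufficiency directions coincide with the paper's: for (a)$\Rightarrow$(b) and (a)$\Rightarrow$(c) the paper (Theorems \ref{thm2.3} and \ref{thm2.4}) also uses the selfadjointness of $t^{\alpha}\partial_{t}^{\alpha}W_{t}^{\mathfrak{H}}$, the Calder\'on-type identity $\int_{0}^{\infty}\langle t^{\alpha}\partial_{t}^{\alpha}W_{t}^{\mathfrak{H}}f,t^{\alpha}\partial_{t}^{\alpha}W_{t}^{\mathfrak{H}}g\rangle\,\frac{dt}{t}=\frac{\Gamma(2\alpha)}{2^{2\alpha}}\langle f,g\rangle$ (proved there via Hermite expansions rather than the spectral theorem), the CMS averaging trick for the cone version, and the convex-side Hermite results (Theorems \ref{thm2.1}, \ref{thm2.2}) applied to $B^{*}$ with exponent $q'$ — exactly your Theorem \ref{thm1.9}-for-$B^{*}$ step. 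Where you genuinely diverge is in (b)$\Rightarrow$(a) and (c)$\Rightarrow$(a): the paper never leaves the Hermite setting; it dualizes the hypothesis, studies the composed kernel $(st)^{\alpha}\partial_{s}^{\alpha}\partial_{t}^{\alpha}W_{s+t}^{\mathfrak{H}}(x,y)$, proves $L^{r}$-boundedness of the associated operator by operator-valued Calder\'on--Zygmund theory (using the Gaussian bounds \eqref{H1}), deduces the dual estimates $\|g^{\alpha}_{q',W_{t}^{\mathfrak{H}};B^{*}}(f)\|_{L^{p'}}\leq C\|f\|$ resp. $\|\mathcal{A}^{\alpha}_{q',W_{t}^{\mathfrak{H}};B^{*}}(f)\|_{L^{p'}}\leq C\|f\|$, and then applies Theorems \ref{thm2.1}/\ref{thm2.2} to $B^{*}$; this reuses machinery already set up for Theorems \ref{thm7} and \ref{thm9} and requires no limiting procedure. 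Your alternative — a dilation blow-up $f_{\lambda}=f(\lambda\cdot)$ transferring the hypothesis to the classical heat semigroup, then quoting Theorems \ref{thm1.7}(b)/\ref{thm1.8} — is viable and arguably more conceptual (it exhibits the Hermite semigroup as a perturbation of the Gaussian one at small scales), but it carries the dominated-convergence burden you flag: you need majorants uniform in $\lambda$ for $t^{\alpha}\partial_{t}^{\alpha}W_{t}^{\mathfrak{H}}f_{\lambda}$ in both the $\frac{dt}{t}$ and the $dx$ integrations (small times via the smoothness of $f$ and the pointwise domination of the rescaled Mehler kernel by the Gauss--Weierstrass kernel, large times via the exponential decay, plus spatial decay of order $|x|^{-n}$ away from $\supp f$, which is where $p>1$ enters). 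Two points you should make explicit to close the argument: (i) the limit only yields the classical-heat lower bound for $f\in\CcR\otimes B$, whereas Theorems \ref{thm1.7}(b)/\ref{thm1.8} (and their proofs, which apply the hypothesis to the non-compactly supported functions $S_{\alpha}(H)$) need it on $\LpR\otimes B$; this is easily repaired because on finite tensors $g^{\alpha}_{q,W_{t};B}(f)$ and $\mathcal{A}^{\alpha}_{q,W_{t};B}(f)$ are dominated by finitely many scalar Littlewood--Paley/area functions, which are $L^{p}$-bounded, so the inequality passes to limits of finite tensors; (ii) only $\limsup_{\lambda}\lambda^{n}\|g^{\alpha}_{q,W_{t}^{\mathfrak{H}};B}(f_{\lambda})\|_{L^{p}}^{p}\leq\|g^{\alpha}_{q,W_{t};B}(f)\|_{L^{p}}^{p}$ is actually needed, which slightly lightens the convergence analysis. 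With these provisos your route is correct; the paper's route buys uniformity of technique (the same CZ kernel estimates serve all four implications), while yours buys a transference principle that would apply verbatim to other Schr\"odinger-type perturbations of the Laplacian.
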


Let $\beta>-1/2$. By $L_{\beta}$ we denote the Laguerre operator on $(0,\infty)$
\[
L_{\beta}=
\frac{1}{2}
\left(
-\frac{d^{2}}{dx^{2}}+x^{2}+\frac{\beta^{2}-1/4}{x^{2}}
\right), \quad x\in (0,\infty).
\]

For every $k\in\N$ we have that
$$
L_{\beta}(\varphi_{k}^{\beta})=
\lambda_{k}^{\beta}\varphi_{k}^{\beta},
$$
where $\lambda_{k}^{\beta}=2k+\beta+1$, and
$$
\varphi_{k}^{\beta}(x)=
\left(
\frac{2\Gamma(k+1)}{\Gamma(k+1+\beta)}
\right)^{1/2}
e^{-x^{2}/2}x^{\beta+1/2}L_{k}^{\beta}(x^{2}),
\quad x\in (0,\infty),
$$
and $L_{k}^{\beta}$ in the $k$-th polynomial of type $\beta$ (\cite[p. 100]{Sz3}). The system $\{\varphi_{k}^{\beta}\}_{k\in\N}$ is complete and orthonormal in $L^{2}(0,\infty)$. We define, for every $k\in\N$,
$$
c_{k}^{\beta}(f)=
\int_{0}^{\infty}
f(y)\:\varphi_{k}^{\beta}(y)\: dy,
\quad f\in L^{2}(0,\infty).
$$
We consider the operator $\mathcal{L}_{\beta}$ defined by
\[
\mathcal{L}_{\beta}(f)=\sum_{k=0}^{\infty}
c_{k}(f)\lambda_{k}^{\beta}\varphi_{k}^{\beta},
\]
for every $f\in D(\mathcal{L}_{\beta})=
\left\{g\in L^{2}(0,\infty):
\sum\limits_{k=0}^{\infty}
|c_{k}^{\beta}(g)|^{2}(\lambda_{k}^{\beta})^{2}<\infty
\right\}$. Note that $L_\beta(f) = \mathcal L_\beta(f)$, $ f\in C^\infty_c(0,\infty)$, the space of smooth functions with compact support in $(0,\infty)$.

The operator $\mathcal{L}_{\beta}$ generates the semigroup $\{W_{t}^{\mathcal L_\beta}\}_{t>0}$ of operators, being for every $t>0$,
$$
W_{t}^{\mathcal L_\beta}(f)=
\sum_{k=0}^{\infty}
e^{-\lambda_{k}t}
c_{k}(f)
\varphi_{k}^{\beta},
\quad f\in L^{2}(0,\infty).
$$

We can write, for every $t>0$,
\begin{equation}\label{1.9}
W_{t}^{\mathcal L_\beta}(f)(x)=
\int_{0}^{\infty} W_{t}^{\beta}(x,y)\:f(y)\:dy,
\quad f\in L^{2}(0,\infty),
\end{equation}
where, for every $t,x,y \in (0,\infty)$,
$$
W_{t}^{\mathcal L_\beta}(x,y)=
\left(
\frac{2e^{-t}}{1-e^{-2t}}
\right)^{1/2}
\left(
\frac{2xye^{-t}}{1-e^{-2t}}
\right)^{1/2}
I_{\beta}
\left(
\frac{2xye^{-t}}{1-e^{-2t}}
\right)
\exp
\left(-\frac{1}{2}(x^{2}+y^{2})
\frac{1+e^{-2t}}{1-e^{-2t}}
\right),
$$
and $I_{\beta}$ denotes the modified Bessel function of the first kind and order $\beta$.

Integral representation \eqref{1.9} also defines a semigroup of operators $\{W_{t}^{\mathcal L_\beta}\}_{t>0}$ in $L^{p}(0,\infty)$, $1\leq p\leq \infty$. The semigroup $\{W_{t}^{\mathcal L_\beta}\}_{t>0}$ does not satisfies the markovian property. $\{W_{t}^{\mathcal L_\beta}\}_{t>0}$ is contractive in $L^{p}(0,\infty)$, $1\leq p\leq \infty$, if and only if, $\beta\in\{-1/2\}\cup[1/2,+\infty)$ (see \cite{NS}).

Littlewood-Paley functions and area integrals in $\mathcal{L}_{\beta}$-setting were studied by \cite{Wr} and \cite{BMR}, respectively.

We establish the following results connecting uniformly convex and smooth Banach spaces with Littlewood-Paley and area integrals associated to heat semigroups defined for Laguerre operators.

\begin{thm}\label{thm1.11}
Let $B$ be a Banach space, $2\leq q<\infty$, $1<p<\infty$, $\alpha>0$ and $\beta>-1/2$. The following assertions are equivalent.
\begin{itemize}
\item [(a)] There exists a norm $\left\VERT\cdot\right\VERT$ on $B$ that is equivalent to $\|\cdot\|$ and such that $(B,\left\VERT\cdot\right\VERT)$ is $q$-uniformly convex.

\item[(b)] There exists $C>0$ such that
\[
\|g_{q,W_{t}^{\mathcal{L}_{\beta}};B}^{\alpha}(f)\|_{L^{p}(0,\infty)}
\leq C \|f\|_{L^{p}((0,\infty),B)},\quad f\in L^{p}((0,\infty),B).
\]

\item[(c)] There exists $C>0$ such that
\[
\|\mathcal{A}_{q,W_{t}^{\mathcal{L}_{\beta}};B}^{\alpha}(f)\|_{L^{p}(0,\infty)}
\leq C \|f\|_{L^{p}((0,\infty),B)},\quad f\in L^{p}((0,\infty),B).
\]
\end{itemize}
\end{thm}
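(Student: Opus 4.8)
The plan is to follow the scheme of the proof of Theorem \ref{thm1.9}. Since $\{W_t^{\mathcal L_\beta}\}_{t>0}$ is not markovian — and, for $\beta\notin\{-1/2\}\cup[1/2,\infty)$, not even a semigroup of $L^p$-contractions — so that Theorems \ref{thm1.5} and \ref{thm1.6} do not apply to it directly, one exploits that $L_\beta$ is a perturbation of the one-dimensional Laplacian and transfers the estimates to the classical heat semigroup $\{W_t\}_{t>0}$ on $\mathbb R$. Write $K_t(x,y)=W_t^{\mathcal L_\beta}(x,y)$, let $W_t(x-y)=(4\pi t)^{-1/2}e^{-(x-y)^2/4t}$, split $(0,\infty)^2=N_{\mathrm{loc}}\cup N_{\mathrm{glob}}$ with $N_{\mathrm{loc}}=\{(x,y):x/2<y<2x\}$, and decompose $g^{\alpha}_{q,W_t^{\mathcal L_\beta};B}(f)\le g_{\mathrm{loc}}(f)+g_{\mathrm{glob}}(f)$ and $\mathcal A^{\alpha}_{q,W_t^{\mathcal L_\beta};B}(f)\le \mathcal A_{\mathrm{loc}}(f)+\mathcal A_{\mathrm{glob}}(f)$, where the subscript indicates that the kernel $t^{\alpha}\partial_t^{\alpha}K_t(x,y)$ has been multiplied by $\chi_{N_{\mathrm{loc}}}$ or $\chi_{N_{\mathrm{glob}}}$.

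For the implications (a) $\Rightarrow$ (b) and (a) $\Rightarrow$ (c) I would first dispose of the global parts: using the large-argument asymptotics of $I_\beta$ together with the Gaussian factor in $K_t$, one obtains a pointwise bound $|t^{\alpha}\partial_t^{\alpha}K_t(x,y)|\le C\,\mathcal K(x,y)$ on $N_{\mathrm{glob}}$, where $\mathcal K$ is the kernel of an operator bounded on $L^p(0,\infty)$ (a Schur/Young-type estimate); since $\mathcal K$ is scalar this gives $\|g_{\mathrm{glob}}(f)\|_{L^p(0,\infty)}+\|\mathcal A_{\mathrm{glob}}(f)\|_{L^p(0,\infty)}\le C\|f\|_{L^p((0,\infty),B)}$ for \emph{every} Banach space $B$. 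For the local parts I would compare $K_t$ with $W_t$ on $N_{\mathrm{loc}}$ (absorbing the innocuous time dilation caused by the factor $\tfrac12$ in $L_\beta$): the difference kernel $\big(K_t(x,y)-W_t(x-y)\big)\chi_{N_{\mathrm{loc}}}(x,y)$, once $t^{\alpha}\partial_t^{\alpha}$ is applied, should generate square and area functions bounded from $L^p((0,\infty),B)$ into $L^p(0,\infty)$ for every $B$, so that $g_{\mathrm{loc}}(f)\le C\big(g^{\alpha}_{q,W_t;B}(\widetilde f)+\mathcal E(f)\big)$ and $\mathcal A_{\mathrm{loc}}(f)\le C\big(\mathcal A^{\alpha}_{q,W_t;B}(\widetilde f)+\mathcal E'(f)\big)$, where $\widetilde f$ denotes the extension of $f$ by $0$ to $\mathbb R$ and $\mathcal E,\mathcal E'$ are bounded on $L^p$ independently of $B$. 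When $(B,\left\VERT\cdot\right\VERT)$ is $q$-uniformly convex, Theorem \ref{thm1.5}(a) (applied to the symmetric diffusion semigroup $\{W_t\}_{t>0}$ on $\mathbb R$) bounds $\|g^{\alpha}_{q,W_t;B}(\widetilde f)\|_{L^p(\mathbb R)}$ and Theorem \ref{thm1.6} bounds $\|\mathcal A^{\alpha}_{q,W_t;B}(\widetilde f)\|_{L^p(\mathbb R)}$, both by $\|\widetilde f\|_{L^p(\mathbb R,B)}=\|f\|_{L^p((0,\infty),B)}$; adding the two contributions gives (b) and (c).

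For the converse implications (b) $\Rightarrow$ (a) and (c) $\Rightarrow$ (a) I would run the same splitting backwards. For $f\in L^p((0,\infty),B)$ supported in $[a,\infty)$ one gets, for $x\ge a$,
\[
g^{\alpha}_{q,W_t;B}(\widetilde f)(x)\le g^{\alpha}_{q,W_t^{\mathcal L_\beta};B}(f)(x)+g_{\mathrm{glob}}(f)(x)+\mathcal E(f)(x)+\mathcal R(f)(x),
\]
where $\mathcal R$ collects the contribution of $W_t(x-y)$ over $(x,y)\in N_{\mathrm{glob}}$ and is again bounded on $L^p$ for every $B$; hypothesis (b) then makes the $L^p(a,\infty)$-norm of the left-hand side $\le C\|f\|_{L^p((0,\infty),B)}$. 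Letting $a\to-\infty$ and using the translation invariance of $\{W_t\}_{t>0}$ together with density of compactly supported functions, this yields $\|g^{\alpha}_{q,W_t;B}(g)\|_{L^p(\mathbb R)}\le C\|g\|_{L^p(\mathbb R,B)}$ for all $g$, and Theorem \ref{thm1.5}(b) produces an equivalent $q$-uniformly convex norm on $B$. The area-integral case is handled identically, invoking Theorem \ref{thm1.6}, (b)$\Rightarrow$(a).

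The hard part will be the local difference estimate, which is needed in both directions: one must show that, after applying the Weyl derivative $t^{\alpha}\partial_t^{\alpha}$ (i.e. differentiating $m$ times, $m-1\le\alpha<m$, and integrating against $(u-t)^{m-\alpha-1}$), the kernel $K_t(x,y)-W_t(x-y)$ on $N_{\mathrm{loc}}$ gains an extra positive power of $t$ (or of $|x-y|$) compared with a standard approximation-of-the-identity kernel, uniformly in $x,y,t$, so that the subsequent $L^q(dt/t)$- and $L^p$-integrations converge; near the origin the singular potential $\tfrac{\beta^2-1/4}{2x^2}$ must be controlled through the small-argument asymptotics of $I_\beta$, which is exactly where the hypothesis $\beta>-1/2$ enters. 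The corresponding estimates for the subordinated Poisson semigroup are carried out in \cite{BFRST} (Littlewood--Paley functions) and \cite{BMR} (area integrals); the heat-semigroup versions, with a fractional derivative of arbitrary order $\alpha>0$, are the main technical work.
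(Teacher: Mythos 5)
Your scheme breaks down at the step you yourself identify as the crux: the claim that, on the local region $N_{\mathrm{loc}}=\{x/2<y<2x\}$ and for \emph{all} $t>0$, the difference kernel $\bigl(W_t^{\mathcal L_\beta}(x,y)-W_t(x-y)\bigr)\chi_{N_{\mathrm{loc}}}$ produces, after applying $t^{\alpha}\partial_t^{\alpha}$, an error operator bounded on $L^p$ for every Banach space $B$. This is false, because $\mathcal L_\beta$ is \emph{not} a small perturbation of $-\tfrac12\frac{d^2}{dx^2}$ on $N_{\mathrm{loc}}$: the confining potential $\tfrac12 x^2$ is of size $x^2$ there. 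Concretely, for $1\le |x-y|\le x/2$ and $t\simeq |x-y|^2\ge 1$ one has $|t\partial_tW_t(x-y)|\simeq |x-y|^{-1}$, while the Laguerre kernel (and any fixed number of its $t$-derivatives) is $O(e^{-c(x^2+y^2)})$ by the Gaussian confinement in $W_t^{\mathcal L_\beta}(x,y)$; hence $\bigl\|t\partial_t\bigl(W_t(x-y)-W_t^{\mathcal L_\beta}(x,y)\bigr)\bigr\|_{L^q((0,\infty),\frac{dt}{t})}\gtrsim |x-y|^{-1}$ on that range, so the row integrals of your error kernel over $N_{\mathrm{loc}}$ grow like $\log x$, and testing on $f=\chi_{(R,2R)}$ shows $\|\mathcal E(f)\|_{L^p}\gtrsim (\log R)\,\|f\|_{L^p}$. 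The same obstruction kills the converse direction, and there the auxiliary limiting argument is not even meaningful: $f$ lives on $(0,\infty)$, so ``letting $a\to-\infty$'' and invoking translation invariance cannot be carried out (the Laguerre setting is not translation invariant, and the local comparison deteriorates, rather than improves, as the support moves to $+\infty$).

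The paper avoids this by comparing on $N_{\mathrm{loc}}$ with the \emph{Hermite} heat semigroup $\{W_t^{\mathfrak H/2}\}_{t>0}$, whose kernel carries the same harmonic confinement; the difference estimates of \cite{BCFR} give a bound $\frac1y\bigl(1+\sqrt{y/|x-y|}\bigr)$ on $x/2<y<2x$, which is integrable and yields an $L^p$-bounded error by Jensen, while on the global region both kernels are dominated by $C/|x-y|$ and hence by the Hardy operators $H_0,H_\infty$. Moreover, the proof does not run your two-way transfer at fractional order: it first reduces to $\alpha=1$ by the Calder\'on--Zygmund/iteration argument of Theorem \ref{thm2.1}, using the analyticity bounds \eqref{L2}; then $(a)\Rightarrow(b')$ follows from the Hermite result (Theorem \ref{thm1.9}) plus the local/global comparison just described, whereas $(b')\Rightarrow(a)$ is obtained by subordination, $g^1_{q,P_t^{\mathcal L_\beta};B}(f)\le C\,g^1_{q,W_t^{\mathcal L_\beta};B}(f)$, and the Laguerre--Poisson theorem of \cite{BFRST} — no reverse kernel comparison is needed. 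Finally, the area-integral statement (c) is not handled by a separate conical comparison with Theorem \ref{thm1.6}: it is deduced from (b) (and conversely) via the $L^q$-identity between vertical and conical square functions and Calder\'on--Zygmund theory for $H_N$-valued kernels, as in Theorems \ref{thm4} and \ref{thm2.2}. If you want to keep your architecture, you must either replace the classical semigroup by $\{W_t^{\mathfrak H/2}\}_{t>0}$ in the local comparison, or shrink the local region and the time range to the critical radius $\rho(x)\simeq\min(1,1/x)$ and treat the remaining ranges as global terms, as is done for Hermite versus the classical semigroup in the proof of Theorem \ref{thm2.1}.
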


\begin{thm}\label{thm1.12}
Let $B$ be a Banach space, $1<q\leq 2$, $1<p<\infty$, $\alpha>0$ and $\beta>-1/2$. The following assertions are equivalent.
\begin{itemize}
\item [(a)] There exists a norm $\left\VERT\cdot\right\VERT$ on $B$ that is equivalent to $\|\cdot\|$ and such that $(B,\left\VERT\cdot\right\VERT)$ is $q$-uniformly smooth.

\item[(b)] There exists $C>0$ such that
\[
\|f\|_{L^{p}((0,\infty),B)}\leq C
\|g_{q,W_{t}^{\mathcal{L}_{\beta}};B}^{\alpha}(f)\|_{L^{p}(0,\infty)},
\quad f\in L^{p}((0,\infty),B).
\]

\item[(c)] There exists $C>0$ such that
\[
\|f\|_{L^{p}((0,\infty),B)} \leq C
\|\mathcal{A}_{q,W_{t}^{\mathcal{L}_{\beta}};B}^{\alpha}(f)\|_{L^{p}(0,\infty)}, \quad f\in L^{p}((0,\infty),B).
\]
\end{itemize}
\end{thm}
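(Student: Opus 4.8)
The plan is to derive Theorem~\ref{thm1.12} from Theorem~\ref{thm1.11} by a duality argument. The backbone is the classical duality between the moduli of smoothness and convexity (Lindenstrauss): a Banach space $B$ admits an equivalent $q$-uniformly smooth norm if and only if $B^{*}$ admits an equivalent $q'$-uniformly convex norm, where $1/q+1/q'=1$; since $1<q\le 2$ gives $2\le q'<\infty$, this is exactly the range of Theorem~\ref{thm1.11}. The analytic ingredient is a Calderón reproducing formula for $\{W_{t}^{\mathcal{L}_{\beta}}\}_{t>0}$ together with the selfadjointness of each $W_{t}^{\mathcal{L}_{\beta}}$ on $L^{2}(0,\infty)$. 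Because every eigenvalue $\lambda_{k}^{\beta}=2k+\beta+1$ is strictly positive, $\{W_{t}^{\mathcal{L}_{\beta}}\}_{t>0}$ has no nonzero fixed point in any $L^{p}(0,\infty)$ (indeed $W_{t}^{\mathcal{L}_{\beta}}g\to 0$ in $L^{p}$ as $t\to\infty$ since $\lambda_{0}^{\beta}=\beta+1>0$), so the spectral theorem yields, for $f$ in a dense class and a constant $c_{\alpha}$ read off from $\int_{0}^{\infty}u^{2\alpha}e^{-2u}\,\frac{du}{u}$,
\[
f=c_{\alpha}\int_{0}^{\infty}\big(t^{\alpha}\partial_{t}^{\alpha}W_{t}^{\mathcal{L}_{\beta}}\big)^{2}f\,\frac{dt}{t},
\]
with no ``$\mathcal{F}(f)$-term'' to carry along.

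For $(a)\Rightarrow(b)$: assuming $(a)$, $B^{*}$ is renormable $q'$-uniformly convex, so Theorem~\ref{thm1.11}$(b)$ for $B^{*}$ with the exponents $q',p'$ gives $\|g_{q',W_{t}^{\mathcal{L}_{\beta}};B^{*}}^{\alpha}(h)\|_{L^{p'}(0,\infty)}\le C\|h\|_{L^{p'}((0,\infty),B^{*})}$. For $f\in L^{p}((0,\infty),B)$ and $h\in L^{p'}((0,\infty),B^{*})$ in dense classes, the reproducing formula and selfadjointness of $t^{\alpha}\partial_{t}^{\alpha}W_{t}^{\mathcal{L}_{\beta}}$ turn $\langle f,h\rangle$ into $c_{\alpha}\int_{0}^{\infty}\!\int_{0}^{\infty}\langle t^{\alpha}\partial_{t}^{\alpha}W_{t}^{\mathcal{L}_{\beta}}f(x),\,t^{\alpha}\partial_{t}^{\alpha}W_{t}^{\mathcal{L}_{\beta}}h(x)\rangle\,\frac{dt}{t}\,dx$; applying Hölder's inequality in $t$ (exponents $q,q'$) and then in $x$ (exponents $p,p'$) bounds $|\langle f,h\rangle|$ by $c_{\alpha}\|g_{q,W_{t}^{\mathcal{L}_{\beta}};B}^{\alpha}(f)\|_{L^{p}(0,\infty)}\|g_{q',W_{t}^{\mathcal{L}_{\beta}};B^{*}}^{\alpha}(h)\|_{L^{p'}(0,\infty)}$, and taking the supremum over $\|h\|_{L^{p'}((0,\infty),B^{*})}\le 1$ gives $(b)$. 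The implication $(a)\Rightarrow(c)$ is the same, dualizing Theorem~\ref{thm1.11}$(c)$ and using a reproducing formula attached to the cones $\Gamma(x)$, which after a Fubini step and the substitution $s=t^{2}$ is essentially the one above.

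For the converses $(b)\Rightarrow(a)$ and $(c)\Rightarrow(a)$ I would run the pairing backwards. Using the always-isometric embeddings $L^{q'}(\frac{dt}{t};B^{*})\hookrightarrow(L^{q}(\frac{dt}{t};B))^{*}$ and $L^{p'}\hookrightarrow(L^{p})^{*}$, picking near-optimal norming elements $\varphi_{x}(\cdot)\in L^{q}(\frac{dt}{t};B)$ in a measurable way and weighting them by $g_{q',W_{t}^{\mathcal{L}_{\beta}};B^{*}}^{\alpha}(h)(x)^{p'-1}$, one writes $\|g_{q',W_{t}^{\mathcal{L}_{\beta}};B^{*}}^{\alpha}(h)\|_{L^{p'}(0,\infty)}^{p'}$ as a pairing $\langle h,\Lambda\rangle$ with $\Lambda(x)=\int_{0}^{\infty}t^{\alpha}\partial_{t}^{\alpha}W_{t}^{\mathcal{L}_{\beta}}[\Theta(\cdot,t)](x)\,\frac{dt}{t}$ for a suitable $\Theta$. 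Hypothesis $(b)$ gives $\|\Lambda\|_{L^{p}((0,\infty),B)}\le C\|g_{q,W_{t}^{\mathcal{L}_{\beta}};B}^{\alpha}(\Lambda)\|_{L^{p}(0,\infty)}$, and an auxiliary Littlewood--Paley estimate — the boundedness on $L^{p}\big((0,\infty);L^{q}(\frac{dt}{t};B)\big)$ of $\Theta\mapsto\big(t^{\alpha}\partial_{t}^{\alpha}W_{t}^{\mathcal{L}_{\beta}}\int_{0}^{\infty}s^{\alpha}\partial_{s}^{\alpha}W_{s}^{\mathcal{L}_{\beta}}\Theta(\cdot,s)\,\frac{ds}{s}\big)_{t>0}$ — closes the circle, yielding $\|g_{q',W_{t}^{\mathcal{L}_{\beta}};B^{*}}^{\alpha}(h)\|_{L^{p'}}\le C\|h\|_{L^{p'}((0,\infty),B^{*})}$. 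Theorem~\ref{thm1.11} ($(b)\Rightarrow(a)$, for $B^{*}$) then makes $B^{*}$ renormable $q'$-uniformly convex, hence super-reflexive, hence $B$ super-reflexive, and the duality of the moduli gives $(a)$. The case $(c)\Rightarrow(a)$ is identical with area integrals.

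The main obstacle is the auxiliary Littlewood--Paley estimate in the converse: it must be proved for the Laguerre heat semigroup from pointwise size and regularity bounds for $t^{\alpha}\partial_{t}^{\alpha}W_{t}^{\mathcal{L}_{\beta}}(x,y)$ obtained from the explicit kernel and the asymptotics of $I_{\beta}$, which is where one would split the kernel into a local part (where $\mathcal{L}_{\beta}$ is a controlled perturbation of $-\frac{1}{2}\,d^{2}/dx^{2}$) and a global part. A secondary layer of technical care is the justification of the reproducing formula and the Fubini interchanges in $L^{p}((0,\infty),B)$ for $p\neq 2$, by a density argument using the decay $\|(t^{\alpha}\partial_{t}^{\alpha}W_{t}^{\mathcal{L}_{\beta}})^{2}f\|_{L^{p}}\lesssim t^{2\alpha}e^{-2\lambda_{0}^{\beta}t}\|f\|_{L^{p}}$ for large $t$ and its vanishing as $t\to 0^{+}$ on that class, together with the measurable selection of norming functionals. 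An alternative, more computational route avoids Theorem~\ref{thm1.11}: split $g_{q,W_{t}^{\mathcal{L}_{\beta}};B}^{\alpha}$ and $\mathcal{A}_{q,W_{t}^{\mathcal{L}_{\beta}};B}^{\alpha}$ into local and global pieces, show the global piece and the local difference against a rescaled one-dimensional Euclidean heat $g$-function define operators bounded on $L^{p}((0,\infty),B)$ for every $B$, and reduce to Theorems~\ref{thm1.7} and~\ref{thm1.8}; but absorbing a bounded perturbation is not available for lower bounds, so the duality reduction to Theorem~\ref{thm1.11} is the cleaner plan.
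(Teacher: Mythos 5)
Your proposal is correct and follows essentially the same route as the paper's own proof of this theorem (its Theorems \ref{thm3.3} and \ref{thm3.4}, which adapt Theorems \ref{thm6}, \ref{thm7}, \ref{thm9} and the Hermite analogues \ref{thm2.3}, \ref{thm2.4}): the forward directions use the spectral polarization/reproducing identity with no fixed-point term (since $\lambda_k^{\beta}=2k+\beta+1>0$), H\"older, and Theorem \ref{thm1.11} applied to $B^{*}$ (for the area version together with the averaging trick), while the converses dualize through the composition operator with kernel $(st)^{\alpha}\partial_s^{\alpha}\partial_t^{\alpha}W^{\mathcal L_\beta}_{s+t}(x,y)$, proved bounded on $L^{p}(L^{q}(\frac{dt}{t};B))$ by operator-valued Calder\'on--Zygmund theory, and then invoke the convexity converse for $B^{*}$ and type/cotype duality. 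The only minor deviations are that the paper gets the needed Gaussian bounds for time derivatives of $W_t^{\mathcal L_\beta}(x,y)$ globally from analyticity of the semigroup (its estimate \eqref{L2}) rather than from a local/global kernel splitting, and passes from martingale cotype $q'$ of $B^{*}$ to $q$-uniform smoothability of $B$ via \cite[Lemma 3.2]{OX} and Pisier's renorming instead of super-reflexivity plus the duality of the moduli.
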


We remark that from our results by taking $q=2$ we can deduce new characterizations of Hilbert spaces.

In the remaining of this paper we present proofs of Theorems \ref{thm1.5}-\ref{thm1.12}. In our proofs Calder\'on-Zygmund theory for Banach-valued singular integrals plays a key role (see \cite{RRT}).
Throughout this paper by $C$ and $c$ we always denote positive constants that can change in each occurrence.

\section{Results concerning symmetric diffusion semigroups }
Let $(\Omega,\mu)$ be a $\sigma$-finite measure space and let $B$ be a Banach space. For every $1\leq p\leq\infty$, we denote by $\LpOB$ the $p$-th B\"{o}chner-Lebesgue space. It is well known that if  $1\leq p\leq\infty$ and $T$ is a positive bounded operator from $\LpO$ into itself, then $T\otimes I_{B}$ is bounded from $\LpOB$ into itself and
$$
\|T\otimes I_{B}\|_{\LpOB\to\LpOB}=\|T\|_{\LpO\to\LpO}.
$$

Suppose that $\{T_{t}\}_{t>0}$ is a semigroup of positive operators in $\LpO$, $1 \leq p\leq \infty$. For every $t>0$, we also denote by $T_{t}$ to the operator $T_{t}\otimes I_{B}$ defined in $L^p(\Omega)\otimes B$, $1\leq p\leq\infty$. Thus, $\{T_{t}\}_{t>0}$ is a semigroup of operators in $\LpOB$, $1\leq p\leq\infty$.

Let $\alpha>0$. We choose $m\in\N$ such that $m-1\leq\alpha < m$. If $f\in C^{m}(0,\infty)$ the $\alpha$-derivative $\partial_t^{\alpha}f(t)$ of $f$ in $t\in (0,\infty)$ is given by
$$
\partial_{t}^{\alpha}f(t)=\frac{1}{\Gamma(m-\alpha)}\int_{t}^{\infty}(\partial_u^{m}f)(u)(u-t)^{m-\alpha-1}du,
$$
provided that the last integral exists. The fractional derivative is usually named Weyl derivative.

Suppose that $\{T_{t}\}_{t>0}$ is a symmetric diffusion semigroup in $(\Omega,\mathcal{A},\mu)$. According to \cite[Theorem 1, p. 67]{SteinLp} $\{T_{t}\}_{t>0}$ defines a bounded analytic semigroup in  $\LpO$, $1<p<\infty$. Then for every $k\in\N$, the set $\{t^{k}\partial_{t}^{k}T_{t}\}_{t>0}$ is bounded in the space $\mathcal{L}(\LpO)$ of bounded operators from $\LpO$ into itself, for every $1<p<\infty$. It follows that, for every $1<p<\infty$, $\int_{t}^{\infty}\|\partial_{u}^{m}T_{u}\|_{\LpO\to\LpO}(u-t)^{m-\alpha-1} du<\infty$, $t>0$.

We define, for every $1<p<\infty$ and $f\in\LpO$,

\[\partial_{t}^{\alpha}T_{t}f=\frac{1}{\Gamma(m-\alpha)}
\int_{t}^{\infty}\partial_{u}^{m}T_{u}(f)(u-t)^{m-\alpha-1} du, \quad t>0.\]

Assume that the Banach space $B$ is of martingale cotype $q$ where $2\leq q<\infty$. According to \cite[p. 5]{Xu18} (also see \cite[Theorem 1.2]{Pi5}), for every $1\leq p<\infty$, $\{T_{t}\}_{t>0}$ is an analytic semigroup on $\LpOB$. We also define, for every $1< p<\infty$ and $f\in\LpOB$,

\[\partial_{t}^{\alpha}T_{t}f=\frac{1}{\Gamma(m-\alpha)}
\int_{t}^{\infty}\partial_{u}^{m}T_{u}(f)(u-t)^{m-\alpha-1} du, \quad t>0.\]

Thus, $\partial_{t}^{\alpha}T_{t}f\in\LpOB$ provided that $f\in\LpOB$, $1\leq p<\infty$ and $t>0$. Note also that if $f\in\LpO\otimes B$, $1\leq p<\infty$ and $t>0$, then $\partial_{t}^{\alpha}T_{t}f\in\LpO\otimes B$

Let $1\leq p<\infty$ and $1<r<\infty$. We define the Littlewood-Paley function $g^\alpha_{r,T_{t};B}(f)$ of $f\in\LpOB$ associated with the semigroup $\{T_{t}\}_{t>0}$ as follows

$$
g^\alpha_{r, T_{t};B}(f)(x)=
(
\int_{0}^{\infty}\|t^{\alpha}\partial_{t}^{\alpha}T_{t}(f)(x)\|^{r}
\frac{dt}{t})^{1/r}, \quad x\in\Omega.
$$
Here, $\|\cdot\|$ denotes the norm in $B$.
%

By proceeding as in \cite[Proposition 3.1]{TZ} we can see that if $0<\alpha<\beta$ then
\begin{equation}\label{A1}
g_{r,T_{t};B}^{\alpha}(f)\leq g_{r,T_{t};B}^{\beta}(f), \quad f\in\LpOB,
\end{equation}
for every $1<r<\infty$ and $1 \leq p<\infty$.

From \eqref{A1} and Theorem \ref{thm1.4}, (a), we deduce immediately the following property.

\begin{cor}\label{cor1}
Let $B$ be a Banach space and $\alpha>0$. If $B$ is of martingale cotype $q$ with $2\leq q<\infty$ and $\{T_{t}\}_{t>0}$ is a symmetric diffusion semigroup, then
$$
\|g_{q, T_{t};B}^{\alpha}(f)\|_{\LpO}\leq C\|f\|_{\LpOB},
$$
for every $f\in\LpOB$ with $1<p<\infty$. Here $C$ is a positive constant depending on $p,q,\alpha$ and the martingale cotype $q$ of $B$.
\end{cor}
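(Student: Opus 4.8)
The plan is to deduce the estimate directly from Theorem~\ref{thm1.4}(a) together with the comparison \eqref{A1}, so the argument is short. The first observation is that the hypothesis ``$B$ is of martingale cotype $q$'' is, by the classical renorming theorem of Pisier \cite{Pi5}, equivalent to the existence of a norm $\left\VERT\cdot\right\VERT$ on $B$ that is equivalent to $\|\cdot\|$ and such that $(B,\left\VERT\cdot\right\VERT)$ is $q$-uniformly convex, the geometric constants of the renorming being controlled by (and controlling) the martingale cotype $q$ constant of $B$. Thus we are exactly in the setting of Theorem~\ref{thm1.4}(a). Note also that the fractional object $g_{q,T_t;B}^{\alpha}(f)$ is well defined for every $f\in\LpOB$ with $1<p<\infty$ by the discussion in Section~2: since $B$ has martingale cotype $q$, $\{T_t\}_{t>0}$ is analytic on $\LpOB$ and $\partial_t^{\alpha}T_tf$ is given by the absolutely convergent integral \eqref{1.5}.

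Next, fix an integer $k=\lfloor\alpha\rfloor+1$, so that $k\in\N$, $k\ge 1$ and $k>\alpha$. Applying Theorem~\ref{thm1.4}(a) to the symmetric diffusion semigroup $\{T_t\}_{t>0}$, with this $k$ and with the renormed space $(B,\left\VERT\cdot\right\VERT)$, we obtain a constant $C>0$, depending only on $k$, $p$, $q$ and the geometric constants of $(B,\left\VERT\cdot\right\VERT)$ --- hence, after undoing the renorming, only on $\alpha$, $p$, $q$ and the martingale cotype $q$ constant of $B$, and in particular not on the semigroup --- such that
\[
\|g_{q,T_t;B}^{k}(f)\|_{\LpO}\le C\,\|f\|_{\LpOB},\qquad f\in\LpOB,\ 1<p<\infty .
\]

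Finally, since $0<\alpha<k$, the monotonicity \eqref{A1} yields the pointwise inequality $g_{q,T_t;B}^{\alpha}(f)(x)\le g_{q,T_t;B}^{k}(f)(x)$ for a.e.\ $x\in\Omega$, whence
\[
\|g_{q,T_t;B}^{\alpha}(f)\|_{\LpO}\le\|g_{q,T_t;B}^{k}(f)\|_{\LpO}\le C\,\|f\|_{\LpOB},
\]
which is the claimed bound with the stated dependence of $C$. There is really no obstacle here; the only points needing a word of care are the well-definedness mentioned above and the fact that an integer $k$ strictly larger than $\alpha$ is always available, so that \eqref{A1} does apply.
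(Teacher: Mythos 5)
Your proof is correct and is essentially the paper's own argument: the paper likewise deduces the corollary ``immediately'' from the comparison \eqref{A1} applied with an integer exponent exceeding $\alpha$ together with Theorem~\ref{thm1.4}(a), the Pisier renorming equivalence between martingale cotype $q$ and $q$-uniform convexifiability being used implicitly there and made explicit by you.
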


In order to  prove a converse of Corollary \ref{cor1} we consider the classical heat semigroup $\{W_{t}\}_{t>0}$ defined by
$$
W_{t}(f)(x)=\int_{\Rn}W_{t}(x-y)f(y)dy, \quad f\in\bigcup_{p\in [1,\infty]}\LpRB,
$$
where $W_{t}(z)=\frac{1}{(4\pi)^{\frac{n}{2}}}\dfrac{e^{-|z|^{2}/4t}}{t^{n/2}}$, $z\in\Rn$ and $t>0$.

\begin{thm}\label{thm2}
Let $B$ be a Banach space, $1<p<\infty$, $2\leq q<\infty$ and $\alpha>0$. Suppose that $g_{q,W_{t};B}^{\alpha}$ is bounded from $\LpRB$ into $\LpR$. Then, there exists a norm $\left\VERT\cdot\right\VERT$ on $B$ that is equivalent to $\|\cdot\|$ and such that $(B,\left\VERT\cdot\right\VERT)$ is $q$-uniformly convex.
\end{thm}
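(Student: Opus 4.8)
The plan is to reduce the statement to Theorem~\ref{thm1.3}(c), applied to the \emph{classical} Poisson semigroup $\{P_{t}\}_{t>0}$ on $\Rn$. Concretely, I would show that the boundedness of $g_{q,W_{t};B}^{\alpha}$ from $\LpRB$ into $\LpR$ forces the boundedness of $g_{q,P_{t};B}^{2\alpha}$ from $\LpRB$ into $\LpR$. Since $2\alpha$ is again an arbitrary positive order and Theorem~\ref{thm1.3}(c) is valid for every positive order, this last bound produces a norm $\left\VERT\cdot\right\VERT$ on $B$, equivalent to $\|\cdot\|$, for which $(B,\left\VERT\cdot\right\VERT)$ is $q$-uniformly convex, which is exactly the desired conclusion.

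The comparison of the two Littlewood--Paley functions rests on a subordination identity. Starting from the classical subordination formula $P_{t}f=\pi^{-1/2}\int_{0}^{\infty}v^{-1/2}e^{-v}W_{t^{2}/(4v)}f\,dv$ — equivalently, passing to Fourier transforms on $\Rn$ and using the Weyl derivative of the exponential together with the Laplace transform identity $\int_{0}^{\infty}w^{-3/2}e^{-1/(4w)}e^{-wb}\,dw=2\sqrt{\pi}\,e^{-\sqrt{b}}$, $b\ge 0$ — one arrives, for $f$ in a dense subspace and $x\in\Rn$, at
\[
t^{2\alpha}\partial_{t}^{2\alpha}P_{t}f(x)=C_{\alpha}\int_{0}^{\infty}k\!\left(\frac{s}{t^{2}}\right)s^{\alpha}\partial_{s}^{\alpha}W_{s}f(x)\,\frac{ds}{s},\qquad k(w)=w^{-\alpha-1/2}e^{-1/(4w)},
\]
for a suitable constant $C_{\alpha}$. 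Writing $\Phi(t,s)=k(s/t^{2})\ge 0$, the change of variables $w=s/t^{2}$ gives $\int_{0}^{\infty}\Phi(t,s)\,\frac{ds}{s}=\int_{0}^{\infty}k(w)\,\frac{dw}{w}$ and $\int_{0}^{\infty}\Phi(t,s)\,\frac{dt}{t}=\tfrac12\int_{0}^{\infty}k(w)\,\frac{dw}{w}$, and this integral equals $c\,\Gamma(\alpha+\tfrac12)<\infty$ because $k$ decays super-polynomially as $w\to 0^{+}$ and like $w^{-\alpha-1/2}$ as $w\to\infty$. Hence Schur's lemma shows that the averaging operator $h\mapsto\int_{0}^{\infty}\Phi(\cdot,s)h(s)\,\frac{ds}{s}$ is bounded on $L^{q}((0,\infty),\frac{dt}{t})$, and, by the triangle inequality, on its $B$-valued analogue.

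Applying this operator bound with the norm of $B$ placed inside the $s$-integral yields the pointwise estimate $g_{q,P_{t};B}^{2\alpha}(f)(x)\le C\,g_{q,W_{t};B}^{\alpha}(f)(x)$ for $f$ in the dense subspace; taking $\LpR$ norms and invoking the hypothesis gives $\|g_{q,P_{t};B}^{2\alpha}(f)\|_{\LpR}\le C\|f\|_{\LpRB}$ there, and a routine density argument (together with Fatou's lemma in the $t$-integral) extends the inequality to all $f\in\LpRB$. Theorem~\ref{thm1.3}(c) then provides the required equivalent $q$-uniformly convex norm on $B$.

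The step I expect to be the main obstacle is the rigorous justification of the subordination identity when $\alpha$ is not an integer: one must legitimately interchange the Weyl fractional derivative $\partial_{t}^{2\alpha}$ with the subordination integral (or, on the Fourier side, verify the multiplier identity $t^{2\alpha}\partial_{t}^{2\alpha}e^{-t|\xi|}=C_{\alpha}\int_{0}^{\infty}k(s/t^{2})\,s^{\alpha}\partial_{s}^{\alpha}e^{-s|\xi|^{2}}\,\frac{ds}{s}$ for every $\xi$) and establish absolute convergence of the $B$-valued integrals above. For that it is convenient to work first with $f\in\CcR\otimes B$ (or with $f$ in the Schwartz class tensored with $B$), where the bounds on $\|s^{\alpha}\partial_{s}^{\alpha}W_{s}f\|$ near $s=0$ and as $s\to\infty$ are elementary and make every interchange transparent, and only afterwards to pass to general $f\in\LpRB$ by density, relying on the boundedness of $g_{q,W_{t};B}^{\alpha}$ granted by hypothesis.
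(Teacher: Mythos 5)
Your argument is correct, but it takes a genuinely different route from the paper's proof of Theorem \ref{thm2}. Your subordination identity checks out: with $m-1\le\alpha<m$ and $m'-1\le 2\alpha<m'$ one has $s^{\alpha}\partial_{s}^{\alpha}e^{-s|\xi|^{2}}=(-1)^{m}(s|\xi|^{2})^{\alpha}e^{-s|\xi|^{2}}$ and $t^{2\alpha}\partial_{t}^{2\alpha}e^{-t|\xi|}=(-1)^{m'}(t|\xi|)^{2\alpha}e^{-t|\xi|}$, and the Laplace identity you quote gives your formula with $C_{\alpha}=(-1)^{m+m'}/(2\sqrt{\pi})$; the Schur marginals equal $4^{\alpha+1/2}\Gamma(\alpha+\tfrac12)$ and half of that, so the pointwise bound $g_{q,P_{t};B}^{2\alpha}(f)\le C\,g_{q,W_{t};B}^{\alpha}(f)$ holds on $\CcR\otimes B$, and after the density/Fatou step the hypothesis at the single exponent $p$ puts you exactly in the hypotheses of Theorem \ref{thm1.3}(c) with order $2\alpha$. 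The paper instead never passes through the fractional Poisson converse: it uses vector-valued Calder\'on--Zygmund theory (kernel estimates for $t^{\alpha}\partial_{t}^{\alpha}W_{t}(z)$) to upgrade the hypothesis from the given $p$ to every $r\in(1,\infty)$, an inductive composition argument based on $\partial_{s}^{\alpha}\partial_{t}^{k\alpha}W_{s+t}=\partial_{u}^{(k+1)\alpha}W_{u}|_{u=s+t}$ to raise the order from $\alpha$ to $k\alpha$, the monotonicity $g^{1}_{q,W_t;B}\le g^{k\alpha}_{q,W_t;B}$ (for $k\alpha\ge1$) to come down to order one, then order-one subordination and \cite[Theorem 5.2]{MTX}. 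What each buys: your route is shorter, stays at the fixed $p$, and avoids the Calder\'on--Zygmund machinery and the induction, at the price of invoking the full fractional-order Poisson characterization from \cite{TZ} (Theorem \ref{thm1.3}(c)), whereas the paper only leans on the order-one result of \cite{MTX}; on the other hand, the apparatus the paper builds here (the kernel bounds, the extension to all $r$, the order-raising step) is reused almost verbatim for the area-integral Theorem \ref{thm4} and in the Hermite and Laguerre sections, so it is not wasted. The only points that need care in your write-up are the ones you already flag: justifying the interchange of the Weyl derivative with the subordination integral for non-integer order (direct Fourier inversion for $f\in\CcR\otimes B$, together with the elementary bound $\|s^{\alpha}\partial_{s}^{\alpha}W_{s}f(x)\|\le C\min(s^{\alpha},s^{-n/2})$, suffices), keeping track of the harmless sign $(-1)^{m+m'}$, and the passage from the dense subspace to all of $\LpRB$.
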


\begin{proof}
In this proof we combine some ideas developed in \cite[Sections 2 and 3]{TZ}. By proceeding as in \cite[Theorem 2.3]{TZ} we can see that, if $\gamma, \beta>0$,

\begin{equation}\label{A0}
\partial_{t}^{\gamma}(\partial_{t}^{\beta}W_{t}(f))=
\partial_{t}^{\gamma+\beta}W_{t}(f), \qquad f\in L^{r}(\Rn,B)\quad \text{and} \quad t>0,
\end{equation}
for every $1\leq r<\infty$.

Let $N\in\N$. We denote $H_{N}=L^{q}((\frac{1}{N}, \infty),\frac{dt}{t})$. Assume that $f\in \CcR$ that represents the space of smooth functions with compact support in $\Rn$.

Let $\beta>0$. We consider the function $F$ defined by
\[
[F(x)](t)=t^{\beta}\partial_{t}^{\beta}W_{t}(f)(x), \qquad x\in\Rn\quad\text{and}\quad t>0.
\]

We are going to see that $F(x)\in H_{N}$, for every $x\in\Rn$, and that the function $F$ is $H_{N}$-strongly measurable in $\Rn$. Actually we will prove that $F$ is a continuous function from $\Rn$ into $H_{N}$. Note firstly that $W_{t}(f)(x)\in C^{\infty}(\Rn\times(0,\infty))$.

Let $m\in\N$ such that $m-1\leq \beta<m$. We write
\begin{eqnarray*}
|\partial_{t}^{m}W_{t}(f)(x)|&=&
\left|\frac{1}{(4\pi)^{\frac{n}{2}}}\partial_{t}^{m}\int_{\Rn} \frac{e^{-|x-y|^{2}/4t}}{t^{n/2}}f(y)dy\right|\\
&\leq& C\int_{\Rn}\left|\partial_{t}^{m}\left(\frac{e^{-|x-y|^{2}/4t}}{t^{n/2}}
\right)\right|\,|f(y)|dy\\
&\leq& C\int_{\Rn}\frac{e^{-c|x-y|^{2}/t}}{t^{n/2+m}}|f(y)|dy\\
&\leq& Ct^{-n/2-m}, \quad x\in\Rn\quad\text{and}\quad t>0.
\end{eqnarray*}
Then,
\begin{eqnarray}
|\partial_{t}^{\beta}W_{t}(f)(x)|&\leq&
C\int_{t}^{\infty}|\partial_{s}^{m}W_{s}(f)(x)|(s-t)^{m-\beta-1}ds\nonumber\\
&\leq& C\int_{t}^{\infty}\frac{(s-t)^{m-\beta-1}}{s^{n/2+m}}
\leq Ct^{-n/2-\beta}, \quad x\in\Rn\quad\text{and}\quad t>0\label{A01}.
\end{eqnarray}

We obtain $|[F(x)](t)|\leq Ct^{-n/2}$, $x\in\Rn$ and $t>0$. Then,
\[
\|F(x)\|_{H_{N}}^{q}=\int_{1/N}^{\infty}|[F(x)](t)|^{q}\frac{dt}{t}<\infty, \quad x\in\Rn.
\]

Let $x_{0}\in\Rn$. Assume that $(x_{k})_{k=1}^{\infty}$ is a sequence of real numbers and that $x_{k}\to x_{0}$, as $k\to\infty$.
Since $|[F(x_{k})](t)-[F(x_{0})](t)|\leq Ct^{-n/2}$, $k\in\N$ and $t>0$, the dominated convergence theorem leads to

\[
\int_{1/N}^{\infty}|[F(x_{k})](t)-[F(x_{0})](t)|^{q}\frac{dt}{t}\to 0, \quad\text{as}\quad k\to\infty.
\]
Hence $F$ is continuous from $\mathbb R^n$ into $H_N$. It is clear that
$$
g_{q,W_{t};\mathbb C}^{\beta}(f)(x)=
\|F(x)\|_{L^{q}\co}=\lim\limits_{N\to\infty}\|F(x)\|_{H_{N}}, \quad x\in\Rn.
$$

We consider the operator $T^{\beta}$ defined, for every $f\in \CcR$, by $T^{\beta}(f)=F$, where $[F(x)](t)= t^{\beta}\partial_{t}^{\beta}W_{t}(f)(x)$, $x\in\Rn$ and $t>0$. Suppose that $g_{q,W_{t};\mathbb C}^{\beta}$ is bounded from $\LpR$ into itself. It follows that
$$
\|T^{\beta}f\|_{L^{p}(\Rn,L^{q}((0,\frac{1}{N}), \frac{dt}{t}))}
\leq C\|f\|_{\LpR}, \qquad f\in \CcR,
$$
 where $C>0$ does not depend on $N$.

 Then, $T^{\beta}$ can be extended to $\LpR$ as a bounded operator from $\LpR$ into $L^{p}(\Rn,L^{q}\co)$.

 Let $f\in \CcR$ and $x\in\Rn$. We define the function $G$ as follows

\begin{eqnarray*}
G: \Rn &\to& H_{N}\\
y  &\to& [G(y)](t)=t^{\beta}\partial_{t}^{\beta}W_{t}(x-y)f(y).
\end{eqnarray*}

By proceeding as above we can see that $G\in L^{1}(\Rn,H_{N})$. Also, we have that
\begin{equation}\label{A1.0}
\left(\int_{\Rn}G(y) dy\right)(t)=
\int_{\Rn} t^{\beta}\partial_{t}^{\beta}W_{t}(x-y)f(y) dy,
\quad a.e\quad t\in (\frac{1}{N}, \infty),
\end{equation}
where the first integral is understood in the $H_{N}$-B\"{o}chner sense.

Indeed, let $L\in H'_{N}$. There exists $h\in L^{q'}\coN$ such that
$$
L(g)=\int_{\frac{1}{N}}^{\infty}g(t)h(t)\frac{dt}{t}, \quad g\in L^{q}\coN .
$$
By using well known properties of the B\"{o}chner integral we deduce that
$$
L\left(\int_{\Rn}G(y) dy\right)=\int_{\Rn} L(G(y)) dy=
\int_{\Rn}\int_{\frac{1}{N}}^{\infty}
t^{\beta}\partial_{t}^{\beta}W_{t}(x-y)g(t)\frac{dt}{t}f(y) dy.
$$
Since
$$
\int_{\frac{1}{N}}^{\infty}\int_{\Rn}
|t^{\beta}\partial_{t}^{\beta}W_{t}(x-y)| |f(y)| dy |g(t)|\frac{dt}{t}\leq
C\left(\int_{\frac{1}{N}}^{\infty}\dfrac{1}{t^{nq/2+1}}dt\right)^{1/q}
\|g\|_{L^{q'}\left((\frac{1}{N},\infty),\frac{dt}{t}\right)}<\infty,
$$
Fubini theorem leads to
$$
L\left(\int_{\Rn}G(y) dy\right)=
\int_{\frac{1}{N}}^{\infty}\int_{\Rn}
t^{\beta}\partial_{t}^{\beta}W_{t}(x-y)f(y) dy g(t)\frac{dt}{t}.
$$
According to Hahn-Banach theorem we get \eqref{A1.0}.
Above estimates allows us to differentiate under the integral sign to obtain

\begin{equation}\label{A1.1}
\partial_{t}^{\beta}\int_{\Rn}W_{t}(x-y)f(y)dy=
\int_{\Rn}\partial_{t}^{\beta}W_{t}(x-y)f(y)dy, \quad t>0.
\end{equation}

Thus we have proved that, for every $f\in\CcR$,
\[(T^{\beta}f)(x)=\int_{\Rn}K(x-y)f(y)dy,\quad x\in\Rn,\]
where $[K(z)](t)=t^{\beta}\partial_{t}^{\beta}W_{t}(z)$, $z\in\Rn$ and $t>\frac{1}{N}$. The last integral is understood in $L^{q}\coN$-B\"{o}chner sense.

By using partial integration we can write

\begin{align}\label{A1.2}
\|K(z)\|_{H_{N}}&=\left(\int_{\frac{1}{N}}^{\infty}
|t^{\beta}\partial_{t}^{\beta}W_{t}(z)|^{q}\frac{dt}{t}
\right)^{1/q}
\leq
C\left(\int_{0}^{\infty}
\left|t^{\beta}\int_{t}^{\infty}
\partial_{s}^{m}W_{s}(z)(s-t)^{m-\beta-1}ds\right|^{q}\frac{dt}{t}
\right)^{1/q}\nonumber\\
&\leq C\left(
\int_{0}^{\infty}
\left|t^{\beta}\int_{t}^{\infty}
\frac{e^{-c|z|^{2}/s}}{s^{n/2+m}}
(s-t)^{m-\beta-1}ds\right|^{q}\frac{dt}{t}
\right)^{1/q}\nonumber\\
&\leq
C\left(
\left(
\int_{0}^{|z|^{2}}
\left|t^{\beta}\int_{t}^{\infty}
\frac{e^{-c|z|^{2}/s}}{s^{n/2+m}}
(s-t)^{m-\beta-1}ds\right|^{q}\frac{dt}{t}
\right.\right)^{1/q}\nonumber\\
&\hspace{5mm}
+\left(\left.
\int_{|z|^{2}}^{\infty}
\left|t^{\beta}\int_{t}^{\infty}
\frac{e^{-c|z|^{2}/s}}{s^{n/2+m}}
(s-t)^{m-\beta-1}ds\right|^{q}\frac{dt}{t}
\right)^{1/q}
\right)\nonumber\\
&\leq
C\left(
\left(
\int_{0}^{|z|^{2}}
\left|t^{\beta}
\left(
\left.
\frac{e^{-c|z|^{2}/s}}{s^{n/2+m}}\frac{(s-t)^{m-\beta}}{m-\beta}\right]_{t}^{\infty}\right.\right.\right.\right.\nonumber\\
&\hspace{5mm}
\left.\left.\left.-\int_{t}^{\infty}
\frac{\partial}{\partial s}
\left[
\frac{e^{-c|z|^{2}/s}}{s^{n/2+m}}
\right]
\frac{(s-t)^{m-\beta}}{m-\beta}ds\right)\right|^{q}\frac{dt}{t}
\right)^{1/q}\nonumber\\
&\hspace{5mm}\left.
+\left(
\int_{|z|^{2}}^{\infty}
\left|t^{\beta}\int_{t}^{\infty}
\frac{(s-t)^{m-\beta-1}}{s^{n/2+m}}ds\right|^{q}\frac{dt}{t}
\right)^{1/q}
\right)\nonumber\\
&\leq
C\left(
\left(
\int_{0}^{|z|^{2}}
\left|t^{\beta}
\int_{t}^{\infty}
\frac{e^{-c|z|^{2}/s}}{s^{n/2+m+1}}
(s-t)^{m-\beta}ds\right|^{q}\frac{dt}{t}
\right)^{1/q}\right.\nonumber\\
&\hspace{5mm}+
\left.\left(
\int_{|z|^{2}}^{\infty}
\left|t^{-n/2}\int_{1}^{\infty}
\frac{(v-1)^{m-\beta-1}}{v^{n/2+m}}dv\right|^{q}\frac{dt}{t}
\right)^{1/q}
\right)\nonumber\\
&\leq
C\left(\left(
\int_{0}^{|z|^{2}}
\left|t^{\beta}
\int_{t}^{\infty}
\frac{e^{-c|z|^{2}/s}}{s^{n/2+\beta+1}}ds\right|^{q}\frac{dt}{t}
\right)^{1/q}+
\left(
\int_{|z|^{2}}^{\infty}
\frac{1}{t^{qn/2+1}}dt
\right)^{1/q}
\right)\nonumber\\
&\leq
C\left(
\frac{1}{|z|^{n+2\beta}}
\left(\int_{0}^{|z|^{2}}
t^{\beta q-1}dt
\right)^{1/q}
+\frac{1}{|z|^{n}}
\right)\leq \frac{C}{|z|^{n}}, \qquad z\in\Rn\setminus\{0\}.
\end{align}
Note that $C>0$ does not depend on $N$.

We define
$[\mathscr{H}_{i}(z)](t)=t^{\beta}\partial_{t}^{\beta}\partial_{z_{i}}W_{t}(z)$, $z\in\Rn$, $i=1,\ldots,n$ and $t>\frac{1}{N}$. By proceeding as above we can prove that

\[\|\mathscr{H}_{i}(z)\|_{H_{N}}\leq\frac{C}{|z|^{n+1}}, \quad z\in\Rn\setminus\{0\}\quad\text{and}\quad i=1,\ldots,n\]
where $C>0$ does not depend on $N$.

\label{pA6}
By using Calder\'on-Zygmund theorem for vector valued singular integrals we conclude that, for every $r\in(1,\infty)$, $T^{\beta}$ can be extended to $L^{r}(\Rn)$ as a bounded operator from $L^r({\Rn})$ to $L^r({\Rn,H_{N}})$ and by denoting the extended operator also by $T^{\beta}$ we have that
$$
\|T^{\beta}(f)\|_{L^{r}(\Rn,H_{N})}\leq C\|f\|_{L^{r}(\Rn)}, \quad f\in L^{r}(\Rn),
$$
where $C>0$ does not depend on $N$.

Let $1<r<\infty$. We have that
$$
\left\|\left(
\int_{\frac{1}{N}}^{\infty}
|t^{\beta}\partial_{t}^{\beta}W_{t}(f)(x)|^{q}\frac{dt}{t}
\right)^{1/q}
\right\|_{L^{r}(\Rn)}
\leq C \|f\|_{L^{r}(\Rn)},\quad f\in\CcR.
$$
Since $C$ does not depend on $N$ by making $N\to\infty$ we get

\begin{equation}\label{A2}
\left\|\left(
\int_{0}^{\infty}
|t^{\beta}\partial_{t}^{\beta}W_{t}(f)(x)|^{q}\frac{dt}{t}
\right)^{1/q}
\right\|_{L^{r}(\Rn)}
\leq C \|f\|_{L^{r}(\Rn)},\quad f\in\CcR.
\end{equation}

Let $f\in L^{r}({\Rn})$. We choose a sequence $(f_{k})_{k=1}^{\infty}$ in $\CcR$ such that $f_{k}\to f$, as $k\to\infty$, in $L^{r}(\Rn)$. We have that
\begin{eqnarray}\label{A11}
|t^{\beta}\partial_{t}^{\beta}W_{t}(f-f_{k})(x)|
&\leq &\left(
\int_{\Rn} |t^{\beta}\partial_{t}^{\beta}W_{t}(x-y)|^{r'} dy
\right)^{1/r'}
\|f-f_{k}\|_{L^{r}(\Rn)}\nonumber\\
& \leq & C \|f-f_{k}\|_{L^{r}(\Rn)}
\left(
\int_{\Rn}
\left|t^{\beta}
\int_{t}^{\infty}\frac{e^{-c|x-y|^{2}/s}}{s^{n/2+m}} (s-t)^{m-\beta-1} ds
\right|^{r'} dy
\right)^{1/r'}\nonumber\\
& \leq & C \|f-f_{k}\|_{L^{r}(\Rn)}
t^{\beta}
\int_{t}^{\infty}\frac{(s-t)^{m-\beta-1}}{s^{n/2+m}}
\left(
\int_{\Rn} e^{-c|x-y|^{2}/s} dy
\right)^{1/r'}
ds\nonumber\\
& \leq &C \|f-f_{k}\|_{L^{r}(\Rn)}
t^{\beta}
\int_{t}^{\infty}\frac{(s-t)^{m-\beta-1}}{s^{rn/2+m}} ds\nonumber\\
& \leq & C \|f-f_{k}\|_{L^{r}(\Rn)} t^{-rn/2}, \quad x\in\Rn\quad\text{and}\quad t>0.
\end{eqnarray}
then, $t^{\beta}\partial_{t}^{\beta}W_{t}(f-f_{k})(x)\to 0$ as $k\to\infty$, uniformly in $(x,t)\in\Rn \times[\frac{1}{N},\infty)$.

By using Fatou Lemma and \eqref{A2} we obtain
\begin{align*}
\left\|
\left(
\int_{0}^{\infty}
|t^{\beta}\partial_{t}^{\beta}W_{t}(f)(x)|^{q} \frac{dt}{t}
\right)^{1/q}
\right\|_{L^{r}(\Rn)}
&\leq \liminf_{k\to \infty}
\left\|
\left(
\int_{0}^{\infty}
|t^{\beta}\partial_{t}^{\beta}W_{t}(f_{k})(x)|^{q} \frac{dt}{t}
\right)^{1/q}
\right\|_{L^{r}(\Rn)}\\ \vspace{3mm}
&\leq C \lim\limits_{k\to\infty} \|f_{k}\|_{L^{r}(\Rn)}=
C\|f\|_{L^{r}(\Rn)}.
\end{align*}

The arguments we have developed above by replacing $\CcR$ by $\CcR\otimes B$ and, for every $N\in\N$, $H_{N}$ by $H_{N,B}=L^{q}((\frac{1}{N},\infty), \frac{dt}{t}; B)$, allows us to prove that, for every $1<r<\infty$, there exists $C>0$ such that
\begin{equation}\label{A4}
\|g_{q, W_{t};B}^{\alpha}(f)\|_{L^{r}(\Rn)}\leq
C\|f\|_{L^{r}(\Rn, B)},\quad f\in L^{r}(\Rn, B).
\end{equation}
Our next objective is to see that, for every $k\in\N$ and $1<r<\infty$, there exists $C>0$ such that
$$
\|g^{k\alpha}_{q,W_t;B}(f)\|_{L^{r}(\Rn)}\leq
C\|f\|_{L^{r}(\Rn, B)},\quad f\in L^{r}(\Rn, B).
$$

In order to prove it we use an inductive argument. We have just to see that this property is true when $k=1$. Assume that for a certain $k\in\N$, $g^{k\alpha}_{q,W_t;B}$ defines a sublinear operator from $L^{r}(\Rn, B)$ into $L^{r}(\Rn)$ for every (equivalently, for some) $1<r<\infty$. In particular we have that the operator $T$ defined by
$$
[T(f)(x)](t)=t^{k\alpha}\partial_{t}^{k\alpha}W_{t}(f)(x),\quad
x\in \Rn\quad\text{and}\quad t>0,
$$
for every $f\in L^{q}(\Rn,B)$, is bounded from $L^{q}(\Rn,B)$ into $L^{q}(\Rn,L_{B}^{q}((0,\infty),\frac{dt}{t}))$.

We consider the operator
$$
\mathscr{H}: L^{q}(\Rn, L_{B}^{q}((0,\infty),\frac{dt}{t}))
\to L^{q}(\Rn, L_{B}^{q}((0,\infty)^{2},\frac{dt}{t}\frac{ds}{s})),
$$
defined, for every $h\in L^{q}(\Rn, L_{B}^{q}((0,\infty),\frac{dt}{t}))$, by
$$
[\mathscr{H}(h)(x)](s,t)=s^{\alpha}\partial_{s}^{\alpha}W_{s}([h(\cdot)](t))(x), \quad x\in\Rn\quad\text{and}\quad s,t>0.
$$

By using \eqref{A4} with $r=q$ we get
\begin{eqnarray*}
\|\mathscr{H}(h)\|_{L^{q}(\Rn, L_{B}^{q}((0,\infty)^{2},\frac{dt}{t}\frac{ds}{s}))}^{q}& = &
\int_{\Rn}\int_{0}^{\infty}\int_{0}^{\infty}
\|s^{\alpha}\partial_{s}^{\alpha}W_{s}([h(\cdot)](t))(x)\|^{q} \frac{dt}{t}\frac{ds}{s} dx\\
&=&\int_{0}^{\infty}
\|g_{q,W_{t};B}^{\alpha}([h(\cdot)](t))\|_{L^{q}(\Rn)}^{q}\frac{dt}{t} \\
& = & C \int_{0}^{\infty} \int_{\Rn}
\|[h(\cdot)](t)(x)\|^{q} dx\frac{dt}{t}\\
&= & C \|h\|^{q}_{L^{q}(\Rn, L_{B}^{q}((0,\infty),\frac{dt}{t}))}, \quad h\in L^{q}(\Rn, L_{B}^{q}((0,\infty),\frac{dt}{t})).
\end{eqnarray*}
Hence, the operator $\mathcal{L}=\mathscr{H}\circ T$ is bounded from $L^{q}(\Rn, B)$ into $L^{q}(\Rn, L_{B}^{q}((0,\infty)^{2},\frac{dt}{t}\frac{ds}{s}))$.

Let $f\in L^{q}(\Rn, B)$. We can write
\begin{eqnarray}
[\mathcal{L}(f)(x)](s,t) &=& [\mathscr{H}(T(f))(x)](s,t)\nonumber\\ \vspace{2mm}
&=& s^{\alpha}\partial_{s}^{\alpha}W_{s}([T(f)(\cdot)](t))(x)\nonumber\\ \vspace{2mm}
&=& s^{\alpha}\partial_{s}^{\alpha}W_{s}(t^{k\alpha}\partial_{t}^{k\alpha}W_{t}(f)(\cdot))(x)\nonumber\\ \vspace{2mm}
&=& s^{\alpha}t^{k\alpha}\partial_{s}^{\alpha}\partial_{t}^{k\alpha}
W_{s+t}(f)(x)\label{A5}\\ \vspace{2mm}
&=& s^{\alpha}t^{k\alpha}\partial_{u}^{(k+1)\alpha}
W_{u}(f)(x)|_{u=s+t}, \quad x\in\Rn\quad\text{and}\quad t,s>0.\label{A6}
\end{eqnarray}

We now justify \eqref{A5} and \eqref{A6}. We choose $l\in\N$ such that $l-1\leq k\alpha<l$. It follows that
\begin{eqnarray*}
\partial_{t}^{k\alpha}W_{s}(W_{t}(f))(x) &= & \partial_{t}^{k\alpha}W_{t+s}(f)(x)\\
&= &\int_{t}^{\infty}\partial_{u}^{l}W_{u+s}(f)(x)(u-t)^{l-k\alpha-1}du\\
&= &\int_{t}^{\infty}\partial_{u}^{l}W_{u}(W_{s}(f))(x)(u-t)^{l-k\alpha-1} du\\
&= &\int_{t}^{\infty}\Delta^{l}W_{u}(W_{s}(f))(x)(u-t)^{l-k\alpha-1} du\\
&= &\int_{t}^{\infty}W_{s}(\Delta^{l}W_{u}(f))(x)(u-t)^{l-k\alpha-1} du\\
&= &\int_{t}^{\infty}W_{s}(\partial_{u}^{l}W_{u}(f))(x)(u-t)^{l-k\alpha-1} du,\quad x\in\Rn\quad\text{and}\quad s,t>0.\\
\end{eqnarray*}
We can write

\begin{align*}
&\int_{t}^{\infty}\int_{\Rn} \frac{e^{-|x-y|^{2}/4s}}{s^{n/2}}
\int_{\Rn} \frac{e^{-c|y-z|^{2}/u}}{u^{n/2+l}}
\|f(z)\|_{B}\,dz dy\,(u-t)^{l-k\alpha-1} du\\
&\hspace{1cm}\leq \int_{t}^{\infty}\int_{\Rn} \frac{e^{-|x-y|^{2}/4s}}{s^{n/2}}
\left(\int_{\Rn} \frac{e^{-c|y-z|^{2}/u}}{u^{(n/2+l)q'}}dz
\right)^{1/q'} dy \: (u-t)^{l-k\alpha-1} du\:
\|f\|_{L^{q}(\Rn,B)}\\
&\hspace{1cm}\leq C \int_{t}^{\infty}\int_{\Rn} \frac{e^{-|x-y|^{2}/4s}}{s^{n/2}}
\frac{1}{u^{l+n/2q}} dy\:(u-t)^{l-k\alpha-1} du\:\|f\|_{L^{q}(\Rn,B)}\\
&\hspace{1cm}\leq C \int_{t}^{\infty} \frac{1}{u^{l+n/2q}}
 (u-t)^{l-k\alpha-1} du\:\|f\|_{L^{q}(\Rn,B)}\\
&\hspace{1cm}\leq C \|f\|_{L^{q}(\Rn,B)} t^{-k\alpha-n/2q},
\quad x\in\Rn\quad\text{and}\quad s,t>0.\\
\end{align*}
We  obtain
\begin{align*}
\partial_{t}^{k\alpha} W_{s}(W_{t}(f))(x)
&= W_{s}\left[\int_{t}^{\infty}
\partial_{u}^{l}W_{u}(f)(\cdot)(u-t)^{l-k\alpha-1} du
\right](x)\\
&= W_{s}(\partial_{t}^{k\alpha} W_{t}(f))(x),
\quad x\in\Rn\quad\text{and}\quad s,t>0.\\
\end{align*}
Then \eqref{A5} is established.

On the other hand, we have that
\begin{align*}
\partial_{t}^{k\alpha} W_{s+t}(f)(x)
&= \int_{t}^{\infty}
\partial_{u}^{l} W_{s+u}(f)(x)(u-t)^{l-k\alpha-1}du\\
&= \int_{t}^{\infty}
\partial_{v}^{l} W_{v}(f)(x)_{|v=s+u}(u-t)^{l-k\alpha-1}du\\
&= \int^{\infty}_{t+s}
\partial_{v}^{l} W_{v}(f)(x)(v-(t+s))^{l-k\alpha-1} du\\
&= \partial_{v}^{k\alpha} W_{v}(f)(x)_{|v=s+t}
\quad x\in\Rn\quad\text{and}\quad s,t>0,
\end{align*}
and \eqref{A6} can be deduced.

By proceeding as in \cite[(3.7)]{TZ} we can prove that
$$
\|g_{q, W_{t};B}^{(k+1)\alpha}(f)\|_{L^{q}(\Rn)}
\leq C\|f\|_{L^{q}(\Rn,B)}, \quad f\in L^{q}(\Rn,B).
$$

By using Calder\'on-Zygmund theory for vector valued integrals as above we obtain that, for every $1<r<\infty$,
$$
\|g_{q, W_{t};B}^{(k+1)\alpha}(f)\|_{L^{r}(\Rn)}
\leq C\|f\|_{L^{r}(\Rn,B)}, \quad f\in L^{r}(\Rn,B).
$$

We now choose $k\in\N$ such that $k\alpha\geq 1$. By writing the heat semigroup instead of the Poisson semigroup, the argument in \cite[Proposition 3.1]{TZ} allows us to prove that
$$
g_{q,W_{t};B}^{1}(f)\leq g_{q,W_{t};B}^{k\alpha}(f),
\quad f\in L^{r}(\Rn,B),\quad 1<r<\infty.
$$
Hence, $g_{q,W_{t};B}^{1}$ is bounded from $L^{r}(\Rn,B)$ into $L^{r}(\Rn)$,  for every $1<r<\infty$.

By using subordination formula, the Poisson semigroup $\{P_{t}\}_{t>0}$ can be written
\begin{align*}
P_{t}(f) &=\frac{t}{2\sqrt{\pi}}
\int_{0}^{\infty}\frac{e^{-t^{2}/4u}}{u^{3/2}} W_{u}(f) du\\
&= \frac{1}{\sqrt{\pi}}
\int_{0}^{\infty}\frac{e^{-v}}{\sqrt{v}} W_{t^{2}/4v}(f) du,
\quad f\in L^{r}(\Rn,B),\quad 1\leq r\leq\infty.\\
\end{align*}

Let $f\in L^{r}(\Rn)\otimes B$, $1<r<\infty$. We have that
\begin{align*}
&\int_{0}^{\infty}\frac{e^{-v}}{\sqrt{v}}
\int_{\Rn}
\left|\frac{\partial}{\partial t}
\left(\frac{e^{-|x-y|^{2}v/t^{2}}}{t^{n}} (4v)^{n/2}
\right)\right|
\|f(y)\|\: dy\:dv\\
&\hspace{1cm}\leq C
\int_{0}^{\infty} \frac{e^{-v}}{\sqrt{v}}
\int_{\Rn} \frac{e^{-c|x-y|^{2}v/t^{2}}}{t^{n+1}} v^{n/2}
\|f(y)\| dy dv\\
&\hspace{1cm}\leq C
\int_{0}^{\infty} \frac{e^{-v}}{\sqrt{v}}
\left(
\int_{\Rn} \left(\frac{e^{-c|x-y|^{2}v/t^{2}}}{t^{n+1}} v^{n/2}\right)^{r'}dy
\right)^{1/r'} dv \:
\|f\|_{L^{r}(\Rn,B)} \\
&\hspace{1cm}\leq C\int_{0}^{\infty}e^{-v} v^{\frac{n}{2r}-\frac{1}{2}}\: dv \:t^{-(\frac{n}{r}+1)}<\infty,
\quad x\in\Rn\quad\text{and}\quad t>0.
\end{align*}
Then,
\begin{align*}
\partial_{t}P_{t}(f)(x)
&= \frac{1}{\sqrt{\pi}}
\int_{0}^{\infty} \frac{e^{-v}}{\sqrt{v}} \partial_{t}(W_{t^{2}/4v}(f)(x)) \:dv\\
&= \frac{1}{2\sqrt{\pi}}
\int_{0}^{\infty} \frac{e^{-v}}{v^{3/2}} t\: \partial_{u} W_{u}(f)(x)_{|u=t^{2}/4v} \:dv,
\quad x\in\Rn\quad\text{and}\quad t>0.\\
\end{align*}
Minkowski inequality leads to
\begin{align*}
g_{q,P_{t};B}^{1}(f)(x)
&\leq C
\int_{0}^{\infty} \frac{e^{-v}}{v^{3/2}}
\left(\int_{0}^{\infty}
\|t^{2}\partial_{u}W_{u}(f)(x)_{|u=t^{2}/4v}\|^{q} \frac{dt}{t}
\right)^{1/q} dv\\
&\leq C
\int_{0}^{\infty} \frac{e^{-v}}{\sqrt{v}} dv \: g_{q,W_{t};B}^{1}(f)(x),
\quad x\in\Rn\quad\text{and}\quad t>0.\\
\end{align*}
Hence, for every $1<r<\infty$ there exists $C>0$ such that
$$
\|g_{q,P_{t};B}^{1}(f)\|_{L^{r}(\Rn)}
\leq C \|f\|_{L^{r}(\Rn,B)}, \quad f\in L^{r}(\Rn)\otimes B.
$$
Since $L^{r}(\Rn)\otimes B$ is dense in $L^{r}(\Rn,B)$, by proceeding as above we obtain that, for every $1<r<\infty$,
$$
\|g_{q,P_{t};B}^{1}(f)\|_{L^{r}(\Rn)}
\leq C \|f\|_{L^{r}(\Rn,B)}, \quad f\in L^{r}(\Rn,B).
$$

According to \cite[Theorem 5.2]{MTX} we conclude that there exists a norm $\left\VERT\cdot\right\VERT$ on $B$ that is equivalent to $\|\cdot\|$ and such that $(B,\left\VERT\cdot\right\VERT)$ is $q$-uniformly convex.
\end{proof}

We define the $(\alpha,q)$-area integrals associated with the Poisson and the heat semigroups as follows:
$$
\mathcal{A}_{q,P_{t};B}^{\alpha}(f)(x)=
\left(\int_{\Gamma(x)}
\|t^{\alpha}\partial_{t}^{\alpha}P_{t}(f)(y)\|^{q}\frac{dy\:dt}{t^{n+1}}
\right)^{1/q}, \quad x\in\Rn
$$
and
$$
\mathcal{A}_{q,W_{t};B}^{\alpha}(f)(x)=
\left(\int_{\Gamma(x)}
\|\left(
s^{\alpha}\partial_{s}^{\alpha}W_{s}(f)(y)
\right)_{|s=t^{2}}\|^{q}\frac{dy\:dt}{t^{n+1}}
\right)^{1/q}, \quad x\in\Rn,
$$
where $\Gamma(x)=\{(y,t)\in\Rn\times(0,\infty): \: |x-y|<t\}$, for every $x\in\Rn$.

The following result was established in \cite[Theorem 5.3]{TZ}.
\begin{thm}\label{thm3}(\cite[Theorem 5.3]{TZ})
Let $B$ be a Banach space and $2\leq q<\infty$. The following statements are
equivalent:
\begin{enumerate}
\item [(a)] There exists a norm $\left\VERT\cdot\right\VERT$ on $B$ that is equivalent to $\|\cdot\|$ and such that $(B,\left\VERT\cdot\right\VERT)$ is $q$-uniformly convex.
\item [(b)] For every (equivalently, for some) $p \in(1,\infty)$ and $\alpha>0$ there exists $C>0$ such that
$$
\|\mathcal{A}_{q,P_{t};B}^{\alpha}(f)\|_{\LpR}
\leq C \|f\|_{\LpRB}, \quad \LpRB.
$$
\end{enumerate}
\end{thm}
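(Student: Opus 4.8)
The plan is to reduce Theorem~\ref{thm3} to the Littlewood--Paley characterization already available in Theorem~\ref{thm1.3}, parts $(a)$ and $(c)$, by comparing the $g$-function $g_{q,P_{t};B}^{\alpha}$ and the area integral $\mathcal{A}_{q,P_{t};B}^{\alpha}$ for the classical Poisson semigroup $\{P_{t}\}_{t>0}$ in $\Rn$; two comparisons are needed, one for each implication. Throughout, recall that the classical Poisson semigroup is subordinated to the classical heat semigroup, which is a symmetric diffusion semigroup, so Theorem~\ref{thm1.3} applies to it.

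For $(a)\Rightarrow(b)$, first settle the exponent $p=q$. A Fubini argument gives, for $f\in L^{q}(\Rn,B)$,
\[
\|\mathcal{A}_{q,P_{t};B}^{\alpha}(f)\|_{L^{q}(\Rn)}^{q}
=\int_{0}^{\infty}\!\!\int_{\Rn}\|t^{\alpha}\partial_{t}^{\alpha}P_{t}(f)(y)\|^{q}
\Big(\int_{|x-y|<t}\!dx\Big)\frac{dy\,dt}{t^{n+1}}
=c_{n}\,\|g_{q,P_{t};B}^{\alpha}(f)\|_{L^{q}(\Rn)}^{q},
\]
so the case $p=q$ of $(b)$ is immediate from Theorem~\ref{thm1.3}$(a)$. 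To reach an arbitrary $p\in(1,\infty)$ I would view $\mathcal{A}_{q,P_{t};B}^{\alpha}$ as the $E$-norm of a convolution-type singular integral with values in $E=L^{q}(\{(y,t):|y|<t\},\tfrac{dy\,dt}{t^{n+1}};B)$: after the substitution $y\mapsto x-y$ one has $\mathcal{A}_{q,P_{t};B}^{\alpha}(f)(x)=\|\widetilde{T}f(x)\|_{E}$ with $\widetilde{T}f(x)=\int_{\Rn}\mathcal{K}(x-z)f(z)\,dz$, where $\mathcal{K}(w)$ is the operator $b\mapsto\big[(y,t)\mapsto t^{\alpha}\partial_{t}^{\alpha}P_{t}(w-y)\,b\big]$. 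The $L^{q}(\Rn,B)\to L^{q}(\Rn,E)$ boundedness of $\widetilde{T}$ is exactly the identity above, and the size and smoothness estimates $\|\mathcal{K}(w)\|_{\mathcal{L}(B,E)}\le C|w|^{-n}$ and $\|\nabla_{w}\mathcal{K}(w)\|_{\mathcal{L}(B,E)}\le C|w|^{-n-1}$, $w\neq0$, follow from the Gaussian/subordination bounds for $\partial_{t}^{m}P_{t}$ together with homogeneity, in the same spirit as \eqref{A1.2} and the estimate for $\mathscr{H}_{i}$ in the proof of Theorem~\ref{thm2}. Vector-valued Calder\'on--Zygmund theory (\cite{RRT}) then upgrades the $L^{q}$ bound to $L^{r}(\Rn,B)\to L^{r}(\Rn,E)$ for every $1<r<\infty$, which is $(b)$.

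For $(b)\Rightarrow(a)$, by Theorem~\ref{thm1.3}$(c)$ it suffices to prove that the area bound in $(b)$ implies $\|g_{q,P_{t};B}^{\alpha}(f)\|_{\LpR}\le C\|f\|_{\LpRB}$ for the classical Poisson semigroup. I would obtain this from a pointwise domination $g_{q,P_{t};B}^{\alpha}(f)(x)\le C\,\mathcal{A}_{q,P_{t};B}^{\alpha,a}(f)(x)$ by the area integral with a wider aperture $a>1$, followed by the standard $L^{p}$ comparison of area integrals of different apertures (a Fefferman--Stein type inequality valid for $1<p<\infty$). The pointwise step rests on a local sub-mean-value inequality for $(y,s)\mapsto\|s^{\alpha}\partial_{s}^{\alpha}P_{s}f(y)\|$: writing $\partial_{s}^{\alpha}P_{s}f=\partial_{s}^{\alpha}P_{s/2}(P_{s/2}f)$ and using that the $B$-valued Poisson integral is harmonic in the upper half-space (so its norm is subharmonic there), one bounds $\|t^{\alpha}\partial_{t}^{\alpha}P_{t}f(x)\|^{q}$ by a constant times $t^{-n-1}\int\!\!\int_{|y-x|<at,\,|s-t|<bt}\|s^{\alpha}\partial_{s}^{\alpha}P_{s}f(y)\|^{q}\,dy\,ds$ and integrates in $t$. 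Theorem~\ref{thm1.3}$(c)$ then delivers the equivalent $q$-uniformly convex norm on $B$.

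The routine ingredients are the Fubini identity, the aperture comparison, and the Calder\'on--Zygmund extrapolation. The delicate point, as always in this circle of ideas, is the control of the fractional derivative $\partial_{t}^{\alpha}P_{t}$ of $B$-valued functions: establishing the H\"ormander-type kernel estimates for $\mathcal{K}$ in $(a)\Rightarrow(b)$, and the local sub-mean-value inequality for the fractional Poisson integrals in $(b)\Rightarrow(a)$, where the non-locality of $\partial_{t}^{\alpha}$ for non-integer $\alpha$ must first be absorbed through the semigroup property before subharmonicity can be applied.
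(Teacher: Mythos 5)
Your proposal is correct in outline, but note first that the paper itself does not prove this statement: Theorem \ref{thm3} is quoted from \cite[Theorem 5.3]{TZ}, and the only proof written out in the paper is for its heat counterpart, Theorem \ref{thm4}. Measured against that in-house argument, your implication $(a)\Rightarrow(b)$ is essentially the same: the Fubini identity $\|\mathcal{A}^{\alpha}_{q,P_t;B}(f)\|_{L^q(\Rn)}^q=c_n\|g^{\alpha}_{q,P_t;B}(f)\|_{L^q(\Rn)}^q$ at $p=q$ (with the $g$-function bound supplied by Theorem \ref{thm1.3}(a)), followed by Calder\'on--Zygmund theory for the $L^q(\Gamma(0),\frac{dy\,dt}{t^{n+1}};B)$-valued convolution kernel, mirrors the paper's treatment of the heat case, including the truncation-in-$t$ and Fatou device you would need to make the B\"ochner integrals and the uniformity of constants rigorous. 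Where you genuinely diverge is in $(b)\Rightarrow(a)$: the paper's heat proof extrapolates the area bound by Calder\'on--Zygmund to $L^q$, uses the exact identity there to recover the $g$-function bound, and then invokes the $g$-function converse (Theorem \ref{thm2}, or Theorem \ref{thm1.3}(c) in the Poisson case); you instead prove a pointwise domination of $g^{\alpha}_{q,P_t;B}(f)$ by a wide-aperture area function, using that $(y,s)\mapsto\partial_s^{\alpha}P_s(f)(y)$ is harmonic (correct: the Weyl derivative is a superposition of vertical translates of the harmonic function $\partial_s^{m}P_s(f)$, so subharmonicity of $\|\cdot\|^q$ and the sub-mean-value inequality apply), followed by the standard change-of-aperture estimate, and then apply Theorem \ref{thm1.3}(c) at the same exponent $p$. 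Both routes are valid. Your subharmonicity route is the classical Stein-type argument: it avoids a second application of Calder\'on--Zygmund theory and gives $\|g^{\alpha}_{q,P_t;B}(f)\|_{\LpR}\leq C\|\mathcal{A}^{\alpha}_{q,P_t;B}(f)\|_{\LpR}$ for every $1<p<\infty$ at once; the paper's route (CZ extrapolation plus the $L^q$ identity, or the vertical-versus-conical comparison of \cite{AHM}, valid only for restricted exponents) never uses harmonicity, which is exactly why it transfers to the heat semigroup and to the Hermite and Laguerre settings, where your pointwise step would not.
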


The same result can be proved by replacing the Poisson semigroup by the heat semigroup.

\begin{thm}\label{thm4}
Let $B$ be a Banach space and $2\leq q<\infty$. The following statements are
equivalent:
\begin{enumerate}
\item [(a)] There exists a norm $\left\VERT\cdot\right\VERT$ on $B$ that is equivalent to $\|\cdot\|$ and such that $(B,\left\VERT\cdot\right\VERT)$ is $q$-uniformly convex.
\item [(b)] For every (equivalently, for some) $p \in(1,\infty)$ and $\alpha>0$ there exists $C>0$ such that
$$
\|\mathcal{A}_{q,W_{t};B}^{\alpha}(f)\|_{\LpR}
\leq C \|f\|_{\LpRB}, \quad \LpRB.
$$
\end{enumerate}
\end{thm}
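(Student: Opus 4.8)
The plan is to deduce Theorem \ref{thm4} from Theorem \ref{thm3} by comparing the heat area integral $\mathcal{A}_{q,W_t;B}^{\alpha}$ with the Poisson area integral $\mathcal{A}_{q,P_t;B}^{\alpha}$ for which the equivalence with $q$-uniform convexity is already known. The implication $(a)\Rightarrow(b)$ should follow the template of Theorem \ref{thm3}: assuming an equivalent $q$-uniformly convex norm on $B$, one first shows that the vector-valued kernel $z\mapsto \|(s^{\alpha}\partial_s^{\alpha}W_s(z))_{|s=t^2}\|$, together with its gradient in the space variable, satisfies standard Calder\'on--Zygmund size and smoothness estimates with values in the Hilbert space $L^2(\Gamma(0),\tfrac{dy\,dt}{t^{n+1}})$ (or rather $L^q$ thereof), exactly as in the computation \eqref{A1.2}; then $L^q$-boundedness for the scalar-valued operator is obtained via Corollary \ref{cor1} (heat semigroup is markovian), and the vector-valued Calder\'on--Zygmund theorem for singular integrals (\cite{RRT}) upgrades this to all $1<p<\infty$ and to the $B$-valued setting, the martingale cotype $q$ hypothesis being what makes the scalar estimate transfer.

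For the converse $(b)\Rightarrow(a)$ I would argue as in the proof of Theorem \ref{thm2} above, namely: assume the heat area integral is bounded from $\LpRB$ into $\LpR$ for some $p$; by the subordination formula $P_t(f)=\frac{1}{\sqrt\pi}\int_0^\infty \frac{e^{-v}}{\sqrt v}\,W_{t^2/4v}(f)\,dv$ one can dominate $\partial_t^{\alpha}P_t(f)$ by a superposition of $\partial_s^{\alpha}W_s(f)$ evaluated at $s=t^2/4v$, and Minkowski's integral inequality in the $L^q\!\left(\Gamma(x),\tfrac{dy\,dt}{t^{n+1}}\right)$ norm gives a pointwise bound $\mathcal{A}_{q,P_t;B}^{\alpha}(f)(x)\le C\,\mathcal{A}_{q,W_t;B}^{\alpha}(f)(x)$ up to a harmless change of variables and the integrability of $\int_0^\infty e^{-v} v^{\beta}\,dv$. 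Hence boundedness of the heat area integral forces boundedness of the Poisson area integral from $\LpRB$ into $\LpR$, and Theorem \ref{thm3} then yields the desired equivalent $q$-uniformly convex norm. Alternatively one could mimic the $g$-function-to-area-function passage, but the subordination route is the shortest since Theorem \ref{thm3} is quoted verbatim.

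The main obstacle is the careful verification that the heat area kernel genuinely behaves like a Calder\'on--Zygmund kernel with values in the relevant $L^q$-space over the cone: one must control $\left(\int_{\Gamma(0)}\big\|(s^{\alpha}\partial_s^{\alpha}W_s(z-y))_{|s=t^2}\big\|^q \frac{dy\,dt}{t^{n+1}}\right)^{1/q}$ by $C|z|^{-n}$, and the corresponding gradient expression by $C|z|^{-n-1}$, uniformly, and this requires the same splitting of the $t$-integral at $t\sim|z|$ and the repeated integration by parts in $s$ that appears in \eqref{A1.2}, now complicated by the extra $y$-integration over the cone. A secondary technical point is the justification of differentiation under the integral sign and the Fubini/Hahn--Banach manipulations needed to represent $\mathcal{A}_{q,W_t;B}^{\alpha}$ as an honest $B$-valued (indeed $L^q$-cone-valued) singular integral acting on $\CcR\otimes B$, together with the density and Fatou arguments to pass to all of $\LpRB$; these are routine but must be done with the truncation $t>1/N$ so that all constants stay independent of $N$, exactly as in the proof of Theorem \ref{thm2}. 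Since every ingredient — Corollary \ref{cor1}, the vector-valued Calder\'on--Zygmund theorem, the subordination inequality, and Theorem \ref{thm3} — is already available, the proof is a matter of assembling these pieces, and I would present it by first stating the kernel estimates as a lemma, then deducing $(a)\Rightarrow(b)$, and finally the subordination comparison for $(b)\Rightarrow(a)$.
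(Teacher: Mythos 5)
Your first half, $(a)\Rightarrow(b)$, is essentially the paper's argument: at $p=q$ Fubini gives the exact identity $\|\mathcal{A}_{q,W_t;B}^{\alpha}(f)\|_{L^q(\Rn)}^q=\tfrac{c_n}{2}\|g_{q,W_t;B}^{\alpha}(f)\|_{L^q(\Rn)}^q$ (state this explicitly, since Corollary \ref{cor1} is a statement about the $g$-function, not the area function), the kernel $z\mapsto (s^{\alpha}\partial_s^{\alpha}W_s(\cdot+z))_{|s=t^2}$ is shown to satisfy the size and gradient estimates with values in $L^q_B(\Gamma_N(0),\frac{dy\,dt}{t^{n+1}})$ uniformly in $N$, and the operator-valued Calder\'on--Zygmund theorem plus density/Fatou and $N\to\infty$ give all $1<p<\infty$. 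That part is fine.

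The converse is where there is a genuine gap. You claim that subordination plus Minkowski yields the pointwise bound $\mathcal{A}_{q,P_t;B}^{\alpha}(f)(x)\le C\,\mathcal{A}_{q,W_t;B}^{\alpha}(f)(x)$, and this fails as stated for two reasons. First, the Weyl derivative $\partial_t^{\alpha}$ applied to $W_{t^2/4v}(f)$ is \emph{not} a harmless multiple of $(\partial_s^{\alpha}W_s(f))_{|s=t^2/4v}$: spectrally $t^{\alpha}\partial_t^{\alpha}P_t$ corresponds to $t^{\alpha}A^{\alpha/2}e^{-t\sqrt{A}}$ while $s^{\alpha}\partial_s^{\alpha}W_s$ corresponds to $s^{\alpha}A^{\alpha}e^{-sA}$, so subordination naturally pairs the Poisson derivative of order $\alpha$ with the heat derivative of order $\alpha/2$; only for $\alpha=1$, where $t\partial_t=2s\partial_s$ under $s=t^2/4v$, do the orders match, and that is precisely the only case the paper uses (and only for $g$-functions) in the proof of Theorem \ref{thm2}. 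Second, even after correcting the order (say, aiming at the Poisson area integral of order $2\alpha$, which Theorem \ref{thm3} would accept), the substitution $s=t^2/4v$ turns the standard cone $|x-y|<t$ into the cone $|x-y|<2\sqrt{v}\,\sqrt{s}$ of aperture $2\sqrt{v}$; for $v$ large this region is not contained in the parabolic cone underlying $\mathcal{A}_{q,W_t;B}^{\alpha}$, so no pointwise domination is possible. At best one gets an $L^p$-norm comparison after invoking a change-of-aperture theorem for vector-valued area functions with constants polynomial in the aperture, an ingredient you neither state nor prove. The paper avoids all of this: from hypothesis $(b)$ it uses the same Calder\'on--Zygmund kernel estimates (valid with no geometric assumption on $B$) to transfer the bound to $r=q$, then the Fubini identity above to pass to $g_{q,W_t;B}^{\alpha}$ at $p=q$, and concludes with Theorem \ref{thm2}. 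I would recommend replacing your subordination step by this argument; as written, the step ``Minkowski gives a pointwise bound up to a harmless change of variables'' is not correct.
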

\begin{proof}
$(a)\Rightarrow (b)$
Assume that the property $(a)$ holds. Let $\alpha>0$. For every $f\in L^{q}(\Rn,B)$, we can write
\begin{align*}
\|\mathcal{A}_{q,W_{t};B}^{\alpha}(f)\|_{L^{q}(\Rn)}^{q}
&=
\int_{\Rn}\int_{\Gamma(x)}
\|\left(
s^{\alpha}\partial_{s}^{\alpha}W_{s}(f)(y)
\right)_{|s=t^{2}}\|^{q}\frac{dy\:dt}{t^{n+1}} dx\\
&= \int_{\Rn}\int_{0}^{\infty}\int_{|x-y|<t}
\|(s^{\alpha}\partial_{s}^{\alpha}W_{s}(f)(y))_{|s=t^{2}}\|^{q}
\frac{dy\:dt}{t^{n+1}} dx\\
&= \int_{\Rn}\int_{0}^{\infty}
\|(s^{\alpha}\partial_{s}^{\alpha}W_{s}(f)(y))_{|s=t^{2}}\|^{q}
\int_{|x-y|<t} dx
\frac{dt\:dy}{t^{n+1}} \\
&= c_{n}\int_{\Rn}\int_{0}^{\infty}
\|(s^{\alpha}\partial_{s}^{\alpha}W_{s}(f)(y))_{|s=t^{2}}\|^{q}
\frac{dt}{t} dy \\
&=\frac{c_{n}}{2} \int_{\Rn} \left( g_{q,W_{t};B}^{\alpha}(f)(y)\right)^{q} dy,
\end{align*}
where $c_{n}$ denotes the Lebesgue measure of the unit ball in $\Rn$.\\

According to Corollary \ref{cor1} we obtain
$$
\|\mathcal{A}_{q,W_{t};B}^{\alpha}(f)\|_{L^{q}(\Rn)}
\leq C\|f\|_{L^{q}(\Rn,B)},\quad f\in L^{q}(\Rn,B).
$$

Note that by proceeding as in the proof of \cite[Proposition 2.1 (a)]{AHM} we can see that
$$
\|\mathcal{A}_{q,W_{t};B}^{\alpha}(f)\|_{L^{r}(\Rn)}
\leq C\|g_{q,W_{t};B}^{\alpha}(f)\|_{L^{r}(\Rn)},\quad f\in L^{r}(\Rn),
$$
provided that $2\leq r<\infty$.
\smallskip

We can write
$$
\mathcal{A}_{q,W_{t};B}^{\alpha}(f)(x)=
\|(s^{\alpha}\partial_{s}^{\alpha}W_{s}(f)(x+y))_{|s=t^{2}}\|_{H}, \quad x\in\Rn,
$$
where $H=L_{B}^{q}(\Gamma(0), \frac{dy\:dt}{t^{n+1}})$.

Let $N\in\N$. We consider  $H_{N}=L_{B}^{q}(\Gamma_{N}(0), \frac{dy\:dt}{t^{n+1}})$, where
$\Gamma_{N}(0)=\{(y,t)\in\Rn\times (\frac{1}{N},\infty):\quad |x-y|<t\}$.\\

Let $f\in\CcR\otimes B$ and $x\in\Rn$. We define
$$
[F(x)](y,t)=(s^{\alpha}\partial_{s}^{\alpha}W_{t}(f)(x+y))_{|s=t^{2}}, \quad y\in\Rn\quad\text{and}\quad t>0.
$$
According to \eqref{A01} we get
$$
\int_{\Gamma_{N}(0)}
\|[F(x)](y,t)\|_{B}^{q} \frac{dy\:dt}{t^{n+1}}
\leq C
\int_{\frac{1}{N}}^{\infty} \int_{|y|<t} \frac{1}{t^{n(q+1)+1}} dy\:dt\leq CN^{nq}.
$$

Moreover, if $(x_{k})_{k=1}^{\infty}$ is a sequence in $\Rn$ such that $x_{k}\to x$, as $k\to\infty$, the dominated convergence theorem allows us to see that
$$
F(x_{k})\to F(x), \quad\text{as}\quad k\to\infty,\quad\text{in}\quad H_{N}.
$$
Hence, $F$ is a strongly measurable function from $\Rn$ into $H_{N}$.

For every $f\in\CcR\otimes B$ we define $T_{\alpha}f=F$ where $[F(x)](y,t)=(s^{\alpha}\partial_{s}^{\alpha}W_{s}(f)(x+y))_{|s=t^{2}}$, $x,y\in\Rn$ and $t>0$. We have that
$$
\|T_{\alpha}f\|_{L^{q}(\Rn,H_{N})}
\leq C \|f\|_{L^{q}(\Rn,B)},\quad
f\in\CcR\otimes B.
$$

Let $f\in\CcR\otimes B$ and $x\in\Rn$. We define the function $G$ as follows
\begin{eqnarray*}
G: \Rn &\to& H_{N}\\
z  &\to& [G(z)](y,t)=(s^{\alpha}\partial_{s}^{\alpha}W_{s}(x+y-z))_{|s=t^{2}}f(z).
\end{eqnarray*}
We can write (see \eqref{A01})
\begin{eqnarray*}
\int_{\Rn}\|G(z)\|_{H_{N}}&=&
\int_{\Rn}
\left(
\int_{\Gamma_{N}(0)}
\|(s^{\alpha}\partial_{s}^{\alpha}W_{s}(x+y-z))_{|s=t^{2}}f(z)\|_{B}^{q}\: \frac{dy\:dt}{t^{n+1}}
\right)^{1/q} dz\\
&\leq &C \int_{\Rn}\|f(z)\|_{B}
\left(
\int_{\Gamma_{N}(0)} \frac{dy\:dt}{t^{n(q+1)+1}}
\right)^{1/q} dz\\
&\leq& C \int_{\Rn}\|f(z)\|_{B}
\left(
\int_{\frac{1}{N}}^{\infty} \frac{dt}{t^{n(q+1)}}
\right)^{1/q} dz \leq C,
\end{eqnarray*}
and, for every $g\in L^{q'}(\Gamma_{N}(0),\frac{dy\:dt}{t^{n}})$,
\begin{multline*}
\int_{\Rn}\int_{\Gamma_{N}(0)}
(s^{\alpha}\partial_{s}^{\alpha}W_{s}(x+y-z))_{|s=t^{2}}\:
g(y,t)\frac{dy\:dt}{t^{n+1}}f(z)\:dz\\
=\int_{\Gamma_{N}(0)}\int_{\Rn}
(s^{\alpha}\partial_{s}^{\alpha}W_{s}(x+y-z))_{|s=t^{2}}
f(z)\:dz \: g(y,t)\frac{dy\:dt}{t^{n+1}}.
\end{multline*}

Then,
$$
\left(
\int_{\Rn}G(z)\:dz
\right)(y,t)=
\int_{\Rn}(s^{\alpha}\partial_{s}^{\alpha}W_{s}(x+y-z))_{|s=t^{2}}
\:f(z)\:dz, \quad a.e.\:(y,t)\in\Gamma_{N}(0),
$$
where the first integral is understood as $H_{N}$-B\"{o}chner integral.

As in \eqref{A1.1} we have that
$$
\partial_{s}^{\alpha}W_{s}(f)(x+y)=
\int_{\Rn}\partial_{s}^{\alpha}W_{s}(x+y-z)
f(z)\:dz, \quad x,y\in\Rn\quad\text{and}\quad s>0.
$$

We obtain
$$
(T_{\alpha}f)(x)=\int_{\Rn} K(x-z)\:f(z)\:dz, \quad x\in\Rn,
$$
where, for every $z\in\Rn$,
$$
[K(z)](y,t)=(s^{\alpha}\partial_{s}^{\alpha}W_{s}(y+z))_{|s=t^{2}},
\quad y\in\Rn\text{and}\quad t>0,
$$
and the integral is understood in $H_{N}$-B\"{o}chner sense.

Let $m\in\N$ such that $m-1\leq\alpha<m$. By arguing as in \eqref{A1.2} we get

\begin{align*}
&\|K(z)\|_{L^{q}(\Gamma_{N}(0),\frac{dy\:dt}{t^{n+1}})}=
\left(
\int_{\Gamma_{N}(0)}
\left|
\left(
s^{\alpha}\int_{s}^{\infty}\partial_{u}^{m} W_{u}(y+z)(u-s)^{m-\alpha-1} du
\right)_{|s=t^{2}}
\right|^{q} \frac{dy\:dt}{t^{n+1}}
\right)^{1/q}\\
&\hspace{8mm}\leq C
\left(
\int_{0}^{\infty} \int_{|y|<t}
\left|t^{2\alpha}\int_{t^{2}}^{\infty}
\frac{e^{-c|y+z|^{2}/u}}{u^{n/2+m}}(u-t^{2})^{m-\alpha-1} du
\right|^{q} \frac{dy\:dt}{t^{n+1}}
\right)^{1/q}\\
&\hspace{8mm}\leq C
\left[
\left(
\int_{0}^{\infty} \int_{|y|<t,\:|y|\leq|z|/2}\:
\left|t^{2\alpha}
\int_{t^{2}}^{\infty}
\frac{e^{-c|y+z|^{2}/u}}{u^{n/2+m}}(u-t^{2})^{m-\alpha-1} du
\right|^{q} \frac{dy\:dt}{t^{n+1}}
\right)^{1/q}
\right.\\
&\hspace{10mm}+
\left.
\left(
\int_{0}^{\infty} \int_{|y|<t,\:|z|/2\leq|y|}\:
\left|t^{2\alpha}
\int_{t^{2}}^{\infty}
\frac{e^{-c|y+z|^{2}/u}}{u^{n/2+m}}(u-t^{2})^{m-\alpha-1} du
\right|^{q} \frac{dy\:dt}{t^{n+1}}
\right)^{1/q}
\right]\\
&\hspace{8mm}\leq C
\left[
\left(
\int_{0}^{\infty} \int_{|y|<t}
\left|t^{2\alpha}
\int_{t^{2}}^{\infty}
\frac{e^{-c|z|^{2}/u}}{u^{n/2+m}}(u-t^{2})^{m-\alpha-1} du
\right|^{q} \frac{dy\:dt}{t^{n+1}}
\right)^{1/q}
\right.\\
&\hspace{10mm}+
\left.
\left(
\int_{|z|/2}^{\infty} \int_{|y|<t}
\left|t^{2\alpha}
\int_{t^{2}}^{\infty}
\frac{1}{u^{n/2+m}}(u-t^{2})^{m-\alpha-1} du
\right|^{q} \frac{dy\:dt}{t^{n+1}}
\right)^{1/q}
\right]\\
&\hspace{8mm}\leq C
\left[
\left(
\int_{0}^{\infty}
\left|t^{2\alpha}
\int_{t^{2}}^{\infty}
\frac{e^{-c|z|^{2}/u}}{u^{n/2+m}}(u-t^{2})^{m-\alpha-1} du
\right|^{q} \frac{dt}{t}
\right)^{1/q}
\right.\\
&\hspace{10mm}+
\left.
\left(
\int_{|z|/2}^{\infty}
\left|t^{2\alpha}
\int_{t^{2}}^{\infty}
\frac{1}{u^{n/2+m}}(u-t^{2})^{m-\alpha-1} du
\right|^{q} \frac{dt}{t}
\right)^{1/q}
\right]\\
&\hspace{8mm}\leq C
\left[
\left(
\int_{0}^{\infty}
\left|v^{\alpha}
\int_{v}^{\infty}
\frac{e^{-c|z|^{2}/u}}{u^{n/2+m}}(u-v)^{m-\alpha-1} du
\right|^{q} \frac{dv}{v}
\right)^{1/q}
\right.\\
&\hspace{10mm}+
\left.
\left(
\int_{|z|/2}^{\infty}
\frac{1}{t^{nq+1}}
\left(
\int_{1}^{\infty}
\frac{(u-1)^{m-\alpha-1}}{u^{n/2+m}} du
\right)^{q} dt
\right)^{1/q}
\right]\\
&\leq C\frac{1}{|z|^{n}},\quad z\in\Rn\setminus\{0\}.
\end{align*}
Here $C>0$ does not depend on $N$.

We define, for every $i=1,\ldots,n$, and $z\in\Rn$, $[\mathcal{H}_{i}(z)](y,t)=(s^{\alpha}\partial_{s}^{\alpha}\partial_{z_i}W_{s}(y+z))_{|s=t^{2}}$, $y\in\Rn$ and $t>0$. By proceeding as above we obtain, for every $i=1,\ldots,n$,
$$
\|\mathcal{H}_{i}(z)\|_{L^{q}(\Gamma_{N}(0),\frac{dy\:dt}{t^{n+1}})}
\leq C/|z|^{n+1}, \quad z\in\Rn\setminus\{0\},
$$
where $C>0$ does not depend on $N$.

Then, Calder\'on-Zygmund theorem for Banach-valued singular integrals leads to $T_{\alpha}$ can be extended, for every $1<p<\infty$, from $\CcR\otimes B$ to $\LpRB$ as a bounded operator from $\LpRB$ into $L^{p}(\Rn,H_{N})$. Moreover, for every $1<p<\infty$ there exists $C>0$ that is not depending on $N$ such that
$$
\|T_{\alpha}(f)\|_{L^{p}(\Rn,H_{N})}
\leq C\|f\|_{\LpRB}, \quad f\in \CcR\otimes B.
$$

Suppose that $1<p<\infty$. Let $f\in\LpRB$. There exists a sequence $(f_{k})_{k=1}^{\infty}$ in $\CcR\otimes B$ such that $f_{k}\to f$, as $k\to \infty$, in $\LpRB$. As in \eqref{A11}  we can see that
$$
\|s^{\alpha}\partial_{s}^{\alpha}W_{s}(f-f_{k})(x+y)_{|s=t^{2}}\|
\leq C \|f-f_{k}\|_{\LpRB}t^{-np},
\quad x,y\in\Rn\quad\text{and}\quad t>0.
$$
Then $s^{\alpha}\partial_{s}^{\alpha}W_{s}(f-f_{k})(x+y)_{|s=t^{2}}\to 0$, as $k\to \infty$, in $B$, uniformly in $(x,y,s)\in\Rn\times\Rn\times[\frac{1}{N},\infty)$. By using now Fatou lemma we obtain
\begin{align*}
&\hspace{-12mm}\left\|\left(
\int_{\Gamma_{N}(0)}
\|(s^{\alpha}\partial_{s}^{\alpha}W_{s}(f)(x+y))_{|s=t^{2}}\|_{B}^{q}
\frac{dy\:dt}{t^{n+1}}
\right)^{1/q}\right\|_{\LpR} \\
&\hspace{55mm}=\|(s^{\alpha}\partial_{s}^{\alpha}W_{s}(f)(\cdot,y))_{|s=t^{2}}\|_{L^p(\Rn,H_{N})}\\
&\hspace{55mm}\leq \liminf\limits_{k\to\infty}
\|(s^{\alpha}\partial_{s}^{\alpha}W_{s}(f_{k})(\cdot,y))_{|s=t^{2}}\|_{L^p(\Rn,H_{N})}\\
&\hspace{55mm}=\liminf\limits_{k\to\infty} \|T_{\alpha}(f_{k})\|_{L^p(\Rn,H_{N})}\\
&\hspace{55mm}\leq  C \lim\limits_{k\to\infty}\|f_{k}\|_{L^p(\Rn,H_{N})}\\
&\hspace{55mm}= C \|f\|_{L^p(\Rn,H_{N})}.
\end{align*}
By taking $N\to\infty$, we conclude that
$$
\|\mathcal{A}_{q,W_{t};B}^{\alpha}(f)\|_{\LpR}
\leq C \|f\|_{\LpRB}.
$$
Thus, the proof of $(a)\Rightarrow(b)$ is complete.

$(b)\Rightarrow (a)$. Suppose that $1<p<\infty$ and $\alpha>0$ and that
$$
\|\mathcal{A}_{q,W_{t};B}^{\alpha}(f)\|_{\LpR}
\leq C \|f\|_{\LpRB}, \quad f\in\LpRB,
$$
for certain $C>0$.

By using Calder\'on-Zygmund theorem for Banach-valued singular integrals as in the first part of this proof, we deduce that, for every $r\in(1,\infty)$, there exists $C>0$ for which
$$
\|\mathcal{A}_{q,W_{t};B}^{\alpha}(f)\|_{L^{r}(\Rn)}
\leq C \|f\|_{L^{r}(\Rn,B)}, \quad f\in L^{r}(\Rn,B).
$$
Then, for every $f\in L^{q}(\Rn,B)$,
$$
\|g_{q,W_{t};B}^{\alpha}(f)\|_{L^{q}(\Rn)}=
\left(\frac{2}{c_{n}}
\right)^{1/q}
\|\mathcal{A}_{q,W_{t};B}^{\alpha}(f)\|_{L^{q}(\Rn)}\leq
C \|f\|_{L^{q}(\Rn,B)},
$$
where $c_n$ denotes the Lebesgue measure of the unit ball in $\mathbb R^n$

According to Theorem \ref{thm2}, there exists a norm $\left\VERT\cdot\right\VERT$ on $B$ defining the original topology of $B$ and such that $(B,\left\VERT\cdot\right\VERT)$ is $q$-uniformly convex.
\end{proof}

The following result was established by \cite[Theorem 2 (ii)]{Xu18}.
\begin{thm}{\cite[Theorem 2 (ii)]{Xu18}}\label{thm5}
Let $B$ be a Banach space and let $k$ be a positive integer.
Suppose that there exists a norm $\left\VERT \cdot \right\VERT$ on $B$, that is equivalent to $\|\cdot\|$ and such that $(B, \left\VERT\cdot\right\VERT)$ is $q$-uniformly smooth, where $1<q \leq 2$. If $\{T_{t}\}_{t>0}$ is a symmetric diffusion semigroup, then
$$
\|f\|_{\LpOB}\leq C
(\|\mathcal{F}(f)\|_{\LpOB}+\|g_{q,T_{t};B}^{k}(f)\|_{\LpO}),
$$
for every $f\in\LpOB$ with $1<p<\infty$. Here $C$ is a positive constant depending on $p,q,k$ and the martingale type $q$ constant of $B$.
\end{thm}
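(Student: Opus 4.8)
\smallskip
\noindent\emph{Proof proposal.} This is \cite[Theorem 2 (ii)]{Xu18}. We outline the argument, since it is the quantitative counterpart of Theorem~\ref{thm1.4}~(b) --- the point being that $C$ depends only on $p$, $q$, $k$ and the martingale type $q$ constant of $B$, and not on the semigroup $\{T_{t}\}_{t>0}$ --- and this semigroup-independence will be used in the proof of Theorem~\ref{thm1.7}. The plan is to deduce the estimate from the ``cotype'' statement Theorem~\ref{thm1.4}~(a) by the classical duality for Littlewood--Paley $g$-functions; this route makes the independence of $C$ of the semigroup transparent.

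First I would fix the geometric set-up. Replacing $\|\cdot\|$ by the equivalent norm $\left\VERT\cdot\right\VERT$ we may assume $(B,\left\VERT\cdot\right\VERT)$ is $q$-uniformly smooth, so that $B$ has martingale type $q$. By the duality between the moduli of convexity and smoothness (see \cite{LT}), $B^{*}$ then carries an equivalent norm that is $q'$-uniformly convex, where $1/q+1/q'=1$, hence $2\le q'<\infty$; in particular, by \cite[Theorems 4.51 and 4.52]{Pi3}, $B^{*}$ has martingale cotype $q'$. Moreover $B$ is reflexive, so that $B$ and $B^{*}$ have the Radon--Nikod\'ym property and $(\LpOB)^{*}=L^{p'}(\Omega,B^{*})$ for $1<p<\infty$. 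Since $T_{t}\mathcal F=\mathcal F$ for every $t>0$, one has $\partial_{t}^{k}T_{t}f=\partial_{t}^{k}T_{t}(I-\mathcal F)f$ and hence $g_{q,T_{t};B}^{k}(f)=g_{q,T_{t};B}^{k}((I-\mathcal F)f)$; it thus suffices to estimate $\|(I-\mathcal F)f\|_{\LpOB}$ by $\|g_{q,T_{t};B}^{k}(f)\|_{\LpO}$.

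The core of the proof is the polarization identity
\begin{equation*}
\langle (I-\mathcal F)f,\, g\rangle
= \frac{4^{k}}{\Gamma(2k)}\int_{\Omega}\int_{0}^{\infty}
\langle t^{k}\partial_{t}^{k}T_{t}f(x),\ t^{k}\partial_{t}^{k}T_{t}g(x)\rangle\,\frac{dt}{t}\,d\mu(x),
\qquad f\in\LpOB,\ g\in L^{p'}(\Omega,B^{*}),
\end{equation*}
where the outer bracket is the $(\LpOB,L^{p'}(\Omega,B^{*}))$ pairing and the inner one the $(B,B^{*})$ pairing. On smooth functions this follows from the spectral theorem for the self-adjoint generator of $\{T_{t}\}_{t>0}$ on $L^{2}(\Omega)$, together with the fact that $\int_{0}^{\infty}t^{2k-1}\lambda^{2k}e^{-2t\lambda}\,dt=\Gamma(2k)4^{-k}$ does not depend on $\lambda>0$ (and vanishes for $\lambda=0$, i.e.\ on $\mathbb F$); the general case follows by density, using that $\mathcal F$ is a contractive self-adjoint projection. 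Applying H\"older's inequality in the $(B,B^{*})$ pairing pointwise in $(x,t)$, then in $t$ with exponents $(q,q')$, and then in $x$ with exponents $(p,p')$, I obtain
\begin{equation*}
|\langle (I-\mathcal F)f,\, g\rangle|
\le \frac{4^{k}}{\Gamma(2k)}\,\|g_{q,T_{t};B}^{k}(f)\|_{\LpO}\,\|g_{q',T_{t};B^{*}}^{k}(g)\|_{L^{p'}(\Omega)}.
\end{equation*}
By Theorem~\ref{thm1.4}~(a) applied to $B^{*}$ with the exponent $q'$ and the integer $k$, the last factor is at most $C\|g\|_{L^{p'}(\Omega,B^{*})}$, where $C$ depends only on $p'$, $q'$, $k$ and the $q'$-uniform convexity constant of $B^{*}$, hence only on $p$, $q$, $k$ and the martingale type $q$ constant of $B$, and not on $\{T_{t}\}_{t>0}$. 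Taking the supremum over $g$ with $\|g\|_{L^{p'}(\Omega,B^{*})}\le 1$ gives $\|(I-\mathcal F)f\|_{\LpOB}\le C\|g_{q,T_{t};B}^{k}(f)\|_{\LpO}$, and adding $\|\mathcal F f\|_{\LpOB}$ yields the asserted inequality.

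The step I expect to be the main obstacle is the rigorous justification of the polarization identity in the Banach-valued setting: one must check that $\partial_{t}^{k}T_{t}f$ and $\partial_{t}^{k}T_{t}g$ are well defined as elements of $\LpOB$ and $L^{p'}(\Omega,B^{*})$ respectively (this rests on the analyticity of $\{T_{t}\}_{t>0}$ on those spaces, available because $B$ has martingale type $q$ and $B^{*}$ martingale cotype $q'$, as recalled in Section~2), that the iterated integral above converges absolutely so that Fubini's theorem applies, and that the identity, first established on a dense subspace via the spectral theorem, extends to the full duality pairing. All of this is carried out in \cite{Xu18}, and in the paper we simply quote that result.
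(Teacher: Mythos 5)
Your argument is correct and coincides with how the paper handles this statement: Theorem \ref{thm5} is simply quoted from \cite[Theorem 2 (ii)]{Xu18}, and the duality/polarization route you outline (the spectral identity with constant $\Gamma(2k)/4^{k}$, three applications of H\"older, then the martingale cotype $q'$ bound for $B^{*}$ coming from Theorem \ref{thm1.4}(a), which is semigroup-independent) is precisely the argument the paper itself carries out for the fractional analogue in Theorem \ref{thm6}. The only delicate point is the one you already flag, namely justifying the polarization identity and passing from a dense class to all of $\LpOB$; this is why the paper's own Theorem \ref{thm6} is stated only for $f\in(\LpO\cap L^{2}(\Omega))\otimes B$ and why the full statement here is delegated to \cite{Xu18}.
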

This result can be extended to $g$-functions involving fractional derivatives.
\begin{thm}\label{thm6}
Let $B$ be a Banach space and let $\alpha$ be a positive real number. Suppose that there exists a norm $\left\VERT\cdot \right\VERT$ on $B$, that is equivalent to $\|\cdot\|$ and such that $(B, \left\VERT\cdot\right\VERT)$ is $q$-uniformly smooth, where $1<q \leq 2$. If $\{T_{t}\}_{t>0}$ is a symmetric diffusion semigroup, then
$$
\|f\|_{\LpOB}\leq C
(\|\mathcal{F}(f)\|_{\LpOB}+
\|g_{q,T_{t};B}^{\alpha}(f)\|_{\LpO}),
$$
for every $f\in(\LpO\cap L^{2}(\Omega))\otimes B$ with $1<p<\infty$. Here $C$ is a positive constant depending on $p,q,\alpha$ and the martingale type $q$ constant of $B$.
\end{thm}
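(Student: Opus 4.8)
The plan is to derive the stated lower bound from a Calder\'on-type reproducing formula for $\{T_{t}\}_{t>0}$ combined, by duality, with Corollary~\ref{cor1} applied to the dual space $B^{*}$. First I would remove the fixed-point part: since $T_{t}(\mathcal{F}(f))=\mathcal{F}(f)$ does not depend on $t$ we have $\partial_{t}^{\alpha}T_{t}(\mathcal{F}(f))=0$, so $g_{q,T_{t};B}^{\alpha}(f)=g_{q,T_{t};B}^{\alpha}(f-\mathcal{F}(f))$, and from $\|f\|_{\LpOB}\le\|\mathcal{F}(f)\|_{\LpOB}+\|f-\mathcal{F}(f)\|_{\LpOB}$ it suffices to prove that $\|f-\mathcal{F}(f)\|_{\LpOB}\le C\,\|g_{q,T_{t};B}^{\alpha}(f)\|_{\LpO}$ for $f\in(\LpO\cap L^{2}(\Omega))\otimes B$, where we may assume the right-hand side finite.

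Next I would set up the reproducing formula on $L^{2}$. Let $A$ be the (non-negative, self-adjoint) generator of $\{T_{t}\}_{t>0}$ on $L^{2}(\Omega)$ and let $m\in\N$ with $m-1\le\alpha<m$. Using the spectral resolution of $A$, the Weyl integral \eqref{1.5} can be carried inside the spectral integral (legitimately, thanks to $\|\partial_{u}^{m}T_{u}\|_{L^{2}(\Omega)\to L^{2}(\Omega)}\le Cu^{-m}$), giving $t^{\alpha}\partial_{t}^{\alpha}T_{t}h=(-1)^{m}\psi_{t}(A)h$ with $\psi_{t}(\lambda)=(t\lambda)^{\alpha}e^{-t\lambda}$ and $\psi_{t}(0)=0$, for every $h\in(\LpO\cap L^{2}(\Omega))\otimes B$, and similarly with $B^{*}$ in place of $B$. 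Since $\int_{0}^{\infty}\psi_{t}(\lambda)^{2}\,\frac{dt}{t}=2^{-2\alpha}\Gamma(2\alpha)=:c_{\alpha}^{-1}$ for $\lambda>0$ and this integral vanishes for $\lambda=0$, and since each $\psi_{t}(A)$ is self-adjoint on $L^{2}(\Omega)$, one obtains, for $g\in(L^{p'}(\Omega)\cap L^{2}(\Omega))\otimes B^{*}$,
\[
\langle f-\mathcal{F}(f),\,g\rangle
= c_{\alpha}\int_{0}^{\infty}\langle \psi_{t}(A)f,\psi_{t}(A)g\rangle\,\frac{dt}{t}
= c_{\alpha}\int_{0}^{\infty}\int_{\Omega}\langle t^{\alpha}\partial_{t}^{\alpha}T_{t}f(x),\,t^{\alpha}\partial_{t}^{\alpha}T_{t}g(x)\rangle\,d\mu(x)\,\frac{dt}{t},
\]
the interchange of $\int_{0}^{\infty}$ and $\int_{\Omega}$ being justified by the bound used in the following step.

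Then I would run the duality argument. Applying H\"older's inequality in $t$ (with exponents $q,q'$) and then in $x$ (with exponents $p,p'$) gives
\[
|\langle f-\mathcal{F}(f),\,g\rangle|\le c_{\alpha}\,\|g_{q,T_{t};B}^{\alpha}(f)\|_{\LpO}\;\|g_{q',T_{t};B^{*}}^{\alpha}(g)\|_{L^{p'}(\Omega)}.
\]
Because $B$ admits an equivalent $q$-uniformly smooth norm, $B$ has martingale type $q$, hence $B^{*}$ has martingale cotype $q'$ with $2\le q'<\infty$; applying Corollary~\ref{cor1} to $B^{*}$ and the same symmetric diffusion semigroup $\{T_{t}\}_{t>0}$ yields $\|g_{q',T_{t};B^{*}}^{\alpha}(g)\|_{L^{p'}(\Omega)}\le C\,\|g\|_{L^{p'}(\Omega,B^{*})}$. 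Taking the supremum over $g$ in the unit ball of the (norming) dense subspace $(L^{p'}(\Omega)\cap L^{2}(\Omega))\otimes B^{*}$ of $L^{p'}(\Omega,B^{*})$, we get $\|f-\mathcal{F}(f)\|_{\LpOB}\le C\,\|g_{q,T_{t};B}^{\alpha}(f)\|_{\LpO}$, which together with the first step proves the theorem.

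The main obstacle will be the technical justification of the reproducing formula: namely the identity $t^{\alpha}\partial_{t}^{\alpha}T_{t}h=(-1)^{m}\psi_{t}(A)h$ on $(\LpO\cap L^{2}(\Omega))\otimes B$, the convergence of $c_{\alpha}\int_{0}^{\infty}\psi_{t}(A)^{2}h\,\frac{dt}{t}$ to $(I-\mathcal{F})h$ in $L^{2}(\Omega,B)$, and the Fubini interchanges. These all follow from the uniform estimates $\|t^{k}\partial_{t}^{k}T_{t}\|_{\LpO\to\LpO}\le C$ (analyticity of symmetric diffusion semigroups on $\LpO$) and $\sup_{\lambda>0}\psi_{t}(\lambda)\le C_{\alpha}$, combined with dominated convergence in the spectral integral; once these are in place, the remaining ingredients --- Pisier's duality between martingale type $q$ and martingale cotype $q'$, Corollary~\ref{cor1}, and H\"older's inequality --- are routine.
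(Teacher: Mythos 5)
Your proposal is correct and follows essentially the paper's own argument: the spectral-calculus identity $t^{\alpha}\partial_{t}^{\alpha}T_{t}=(-1)^{m}\psi_{t}(A)$ with $\psi_{t}(\lambda)=(t\lambda)^{\alpha}e^{-t\lambda}$, the reproducing formula $c_{\alpha}\int_{0}^{\infty}\psi_{t}(A)^{2}\,\frac{dt}{t}=I-\mathcal{F}$, duality with H\"older in $t$ and in $x$, and Corollary \ref{cor1} applied to $B^{*}$, which has martingale cotype $q'$ since $B$ has martingale type $q$. The only (harmless) organizational difference is that the paper runs this spectral/duality argument only for $0<\alpha<1$, disposing of $\alpha\geq 1$ via \eqref{A1} and Theorem \ref{thm5}, whereas you treat all $\alpha>0$ uniformly with general $m$.
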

\begin{proof}
According to \eqref{A1} and Theorem \ref{thm5} we have that
$$
\|f\|_{\LpOB}\leq C
(\|\mathcal{F}(f)\|_{\LpOB}+
\|g_{\alpha,W_{t}}^{q}(f)\|_{\LpO}),
$$
for every $f\in\LpOB$ with $1<p<\infty$, provided that $\alpha\geq 1$.

Suppose that $\alpha\in(0,1)$ and $p\in(1,\infty)$. We have that, for every $f\in\LpOB$,
$$
\partial_{t}^{\alpha}T_{t}f=\frac{1}{\Gamma(1-\alpha)}
\int_{t}^{\infty}\partial_{u}T_{u}(f)(u-t)^{-\alpha} du, \quad t>0.
$$

Let $f\in L^{2}(\Omega)$. Spectral calculus leads to (see \cite[p. 4]{Xu18})
$$
\partial_{u}T_{u}(f)=
-\int_{[0,\infty)}\lambda e^{-\lambda u} E_{A}(d\lambda)f=
-\int_{(0,\infty)}\lambda e^{-\lambda u} E_{A}(d\lambda)f, \quad u>0,
$$
where $E_{A}$ denotes the spectral measure for the infinitesimal generator $A$ of the semigroup $\{T_{t}\}_{t>0}$. Let $g\in L^{2}(\Omega)$. We have that
\begin{align*}
\int_{\Omega}&\int_{t}^{\infty}
\partial_{u}T_{u}(f)(u-t)^{-\alpha}du\:g\:d\mu=
\int_{t}^{\infty}\int_{\Omega}
\partial_{u}T_{u}(f)\:g\:d\mu\:(u-t)^{-\alpha}du\\
&=-\int_{t}^{\infty}\int_{\Omega}\int_{(0,\infty)}
\lambda e^{-\lambda u} E_{A}(d\lambda)f\:g\:d\mu\:(u-t)^{-\alpha}du\\
&=-\int_{t}^{\infty}\int_{(0,\infty)}
\lambda e^{-\lambda u} \langle E_{A}(d\lambda)f,g\rangle_{L^{2}(\Omega,\mu)} (u-t)^{-\alpha}du, \quad t>0.
\end{align*}

We consider the complex measure $\nu$ defined by
$$
\nu(B)=\langle E_{A}(B)f,g\rangle,
$$
for every Borel measurable set $B$. We can write
$$
\int_{(0,\infty)}
\lambda e^{-\lambda u} \langle E_{A}(d\lambda)f,g\rangle=
\int_{(0,\infty)}
\lambda e^{-\lambda u} d\nu(\lambda), \quad u>0,
$$
and
$$
\int_{(0,\infty)}
\lambda e^{-\lambda u} d|\nu|(\lambda)\leq \frac{C}{u}|\nu|(0,\infty)\leq \frac{C}{u} \|f\|_{2}\|g\|_{2}, \quad u>0.
$$

Then
$$
\int_{t}^{\infty}(u-t)^{-\alpha}\int_{(0,\infty)}
\lambda e^{-\lambda u} d|\nu|(\lambda)\:du
\leq C\|f\|_{2}\|g\|_{2} \int_{t}^{\infty}\frac{1}{u(u-t)^{\alpha}} du
\leq \frac{C}{t^{\alpha}} \|f\|_{2}\|g\|_{2}, \quad t>0.
$$

We get
\begin{eqnarray*}
\int_{\Omega}\int_{t}^{\infty}
\partial_{u}T_{u}(f)(u-t)^{-\alpha}  du\:g\:d\mu
&=&-\int_{(0,\infty)}\lambda\int_{t}^{\infty}e^{-\lambda u}
(u-t)^{-\alpha} du\:
\langle E_{A}(d\lambda)f,g\rangle_{L^{2}(\Omega)}\\
&=&-\Gamma(1-\alpha)\int_{(0,\infty)} \lambda^{\alpha}e^{-\lambda t} \langle E_{A}(d\lambda)f,g\rangle_{L^{2}(\Omega)}\\
&=& -\Gamma(1-\alpha)\int_{\Omega}\int_{(0,\infty)} \lambda^{\alpha}
e^{-\lambda t} E_{A}(d\lambda)f\:g\:d\mu, \quad t>0.\\
\end{eqnarray*}

From the arbitrariness of $g$ it follows that
$$
\partial_{t}^{\alpha}T_{t}f=-\int_{(0,\infty)}
\lambda^{\alpha}e^{-\lambda t} E_{A}(d\lambda)f, \quad t>0.
$$
For every $t>0$, $\partial_{t}^{\alpha}T_{t}$ is a selfadjoint operator in $L^{2}(\Omega)$. Then, we can write
\begin{align*}
\langle
\partial_{t}^{\alpha}T_{t}f,\partial_{t}^{\alpha}T_{t}g
\rangle_{L^{2}(\Omega)}
&=\langle(\partial_{t}^{\alpha}T_{t})(\partial_{t}^{\alpha}T_{t})f,g\rangle_{L^{2}(\Omega)}\\
&=\langle
\int_{(0,\infty)}\lambda^{2\alpha} e^{-2\lambda t} E_{A}(d\lambda)f,g
\rangle_{L^{2}(\Omega)}\\
&=\int_{(0,\infty)}\lambda^{2\alpha} e^{-2\lambda t}
\langle E_{A}(d\lambda)f,g
\rangle_{L^{2}(\Omega)}, \quad t>0.
\end{align*}
We have that
$$
\int_{0}^{\infty} t^{2\alpha}
\int_{(0,\infty)} \lambda^{2\alpha} e^{-2\lambda t} d|\nu|(\lambda)
\frac{dt}{t}=\frac{\Gamma(2\alpha)}{2^{2\alpha}}|\nu|(0,\infty)<\infty.
$$
We obtain
\begin{align*}
&\int_{0}^{\infty}
\langle
t^{\alpha}\partial_{t}^{\alpha}T_{t}f,t^{\alpha}\partial_{t}^{\alpha}T_{t}g
\rangle_{L^{2}(\Omega)}
\frac{dt}{t}
=\int_{0}^{\infty} t^{2\alpha}
\int_{(0,\infty)} \lambda^{2\alpha} e^{-2\lambda t}
\langle E_{A}(d\lambda)f,g
\rangle_{L^{2}(\Omega)}
\frac{dt}{t}\\
&=\int_{(0,\infty)} \lambda^{2\alpha}
\int_{0}^{\infty} t^{2\alpha-1} e^{-2\lambda t} dt \:
\langle E_{A}(d\lambda)f,g
\rangle_{L^{2}(\Omega)}\\
&=\frac{\Gamma(2\alpha)}{2^{2\alpha}} \int_{(0,\infty)}
\langle E_{A}(d\lambda)f,g
\rangle_{L^{2}(\Omega)}=
\frac{\Gamma(2\alpha)}{2^{2\alpha}}
\langle E_{A}((0,\infty))f,g
\rangle_{L^{2}(0,\infty)}\\
&=\frac{\Gamma(2\alpha)}{2^{2\alpha}}
\langle f-E_{A}(\{0\})f,g
\rangle_{L^{2}(\Omega)}
=\frac{\Gamma(2\alpha)}{2^{2\alpha}}
\langle f-E_{A}(\{0\})f,g-E_{A}(\{0\})g
\rangle_{L^{2}(\Omega)}.
\end{align*}
We conclude that
$$
\int_{0}^{\infty}
\langle
t^{\alpha}\partial_{t}^{\alpha}T_{t}f,t^{\alpha}\partial_{t}^{\alpha}T_{t}g
\rangle_{L^{2}(\Omega)}
\frac{dt}{t}
=\frac{\Gamma(2\alpha)}{2^{2\alpha}}
\langle f-\mathcal{F}(f),g-\mathcal{F}(g)
\rangle_{L^{2}(\Omega)}.
$$

We denote by $\langle\:,\:\rangle_{B,B^{*}}$ the duality bracket between $B$ and the dual $B^{*}$ of $B$.
Let $f\in(\LpO\cap L^{2}(\Omega))\otimes B$ and $g\in(L^{p'}(\Omega)\cap L^{2}(\Omega))\otimes B^{*}$. We have that
$$
\int_{\Omega}
\langle f-\mathcal{F}(f),g-\mathcal{F}(g)
\rangle_{B,B^{*}} d\mu=
\frac{2^{2\alpha}}{\Gamma(2\alpha)}
\int_{0}^{\infty}\int_{\Omega}
\langle t^{\alpha}\partial_{t}^{\alpha}T_{t}f,t^{\alpha}\partial_{t}^{\alpha}T_{t}g
\rangle_{B,B^{*}} \:d\mu\frac{dt}{t}
$$
H\"{o}lder inequality leads to
\begin{align*}
\left|
\int_{\Omega}
\langle f-\mathcal{F}(f),g-\mathcal{F}(g)
\rangle_{B,B^{*}} d\mu
\right|
&\leq C
\int_{0}^{\infty}\int_{\Omega}
\|t^{\alpha}\partial_{t}^{\alpha}T_{t}f\|_{B}
\|t^{\alpha}\partial_{t}^{\alpha}T_{t}g\|_{B^{*}}
\:d\mu \frac{dt}{t}\\
&\leq C
\left\|\left(
\int_{0}^{\infty}
\|t^{\alpha}\partial_{t}^{\alpha}T_{t}f\|_{B}^{q}\frac{dt}{t}
\right)^{1/2}\right\|_{L^{p}(\Omega)}
\left\|\left(
\int_{0}^{\infty}
\|t^{\alpha}\partial_{t}^{\alpha}T_{t}g\|_{B^{*}}^{q'}\frac{dt}{t}
\right)^{1/2}\right\|_{L^{p'}(\Omega)}.
\end{align*}
Here by $\|\;\|_B$ and $\|\;\|_{B^*}$ we denote the norm in $B$ and in $B^*$, respectively.

Since $B$ has martingale type $q$, $B^{*}$ has martingale cotype $q'$ (\cite[lemma 3.2]{OX}).
Corollary \ref{cor1} implies that
$$
\left|\int_{\Omega}\langle f-\mathcal{F}(f),g\rangle_{B,B^{*}} d\mu\right|
=\left|\int_{\Omega}\langle f-\mathcal{F}(f),g-\mathcal{F}(g)\rangle_{B,B^{*}} d\mu\right|
\leq C
\left\|\left(
\int_{0}^{\infty}
\|t^{\alpha}\partial_{t}^{\alpha}T_{t}f\|_{B}^{q} \frac{dt}{t}
\right)^{1/q}\right\|_{\LpO} \|g\|_{L^{p'}(\Omega)}.
$$
The arbitrariness of $g$ allows us to conclude that
$$
\|f-\mathcal{F}(f)\|_{\LpOB}\leq C \|g_{\alpha,T_{t};B}^{q}(f)\|_{\LpO}
$$
\end{proof}

We now prove a converse of Theorem \ref{thm6} by using the classical heat semigroup $\{W_{t}\}_{t>0}$ on $\Rn$. In this case, the projection $\mathcal{F}$ is the null operator.

\begin{thm}\label{thm7}
Let $B$ be a Banach space, $1<q\leq 2$, $1<p<\infty$ and $\alpha>0$. Suppose that there exists $C>0$ such that
$$
\|f\|_{\LpRB}\leq C
\|g_{q,W_{t};B}^{\alpha}(f)\|_{\LpR},
\quad f\in \LpR\otimes B.
$$
Then, there exists a norm $\left\VERT\cdot\right\VERT$ on $B$ that is equivalent to $\|\cdot\|$ and such that $(B,\left\VERT\cdot\right\VERT)$ is $q$-uniformly smooth.
\end{thm}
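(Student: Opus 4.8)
The plan is to reduce the statement, via duality, to the $q$-uniformly convex case already handled in Theorem~\ref{thm2}. Write $q'=q/(q-1)\in[2,\infty)$. By the classical duality between the moduli of convexity and smoothness (see~\cite{LT}), $B$ admits an equivalent $q$-uniformly smooth norm if and only if $B^{*}$ admits an equivalent $q'$-uniformly convex norm. Hence, by Theorem~\ref{thm2} applied to $B^{*}$, it will suffice to deduce from the hypothesis that there is $C>0$ with
\[
\|g_{q',W_{t};B^{*}}^{\alpha}(g)\|_{L^{p'}(\Rn)}\leq C\|g\|_{L^{p'}(\Rn,B^{*})},\qquad g\in L^{p'}(\Rn)\otimes B^{*},
\]
and then to run the Calder\'on--Zygmund machinery of Theorem~\ref{thm2} for $B^{*}$-valued singular integrals.

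The bridge between $B$ and $B^{*}$ is the polarization identity established in the proof of Theorem~\ref{thm6}: since the fixed-point projection $\mathcal{F}$ of $\{W_{t}\}_{t>0}$ on $\Rn$ is the null operator, one has
\[
\int_{\Rn}\langle f,g\rangle_{B,B^{*}}\,dx=\frac{2^{2\alpha}}{\Gamma(2\alpha)}\int_{0}^{\infty}\int_{\Rn}\langle t^{\alpha}\partial_{t}^{\alpha}W_{t}f,\,t^{\alpha}\partial_{t}^{\alpha}W_{t}g\rangle_{B,B^{*}}\,dx\,\frac{dt}{t},
\]
for $f\in(\LpR\cap L^{2}(\Rn))\otimes B$ and $g\in(L^{p'}(\Rn)\cap L^{2}(\Rn))\otimes B^{*}$. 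Applying H\"older's inequality (exponents $q,q'$ in the variable $t$, then $p,p'$ in the variable $x$) yields the bilinear estimate
\[
\Big|\int_{\Rn}\langle f,g\rangle_{B,B^{*}}\,dx\Big|\leq\frac{2^{2\alpha}}{\Gamma(2\alpha)}\int_{\Rn}g_{q,W_{t};B}^{\alpha}(f)(x)\,g_{q',W_{t};B^{*}}^{\alpha}(g)(x)\,dx.
\]
On the other hand, the hypothesis says that the Littlewood--Paley map $Q_{B}:f\mapsto(t^{\alpha}\partial_{t}^{\alpha}W_{t}f)$ is bounded below from $\LpRB$ into $L^{p}(\Rn,L^{q}((0,\infty),\tfrac{dt}{t};B))$, hence an isomorphism onto its (closed) range; I would use this together with the bilinear estimate to produce the $L^{p'}$-bound for $g_{q',W_{t};B^{*}}^{\alpha}$ at the single exponent $p'$.

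Once that bound is in hand, the remaining steps are routine copies of the proof of Theorem~\ref{thm2}. The operator $g\mapsto(t^{\alpha}\partial_{t}^{\alpha}W_{t}g)$ is a $B^{*}$-valued Calder\'on--Zygmund operator with kernel bounds of the type obtained in~\eqref{A1.2}, so the estimate self-improves to every exponent $r\in(1,\infty)$; the semigroup composition rule~\eqref{A0} and the monotonicity~\eqref{A1} let one pass from $g_{q',W_{t};B^{*}}^{\alpha}$ down to $g_{q',W_{t};B^{*}}^{1}$; and the subordination formula transfers the bound to the classical Poisson semigroup, giving $\|g_{q',P_{t};B^{*}}^{1}(g)\|_{L^{r}(\Rn)}\leq C\|g\|_{L^{r}(\Rn,B^{*})}$. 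Theorem~\ref{thm1.1}(a) (equivalently \cite[Theorem~5.2]{MTX}) then makes $B^{*}$ $q'$-uniformly convexifiable, and the duality of moduli returns the desired norm on $B$.

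The hard part will be the second paragraph: converting the \emph{lower} estimate for $g_{q,W_{t};B}^{\alpha}$ into the \emph{upper} estimate for $g_{q',W_{t};B^{*}}^{\alpha}$. Calder\'on--Zygmund extrapolation applies to bounded operators, not to lower bounds, so it cannot be invoked on the hypothesis directly, and the transfer must be carried out by hand. The idea is to extend the functional $g\mapsto\int_{\Rn}\langle f,g\rangle$, defined on the range of $Q_{B}$, where it is bounded with constant $\lesssim\|g\|_{L^{p'}(\Rn,B^{*})}$ precisely because $Q_{B}$ is bounded below, and then to identify the resulting functional, through the polarization identity and the reproducing formula, with $g_{q',W_{t};B^{*}}^{\alpha}(g)$ up to the constant $\Gamma(2\alpha)/2^{2\alpha}$. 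The subtlety is to show that one thereby recovers the full $L^{p'}(\Rn,L^{q'}(\tfrac{dt}{t};B^{*}))$-norm and not merely its value on the (thin) subspace $Q_{B}(\LpRB)$; this is where the core estimate lives, and I expect it to rest on the fact that the co-Littlewood--Paley operator $H\mapsto\int_{0}^{\infty}t^{\alpha}\partial_{t}^{\alpha}W_{t}(H(\cdot,t))(x)\,\tfrac{dt}{t}$ is itself a vector-valued singular integral of Calder\'on--Zygmund type. (An alternative would be to compare $g_{q,W_{t};B}^{\alpha}$ directly with some $g_{q,P_{t};B}^{\beta}$ and quote Theorem~\ref{thm1.3}(d); but the comparison needed here, heat controlled by Poisson, is the opposite of what subordination yields for free and is a nontrivial square-function inequality for general $B$.)
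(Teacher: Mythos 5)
Your global strategy (dualize, prove $\|g^{\alpha}_{q',W_{t};B^{*}}(g)\|_{L^{p'}(\Rn)}\leq C\|g\|_{L^{p'}(\Rn,B^{*})}$, apply Theorem \ref{thm2} to $B^{*}$, and come back to $B$ by duality of type/cotype or of the moduli) is exactly the paper's, but the step you yourself flag as ``the hard part'' is where the whole proof lives, and the mechanisms you propose for it do not work as stated. The polarization identity plus H\"older gives $\left|\int_{\Rn}\langle f,g\rangle\,dx\right|\leq C\,\|g^{\alpha}_{q,W_{t};B}(f)\|_{\LpR}\,\|g^{\alpha}_{q',W_{t};B^{*}}(g)\|_{L^{p'}(\Rn)}$, which is the inequality used for the \emph{opposite} implication (Theorems \ref{thm6} and \ref{thm9}); combined with the hypothesis, which is a \emph{lower} bound for $g^{\alpha}_{q,W_{t};B}$, it yields nothing about an upper bound for $g^{\alpha}_{q',W_{t};B^{*}}$. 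Likewise, a Hahn--Banach extension of the functional defined on the range of $Q_{B}f=(t^{\alpha}\partial_{t}^{\alpha}W_{t}f)_{t>0}$ cannot recover $\|Q_{B^{*}}g\|_{L^{p'}(\Rn,L^{q'}(\frac{dt}{t};B^{*}))}$, because that norm is the supremum of the pairing over the whole unit ball of $L^{p}(\Rn,L^{q}((0,\infty),\frac{dt}{t};B))$ and not over the subspace $Q_{B}(\LpRB)$; you note this, but the fix you conjecture --- that the co-Littlewood--Paley operator $S_{\alpha}h=\int_{0}^{\infty}t^{\alpha}\partial_{t}^{\alpha}W_{t}(h(\cdot,t))\,\frac{dt}{t}$ is itself a bounded Calder\'on--Zygmund operator from $L^{p}(\Rn,L^{q}(\frac{dt}{t};B))$ into $\LpRB$ --- is circular: by duality that boundedness is equivalent to the Lusin cotype $q'$ inequality for $B^{*}$, i.e.\ to the very statement being proved, and it fails for a general Banach space $B$.

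What the paper actually does at this point is different: for $f\in\CcR\otimes B^{*}$ it dualizes $\|g^{\alpha}_{q',W_{t};B^{*}}(f)\|_{L^{p'}(\Rn)}$ against an \emph{arbitrary} $h$ with $\|h\|_{L^{p}(\Rn,L^{q}_{B}(\frac{dt}{t}))}=1$, rewrites the pairing as $\int_{\Rn}\langle f,S_{\alpha}h\rangle$, and only then applies the hypothesis to the $B$-valued function $S_{\alpha}h$, getting $\|S_{\alpha}h\|_{\LpRB}\leq C\|g^{\alpha}_{q,W_{t};B}(S_{\alpha}h)\|_{\LpR}=C\|T_{\alpha}h\|_{L^{p}(\Rn,L^{q}(\frac{dt}{t};B))}$, where $T_{\alpha}=Q_{B}\circ S_{\alpha}$ has kernel $K_{\alpha}(x,s;y,t)=(st)^{\alpha}\partial_{s}^{\alpha}\partial_{t}^{\alpha}W_{s+t}(x-y)$. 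The genuinely new estimate is that this \emph{composition} is bounded on $L^{r}(\Rn,L^{q}((0,\infty),\frac{dt}{t};B))$ for every Banach space $B$ and every $1<r<\infty$: a Schur-type test using $\int_{0}^{\infty}|K_{\alpha}(x,s;y,t)|(st)^{-0}\frac{dt}{t}$-type bounds gives $r=q$, and operator-valued Calder\'on--Zygmund theory applied to $\mathbb{K}_{\alpha}(x,y)\in\mathcal{L}\bigl(L^{q}(\frac{dt}{t};B)\bigr)$, with $\|\mathbb{K}_{\alpha}(x,y)\|\leq C|x-y|^{-n}$ and the corresponding gradient estimate, gives all other $r$, in particular $r=p$. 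None of this appears in your outline, and without it the argument does not close; your observation that a heat-versus-Poisson comparison is unavailable is correct, but the missing ingredient is precisely this composition bound, not a square-function comparison.
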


\begin{proof}
Let $f\in\CcR\otimes B^{*}$. Fix $\eps>0$. We choose $h\in\CcR\otimes \Cc(0,\infty)\otimes B$ such that
$\|h\|_{L^{p}(\Rn, L^{q}_{B}((0,\infty),\frac{dt}{t}))}=1$ and
$$
\|g_{q',W_{t};B}^{\alpha}(f)\|_{L^{p'}(\Rn)}
\leq\left|
\int_{\Rn}\int_{0}^{\infty}
\langle
t^{\alpha}\partial_{t}^{\alpha}W_{t}(f)(x), h(x,t)
\rangle_{B^{*},B} \frac{dt}{t} dx
\right|+\eps.
$$
We have that
$$
\partial_{t}^{\alpha}W_{t}(x-y)=
\frac{1}{\Gamma(m-\alpha)}
\int_{t}^{\infty} \partial_{u}^{m}W_{u}(x-y)
(u-t)^{ m-\alpha-1} du,
\quad t>0 \quad \text{and}\quad x,y\in\Rn,
$$
where $m-1\leq\alpha<m$ and $m\in\N$. Then,
\begin{align}\label{A.2}
|\partial_{t}^{\alpha}W_{t}(x-y)|
&\leq C\:\int_{t}^{\infty}
\frac{e^{-c|x-y|^{2}/u}}{u^{n/2+m}} (u-t)^{m-\alpha-1} du \nonumber\\
&\leq C \frac{1}{t^{n/2+\alpha}},
\quad t>0\quad\text{and}\quad x,y\in\Rn,
\end{align}
and
\begin{align*}
&\int_{\Rn}\int_{0}^{\infty}
|\langle
t^{\alpha}\partial_{t}^{\alpha}W_{t}(f)(x), h(x,t)
\rangle_{B^{*},B}|
\frac{dt}{t} dx\\
&\hspace{5mm}\leq C \int_{\Rn}\int_{0}^{\infty}\int_{\Rn}
|t^{\alpha}\partial_{t}^{\alpha}W_{t}(x-y)| \:
\|f(y)\|_{B^{*}} dy \:\|h(x,t)\|_{B}
\frac{dt}{t} dx\\
&\hspace{5mm}\leq C \int_{\Rn}\int_{0}^{\infty}\int_{\Rn}
\frac{1}{t^{n/2}}
\|f(y)\|_{B^{*}} dy \: \|h(x,t)\|_{B}
\frac{dt}{t} dx\\
&\hspace{5mm}\leq C \|f(y)\|_{L^{\infty}(\Rn,B^{*})}
\int_{\Rn}\int_{0}^{\infty} \|h(x,t)\|_{B}
\frac{dt}{t} dx<\infty.\\
\end{align*}
We can write
\begin{align*}
&\int_{\Rn}\int_{0}^{\infty}
\langle
t^{\alpha}\partial_{t}^{\alpha}W_{t}(f)(x), h(x,t)
\rangle_{B^{*},B}
\frac{dt}{t} dx\\
&\hspace{5mm}=\int_{\Rn}\int_{0}^{\infty}
\langle
f(y),\int_{\Rn} t^{\alpha}\partial_{t}^{\alpha}W_{t}(x-y)\:h(x,t)\:dx
\rangle_{B^{*},B}
\frac{dt}{t} dy\\
&\hspace{5mm}=\int_{\Rn}
\langle
f(y),\int_{0}^{\infty}\int_{\Rn}
t^{\alpha}\partial_{t}^{\alpha}W_{t}(x-y)\:h(x,t)\:dx\:\frac{dt}{t}
\rangle_{B^{*},B} \:
 dy.\\
\end{align*}

We consider the operator $S_{\alpha}$ defined by
$$
S_{\alpha}(H)(y)=  \int_{0}^{\infty}\int_{\Rn}
t^{\alpha}\partial_{t}^{\alpha}W_{t}(x,y)\:H(x,t)\:dx\:\frac{dt}{t},
\quad H\in\CcR\otimes \Cc(0,\infty)\otimes B.
$$

Let $H\in\CcR\otimes \Cc(0,\infty)\otimes B$. By using our hypothesis we deduce that
\begin{eqnarray}\label{7.1}
\left|
\int_{\Rn}\int_{0}^{\infty}
\langle
t^{\alpha}\partial_{t}^{\alpha}W_{t}(f)(x),\:H(x,t)
\rangle_{B^{*},B}\:\frac{dt}{t} dx
\right|
&\leq&
\|f\|_{L^{p'}(\Rn,B^{*})}
\|S_{\alpha}H\|_{L^{p}(\Rn,B)}\nonumber\\
&\leq & C\|f\|_{L^{p'}(\Rn,B^{*})}
\|g_{q,W_{t};B}^{\alpha}(S_{\alpha}H)\|_{L^{p}(\Rn,B)}\nonumber\\
&=& C\|f\|_{L^{p'}(\Rn,B^{*})}
\|s^{\alpha}\partial_{s}^{\alpha}W_{s}[S_{\alpha}H](x)\|_{L^{p}(\Rn,L^{q}((0,\infty),\frac{dt}{t};B))}.
\end{eqnarray}
By proceeding as above we can see that $S_{\alpha}(H)\in L^{\infty}(\Rn,B)$. We get
$$
\partial_{s}^{\alpha}W_{s}[S_{\alpha}H](x)=
\int_{\Rn}\partial_{s}^{\alpha}W_{s}(x-y)(S_{\alpha}H)(y) \:dy,
\quad x\in\Rn\quad\text{and}\quad s\in(0,\infty).
$$
Also, since
$$
\int_{\Rn}\int_{0}^{\infty}\int_{\Rn}
|\partial_{s}^{\alpha}W_{s}(x-y)|\:t^{\alpha}\:
|\partial_{t}^{\alpha}W_{t}(y-z)|\:\|H(z,t)\|\:dz\frac{dt}{t}dy<\infty,
\quad x\in\Rn\quad\text{and}\quad s\in(0,\infty),
$$
we can write
$$
s^{\alpha}\partial_{s}^{\alpha}W_{s}[S_{\alpha}H](x)=
\int_{\Rn}\int_{0}^{\infty}
K_{\alpha}(x,s;y,t) H(y,t)\frac{dt}{t} dy,
\quad x\in\Rn\quad\text{and}\quad s\in(0,\infty),
$$
where
$$
K_{\alpha}(x,s;y,t)=
\int_{\Rn}
s^{\alpha}\partial_{s}^{\alpha}W_{s}(x-z)\:
t^{\alpha}\partial_{t}^{\alpha}W_{t}(z-y)\:dz,
\quad x,y\in\Rn\quad\text{and}\quad s,t\in(0,\infty).
$$
By using the semigroup property we obtain
\begin{align*}
K_{\alpha}(x,s;y,t)&=
s^{\alpha}\partial_{s}^{\alpha}t^{\alpha}\partial_{t}^{\alpha}
\int_{\Rn} W_{s}(x-z)W_{t}(z-y)dz\\
&=(st)^{\alpha}\partial_{s}^{\alpha}\partial_{t}^{\alpha}W_{s+t}(x-y),
\quad x,y\in\Rn\quad\text{and}\quad s,t\in(0,\infty).
\end{align*}

We consider the operator $T_{\alpha}$ defined as follows
$$
T_{\alpha}(F)(x,s)=
\int_{\Rn}\int_{0}^{\infty} K_{\alpha}(x,s;y,t)F(y,t)
\frac{dt}{t} dy,
\quad F\in L^{p,q}(\Rn\times(0,\infty), dx\frac{dt}{t},B).
$$
Note that the mixed B\"{o}chner-Lebesgue space $L^{p,q}(\Rn\times(0,\infty), dx\frac{dt}{t},B)$ can be identified with the space $L^{p}(\Rn,L^{q}((0,\infty),\frac{dt}{t},B))$.

We are going to see that $T_{\alpha}$ defines a bounded operator from
$L^{q}(\Rn,L^{q}((0,\infty),\frac{dt}{t},B))$ into itself.

Let $F\in L^{q}(\Rn,L_{B}^{q}((0,\infty),\frac{dt}{t}))$. H\"older's inequality leads to
$$
\|T_{\alpha}(F)(x,s)\|_{B}
\leq
\left(
\int_{\Rn}\int_{0}^{\infty}|K_{\alpha}(x,s;y,t)|\frac{dt}{t}dy
\right)^{1/q'}
\left(
\int_{\Rn}\int_{0}^{\infty}|K_{\alpha}(x,s;y,t)| \|f(y,t)\|_{B}^{q}
\frac{dt}{t}dy\right)^{1/q},
$$
for every $x\in\Rn$ and $s\in(0,\infty)$.

By choosing $m\in\N$ such that $m-1\leq\alpha<m$, we have that
\begin{align}
\partial_{s}^{\alpha}\partial_{t}^{\alpha} W_{s+t}(x,y)
&=\frac{1}{\Gamma(m-\alpha)}
\int_{s}^{\infty}\partial_{u}^{m}(\partial_{t}^{\alpha}W_{u+t}(x-y))
(u-s)^{m-\alpha-1} du\nonumber\\
&=\frac{1}{(\Gamma(m-\alpha))^{2}}
\int_{s}^{\infty} (u-s)^{m-\alpha-1} \partial_{u}^{m}
\int_{t}^{\infty}\partial_{v}^{m} W_{u+v}(x-y) (v-t)^{m-\alpha-1}
dv\: du\nonumber\\
&=\frac{1}{(\Gamma(m-\alpha))^{2}}
\int_{s}^{\infty} (u-s)^{m-\alpha-1}
\int_{t}^{\infty} \partial_{z}^{2m} W_{z}(x-y)_{|z=u+v}
(v-t)^{m-\alpha-1}
dv\: du,\label{7.2}
\end{align}
for every $x,y\in\Rn$ and $s,t\in(0,\infty)$.Then,
\begin{align*}
&\int_{\Rn}\int_{0}^{\infty}
|\partial_{s}^{\alpha} \partial_{t}^{\alpha}W_{s+t}(x-y)|
(st)^{\alpha} \frac{dt}{t} dy\\
&\hspace{8mm}\leq C
\int_{\Rn}\int_{0}^{\infty}
(st)^{\alpha}\int_{s}^{\infty} (u-s)^{m-\alpha-1}
\int_{t}^{\infty} |\partial_{z}^{2m} W_{z}(x-y)_{|z=u+v}|
(v-t)^{m-\alpha-1} dv\:du\:\frac{dt}{t}\:dy\\
&\hspace{8mm}\leq C
\int_{\Rn}\int_{0}^{\infty}
(st)^{\alpha}\int_{s}^{\infty} (u-s)^{m-\alpha-1}
\int_{t}^{\infty}
\frac{e^{-c\frac{|x-y|^{2}}{u+v}}}{(u+v)^{n/2+2m}}
(v-t)^{m-\alpha-1} dv\:du\:\frac{dt}{t}\:dy\\
&\hspace{8mm} \leq C
\int_{0}^{\infty} (st)^{\alpha}
\int_{s}^{\infty}(u-s)^{m-\alpha-1}
\int_{t}^{\infty} \frac{1}{(u+v)^{2m}}
(v-t)^{m-\alpha-1} dv\:du\:\frac{dt}{t}\\
&\hspace{8mm}\leq C
\int_{0}^{\infty} (st)^{\alpha}
\int_{s}^{\infty}(u-s)^{m-\alpha-1}
\int_{0}^{\infty} \frac{1}{(u+t+w)^{2m}} \:w^{m-\alpha-1}
dw\:du\:\frac{dt}{t}\\
&\hspace{8mm} \leq C
\int_{0}^{\infty} (st)^{\alpha}
\int_{s}^{\infty}(u-s)^{m-\alpha-1}
(u+t)^{-m-\alpha}\:du\:\frac{dt}{t}\\
&\hspace{8mm} \leq C
\int_{0}^{\infty} (st)^{\alpha}
\int_{0}^{\infty} w^{m-\alpha-1} (w+s+t)^{-m-\alpha}
dw\:\frac{dt}{t}\\
& \hspace{8mm}\leq C
\int_{0}^{\infty} \frac{(st)^{\alpha}}{(t+s)^{2\alpha}\:t} \:dt\\
& \hspace{8mm}\leq C
\int_{0}^{\infty} \frac{t^{\alpha-1}}{(1+t)^{2\alpha}} \:dt<\infty,
\quad x\in\Rn\quad\text{and}\quad s\in (0,\infty).
\end{align*}
We obtain
\begin{align*}
\|T_{\alpha}(F)\|^{q}_{L^{q}(\Rn,L^{q}((0,\infty),\frac{dt}{t};B))}
&\leq C
\int_{\Rn}\int_{\Rn}\int_{0}^{\infty}\int_{0}^{\infty}
|K_{\alpha}(x,s;y,t)|\:\|f(y,t)\|_{B}^{q}
\:\frac{dt}{t}\:dy\:\frac{ds}{s}\:dx\\
&\leq C
\|f\|^{q}_{L^{q}(\Rn,L^{q}((0,\infty),\frac{dt}{t};B))}.
\end{align*}

Let $x,y\in\Rn$, $x\neq y$. We define the operator $\mathbb{K}_{\alpha}(x,y)$ by
\begin{align*}
\mathbb{K}_{\alpha}(x,y):L^{q}((0,\infty),&\frac{dt}{t};B)\rightarrow
L^{q}((0,\infty),\frac{ds}{s};B)\\
& G \rightarrow [\mathbb{K}_{\alpha}(x,y)(G)](s)=
\int_{0}^{\infty} K_{\alpha}(x,s;y,t) G(t) \frac{dt}{t}.
\end{align*}
$\mathbb{K}_{\alpha}(x,y)$ is bounded from $L^{q}((0,\infty),\frac{dt}{t};B)$ into itself. Indeed, let $G\in L^{q}((0,\infty),\frac{ds}{s};B)$. We have that
$$
\|[\mathbb{K}_{\alpha}(x,y)(G)](s)\|_{B}
\leq
\left(
\int_{0}^{\infty}
|K_{\alpha}(x,s;y,t)| \:\frac{dt}{t}
\right)^{1/q'}
\left(\int_{0}^{\infty}
|K_{\alpha}(x,s;y,t)| \:\|G(t)\|_{B}^{q}\:\frac{dt}{t}\right)^{1/q}, \quad s\in (0,\infty).
$$

As above we get
\begin{eqnarray*}
\int_{0}^{\infty}
|K_{\alpha}(x,s;y,t)| \:\frac{dt}{t}
&=& \int_{0}^{\infty}
|\partial_{s}^{\alpha}\partial_{t}^{\alpha} W_{s+t}(x-y)|
(st)^{\alpha} \frac{dt}{t}\\
& \leq & C\int_{0}^{\infty} (st)^{\alpha}
\int_{s}^{\infty} (u-s)^{m-\alpha-1}
\int_{t}^{\infty} \frac{e^{-c\frac{|x-y|^{2}}{u+v}}}{(u+v)^{n/2+2m}}
(v-t)^{m-\alpha-1} \:dv\:du\:\frac{dt}{t}\\
& \leq &\frac{C}{|x-y|^{n}}
\int_{0}^{\infty} (st)^{\alpha}
\int_{s}^{\infty} (u-s)^{m-\alpha-1}
\int_{t}^{\infty} \frac{(v-t)^{m-\alpha-1}}{(u+v)^{2m}}
\:dv\:du\:\frac{dt}{t}\\
& \leq & \frac{C}{|x-y|^{n}}, \quad s\in (0,\infty).
\end{eqnarray*}
Then,
\begin{align}\label{7.1'}
\|\mathbb{K}_{\alpha}(x,y)(G)\|_{L^{q}((0,\infty),\frac{ds}{s};B)}
&\leq C \frac{1}{|x-y|^{n/q'}}
\left(
\int_{0}^{\infty}\int_{0}^{\infty}
|K_{\alpha}(x,s:y,t)|\:\|G(t)\|_{B}^{q} \frac{dt}{t}\:\frac{ds}{s}
\right)^{1/q}\\
&\leq C \frac{1}{|x-y|^{n}} \: \|G\|_{L^{q}((0,\infty),\frac{dt}{t};B)}.\nonumber
\end{align}

Let $G\in \Cc(\Rn,L^{q}((0,\infty),\frac{ds}{s})\otimes B)$. Suppose that $x\notin\supp (G)$ and $H\in \Cc(0,\infty)\otimes B^{*}$. By taking into account \eqref{7.1'} we obtain
\begin{align*}
\int_{0}^{\infty}&
\left\langle
\left(
\int_{\Rn} \mathbb{K}_{\alpha}(x,y) (G(y))
dy \right)(s), H(s)
\right\rangle_{B,B^{*}}
\frac{ds}{s}
= \int_{\Rn} \int_{0}^{\infty}
\langle [\mathbb{K}_{\alpha}(x,y)(G(y))](s), H(s)
\rangle_{B,B^{*}}
\frac{ds}{s}\: dy\\
& = \int_{0}^{\infty}
\left\langle
\int_{\Rn} \int_{0}^{\infty}
K_{\alpha}(x,s;y,t) [G(y)](t)\:\frac{dt}{t}\:dy, H(s)
\right\rangle_{B,B^{*}}\: \frac{ds}{s}.
\end{align*}
Then,
$$
\left(
\int_{\Rn}
\mathbb{K}_{\alpha}(x,y) \:[G(y)]\:dy
\right)(s)
= \int_{\Rn} \int_{0}^{\infty}
K_{\alpha}(x,s;y,t) \:[G(y)](t)\:\frac{dt}{t}\:dy,
\quad \text{a.e}\quad s\in(0,\infty).
$$

We denote by  $\mathcal{L}(L^{q}((0,\infty),\frac{ds}{s};B))$ the space of bounded and linear operators from $L^{q}((0,\infty),\frac{ds}{s};B)$ into itself. By \eqref{7.1'} we have that
$$
\|\mathbb{K}_{\alpha}(x,y)\|_{\mathcal{L}(L^{q}((0,\infty),\frac{ds}{s};B))}
\leq \frac{C}{|x-y|^{n}}, \quad x,y\in\Rn,\: x\neq y.
$$

Let $y,x,x_{0}\in\Rn$ and $G\in L^{q}((0,\infty),\frac{dt}{t};B)$. We have that
\begin{multline*}
[(K_{\alpha}(x,y)-K_{\alpha}(x,x_0))(G)](s)=
\int_{0}^{\infty}
(K_{\alpha}(x,s;y,t)-K_{\alpha}(x,s;x_0,t))\:G(t)\:\frac{dt}{t}\\
=\int_{0}^{\infty}\int_{0}^{1}
\nabla_{z} K_{\alpha}(x,s;\eta y+(1-\eta)x_0,t)(y-x_0)\:
d\eta\:G(t)\:\frac{dt}{t},
\quad s\in (0,\infty),
\end{multline*}
where $\nabla_{z}$ is understood with respect to the third variable, and
\begin{align*}
&\|[(K_{\alpha}(x,y)-K_{\alpha}(x,x_0))(G)](s)\|_{B}
\leq
\int_{0}^{1}\left(\int_{0}^{\infty}
|\nabla_{z} K_{\alpha}(x,s;\eta y+(1-\eta)x_0,t)|\:\frac{dt}{t}
\right)^{1/q'}\\
&\hspace{4cm}\cdot\left(
\int_{0}^{\infty}
|\nabla_{z} K_{\alpha}(x,s;\eta y+(1-\eta)x_0,t)|\: \|G(t)\|_{B}^{q}\frac{dt}{t}
\right)^{1/q}\:d\eta\:|y-x_0|.
\end{align*}

By proceeding as above we get
\begin{align*}
&\int_{0}^{\infty}
|\partial_{z_{i}}K_{\alpha}(x,s;z,t)|\:\frac{dt}{t}=
\int_{0}^{\infty}
|\partial_{s}^{\alpha}\partial_{t}^{\alpha}\partial_{z_{i}}W_{s+t}(x-z)|(st)^{\alpha} \frac{dt}{t}\\
&\leq C
\int_{0}^{\infty}(st)^{\alpha}
\int_{s}^{\infty} (u-s)^{m-\alpha-1}
\int_{t}^{\infty} \frac{e^{-c\frac{|x-y|^{2}}{u+v}}}{(u+v)^{n/2+2m+1/2}} (v-t)^{m-\alpha-1}\:dv\:du\:\frac{dt}{t}\\
& \leq
\frac{C}{|x-z|^{n+1}},\quad z\in \mathbb R^n,
\quad s\in (0,\infty)\quad\text{and}\quad i=1,\ldots,n.
\end{align*}

Then, if $|x-x_0|>2|x_0-y|$, by using Minkowski integral inequality we obtain

\begin{align*}
&\|(K_{\alpha}(x,y)-K_{\alpha}(x,x_0))(G)\|_{L^{q}((0,\infty),\frac{ds}{s};B)}\\
& \leq C \:
|y-x_0|\int_{0}^{1} \frac{1}{|x-(\eta y+(1-\eta)x_0)|^{n+1}}\:d\eta\:
\|G\|_{L^{q}((0,\infty),\frac{ds}{s};B)}\\
& \leq C
\frac{|y-x_0|}{(|x-x_0|-|y-x_0|)^{n+1}}
\|G\|_{L^{q}((0,\infty),\frac{ds}{s};B)}\\
& \leq C
\frac{|y-x_0|}{|x-x_0|^{n+1}}
\|G\|_{L^{q}((0,\infty),\frac{ds}{s};B)}.
\end{align*}

Hence, we get
$$
\|(K_{\alpha}(x,y)-K_{\alpha}(x,x_0))\|_
{\mathcal{L}(L^{q}((0,\infty),\frac{ds}{s};B))}
\leq C\: \frac{|y-x_0|}{|x-x_0|^{n+1}},
$$
provided that $|x-x_0|>2|x_0-y|$.

Calder\'on-Zygmund theorem for Banach valued singular integrals (\cite{RRT}) allows us to prove that the operator $T_{\alpha}$ can be extended to $L^{r}(\Rn, L^{q}((0,\infty),\frac{dt}{t};B))$ as a bounded operator from $L^{r}(\Rn, L^{q}((0,\infty),\frac{dt}{t};B))$ into itself, for every $1<r<\infty$.

By \eqref{7.1} we have that
\begin{eqnarray*}
\left|
\int_{\Rn}\int_{0}^{\infty}
\langle
t^{\alpha}\partial_{t}^{\alpha}W_{t}(f)(x), h(x,t)
\rangle_{B^{*},B}
\:\frac{dt}{t}\:dx
\right|
&\leq & C\|f\|_{L^{p'}(\Rn,B^{*})}
\|h\|_{L^{p}(\Rn, L^{q}((0,\infty),\frac{dt}{t};B))}\\
& \leq &C\:\|f\|_{L^{p'}(\Rn,B^{*})}.
\end{eqnarray*}

The arbitrariness of $\eps$ allows us to obtain
$$
\|g_{q',W_{t};B}^{\alpha}(f)\|_{L^{p'}(\Rn)}
\leq C\:\|f\|_{L^{p'}(\Rn,B^{*})}.
$$
According to Theorem \ref{thm2}, $B^{*}$ is of martingale cotype $q'$. Then, $B$ is of martingale type $q$ (see \cite[Lemma 3.2]{OX}).
Thus the proof is complete.
\end{proof}

\begin{rem}
Note that according to \eqref{A1} it is sufficient to prove Theorem \ref{thm7} when $\alpha\in\N$, $\alpha\geq1$.
\end{rem}

We now characterize smoothness property for a Banach space by using area integrals.
In \cite{TZ} the following result was established.

\begin{thm}(\cite[Theorem 5.4]{TZ})\label{thm8}
Let $B$ be a Banach space, $1<q\leq 2$, $1<p<\infty$, and $\alpha >0$. The following assertions are equivalents:
\begin{enumerate}
\item[(i)] There exists a norm $\left\VERT\cdot\right\VERT$ on $B$ that is equivalent to $\|\cdot\|$ and such that $(B,\left\VERT\cdot\right\VERT)$ is $q$-uniformly smooth.
\item[(ii)] There exists $C>0$ such that
$$
\|f\|_{\LpRB}\leq C \|\mathcal{A}_{q,P_{t};B}^{\alpha}(f)\|_{\LpR},
\quad f\in\LpR\otimes B.
$$
\end{enumerate}
\end{thm}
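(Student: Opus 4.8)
The plan is to prove both implications by reducing them to the corresponding statement for the dual space $B^{*}$, combining a spectral reproducing identity for the Poisson semigroup with duality and with Theorem \ref{thm3}. I shall use freely that $B$ admits an equivalent $q$-uniformly smooth norm if and only if $B^{*}$ admits an equivalent $q'$-uniformly convex norm, where $q'=q/(q-1)\in[2,\infty)$ (equivalently, $B$ has martingale type $q$ if and only if $B^{*}$ has martingale cotype $q'$; see \cite[Lemma 3.2]{OX}), and that on $\Rn$ the fixed-point projection $\mathcal F$ associated with $\{P_t\}_{t>0}$ is the null operator.

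\emph{Proof of $(i)\Rightarrow(ii)$.} First I would record the polarization identity. Since $\{P_t\}_{t>0}$ on $\Rn$ is generated by $-\sqrt{-\Delta}$, spectral calculus yields, for $h\in L^2(\Rn)$ and $m-1\le\alpha<m$, that $\partial_t^{\alpha}P_t h=(-1)^{m}\int_{(0,\infty)}\lambda^{\alpha}e^{-t\lambda}\,dE(\lambda)h$, where $E$ denotes the spectral measure of $\sqrt{-\Delta}$ and $E(\{0\})=0$; since the Poisson kernel is real, this gives, for $f\in(L^2\cap L^p)(\Rn)\otimes B$ and $g\in(L^2\cap L^{p'})(\Rn)\otimes B^{*}$,
\[
\int_{\Rn}\int_0^{\infty} t^{2\alpha}\,\langle\partial_t^{\alpha}P_t f(y),\partial_t^{\alpha}P_t g(y)\rangle_{B,B^{*}}\,\frac{dt}{t}\,dy=\frac{\Gamma(2\alpha)}{2^{2\alpha}}\int_{\Rn}\langle f(y),g(y)\rangle_{B,B^{*}}\,dy .
\]
Integrating over the cones $\Gamma(x)$ and using Fubini (the inner integral $\int_{|x-y|<t}dx=c_n t^n$ cancelling the factor $t^{-(n+1)}$) this becomes
\[
\int_{\Rn}\langle f(x),g(x)\rangle\,dx=\frac{2^{2\alpha}}{c_n\Gamma(2\alpha)}\int_{\Rn}\int_{\Gamma(x)}\langle t^{\alpha}\partial_t^{\alpha}P_t f(y),\,t^{\alpha}\partial_t^{\alpha}P_t g(y)\rangle\,\frac{dy\,dt}{t^{n+1}}\,dx .
\]
Applying H\"older's inequality on $\Gamma(x)$ with the exponents $(q,q')$ and then on $\Rn$ with $(p,p')$, followed by Theorem \ref{thm3} applied to $B^{*}$ at the exponent $p'$ (legitimate since $B^{*}$ is $q'$-uniformly convex up to an equivalent norm), we obtain
\[
\Big|\int_{\Rn}\langle f(x),g(x)\rangle\,dx\Big|\le C\,\|\mathcal A_{q,P_t;B}^{\alpha}(f)\|_{\LpR}\,\|\mathcal A_{q',P_t;B^{*}}^{\alpha}(g)\|_{L^{p'}(\Rn)}\le C\,\|\mathcal A_{q,P_t;B}^{\alpha}(f)\|_{\LpR}\,\|g\|_{L^{p'}(\Rn,B^{*})} .
\]
Since the simple $B^{*}$-valued functions norm $\LpRB$, taking the supremum over $g$ gives $\|f\|_{\LpRB}\le C\|\mathcal A_{q,P_t;B}^{\alpha}(f)\|_{\LpR}$ for $f$ in the dense class above, and a density and Fatou argument (using $|\partial_t^{\alpha}P_t(z)|\le Ct^{-n-\alpha}$) extends this to every $f\in\LpR\otimes B$.

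\emph{Proof of $(ii)\Rightarrow(i)$.} Here I would follow the scheme of the proof of Theorem \ref{thm7}, with the $g$-function replaced by the area integral. Put $H=L^{q}_{B}(\Gamma(0),dy\,dt/t^{n+1})$ and fix $f\in\CcR\otimes B^{*}$; the aim is the estimate $\|\mathcal A_{q',P_t;B^{*}}^{\alpha}(f)\|_{L^{p'}(\Rn)}\le C\|f\|_{L^{p'}(\Rn,B^{*})}$. By duality this amounts to bounding, uniformly over $\Phi$ in the unit ball of $L^{p}(\Rn,H)$ ranging over a dense class of smooth compactly supported functions, the pairing $\int_{\Rn}\int_{\Gamma(0)}\langle t^{\alpha}\partial_t^{\alpha}P_t f(x+y),\,[\Phi(x)](y,t)\rangle\frac{dy\,dt}{t^{n+1}}\,dx$. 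Writing $\partial_t^{\alpha}P_t f$ as a convolution and moving the convolution and the Weyl derivative onto the kernel by Fubini, this pairing equals $\int_{\Rn}\langle f(w),S_{\alpha}(\Phi)(w)\rangle\,dw$ with
\[
S_{\alpha}(\Phi)(w)=\int_{\Rn}\int_{\Gamma(0)} t^{\alpha}\partial_t^{\alpha}P_t(x+y-w)\,[\Phi(x)](y,t)\,\frac{dy\,dt}{t^{n+1}}\,dx ,
\]
and one checks that $S_{\alpha}(\Phi)\in\LpR\otimes B$. Hence the pairing is at most $\|f\|_{L^{p'}(\Rn,B^{*})}\,\|S_{\alpha}(\Phi)\|_{\LpRB}$, and hypothesis (ii) bounds this by $C\|f\|_{L^{p'}(\Rn,B^{*})}\,\|\mathcal A_{q,P_t;B}^{\alpha}(S_{\alpha}(\Phi))\|_{\LpR}$. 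Using the semigroup identity $\int_{\Rn}\partial_s^{\alpha}P_s(\,\cdot\,-w)\,\partial_t^{\alpha}P_t(x+y-w)\,dw=\partial_s^{\alpha}\partial_t^{\alpha}P_{s+t}(\,\cdot\,-x-y)$, one recognizes $\Phi\mapsto\mathcal A_{q,P_t;B}^{\alpha}(S_{\alpha}(\Phi))$ as $\Phi\mapsto\|\mathcal T(\Phi)(\cdot)\|_{H}$, where $\mathcal T$ is a singular integral operator on $L^{r}(\Rn,H)$ whose operator-valued kernel $\mathbb K_{\alpha}(x,y)\in\mathcal L(H)$ is built from the scalar kernel $(st)^{\alpha}\partial_s^{\alpha}\partial_t^{\alpha}P_{s+t}(x-y+u-v)$ (truncated to $s,t>1/N$). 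The operator $\mathcal T$ is bounded on $L^{r}(\Rn,H)$ for every $1<r<\infty$, with constant independent of $N$: at the matching exponent $r=q$ this follows from the triangle inequality in $B$, H\"older's inequality and the uniform estimate $\int_{\Rn}\int_{\Gamma(0)}(st)^{\alpha}|\partial_s^{\alpha}\partial_t^{\alpha}P_{s+t}(x-y+u-v)|\frac{dv\,d\tau}{\tau^{n+1}}\frac{dy\,ds}{s}\le C$, and then the Calder\'on--Zygmund theorem for $B$-valued singular integrals (\cite{RRT}), applicable by virtue of the bounds $\|\mathbb K_{\alpha}(x,y)\|_{\mathcal L(H)}\le C|x-y|^{-n}$ and $\|\mathbb K_{\alpha}(x,y)-\mathbb K_{\alpha}(x,x_0)\|_{\mathcal L(H)}\le C|y-x_0|\,|x-x_0|^{-n-1}$ for $|x-x_0|>2|y-x_0|$, upgrades it to every $r$. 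Letting $N\to\infty$ we get $\|\mathcal A_{q,P_t;B}^{\alpha}(S_{\alpha}(\Phi))\|_{\LpR}\le C\|\Phi\|_{L^{p}(\Rn,H)}\le C$, hence $\|\mathcal A_{q',P_t;B^{*}}^{\alpha}(f)\|_{L^{p'}(\Rn)}\le C\|f\|_{L^{p'}(\Rn,B^{*})}$ for all $f\in\CcR\otimes B^{*}$, and then for all $f\in L^{p'}(\Rn)\otimes B^{*}$ by density. By Theorem \ref{thm3} applied to $B^{*}$ (at exponent $p'$), $B^{*}$ admits an equivalent $q'$-uniformly convex norm, whence $B$ admits an equivalent $q$-uniformly smooth norm.

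The direction $(i)\Rightarrow(ii)$ is essentially soft once the spectral polarization identity is in hand. The technical core is $(ii)\Rightarrow(i)$: the main obstacle is to justify the Fubini manipulations identifying $\mathcal A_{q,P_t;B}^{\alpha}\circ S_{\alpha}$ with the Calder\'on--Zygmund operator $\mathcal T$, and to prove the size, integrability and H\"ormander-type smoothness estimates for the operator-valued kernel $\mathbb K_{\alpha}$ uniformly in the truncation parameter $N$. These estimates are lengthy but are cone-integrated analogues of those carried out for the $g$-function in the proof of Theorem \ref{thm7}, the additional integration $\int_{|v|<\tau}dv$ over the cone being absorbed by the weight $\tau^{-(n+1)}$.
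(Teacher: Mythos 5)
The paper never proves Theorem \ref{thm8}: it is imported verbatim from \cite[Theorem 5.4]{TZ}, the paper's own contribution being the heat analogue, Theorem \ref{thm9}. Your argument is sound in outline and is, in substance, exactly the scheme the paper uses for Theorem \ref{thm9}, transported from $\{W_t\}_{t>0}$ to $\{P_t\}_{t>0}$: for (i)$\Rightarrow$(ii), a polarization identity (you get it from the spectral calculus of $\sqrt{-\Delta}$, the paper uses Plancherel for the heat kernel), the averaging trick of \cite{CMS} to pass to cones, H\"older with $(q,q')$ and $(p,p')$, and the convexity half applied to $B^{*}$ (your Theorem \ref{thm3} playing the role of the paper's Theorem \ref{thm4}); for (ii)$\Rightarrow$(i), dualization through $S_{\alpha}$, identification of $\mathcal A^{\alpha}_{q,P_t;B}\circ S_{\alpha}$ with a Calder\'on--Zygmund operator whose operator-valued kernel is built from $(st)^{\alpha}\partial_s^{\alpha}\partial_t^{\alpha}P_{s+t}$, boundedness at the matching exponent $r=q$ plus \cite{RRT} to cover all $r$, and then Theorem \ref{thm3} for $B^{*}$ together with \cite[Lemma 3.2]{OX}. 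Two points deserve more care than you give them. First, in (i)$\Rightarrow$(ii) your closing ``density and Fatou'' step is not the right mechanism: the right-hand side $\|\mathcal A^{\alpha}_{q,P_t;B}(f)\|_{\LpR}$ is not known to depend continuously on $f$ without a cotype hypothesis on $B$, so approximating $f$ and passing to the limit on that side goes the wrong way. The clean fix is to verify the polarization identity directly, with absolute convergence, for $f\in\LpR\otimes B$ and $g\in\CcR\otimes B^{*}$, using $\|t^{\alpha}\partial_t^{\alpha}P_t\phi\|_{\LpR}\le C\|\phi\|_{\LpR}$ together with $\|t^{\alpha}\partial_t^{\alpha}P_t\psi\|_{L^{p'}(\Rn)}\le C\min(t^{\alpha},t^{-n/p})$ for $\psi\in\CcR$; note the paper sidesteps this issue in the heat case by stating Theorem \ref{thm9}(b) only for $f\in\CcR\otimes B$. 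Second, the size, H\"ormander and uniform-in-$N$ integral estimates for $\mathbb K_{\alpha}$ are asserted rather than proved; they do hold, being the Poisson counterparts (using $|\partial_t^{m}P_t(z)|\le C(t+|z|)^{-n-m}$) of the estimates written out in the proofs of Theorems \ref{thm7} and \ref{thm9}, but a complete write-up would have to include them.
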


We now establish a heat version of Theorem \ref{thm8}.
\begin{thm}\label{thm9}
Let $B$ be a Banach space, $1<q\leq 2$, $1<p<\infty$, and  $\alpha >0$. The following assertions are equivalents:
\begin{enumerate}
\item[(a)] There exists a norm $\left\VERT\cdot\right\VERT$ on $B$ that is equivalent to $\|\cdot\|$ and such that $(B,\left\VERT\cdot\right\VERT)$ is $q$-uniformly smooth.
\item[(b)] There exists $C>0$ such that
$$
\|f\|_{\LpRB}\leq C \|\mathcal{A}_{q,W_{t};B}^{\alpha}(f)\|_{\LpR},
\quad f\in \CcR\otimes B.
$$
\end{enumerate}
\end{thm}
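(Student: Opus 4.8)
The plan is to derive both implications from the heat area-integral characterization of $q'$-uniformly convex spaces (Theorem~\ref{thm4}) applied to the dual space $B^{*}$, combined with a duality argument. Recall that $B$ carries an equivalent $q$-uniformly smooth norm iff $B$ has martingale type $q$, which by \cite[Lemma 3.2]{OX} is equivalent to $B^{*}$ having martingale cotype $q'$, i.e.\ to $B^{*}$ carrying an equivalent $q'$-uniformly convex norm; note that $2\le q'<\infty$. The second ingredient is the (purely spectral) identity established in the proof of Theorem~\ref{thm6}: since $\mathcal{F}=0$ for the heat semigroup on $\Rn$, for $f\in\CcR\otimes B$ and $g\in\CcR\otimes B^{*}$ one has
$$
\int_{\Rn}\langle f(x),g(x)\rangle_{B,B^{*}}\,dx
= c_{\alpha}\int_{0}^{\infty}\!\!\int_{\Rn}\langle u^{\alpha}\partial_{u}^{\alpha}W_{u}f(x),u^{\alpha}\partial_{u}^{\alpha}W_{u}g(x)\rangle_{B,B^{*}}\,dx\,\frac{du}{u},
\qquad c_{\alpha}=\tfrac{2^{2\alpha}}{\Gamma(2\alpha)}.
$$
Reparametrizing $u=t^{2}$ and averaging the right-hand side over the cone $\Gamma(0)$ (using $\int_{|y|<t}dy=c_{n}t^{n}$, translation invariance of $\int_{\Rn}dx$, and Fubini's theorem, all justified by pointwise bounds of the type \eqref{A01} together with the $L^{2}$ spectral estimates) gives, with $\phi_{h}(w,t):=(u^{\alpha}\partial_{u}^{\alpha}W_{u}h(w))_{|u=t^{2}}$,
$$
\int_{\Rn}\langle f(x),g(x)\rangle_{B,B^{*}}\,dx
= c_{n,\alpha}\int_{\Rn}\!\int_{\Gamma(0)}\langle \phi_{f}(x+y,t),\phi_{g}(x+y,t)\rangle_{B,B^{*}}\,\frac{dy\,dt}{t^{n+1}}\,dx .
$$

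For $(a)\Rightarrow(b)$, assume $B$ is, after renorming, $q$-uniformly smooth, so $B^{*}$ is, after renorming, $q'$-uniformly convex. Theorem~\ref{thm4} applied to $B^{*}$, $q'$ and the exponent $p'$ provides $C>0$ with $\|\mathcal{A}_{q',W_{t};B^{*}}^{\alpha}(g)\|_{L^{p'}(\Rn)}\le C\|g\|_{L^{p'}(\Rn,B^{*})}$ for all $g\in L^{p'}(\Rn,B^{*})$. Given $f\in\CcR\otimes B$ and $g\in\CcR\otimes B^{*}$, H\"{o}lder's inequality in $(y,t)\in\Gamma(0)$ with exponents $q,q'$, followed by H\"{o}lder in $x$ with exponents $p,p'$, turn the reproducing identity into
$$
\Big|\int_{\Rn}\langle f,g\rangle_{B,B^{*}}\,dx\Big|
\le c_{n,\alpha}\,\|\mathcal{A}_{q,W_{t};B}^{\alpha}(f)\|_{\LpR}\,\|\mathcal{A}_{q',W_{t};B^{*}}^{\alpha}(g)\|_{L^{p'}(\Rn)}
\le C\,\|\mathcal{A}_{q,W_{t};B}^{\alpha}(f)\|_{\LpR}\,\|g\|_{L^{p'}(\Rn,B^{*})}.
$$
Since $\{g\in\CcR\otimes B^{*}:\|g\|_{L^{p'}(\Rn,B^{*})}\le1\}$ is a norming family for $\LpRB$, taking the supremum over such $g$ yields $\|f\|_{\LpRB}\le C\|\mathcal{A}_{q,W_{t};B}^{\alpha}(f)\|_{\LpR}$, which is (b).

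For $(b)\Rightarrow(a)$, we show that the hypothesis forces $\mathcal{A}_{q',W_{t};B^{*}}^{\alpha}$ to be bounded from $L^{p'}(\Rn,B^{*})$ into $L^{p'}(\Rn)$; Theorem~\ref{thm4} applied to $B^{*}$ then produces an equivalent $q'$-uniformly convex norm on $B^{*}$, and the duality recalled above gives an equivalent $q$-uniformly smooth norm on $B$. Fix $g\in\CcR\otimes B^{*}$. Because $B$ is norming for $B^{*}$, $\|\mathcal{A}_{q',W_{t};B^{*}}^{\alpha}(g)\|_{L^{p'}(\Rn)}$ is the supremum, over $H$ in (a dense subset of) the unit ball of $L^{p}(\Rn,L^{q}_{B}(\Gamma(0),\frac{dy\,dt}{t^{n+1}}))$, of
$$
\Big|\int_{\Rn}\!\int_{\Gamma(0)}\!\langle \phi_{g}(x+y,t),H(x,y,t)\rangle_{B^{*},B}\,\frac{dy\,dt}{t^{n+1}}\,dx\Big|
=\Big|\int_{\Rn}\langle g(z),(S_{\alpha}H)(z)\rangle_{B^{*},B}\,dz\Big|,
$$
where, using differentiation under the integral sign as in \eqref{A1.1}, $(S_{\alpha}H)(z)=\int_{\Rn}\int_{\Gamma(0)}(u^{\alpha}\partial_{u}^{\alpha}W_{u}(x+y-z))_{|u=t^{2}}\,H(x,y,t)\,\frac{dy\,dt}{t^{n+1}}\,dx$. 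By H\"{o}lder this is $\le\|g\|_{L^{p'}(\Rn,B^{*})}\|S_{\alpha}H\|_{\LpRB}$, and by (b) (extended to $S_{\alpha}H$ by a routine truncation argument) $\|S_{\alpha}H\|_{\LpRB}\le C\|\mathcal{A}_{q,W_{t};B}^{\alpha}(S_{\alpha}H)\|_{\LpR}$. Finally, the semigroup property turns $\mathcal{A}_{q,W_{t};B}^{\alpha}(S_{\alpha}H)$ into $\|T_{\alpha}H\|$, where $T_{\alpha}$ is the singular integral whose $\mathcal{L}(L^{q}_{B}(\Gamma(0)))$-valued kernel is built from $(\sigma s)^{\alpha}\partial_{\sigma}^{\alpha}\partial_{s}^{\alpha}W_{\sigma+s}$ evaluated at $\sigma=\tau^{2}$, $s=t^{2}$; arguing exactly as in the proof of Theorem~\ref{thm7}, with the cone $(0,\infty)$ replaced by $\Gamma(0)$, one verifies the $L^{2}$-bound and the H\"{o}rmander-type conditions, so $T_{\alpha}$ is bounded on $L^{r}(\Rn,L^{q}_{B}(\Gamma(0)))$ for every $1<r<\infty$. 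Hence $\|\mathcal{A}_{q,W_{t};B}^{\alpha}(S_{\alpha}H)\|_{\LpR}\le C\|H\|_{L^{p}(\Rn,L^{q}_{B}(\Gamma(0)))}$, and collecting the estimates gives $\|\mathcal{A}_{q',W_{t};B^{*}}^{\alpha}(g)\|_{L^{p'}(\Rn)}\le C\|g\|_{L^{p'}(\Rn,B^{*})}$.

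The hard part is this last Calder\'on-Zygmund step in $(b)\Rightarrow(a)$: checking that the composed operator $T_{\alpha}$ on the mixed-norm tent space $L^{r}(\Rn,L^{q}_{B}(\Gamma(0)))$ satisfies the size and smoothness estimates. Its kernel is essentially the one already handled in Theorem~\ref{thm7} (a twofold Weyl-derivative heat kernel, $(\sigma s)^{\alpha}\partial_{\sigma}^{\alpha}\partial_{s}^{\alpha}W_{\sigma+s}$), so the bounds follow by iterating estimates of the form $|\partial_{u}^{m}W_{u}(z)|\le Cu^{-n/2-m}e^{-c|z|^{2}/u}$; the additional spatial average over $\{|y|<t\}$ inside the cone only helps. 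The remaining points -- absolute convergence for Fubini in the reproducing identity, and the truncation argument allowing (b) to be applied to the non-compactly supported function $S_{\alpha}H$ -- are routine given the pointwise bounds already used in Theorems~\ref{thm2}, \ref{thm4} and \ref{thm7}.
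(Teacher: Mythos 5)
Your proposal is correct and takes essentially the same route as the paper: for $(a)\Rightarrow(b)$ the polarization identity $\int_0^\infty\int_{\Rn}t^{\alpha}\partial_t^{\alpha}W_t f\,t^{\alpha}\partial_t^{\alpha}W_t g\,dx\,\frac{dt}{t}=c_\alpha\int_{\Rn}fg\,dx$ plus the cone-averaging trick, H\"older, and Theorem \ref{thm4} applied to $B^{*}$ (martingale cotype $q'$ via \cite[Lemma 3.2]{OX}); for $(b)\Rightarrow(a)$ the dual pairing through the operator $S_\alpha$, identification of $\mathcal{A}^{\alpha}_{q,W_t;B}(S_\alpha H)$ with a singular integral $T_\alpha$ whose kernel is $(t t_1)^{2\alpha}\partial_u^{\alpha}\partial_v^{\alpha}W_{u+v}$ at $u=t^2$, $v=t_1^2$, and the operator-valued Calder\'on--Zygmund theorem, exactly as in the paper. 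The only cosmetic correction: the initial strong-type bound for $T_\alpha$ should be verified on $L^{q}(\Rn,L^{q}(\Gamma(0),\frac{dy\,dt}{t^{n+1}};B))$ (as in Theorem \ref{thm7} and in the paper's proof), not on $L^{2}$, since it is for $r=q$ that the mixed norm collapses to an $L^{q}$ norm with respect to the product measure and the Schur-type estimate applies.
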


\begin{proof}
$(a)\Rightarrow (b)$. Suppose that there exists a norm $\left\VERT\cdot\right\VERT$ on $B$ that defines the original topology of $B$ and such that $(B,\left\VERT\cdot\right\VERT)$ is $q$-uniformly smooth.

By proceeding as in the proof of \cite[Proposition 2.1, $(a)$]{AHM} we can prove that
$$
\|g_{q,W_{t};B}^{\alpha}(f)\|_{\LpR}\leq
C \|\mathcal{A}_{q,W_{t};B}^{\alpha}(f)\|_{\LpR},
\quad f\in\LpR\otimes B,
$$
provided that $p\leq q$. Then, according to Theorem \ref{thm6} we deduce that
$$
\|f\|_{\LpRB}\leq
C \|\mathcal{A}_{q,W_{t};B}^{\alpha}(f)\|_{\LpR},
\quad f\in\LpR\otimes B,
$$
when $p\leq q$.

Suppose that $f,g\in \CcR$. We can write
$$
W_{t}(f)(x)=(e^{-|y|^{2}t}\hat{f})^{\vee}(x), \quad x\in\Rn\quad\text{and}\quad t>0,
$$
where, for every $h\in L^{1}(\Rn)$,
$$
\hat h (y)=\frac{1}{(2\pi)^{n/2}} \int_{\Rn} e^{-iz\cdot y} h(z)dz,
\quad y\in\Rn,
$$
and $\breve{h}(y)=\hat h(-y)$, $y\in\Rn$.

We choose $m\in\N$ such that $m-1\leq \alpha<m$. Since $ \hat f \in S(\mathbb R^n)$ (the Schwartz class)
$$
\int_{\Rn} e^{-|y|^{2}t}\:|y|^{2m}\:|\hat f(y)|\:dy
\leq C/t^{m}, \quad t>0,
$$
and we have that
$$
\partial_{t}^{m} W_{t}(f)(x)=
\left(
e^{-|y|^{2}t}\:(-1)^{m}\:|y|^{2m}\:\hat f(y)
\right)^{\vee}(x),
\quad x\in\Rn\quad\text{and}\quad t>0.
$$
Also, we get
$$
\int_{t}^{\infty} (u-t)^{m-\alpha-1}
\int_{\Rn} e^{-|y|^{2}u}\:|y|^{2m}\:|\hat f(y)|\:dy\:du
\leq C
\int_{t}^{\infty} (u-t)^{m-\alpha-1}\:u^{-m}\:du<\infty, \quad t>0,
$$
and we can write
$$
\partial_{t}^{\alpha} W_{t}(f)(x)=
\left(\partial_{t}^{\alpha}
\left(e^{-|y|^{2}t}\right)
\hat f\right)(x)=(-1)^{m}
\left(e^{-|y|^{2}t}|y|^{2\alpha}\hat f\right)(x), \quad x\in\Rn\quad\text{and}\quad t>0.
$$
Plancherel equality leads to
\begin{align*}
&\int_{0}^{\infty}\int_{\Rn}
t^{\alpha}\partial_{t}^{\alpha} W_{t}(f)(x)
t^{\alpha}\partial_{t}^{\alpha} W_{t}(g)(x) \:dx\:\frac{dt}{t}
=\int_{0}^{\infty}\int_{\Rn}
e^{-2|y|^{2}t}|y|^{4\alpha}\hat f(y)\hat g(y)\:dy\:t^{2\alpha-1}\:dt\\
&=\int_{\Rn} \hat f(y)\hat g(y)
\int_{0}^{\infty} e^{-2|y|^{2}t}|y|^{4\alpha}t^{2\alpha-1}\:dt\:dy
=\frac{\Gamma(2\alpha)}{2^{2\alpha}}\int_{\Rn} \hat f(y)\hat g(y)\:dy
=\frac{\Gamma(2\alpha)}{2^{2\alpha}}\int_{\Rn} f(x)\:g(x)\:dx.
\end{align*}
\bigskip
Note that all the above integrals are absolutely convergent.

An averaging trick appearing in \cite[p. 316]{CMS} leads to

\begin{multline*}
\int_{\Rn\times(0,\infty)}
t^{\alpha}\partial_{t}^{\alpha}W_{t}(f)(x)\:
t^{\alpha}\partial_{t}^{\alpha}W_{t}(g)(x)\:dx\:\frac{dt}{t}\\
=\frac{1}{c_{n}}\int_{\Rn}
\left(
\int_{\Rn\times(0,\infty)}
t^{\alpha}\partial_{t}^{\alpha}W_{t}(f)(x)\:
t^{\alpha}\partial_{t}^{\alpha}W_{t}(g)(x)\:\chi_{B(0,1)}
\left(\frac{x-y}{\sqrt{t}}\right)
\frac{dy\:dt}{t^{n/2+1}}
\right)\:dx,
\end{multline*}
where $\chi_{B(0,1)}$ denotes the characteristic function of the unit ball in $\Rn$ and $c_{n}$ means the Lebesgue measure of $B(0,1)$.

Let now $f\in \CcR\otimes B$ and $g\in \CcR\otimes B^{*}$. We have that

\begin{align*}
&\frac{\Gamma(2\alpha)}{2^{2\alpha}}\int_{\Rn}
\langle f(x),g(x)\rangle_{B,B^{*}}\:dx=
\int_{\Rn\times(0,\infty)}
\langle
t^{\alpha}\partial_{t}^{\alpha}W_{t}(f)(x),\:
t^{\alpha}\partial_{t}^{\alpha}W_{t}(g)(x)
\rangle_{B,B^{*}}\:dx\:\frac{dt}{t}\\
&\hspace{5mm}= \frac{1}{c_{n}}\int_{\Rn}
\left(
\int_{\Rn\times(0,\infty)}
\langle
t^{\alpha}\partial_{t}^{\alpha}W_{t}(f)(x),\:
t^{\alpha}\partial_{t}^{\alpha}W_{t}(g)(x)
\rangle_{B,B^{*}}
\:\chi_{B(0,1)}
\left(\frac{x-y}{\sqrt{t}}\right)
\frac{dy\:dt}{t^{n/2+1}}
\right)\:dx.
\end{align*}
By H\"older's inequality we get
$$
\left|\int_{\Rn}
\langle f(x), g(x)\rangle_{B,B^{*}}\:dx
\right|
\leq C \:\|\mathcal{A}_{q,W_{t};B}^{\alpha}(f)\|_{\LpR}
         \|\mathcal{A}_{q',W_{t};B}^{\alpha}(g)\|_{L^{p'}(\Rn)}.
$$
Since $B^{*}$ is of $q'$-cotype of martingale (\cite[Lemma 3.2]{OX}) Theorem \ref{thm4} implies that
$$
\left|\int_{\Rn}
\langle f(x), g(x)\rangle_{B,B^{*}}\:dx
\right|
\leq C \:\|\mathcal{A}_{q,W_{t};B}^{\alpha}(f)\|_{\LpR}
         \|g\|_{L^{p}(\Rn,B^{*})}.
$$
We conclude that
$$\|f\|_{L^{p}(\Rn,B^{*})}
\leq C\:\|\mathcal{A}_{q,W_{t};B}^{\alpha}(f)\|_{\LpR}.
$$

$(b)\Rightarrow (a)$. Suppose that $(b)$ holds. If $q\leq p<\infty$, by proceeding as in the proof of \cite[Proposition 2.1, $(a)$]{AHM} we can see that
$$
\|\mathcal{A}_{q,W_{t};B}^{\alpha}(f)\|_{\LpR}
\leq C \:
\|g_{q,W_{t};B}^{\alpha}(f)\|_{\LpR},
\quad f\in \CcR\otimes B.
$$
Then, by Theorem \ref{thm7}, $(a)$ is proved.
In other cases we need to work harder. We are going to prove that $B^{*}$ is of $q'$-martingale cotype. When we show this property, by using again \cite[Lemma 3.2]{OX} we will prove that $(a)$ holds.

Let $f\in \CcR\otimes B^{*}$. Fix $\eps>0$. We choose $h\in \CcR\otimes \Cc(\Gamma(0))\otimes B$ such that
$$
\|h\|_{L^{p}(\Rn, L^{q}(\Gamma(0),\frac{dy\:dt}{t^{n+1}}; B))}=1
$$
and
$$
\|\mathcal{A}_{q',W_{t};B}^{\alpha}(f)\|_{L^{p'}(\Rn)}
\leq
\left|\int_{\Rn}\int_{\Gamma(0)}
\langle s^{\alpha}\partial_{s}^{\alpha}W_{s}(f)(x+y)_{|s=t^{2}},
h(x,y,t)\rangle_{B^{*},B}\frac{dy\:dt\:dx}{t^{n+1}}
\right|+\eps
$$

The ideas in the sequel are the same as in the proof of Theorem \ref{thm7} but now the manipulation are more involved. By \eqref{A.2} we have that
\begin{align*}
&\int_{\Rn}\int_{\Gamma(0)}
\left|
\langle (s^{\alpha}\partial_{s}^{\alpha}W_{s}(f)(x+y))_{|s=t^{2}},
h(x,y,t)
\rangle_{B^{*},B}
\right|
\frac{dy\:dt\:dx}{t^{n+1}}\\
& \hspace{1cm}\leq C
\int_{\Rn}\int_{\Gamma(0)}\int_{\Rn}
| (s^{\alpha}\partial_{s}^{\alpha}W_{s}(x+y-z))_{|s=t^{2}}|\:
\|f(z)\|_{B^{*}} \: \|h(x,y,t)\|_{B} \:
dz\:\frac{dy\:dt}{t^{n+1}}\:dx\\
&\hspace{1cm} \leq C
\int_{\Rn}\int_{\Gamma(0)}\int_{\Rn}
\frac{1}{t^{2n+1}} \|f(z)\|_{B^{*}} \: \|h(x,y,t)\|_{B} \:
dz\:dy\;dt\:dx<\infty.
\end{align*}
We can write
\begin{multline*}
\int_{\Rn}\int_{\Gamma(0)}
\langle (s^{\alpha}\partial_{s}^{\alpha}W_{s}(f)(x+y))_{|s=t^{2}},
h(x,y,t)
\rangle_{B^{*},B}
\frac{dy\:dt\:dx}{t^{n+1}}\\
=\int_{\Rn}
\langle f(z),
\int_{\Gamma(0)}\int_{\Rn}
(s^{\alpha}\partial_{s}^{\alpha}W_{s}(x+y-z))_{|s=t^{2}}\:
h(x,y,t)\:dx\:\frac{dy\:dt}{t^{n+1}}
\rangle_{B^{*},B} \:dz.
\end{multline*}

We define
$$
S_{\alpha}(H)(z)=
\int_{\Gamma(0)}\int_{\Rn}
s^{\alpha}\partial_{s}^{\alpha}W_{s}(x+y-z)_{|s=t^{2}}\:
H(x,y,t)\:dx\:\frac{dy\:dt}{t^{n+1}},
\quad H\in \CcR\otimes \Cc(\Gamma(0))\otimes B.
$$

According to $(b)$ we have that
\begin{align}
&\left|\int_{\Gamma(0)}\int_{\Rn}
\langle (s^{\alpha}\partial_{s}^{\alpha}W_{s}(f)(x+y))_{|s=t^{2}},
H(x,y,t)
\rangle_{B^{*},B}
\frac{dy\:dt\:dx}{t^{n+1}}
\right|\nonumber\\
&\hspace{2cm}\leq \|f\|_{L^{p'}(\Rn,B^{*})}
\|S_{\alpha}(h)\|_{\LpRB}\nonumber\\
& \hspace{2cm}\leq C\:
\|f\|_{L^{p'}(\Rn,B^{*})}
\|\mathcal{A}_{g,W_{t},B}^{\alpha}(S_{\alpha}(h))\|_{\LpR}\nonumber\\
&\hspace{2cm}= C \:
\|f\|_{L^{p'}(\Rn,B^{*})}
\|(s^{\alpha}\partial_{s}^{\alpha}W_{s}(S_{\alpha}(h))(x+y))_{|s=t^{2}}\|_{L^{p}(\Rn,L^{q}(\Gamma(0),\frac{dy\:dt}{t^{n+1}};B))}.\label{8.1}
\end{align}
We can write
$$
s^{\alpha}\partial_{s}^{\alpha}W_{s}(S_{\alpha}(h))(x+y)_{|s=t^{2}}
=\int_{\Rn}\int_{\Gamma(0)}
K_{\alpha}(x,y,t;x_{1},y_{1},t_{1})\:
h(x_{1},y_{1},t_{1})\:
\frac{dy_{1}\:dt_{1}}{t_{1}^{n+1}}\:dx_{1},
$$
where
\begin{eqnarray*}
K_{\alpha}(x,y,t;x_{1},y_{1},t_{1})&=&
\int_{\Rn}
s^{\alpha}\partial_{s}^{\alpha}W_{s}(x_{1}-y_{1}-z)_{|s=t_{1}^{2}}
s^{\alpha}\partial_{s}^{\alpha}W_{s}(x+y-z)_{|s=t^{2}} \:dz\\
&=& t^{2\alpha}\:t_{1}^{2\alpha}
\partial_{u}^{\alpha}\partial_{v}^{\alpha}
W_{u+v}((x_{1}+y_{1})-(x+y))_{|u=t^{2}\:,v=t_{1}^{2}},
\end{eqnarray*}
$x,y,x_{1},y_{1}\in\Rn$ and $t,t_{1}\in(0,\infty)$.

We define the operator $T_{\alpha}$ by
$$
T_{\alpha}(F)(x,y,t)=
\int_{\Rn}\int_{\Gamma(0)}
K_{\alpha}(x,y,t;x_{1},y_{1},t_{1})\:
F(x_{1},y_{1},t_{1})
\frac{dy_{1}\:dt_{1}}{t_{1}^{n+1}}\;dx_{1},
$$
for every $F\in L^{p,q}(\Rn\times\Gamma(0),dx_{1}\:\frac{dy_{1}\:dt_{1}}{t_{1}^{n+1}};B)$
Here $L^{p,q}(\Rn\times\Gamma(0),dx_{1}\:\frac{dy_{1}\:dt_{1}}{t_{1}^{n+1}};B)$ can be identified by $L^{p}(\Rn,L^{q}(\Gamma(0),\frac{dy_{1}\:dt_{1}}{t_{1}^{n+1}};B))$.

The operator $T_{\alpha}$ defines a bounded operator from $L^{q}(\Rn,L^{q}(\Gamma(0),\frac{dy\:dt}{t^{n+1}};B))$ into itself. Indeed, let $F\in L^{q}(\Rn,L^{q}(\Gamma(0),\frac{dy\:dt}{t^{n+1}};B))$. We have that
\begin{multline*}
\|T_{\alpha}(F)(x,y,t)\|_{B}
\leq
\left(
\int_{\Rn}\int_{\Gamma(0)}
|K_{\alpha}(x,y,t;x_{1},y_{1},t_{1})|\:
dx_{1}\:\frac{dy_{1}\:dt_{1}}{t_{1}^{n+1}}
\right)^{1/q'}\\
\times
\left(
\int_{\Rn}\int_{\Gamma(0)}
|K_{\alpha}(x,y,t;x_{1},y_{1},t_{1})|\:
\|F(x_{1},y_{1},t_{1})\|_{B}^{q}\:
dx_{1}\:\frac{dy_{1}\:dt_{1}}{t_{1}^{n+1}}
\right)^{1/q},
 x\in\Rn,\:(t,y)\in\Gamma(0).
\end{multline*}

We take $m\in\N$ such that $m-1\leq\alpha<m$. According to \eqref{7.2} we get
\begin{align*}
&\int_{\Rn}\int_{\Gamma(0)}
|K_{\alpha}(x,y,t;x_{1},y_{1},t_{1})|\:
dx_{1}\:\frac{dy_{1}\:dt_{1}}{t_{1}^{n+1}}\\
&\hspace{8mm}\leq C
\int_{\Rn}\int_{\Gamma(0)}
t^{2\alpha}\:t_{1}^{2\alpha}
\int_{t^{2}}^{\infty} (u-t^{2})^{m-\alpha-1}\\
&\hspace{1cm}\times \int_{t_{1}^{2}}^{\infty}
|\partial_{z}^{2m}W_{z}((x_{1}+y_{1})-(x+y))_{z=u+v}|
(v-t_{1}^{2})^{m-\alpha-1}
dv\:du\:\frac{dy_{1}\:dt_{1}}{t_{1}^{n+1}}\:dx_{1}\\
&\hspace{8mm}\leq C
\int_{\Rn}\int_{\Gamma(0)}
t^{2\alpha}\:t_{1}^{2\alpha}
\int_{t^{2}}^{\infty} (u-t^{2})^{m-\alpha-1}
\int_{t_{1}^{2}}^{\infty}
\frac{e^{-c\frac{|x+y-x_{1}-y_{1}|^{2}}{u+v}}}{(u+v)^{n/2+2m}}
(v-t_{1}^{2})^{m-\alpha-1}
dv\:du\:\frac{dy_{1}\:dt_{1}}{t_{1}^{n+1}}\:dx_{1}\\
&\hspace{8mm}\leq C
\int_{\Gamma(0)} t^{2\alpha}\:t_{1}^{2\alpha}
\int_{t^{2}}^{\infty} (u-t^{2})^{m-\alpha-1}
\int_{t_{1}^{2}}^{\infty} \frac{1}{(u+v)^{2m}}
(v-t_{1}^{2})^{m-\alpha-1}
dv\:du\:\frac{dy_{1}\:dt_{1}}{t_{1}^{n+1}}\\
&\hspace{8mm}\leq C
\int_{\Gamma(0)}
\frac{t^{2\alpha}\:t_{1}^{2\alpha}}{(t^{2}+t_{1}^{2})^{2\alpha}}
\:\frac{dy_{1}\:dt_{1}}{t_{1}^{n+1}}\\
&\hspace{8mm}\leq C
\int_{0}^{\infty}
\frac{t^{2\alpha}\:t_{1}^{2\alpha-1}}{(t^{2}+t_{1}^{2})^{2\alpha}}
dt_{1}\\
&\hspace{8mm}= C\int_{0}^{\infty} \frac{z^{2\alpha-1}}{(z^{2}+1)^{2\alpha}}<\infty,
\quad x\in\Rn,\:(y,t)\in\Gamma(0).
\end{align*}

Then, we obtain
$$
\|T_{\alpha}(F)\|_{L^{q}(\Rn,\:L^{q}(\Gamma(0)),\frac{dy\:dt}{t^{n+1}};B))}
\leq
C\:\|F\|_{L^{q}(\Rn,L^{q}(\Gamma(0)),\frac{dy\:dt}{t^{n+1}};B))}.
$$

Let $x,x_{1}\in\Rn$, $x\neq x_{1}$. We consider the operator $\mathbb{K}_{\alpha}(x,x_{1})$ defined by
\begin{align*}
\mathbb{K}_{\alpha}(x,x_{1}):
L^{q}&(\Gamma(0),\frac{dx\:dt}{t^{n+1}};B)\to
L^{q}(\Gamma(0),\frac{dx\:dt}{t^{n+1}};B)\\
& G \to [\mathbb{K}_{\alpha}(x,x_{1})(G)](y,s)=
\int_{\Gamma(0)}K_{\alpha}(x,y,s;x_{1},y_{1},t_{1})\:
G(y_{1},t_{1})\:\frac{dy_{1}\:dt_{1}}{t_{1}^{n+1}}
\end{align*}
$\mathbb{K}_{\alpha}(x,x_{1})$ is a bounded operator from
$L^{q}(\Gamma(0),\frac{dx\:dt}{t^{n+1}};B)$ into itself. Indeed, let $G\in L^{q}(\Gamma(0),\frac{dx\:dt}{t^{n+1}};B)$. We can write
\begin{align*}
\|[\mathbb{K}_{\alpha}(x,x_{1})(G)](y,s)\|_{B}
& \leq
\left(
\int_{\Gamma(0)}|K_{\alpha}(x,y,s;x_{1},y_{1},t_{1})|
\frac{dy_{1}\:dt_{1}}{t_{1}^{n+1}}
\right)^{1/q'}\\
&\cdot
\left(
\int_{\Gamma(0)}|K_{\alpha}(x,y,s;x_{1},y_{1},t_{1})|\:
\|G(y_{1},t_{1})\|_{B}^{q}
\frac{dy_{1}\:dt_{1}}{t_{1}^{n+1}}
\right)^{1/q},
\quad (y,s)\in\Gamma(0).
\end{align*}

By proceeding as above, distinguising the cases  $|x-x_{1}|\geq 2(s+t_{1})$ and $|x-x_{1}|\leq 2(s+t_{1})$, we get
$$
\int_{\Gamma(0)}|K_{\alpha}(x,y,s;x_{1},y_{1},t_{1})|
\frac{dy_{1}\:dt_{1}}{t_{1}^{n+1}}
\leq \frac{C}{|x-x_{1}|^{n}},
\quad (y,s)\in\Gamma(0),
$$
and then,
\begin{equation}\label{8.2}
\|\mathbb{K}_{\alpha}(x,x_{1})(G)\|_
{L^{q}(\Gamma(0),\frac{dy\:dt}{t^{n+1}};B)}
\leq \frac{C}{|x-x_{1}|^{n}}
\|G\|_
{L^{q}(\Gamma(0),\frac{dy\:dt}{t^{n+1}};B)}.
\end{equation}
We denote by $\mathcal{L}(L^{q}(\Gamma(0),\frac{dx\:dt}{t^{n+1}};B))$ the space of bounded and linear operators from $L^{q}(\Gamma(0),\frac{dx\:dt}{t^{n+1}};B)$ into itself. By \eqref{8.2} we have that
$$
\|\mathbb{K}_{\alpha}(x,x_{1})\|_
{\mathcal{L}(L^{q}(\Gamma(0),\frac{dx\:dt}{t^{n+1}};B))}
\leq \frac{C}{|x-x_{1}|^{n}}.
$$

We now can prove as above that, for every $i=1,\ldots,n$,
$$
\int_{\Gamma(0)}|\partial_{x_{1}}K_{\alpha}(x,y,s;x_{1},y_{1},t_{1})|
\frac{dy_{1}\:dt_{1}}{t_{1}^{n+1}}
\leq \frac{C}{|x-x_{1}|^{n+1}},
\quad (y,s)\in\Gamma(0),
$$
and we deduce that
$$
\|\mathbb{K}_{\alpha}(x,x_{1})-\mathbb{K}_{\alpha}(x,x_{2})\|_
{\mathcal{L}^{q}(L^{q}(\Gamma(0),\frac{dx\:dt}{t^{n+1}};B))}
\leq \frac{|x_{1}-x_{2}|}{|x-x_{2}|^{n+1}}.
$$
provided that $|x-x_{2}|>2|x_{1}-x_{2}|$.

Let now $G\in \Cc(\Rn,L^{q}(\Gamma(0),\frac{dy\:dt}{t^{n+1}})\otimes B)$. Suposse that $x\notin\sup(G)$ and $H\in \Cc(\Gamma(0))\otimes B^{*}$. By using \eqref{8.2} we get
\begin{align*}
\int_{\Gamma(0)}
&\left\langle
\left(\int_{\Rn}
\mathbb{K}_{\alpha}(x,x_{1})(G(x_{1}))\:dx_{1}
\right)(y,s),
H(y,s)
\right\rangle_{B,B^{*}}
\frac{dy\:ds}{s^{n+1}}\\
& =\int_{\Rn} \int_{\Gamma(0)}
\langle
[\mathbb{K}_{\alpha}(x,x_{1})(G(x_{1}))](y,s),
H(y,s)
\rangle_{B,B^{*}}
\frac{dy\:ds}{s^{n+1}}\:dx_{1}\\
& =\int_{\Gamma(0)}
\left\langle
\int_{\Rn} \int_{\Gamma(0)}
K_{\alpha}(x,y,s;x_{1},y_{1},s_{1})\:[G(x_{1})](y,s_{1})\:
\frac{dy_{1}\:ds_{1}}{s_{1}^{n+1}}\:dx_{1},
H(y,s)
\right\rangle_{B,B^{*}}\frac{dy\:ds}{s^{n+1}}.
\end{align*}
Then
\begin{multline*}
\left( \int_{\Rn}
\mathbb{K}_{\alpha}(x,x_{1})(G(x_{1}))\:dx_{1}
\right)(y,s)\\
=\int_{\Rn} \int_{\Gamma(0)}
K_{\alpha}(x,y,s;x_{1},y_{1},s_{1})\:[G(x_{1})](y_{1},s_{1})\:
\frac{dy_{1}\:ds_{1}}{s_{1}^{n+1}}\:dx_{1},
\quad\text{a.e.}\:\:(y,s)\in\Gamma(0).
\end{multline*}

By using Calder\'on-Zygmund theorem for Banach valued singular integrals (\cite{RRT}) we deduce that the operator $T_{\alpha}$ can be extended to $L^{r}(\Rn,L^{q}(\Gamma(0),\frac{dy_{1}\:dt_{1}}{t_{1}^{n+1}};B))$  as a bounded operator from  $L^{r}(\Rn,L^{q}(\Gamma(0),\frac{dy_{1}\:dt_{1}}{t_{1}^{n+1}};B))$ into itself, for every $1<r<\infty$.

From \eqref{8.1}  it follows that
\begin{align*}
&\left|
\int_{\Gamma(0)}\int_{\Rn}
\langle
(s^{\alpha}\partial_{s}^{\alpha}W_{s}(f)(x+y))_{|s=t^{2}},
h(x,y,t)
\rangle_{B^{*},B}
\frac{dy\:dt\:dx}{t^{n+1}}
\right|\\
& \hspace{25mm}\leq C\: \|f\|_{L^{p'}(\Rn,B^{*})}
\|h\|_{L^{p}(\Rn,\:L^{q}(\Gamma(0),\frac{dy\:dt}{t^{n+1}};B))}\\
& \hspace{25mm}\leq C\: \|f\|_{L^{p'}(\Rn,B^{*})}.
\end{align*}

The arbitrariness of $\eps$ leads to
$$
\|\mathcal{A}_{q',W_{t};B}^{\alpha}(f)\|_{L^{p'}(\Rn)}
\leq C\: \|f\|_{L^{p'}(\Rn,B^{*})}
$$

By using Theorem \ref{thm4} we prove that $B^{*}$ is of martingale cotype $q'$.

Thus the proof is finished.
\end{proof}

\section{Results involving the heat semigroup for the Hermite operator.}

We now obtain characterization of martingale cotype and martingale type Banach spaces by using $g$-functions and area integrals defined by the heat semigroup associated with the Hermite operator (Theorems \ref{thm1.9} and \ref{thm1.10}). We recall that the semigroup $\{W_t^\mathfrak{H}\}_{t>0}$ is not Markovian.
\begin{thm}\label{thm2.1}
Let $B$ be a Banach space, $2 \leq q < \infty$, $1<p<\infty$, and $\alpha >0$. The following assertions are equivalent.
\begin{enumerate}
\item[(a)] There exists a norm $\left\VERT\cdot\right\VERT$ on $B$ that is equivalent to $\|\cdot\|$ and such that $(B;\left\VERT\cdot\right\VERT)$ is q-uniformly convex.
\item[(b)] There exists $C>0$ such that
$$
\left\|g^\alpha_{q,W_t^\mathfrak{H};B}(f) \right\|_{L^p(\mathbb R^n)} \leq C\|f\|_{L^p(\mathbb R^n,B)},\;\; f \in L^p(\mathbb R^n,B).
$$
\end{enumerate}
\end{thm}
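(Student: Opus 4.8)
The plan is to transfer the estimate to the classical heat semigroup $\{W_{t}\}_{t>0}$ on $\Rn$, for which the direct bound (Corollary~\ref{cor1}) and the converse (Theorem~\ref{thm2}) are already available; the only obstruction is that $\{W_{t}^{\mathfrak{H}}\}_{t>0}$ is not markovian, so Theorem~\ref{thm1.4} does not apply. Throughout I use Pisier's renorming theorem (see \cite{Pi3}): assertion $(a)$ holds if and only if $B$ has martingale cotype $q$. As in the proof of Theorem~\ref{thm2} I work on the dense subspace $\CcR\otimes B$, where $\partial_{t}^{\alpha}W_{t}^{\mathfrak{H}}f$ can be computed by differentiating under the integral sign in the explicit Mehler-type kernel, and extend the resulting a priori inequalities by density. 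Fix a small $c>0$. For each $x\in\Rn$ split $(0,\infty)=\mathcal{L}_{x}\cup\mathcal{G}_{x}$ into the local part $\mathcal{L}_{x}=\{t>0: t(1+|x|)^{2}\le c\}$ and its complement $\mathcal{G}_{x}$; denote by $g_{\mathcal{L}}^{\mathfrak{H}}$, $g_{\mathcal{G}}^{\mathfrak{H}}$, $g_{\mathcal{L}}^{W}$ the truncations to $\mathcal{L}_{x}$, $\mathcal{G}_{x}$, $\mathcal{L}_{x}$ of $g_{q,W_{t}^{\mathfrak{H}};B}^{\alpha}$, $g_{q,W_{t}^{\mathfrak{H}};B}^{\alpha}$, $g_{q,W_{t};B}^{\alpha}$ respectively, and set $\mathcal{D}_{\mathcal{L}}(f)(x)=(\int_{\mathcal{L}_{x}}\|t^{\alpha}\partial_{t}^{\alpha}(W_{t}^{\mathfrak{H}}-W_{t})f(x)\|^{q}\,\frac{dt}{t})^{1/q}$.

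The technical core is two kernel lemmas, both proved from the explicit formula for $W_{t}^{\mathfrak{H}}(x,y)$ together with the Gaussian-type bounds $|\partial_{t}^{m}W_{t}^{\mathfrak{H}}(x,y)|\le Ct^{-n/2-m}e^{-c|x-y|^{2}/t}e^{-ct|x+y|^{2}}$ for $0<t\le 1$ and $|\partial_{t}^{m}W_{t}^{\mathfrak{H}}(x,y)|\le Ce^{-ct}e^{-c(|x|^{2}+|y|^{2})}$ for $t\ge 1$. (Lemma A) The global operator $f\mapsto g_{\mathcal{G}}^{\mathfrak{H}}(f)$ maps $L^{p}(\Rn,B)$ into $L^{p}(\Rn)$ for every Banach space $B$ and every $1<p<\infty$: on $\mathcal{G}_{x}$ one has $t(1+|x|)^{2}\ge c$, so the factor $e^{-ct|x+y|^{2}}$ combined with $e^{-c|x-y|^{2}/t}$ and the $\frac{dt}{t}$-integration yields, after an elementary optimization in $t$, a kernel whose $L^{q}((0,\infty),\frac{dt}{t})$-norm is integrable in $y$ uniformly in $x$ and vice versa, so the Schur test applies. (Lemma B) The local difference operator $\mathcal{D}_{\mathcal{L}}$ maps $L^{p}(\Rn,B)$ into $L^{p}(\Rn)$ for every Banach space $B$ and every $1<p<\infty$: a Taylor expansion of the Mehler kernel at $t=0$ gives, on $\mathcal{L}_{x}$, $|W_{t}^{\mathfrak{H}}(x,y)-W_{t}(x-y)|\le Ct(1+|x|^{2})t^{-n/2}e^{-c|x-y|^{2}/t}$ and an analogous bound for the relevant $t$-derivatives; since on $\mathcal{L}_{x}$ the Gaussian forces $|x-y|\lesssim(1+|x|)^{-1}$, the gain $t(1+|x|^{2})$ makes the resulting $L^{q}((0,\infty),\frac{dt}{t})$-valued kernel integrable in $y$ uniformly in $x$ (with at worst a harmless logarithmic factor when $n=2$), and the Schur test again applies.

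Granting Lemmas A and B, the two implications follow. $(a)\Rightarrow(b)$: from $g_{q,W_{t}^{\mathfrak{H}};B}^{\alpha}(f)\le g_{\mathcal{L}}^{\mathfrak{H}}(f)+g_{\mathcal{G}}^{\mathfrak{H}}(f)\le g_{\mathcal{L}}^{W}(f)+\mathcal{D}_{\mathcal{L}}(f)+g_{\mathcal{G}}^{\mathfrak{H}}(f)\le g_{q,W_{t};B}^{\alpha}(f)+\mathcal{D}_{\mathcal{L}}(f)+g_{\mathcal{G}}^{\mathfrak{H}}(f)$, the first term on the right is controlled by Corollary~\ref{cor1} (the classical heat semigroup is a markovian symmetric diffusion semigroup and $B$ has martingale cotype $q$) and the other two by Lemmas A and B, which gives $(b)$. $(b)\Rightarrow(a)$: the bounds $\|(t^{\alpha}\partial_{t}^{\alpha}W_{t}^{\mathfrak{H}}(x,y))_{t>0}\|_{L^{q}((0,\infty),\frac{dt}{t})}\le C|x-y|^{-n}$ and the corresponding first-order bound $\le C|x-y|^{-n-1}$, obtained exactly as in \eqref{A1.2}, show that $f\mapsto(t^{\alpha}\partial_{t}^{\alpha}W_{t}^{\mathfrak{H}}f(x))_{t>0}$ is a Calder\'on-Zygmund operator with $\mathcal{L}(B,L^{q}((0,\infty),\frac{dt}{t};B))$-valued kernel, so by the Calder\'on-Zygmund theory for Banach-valued singular integrals (\cite{RRT}) the hypothesis $(b)$ upgrades to boundedness of $g_{q,W_{t}^{\mathfrak{H}};B}^{\alpha}$ on every $L^{r}(\Rn,B)$, $1<r<\infty$. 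Then $g_{\mathcal{L}}^{W}(f)\le g_{\mathcal{L}}^{\mathfrak{H}}(f)+\mathcal{D}_{\mathcal{L}}(f)\le g_{q,W_{t}^{\mathfrak{H}};B}^{\alpha}(f)+\mathcal{D}_{\mathcal{L}}(f)$ is bounded on every $L^{r}(\Rn,B)$; restricting the output to $|x|\le 1$, where $g_{\mathcal{L}}^{W}$ coincides up to constants with the fixed-cutoff truncation of $g_{q,W_{t};B}^{\alpha}$ to $t<c/4$, and then using translation invariance of $\{W_{t}\}$, a bounded-overlap covering of $\Rn$ by unit balls, the decay of the heat kernel to absorb the off-diagonal contributions, and finally a parabolic dilation $f\mapsto f(\lambda\,\cdot)$ together with monotone convergence as $\lambda\to\infty$, one deduces that the full $g_{q,W_{t};B}^{\alpha}$ is bounded on some $L^{r}(\Rn,B)$; Theorem~\ref{thm2} then provides the equivalent $q$-uniformly convex norm.

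The main obstacle is Lemma B: one must extract from the explicit Mehler kernel the sharp perturbative comparison with the Gaussian kernel on the local region and verify that the resulting $L^{q}((0,\infty),\frac{dt}{t})$-valued kernel genuinely passes a Schur test uniformly in both variables. Lemma A and the Calder\'on-Zygmund kernel bounds are routine, if lengthy, variants of the computation in \eqref{A1.2}, and the localization-plus-dilation step in $(b)\Rightarrow(a)$ is standard but needs some care because the truncation $\mathcal{L}_{x}$ is not scale invariant.
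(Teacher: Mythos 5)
Your argument is essentially correct, but it takes a genuinely different route from the paper's. The paper first proves that (b) is equivalent to the same estimate with $\alpha=1$ (call it (b')), using the Gaussian bounds \eqref{H1}, vector-valued Calder\'on--Zygmund theory, the composition trick that produces $g^{(k+1)\alpha}_{q,W_t^{\mathfrak H};B}$, and the monotonicity \eqref{A1}; then (b')$\Rightarrow$(a) is obtained by subordinating to the Hermite--Poisson semigroup and quoting \cite[Theorem A]{AST}, while (a)$\Rightarrow$(b') uses Theorem \ref{thm1.4} for $g^1_{q,W_t;B}$ together with a local/global splitting of $f$ in the \emph{space} variable at the critical radius $\rho(x)$, the covering of \cite{DGMTZ}, the perturbation formula for $\partial_t[W_t-W_t^{\mathfrak H}]$, and the Hardy--Littlewood maximal function. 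You instead work at fractional order directly and split in the \emph{time} variable at $t\approx\rho(x)^2$: for (a)$\Rightarrow$(b) you combine Corollary \ref{cor1} with your Lemmas A and B (Schur tests on $L^q((0,\infty),\frac{dt}{t})$-valued kernels), and for (b)$\Rightarrow$(a) you replace the subordination/AST step by a comparison with the time-truncated classical $g$-function, translation invariance plus a bounded-overlap covering, a parabolic dilation to remove the truncation, and the paper's Theorem \ref{thm2}. What you gain is a converse that is self-contained within the paper (no appeal to \cite{AST}) and no detour through $\alpha=1$; what the paper's reduction to $\alpha=1$ buys is precisely the point your sketch of Lemma B glosses over: the Weyl derivative is nonlocal in $t$, so even for $t$ in your local region the defining integral runs over all $u>t$, and the Taylor gain of order $u(1+|x|)^2$ for $W_u^{\mathfrak H}-W_u$ is only available for $u\lesssim(1+|x|)^{-2}$; the tail $u\gtrsim(1+|x|)^{-2}$ must be estimated separately (the Hermite part by Lemma A-type decay, the classical part by the smallness of $t^{\alpha}$ against the lower cutoff of the Weyl integral), which does work but is an extra step you must write out. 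Two further points to watch: your global-in-time region contains the diagonal $y\approx x$, so Lemma A genuinely needs the factor $e^{-ct|x+y|^2}$ (the simpler bound $e^{-c(|x|+|y|)|x-y|}$ that suffices on the paper's global-in-space region is useless there), and since your truncation region depends on $x$, the second half of the Schur test, $\sup_y\int(\cdot)\,dx$, has to be checked separately; both close after the change of variables $t\sim s(1+|x|)^{-2}$, and your dilation argument in (b)$\Rightarrow$(a) is a clean device with no counterpart in the paper.
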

\begin{proof}
Our first objective is to prove that the property (b) is equivalent to the following one

(b') There exists $C>0$ such that
$$
\left\|g^1_{q,W_t^\mathfrak{H};B}(f) \right\|_{L^p(\mathbb R^n)} \leq C\|f\|_{L^p(\mathbb R^n,B)},\;\; f \in L^p(\mathbb R^n,B).
$$
We have that, for every $f \in L^p(\mathbb R^n)$, $1 \leq p \leq \infty$,
$$
W^\mathfrak{H}_t(f)(x)= \int_{\mathbb R^n} W^\mathfrak{H}_t(x,y)f(y) dy,\;\; t>0,
$$
where, for every $x,y \in \mathbb R^n$ and $t>0$, $W_t^H(x,y)$ can be written
$$
W^\mathfrak{H}_t(x,y) = \frac{1}{\pi^{\frac{n}{2}}} \left( \frac{e^{-2t}}{1-e^{-4t}}\right)^{n/2} \exp\left(-\frac{1}{4}\left(|x-y|^2\frac{1+e^{-2t}}{1-e^{-2t}}+ |x+y|^2\frac{1 -e^{-2t}}{1 + e^{-2t}}\right)\right).
$$

For every $t>0$, $W^\mathfrak{H}_t$ is a positive operator. Then, for every $t>0$ and $1 \leq p < \infty$, $W_t^\mathfrak{H}$ has a (tensor) extension to $L^p(\mathbb R^n,B)$ satisfying the same $L^p$-boundedness properties than the corresponding scalar operator.

In the sequel we will use that, for every $k\in \mathbb N$,
\begin{equation}\label{H1}
\left|\frac{\partial^k}{\partial t^k} W_t^\mathfrak{H}(x,y)\right| \leq C\frac{e^{-c\frac{|x-y|^2}{t}}}{t^{\frac{n}{2}+k}},\;\; x,y \in \mathbb R^n\;\;\mbox{and}\;\;t>0.
\end{equation}
(See \cite[Lemma 2.5]{CD} for a proof of this property in a general situation).
Let $N \in \mathbb N$. We consider the space $H_N=L^q\left(\left(\frac{1}{N},\infty\right), \frac{dt}{t},B\right)$. Suppose that $f \in C_c^\infty(\mathbb R^n) \otimes B$. We now proceed as in the proof of the Theorem \ref{thm2}. We can see that, for every $x \in \mathbb R^n$, the function
\begin{equation}\label{H2}
\left[F_f(x)\right](t) = t^\alpha \partial_t^\alpha W_t^\mathfrak{H}(f)(x),\;\; t>0,
\end{equation}
is in $H_N$. Moreover, $F_f$ is $H_N$-strongly measurable in $\mathbb R^n$.

Assume that (b) holds. We consider the operator $T_\alpha$ defined by $T_\alpha(f)=F_f$, for every $f \in C_c^\infty(\mathbb R^n)\otimes B$ where $F_f$ is defined by \eqref{H2}. (b) says us that
$$
\|T_\alpha f\|_{L^p\big(\mathbb R^n,L^q\big(\big(\frac{1}{N},\infty\big),\frac{dt}{t},B\big)\big)}\leq C\|f\|_{L^p(\mathbb R^n,B)},\;\; f \in C_c^\infty(\mathbb R^n,B).
$$
We have that, for every $f \in C_c^\infty(\mathbb R^n)\otimes B$,
$$
(T_\alpha f)(x) = \int_{\mathbb R^d}K_\alpha(x,y)f(y)dy,\;\; x \in \mathbb R^d,
$$
where $\left[K_\alpha(x,y)\right](t)= t^\alpha\partial^\alpha_tW_t^\mathfrak{H}(x,y)$, $x,y \in \mathbb R^n$ and $t>\frac{1}{N}$. Here the integral is understood in $L^q\left(\left(\frac{1}{N},\infty\right), \frac{dt}{t},B\right)$-B\"{o}chner sense.

According to \eqref{H1} and by taking $m\in \mathbb N$ such that $m-1 \leq \alpha < m$, we get
\begin{eqnarray}\label{H3}
\|K_\alpha(x,y)\|_{H_N} &\leq & C\left(\int^\infty_0\left|t^\alpha\int_t^\infty \partial_s^m W_s^\mathfrak{H}(x,y)(s-t)^{m-\alpha-1} ds \right|^q \frac{dt}{t}\right)^{\frac{1}{q}} \nonumber\\
&\leq & C \left( \int_0^\infty \left|t^\alpha\int_t^\infty \frac{e^{-c \frac{|x-y|^2}{s}}}{s^{\frac{n}{2}+m}}(s-t)^{m-\alpha-1}ds\right|^q\frac{dt}{t}\right)^{\frac{1}{q}}\nonumber\\
&\leq&\frac{C}{|x-y|^n}\;\;x,y \in \mathbb R^n\;\;\mbox{and}\;\;x \not=y .
\end{eqnarray}

Also, by defining $[H_{\alpha,i}(x,y)](t)=t^\beta\partial^\beta_t\partial_{z_i}W_t^\mathfrak{H}(x,y)$, $x,y \in\mathbb R^n$, $i=1,\cdots,n$, and $t\in (\frac{1}{N},\infty)$, we obtain
\begin{equation}\label{H4}
\|H_{\alpha,i}(x,y)\|_{H_N} \leq \frac{C}{|x-y|^{n+1}},\;\;x,y \in \mathbb R^d\;\;,x \not=y\;\;\mbox{and}\;\; i=1,\cdots,n.
\end{equation}
The constant $C>0$ in \eqref{H3} and \eqref{H4} does not depend on $N\in \mathbb N$.

Calder\'on-Zygmund theorem for Banach-valued singular integrals implies that, for every, $1<r<\infty$, $T_\alpha$ can be extended to $L^r(\mathbb R^n,B)$ as a bounded operator from $L^r(\mathbb R^n, B)$ into $L^r(\mathbb R^n,H_N)$ and
$$
\|T_\alpha f\|_{L^r(\mathbb R^n,H_N)} \leq C\|f\|_{L^r(\mathbb R^n,B)},\;\; f \in L^r(\mathbb R^n,B),
$$
where $C$ is independent of $N$.

Fatou lemma leads to
\begin{equation}\label{H5}
\left\|g_{q,W_t^\mathfrak{H};B}^\alpha(f)\right\|_{L^r(\mathbb R^n)} \leq C\|f\|_{L^r(\mathbb R^n,B)},\;\; f \in L^r(\mathbb R^n,B),\;\mbox{and}\;1<r<\infty.
\end{equation}
We are going to see that, for every $k\in \mathbb N$ and $1<r<\infty$, there exists $C>0$ such that
\begin{equation}\label{H6}
\left\|g_{q,W_t^\mathfrak{H};B}^{k\alpha}(f)\right\|_{L^r(\mathbb R^n)} \leq C\|f\|_{L^r(\mathbb R^n,B)},\;\; f \in L^r(\mathbb R^n,B).
\end{equation}
We use an inductive procedure. \eqref{H5} says us that \eqref{H6} holds for $k=1$. Suppose that, for a certain $\ell \in \mathbb N$, $g_{q,W_t^\mathfrak{H};B}^{\ell\alpha}$ defines a bounded operator from $L^r(\mathbb R^n, B)$ into $L^r(\mathbb R^n)$ for every (equivalently, for some $r \in \mathbb N$). We define the operator $T$ as follows: for every $ f \in L^q(\mathbb R^n,B)$,
$$
[T(f)(x)](t) = t^{\ell\alpha}\partial_t^{\ell\alpha}W_t^\mathfrak{H}(f)(x),\;\; x \in \mathbb R^n\;\;\rm{and}\;\;t>0.
$$
The operator $T$ is bounded from $L^q(\mathbb R^n,B)$ into $L^q(\mathbb R^n,L^q((0,\infty),\frac{dt}{t}))$.

We also consider the following operator
$$
\mathcal{H}: L^q(\mathbb R^n,L^q((0,\infty),\frac{dt}{t},B)) \rightarrow L^q(\mathbb R^n,L^q((0,\infty)^2,\frac{dtds}{ts},B))
$$
defined by
$$
[\mathcal{H}(h)(x)](s,t) = s^\alpha \partial_s^\alpha W_s^\mathfrak{H}([h(\cdot)](t))(x),\;\; x \in \mathbb R^n,\;\;{\rm{and}}\;\; s,t >0.
$$
From \eqref{H5} with $r=q$ we deduce that, for every $h \in L^q(\mathbb R^n,L^q((0,\infty), \frac{dt}{t},B))$,
$$
\|\mathcal{H}(h)\|_{L^q(\mathbb R^n,L^q((0,\infty)^2, \frac{dtds}{ts},B))}\leq C\|h\|_{L^q(\mathbb R^n,L^q((0,\infty), \frac{dt}{t},B))}.
$$
Hence, the operator $\mathfrak{L}=\mathcal{H} \circ T$ is bounded from $L^q(\mathbb R^n,B)$ into $L^q(\mathbb R^n,L^q((0,\infty)^2, \frac{dtds}{ts},B))$.

By using \eqref{H1} we deduce that, for every $f \in L^q(\mathbb R^n,B)$,
$$
[\mathfrak{L}(f)(x)](s,t)= s^\alpha t^{\ell\alpha}\partial_u^{(\ell+1)\alpha}W_u^\mathfrak{H}(f)(x)_{|u=s+t}, \;x\in \mathbb R^n\;\;\rm{and}\;\; s,t>0.
$$
By proceeding as in \cite[(3.7)]{TZ} we can see that
$$
\left\|g^{(\ell+1)\alpha}_{q,W_t^\mathfrak{H};B}(f)\right\|_{L^q(\mathbb R^n)} \leq C\|f\|_{L^q(\mathbb R^n,B)},\;\;f\in L^q(\mathbb R^n,B).
$$
By applying the Calder\'on-Zygmund theorem for Banach-valued singular integrals (\cite{RRT}) we deduce that $g^{(\ell+1)\alpha}_{q,W_t^\mathfrak{H};B}$ defines a bounded sublinear operator from $L^r(\mathbb R^n,B)$ into $L^r(\mathbb R^n)$, for every $1<r<\infty$.

Thus \eqref{H6} is proved for every $k \in \mathbb N$ and $1<r<\infty$.

By \eqref{A1} and by choosing $k\in \mathbb N$ such that $k\alpha \geq 1$ we see that there exists $C>0$ such that
$$
\left\|g^1_{q,W_t^\mathfrak{H};B}(f)\right\|_{L^p(\mathbb R^n)} \leq C \|f\|_{L^p(\mathbb R^n,B)},\;\; f\in L^p(\mathbb R^n,B).
$$
Thus (b') is established.

The same argument allows us to prove that (b') implies (b).

Our objective is to see that the properties (a) and (b') are equivalent.

$(b') \Longrightarrow (a)$. We denote by $\{P_t^\mathfrak{H}\}_{t>0}$ the Poisson semigroup associated with Hermite operator. By using subordination formula we can write
$$
P_t^\mathfrak{H}(f)=\frac{1}{\sqrt \pi}\int_0^\infty \frac{e^{-v}}{{\sqrt v}}W^{\mathfrak{H}}_{\frac{t^2}{4v}}(f) du,\;\; f \in L^r(\mathbb R^n,B),\;1\leq r\leq \infty.
$$
By tacking in account \eqref{H1} we can see that, for every $f\in L^r(\mathbb R^n,B)$, $1 \leq r \leq \infty$,
$$
g^1_{q,P_t^\mathfrak{H};B}(f) \leq Cg^1_{q,W_t^\mathfrak{H};B}(f).
$$
Assume that $(b')$ holds. Then, for every $f\in L^{p}(\mathbb R^n,B)$,
$$
\left\|g^1_{q,P_{t}^{\mathfrak{H}};B}(f)\right\|_{L^p(\mathbb R^n)}\leq C\|f\|_{L^p(\mathbb R^n,B)}.
$$
According to now \cite[Theorem A]{AST}, (a) is true.

$(a) \Longrightarrow (b')$. Suppose that $(a)$ holds. By Theorem \ref{thm1.4} there exists $C>0$ such that
\begin{equation}\label{H7}
\left\|g^1_{q,W_t;B}(f)\right\|_{L^p(\mathbb R^n)} \leq C\|f\|_{L^p(\mathbb R^n,B)},\;\; f \in L^p(\mathbb R^n,B).
\end{equation}

We define
$$
\rho(x)=\left\{\begin{array}{ll}
                \frac{1}{2},& |x|\leq 1,\\
                \frac{1}{1+|x|}, & |x| > 1
                \end{array}\right..
$$
For every $x \in \mathbb R^n$, $\rho(x)$ is called the critical radius in $x$ (\cite[p. 516]{Sh}).

If $\{T_t\}_{t>0}$ is a semigroup of operators in $L^p(\mathbb R^n,B)$ we consider, for every $f \in L^p(\mathbb R^n,B)$, $G_{T_t}(f)(x,t)=t\partial_tT_t(f)(x)$, $x\in \mathbb R^n$ and $t>0$,
$$
G_{T_t,loc}(f)(x,t)=t\partial_tT_t(f\chi_{B(x,\rho(x))})(x), \;\;x\in \mathbb R^n\;\; \rm{and}\;\; t>0,
$$
and
$$
G_{T_t,glob}(f)(x,t) = t \partial_t T_t(f\chi_{(B(x,\rho(x)))^c})(x),\;\; x \in \mathbb R^n\;\;\rm{and}\;\; t>0.
$$
Let $f \in L^p(\mathbb R^n,B)$. We can write
\begin{eqnarray*}
\left|g^1_{q,W_t^\mathfrak{H};B}(f)(x) - g^1_{q,W_t;B}(f)(x) \right| &\leq &\left( \int_0^\infty \left\|t\partial_tW_t^\mathfrak{H}(f)(x) - t\partial_tW_t(f)(x)\right\|^q_B \frac{dt}{t}\right)^{\frac{1}{q}}\\
& \leq& \left(\int_0^\infty\left\|G_{W^\mathfrak{H}_t,loc}(f)(x,t) - G_{W_t,loc}(f)(x,t)\right\|_B^q \frac{dt}{t}\right)^{\frac{1}{q}} \\
&& + \left(\int_0^\infty \left\|G_{W^\mathfrak{H}_t,glob}(f)(x,t)\right\|^q\frac{dt}{t}\right)^{\frac{1}{q}}\\
&&+\left(\int_0^\infty \left\|G_{W_t,glob}(f)(x,t)\right\|^q\frac{dt}{t}\right)^{\frac{1}{q}},\;x\in \mathbb R^n.
\end{eqnarray*}
Our objective is to prove that the operators $G_{W^\mathfrak{H}_t,loc}-G_{{W_t, loc}}\:$, $\quad G_{W^\mathfrak{H}_t,glob}$ and $G_{W_t,glob}$ are bounded from $L^p(\mathbb R^n,B)$ into $L^p(\mathbb R^n, L^q((0,\infty),\frac{dt}{t},B))$. When we establish this objective, by tacking into account \eqref{H7}, we have that there exists $C>0$ such that
$$
\left\|g^1_{q,W^\mathfrak{H}_t;B}(f)\right\|_{L^p(\mathbb R^n)} \leq C\|f\|_{L^p(\mathbb R^n,B)},\;\; f \in L^p(\mathbb R^n,B),
$$
and $(b')$ is proved.

We use some ideas developed in \cite{BCFR}.

According to \cite[Proposition 5]{DGMTZ}, for every $M>0$ there exists $C>0$ such that
\begin{equation}\label{H8}
\frac{1}{C} \leq \frac{\rho(x)}{\rho(y)} \leq C,\;\; x \in B(y,M\rho(y)).
\end{equation}

Also there exists a sequence $(x_k)_{k=1}^\infty$ in $\mathbb R^n$ such that
\begin{enumerate}
\item[(i)] $\cup^{\infty}_{k=1} B(x_k,\rho(x_k))= \mathbb R^n$;
\item[(ii)] For every $M>0$ there exists $m\in \mathbb N$ such that, for each $j \in \mathbb N$,
$$
card\{k\in \mathbb N: B(x_k,M\rho(x_k)) \cap B(x_j,M\rho(x_j)) \not= \emptyset\} \leq m.
$$
\end{enumerate}
Let $k\in \mathbb N$. If $x \in B(x_k,\rho(x_k))$ and $y \in B(x,\rho(x))$, then

$$
|y-x_k| \leq \rho(x) + \rho(x_k) \leq C_0\rho(x_k).
$$
Note that, according to \eqref{H8}, $C_0$ does not depending on $k$. For every $x\in B(x_k,\rho(x_k))$  and $t>0$ we write
\begin{eqnarray*}
G_{W_t,loc}(f)(x,t) &=& G_{W_t}\left(\chi_{B(x_k,C_0\rho(x_k))}f\right)(x,t)
+ G_{W_t}\left(\chi_{B(x,\rho(x))} - \chi_{B(x_k,C_0\rho(x_k))}f\right)(x,t)\\
&=& G_{W_t}\left(\chi_{B(x_k,C_0\rho(x_k))}f\right)(x,t) - G_{W_t}\left(\chi_{B(x_k,C_0\rho(x_k))\setminus B(x,\rho(x))}f\right)(x,t).
\end{eqnarray*}
We have that
$$
\left\| t\partial_tW_t(x-y)\right\|_{L^q((0,\infty),\frac{dt}{t})} \leq C\left( \int_0^\infty\frac{e^{-c\frac{|x-y|^2}{t}}}{t^{\frac{n}{2}q+1}}dt\right)^{\frac{1}{q}} \leq \frac{C}{|x-y|^n},\;\;x,y\in \mathbb R^n,\;x\not=y.
$$
Integral Minkowski inequality and \eqref{H8} leads, for every $f\in L^p(\mathbb R^n,B)$, to
\begin{eqnarray*}
&&\left\|G_{W_t}\left(\chi_{B(x_k,C_0\rho(x_k))\setminus B(x,\rho(x))}f\right)(x,\cdot)\right\|_{L^q((0,\infty),\frac{dt}{t},B)} \\
&&\hspace{7mm} \leq \int_{B(x_k,C_0\rho(x_k))\setminus B(x,\rho(x))}\|t\partial_tW_t(x-y)\|_{L^q((0,\infty),\frac{dt}{t})}\|f(y)\|_Bdy\\
&&\hspace{7mm} \leq\frac{C}{\rho(x)^n} \int_{B(x_k,C_0\rho(x_k))}\|f(y)\|_Bdy\\
&&\hspace{7mm} \leq\frac{C}{\rho(x_k)^n} \int_{B(x_k,C_0\rho(x_k))}\|f(y)\|_Bdy \leq C\mathcal{M}(\|f\|)(x),\;\; x\in B(x_k,\rho(x_k)).
\end{eqnarray*}
Here $\mathcal{M}$ denotes the Hardy-Littlewood maximal function.

Then,
\begin{eqnarray*}
&&\left\|G_{W_t,loc}(f)\right\|^p_{L^p(\mathbb R^n,L^q((0,\infty);\frac{dt}{t},B))}\\
&&\hspace{1cm}\leq \sum_{k=1}^\infty\int_{B(x_k,\rho(x_k))}\|G_{W_t,loc}(f)(x,\cdot)\|^p_{L^q((0,\infty),\frac{dt}{t},B)}dx\\
&&\hspace{1cm} \leq C \sum_{k=1}^\infty\left(\int_{B(x_k,\rho(x_k))}\|G_{W_t}(\chi_{B(x_k,C_0\rho(x_k))}f)(x,\cdot)\|^p_{L^q((0,\infty)\frac{dt}{t},B)}\right.\\
&&\hspace{14mm}+\left.\int_{B(x_k,\rho(x_k))}\|G_{W_t}(\chi_{B(x_k,C_0\rho(x_k))\setminus B(x,\rho(x))}f)(x\cdot)\|^p_{L^q((0,\infty),\frac{dt}{t},B)}\right)dx \\
&&\hspace{1cm}\leq C\sum_{k=1}^\infty\left(\|g^1_{q,W_t}(\chi_{B(x_k,C_0\rho(x_k))}f)\|^p_{L^p(\mathbb R^n)} + \int_{B(x_k,\rho(x_k))} [\mathcal{M}(\|f\|)(x)]^p dx\right).
\end{eqnarray*}

By using \eqref{H7}, the property (ii) of $(x_k)^\infty_{k=1}$ and the maximal theorem, we obtain
\begin{eqnarray*}
\|G_{W_t,loc}(f)\|^p_{L^p(\mathbb R^n)} &\leq& C \left(\sum_{k=1}^\infty\int_{B(x_k,C_0\rho(x_k))}\|f(y)\|^pdy + \int_{\mathbb R^n}[\mathcal{M}(\|f\|)(x)]^p dx\right)\\
&\leq& C \|f\|^p_{L^p(\mathbb R^n,B)},\;\; f \in L^p(\mathbb R^n,B).
\end{eqnarray*}
By using \eqref{H7} again we conclude that
$$
\left\|G_{W_t,glob}(f)\right\|_{L^p(\mathbb R^n, L^q((0,\infty),\frac{dt}{t},B))}\leq C\|f\|_{L^p(\mathbb R^n,B)},\;\; f \in L^p(\mathbb R^n,B).
$$

We now study $G_{W^{\mathfrak{H}}_t,glob}$. According to \cite[(4.4) and (4.5)]{BCFST} we have that
\begin{equation}\label{H9}
\exp\left(-\frac{1}{4}\left(|x-y|^2 \frac{1+e^{-2t}}{1-e^{-2t}}+|x+y|^2 \frac{1-e^{-2t}}{1+e^{-2t}}\right)\right)\leq Ce^{-c(|x|+|y|)|x-y|},\;\; x,y\in \mathbb R^n\;\;\text{and}\;\;t>0.
\end{equation}
Also, we can write
\begin{multline}\label{H10}
\partial_tW_t^\mathfrak{H}(x,y)= \frac{1}{\pi^{\frac{\pi}{2}}}\left(\frac{e^{-2t}}{1-e^{-4t}}\right)^{\frac{n}{2}} \exp\left[-\frac{1}{4}\left(|x-y|^2 \frac{1+e^{-2t}}{1-e^{-2t}}+|x+y|^2 \frac{1-e^{-2t}}{1+e^{-2t}}\right)\right]\\
\times\left[-n\frac{1+e^{-4t}}{1-e^{-4t}} + |x-y|^2
\frac{e^{-2t}}{(1-e^{-2t})^{2}}-|x+y|^{2}\frac{e^{-2t}}{(1+e^{-2t})^{2}}\right], \;\; x,y\in \mathbb R^n\;\;\text{and} \;\;t>0.
\end{multline}
By combining \eqref{H9} and \eqref{H10} we obtain
$$
|t\partial_tW_t^\mathfrak{H}(x,y)| \leq C \frac{e^{-c(|x|+|y|)|x-y|}e^{-c\frac{|x-y|^2}{t}}}{t^{\frac{n}{2}}},\;\;x,y\in \mathbb R^n\;\;\text{and} \;\;t>0.
$$
Then, Minkowski integral inequality leads to
\begin{eqnarray*}
\left\| G_{W^\mathfrak{H}_t,glob}(f)(x,\cdot)\right\|_{L^q((0,\infty),\frac{dt}{t},B)}&\leq &\int_{|x-y| >\rho(x)}\|f(y)\|_B\left(\int_0^\infty |t\partial_tW_t^\mathfrak{H}(x,y)|^q\frac{dt}{t}\right)^{\frac{1}{q}}dy \\
&\leq & C\int_{|x-y|>\rho(x)} \|f(y)\|_B \frac{e^{-c(|x|+|y|)|x-y|}}{|x-y|^n}dy.
\end{eqnarray*}
If $|x-y|>\rho(x)$, we have that
$$
(|x|+|y|)\rho(x)\geq |x-y|\rho(x) \geq \rho(x)^2=\frac{1}{4},\;|x| \leq 1,
$$
and
$$
(|x|+|y|)\rho(x) \geq \frac{|x|}{1+|x|} > \frac{1}{2},\;|x| > 1.
$$
Then,
\begin{multline*}
\left\| G_{W^\mathfrak{H}_t,glob}(f)(x,\cdot)\right\|_{L^q((0,\infty),\frac{dt}{t},B)}\leq C\sum_{k=0}^\infty\int_{2^k\rho(x) <|x-y|\leq 2^{k+1}\rho(x)} \|f(y)\|_B \frac{e^{-c(|x|+|y|)|x-y|}}{|x-y|^n}dy \\
\leq C\sum^\infty_{k=0} \frac{e^{-c2^m}}{\rho(x)^n2^{kn}}\int_{|x-y|\leq 2^{k+1}\rho(x)} \|f(y)\|_B dy \leq C \mathcal{M}(\|f\|)(x),\;x\in \mathbb R^n.
\end{multline*}
It follows that
$$
\left\| G_{W^\mathfrak{H}_t,glob}(f) \right\|_{L^p(\mathbb R^n,L^q((0,\infty),\frac{dt}{t},B))} \leq C\|f\|_{L^p(\mathbb R^n,B)}.
$$
Finally, we study $G_{W_t^\mathfrak{H},loc}-G_{W_t,loc}$. We proceed as in \cite{BCFR}. Here we sketch the main steps.

By using the perturbation formula (\cite[(5.25)]{DGMTZ}) we can write
$$
\partial_t[W_t(x-y)-W_t^\mathfrak{H}(x,y)] = \sum_{j=1}^3H_j(x,y,t),\;\;x,y\in \mathbb R^n\;\;\text{and}\;\; t>0,
$$
where
$$
H_1(x,y,t) = \int_0^{\frac{t}{2}}\int_{\mathbb R^n} |z|^2 [\partial_uW_u(x-z)]_{|u=t-s}  W_{s}^{\mathfrak{H}}(z,y) dzds, \;\;x,y\in \mathbb R^n\;\;\text{and}\;\; t>0,
$$
$$
H_1(x,y,t) = \int_0^{\frac{t}{2}}\int_{\mathbb R^n} |z|^2 W_s(x-z)[\partial_uW^{\mathfrak{H}}_u(z,y)]_{|u=t-s}dzds, \;\;x,y\in \mathbb R^n\;\;\text{and}\;\; t>0,
$$
and
$$
H_3(x,y,t) = \int_{\mathbb R^n} |z|^2W_{\frac{t}{2}}(x-z) W^\mathfrak{H}_{\frac{t}{2}}(z,y)dz, \;\;x,y\in \mathbb R^n\;\;\text{and}\;\; t>0.
$$
For every $x \in \mathbb R^n$ and $t>0$, we obtain
\begin{eqnarray}\label{H11}
\int_{\mathbb R^n} e^{-c\frac{|x-y|^2}{t}}|y|^2dy &\leq & \int_{\mathbb R^n} e^{\frac{-c|z|^2}{t}}(|z|^2+|x|^2)dz \leq Ct^{\frac{n}{2}}(t+|x|^2) \nonumber\\
&\leq & C\frac{t^{\frac{n}{2}}}{\rho(x)^2},\;\;x\in \mathbb R^n\;\;\text{and}\;\; 0<t\leq\rho(x)^2.
\end{eqnarray}
Minkowski integral inequality leads to
\begin{eqnarray*}
&&\left\|G_{W^\mathfrak{H}_t,loc}(f)(x,\cdot)- G_{W_t,loc}(f)(x,\cdot)\right\|_{L^q((0,\infty),\frac{dt}{t},B)} \\
&&\hspace{1cm}\leq C\left(\sum_{j=1}^3\int_{B(x, \rho(x))}\|f(y)\|_B\left(\int_0^{\rho(x)^2}|tH_j(x,y,t)|^q\frac{dt}{t}\right)dy\right.\\
&&\hspace{15mm}+\left.\int_{B(x,\rho(x))}\|f(y)\|_B\left(\int_{\rho(x)^2}^\infty |t\partial_tW_t(x-y)-t\partial_tW_t^H(x,y)|^q\frac{dt}{t}\right)^{\frac{1}{q}}\right), \quad x\in\Rn.
\end{eqnarray*}
By using \eqref{H11} we get, for $j=1,2,3$,
$$
\left(\int_0^{\rho(x)^2} |tH_j(x,y,t)|^q \frac{dt}{t}\right)^{\frac{1}{q}} \leq \frac{C}{\sqrt{\rho(x)}\:|x-y|^{n-\frac{1}{2}}}\;, x\in\mathbb R^n,\;\; y\in B(x,\rho(x)),\;\; x \not= y.
$$
Then,
$$
\sum_{j=1}^3\int_{B(x,\rho(x))}\|f(y)\|_B \left(\int_0^{\rho(x)^2} |tH_j(x,y,t)|^q \frac{dt}{t}\right)^{\frac{1}{q}}dy \leq C\mathcal{M}(\|f\|_B),\;\;x\in \mathbb R^n
$$
Also, \eqref{H1} leads to
\begin{eqnarray*}
&&\int_{B(x,\rho(x))} \|f(y)\|_B\left(\int_{\rho(x)^2}^\infty |t\partial_tW_t(x-y)- t\partial_tW_{t}^{H}(x,y)|^q\frac{dt}{t}\right)^{\frac{1}{q}}dy \\
&&\hspace{1cm}\leq \:C\left( \int_{\rho(x)^2}^\infty \frac{dt}{t^{n\frac{q}{2}+1}}\right)^{\frac{1}{q}}\int_{B(x,\rho(x))}\|f(y)\|_Bdy \\
&&\hspace{1cm} \leq\frac{C}{\rho(x)^{n}}\int_{B(x,\rho(x))}\|f(y)\|_Bdy \leq C\mathcal{M}(\|f\|)(x),\;\;x\in \mathbb R^n.
\end{eqnarray*}
By combining the above estimates we obtain
$$
\left\|G_{W^\mathfrak{H}_t,loc}(f)-G_{W_t,loc}(f)\right\|_{L^p(\mathbb R^n,L^q((0,\infty),\frac{dt}{t},B))} \leq C\|f\|_{L^p(\mathbb R^n,B)}.
$$
Thus, the proof of this theorem is complete.
\end{proof}

\begin{thm}\label{thm2.2}
Let $B$ be a Banach space, $2\leq q < \infty$, $1<p<\infty$, and $\alpha >0$. The following assertions are equivalent.
\begin{enumerate}
\item[(a)] There exists a norm $\left\VERT \cdot \right\VERT$ on $B$ that is equivalent to $\|\cdot\|$  and such that $(B,\left\VERT\cdot \right\VERT)$ is $q$-uniformly convex.
\item[(b)] There exists $C>0$ such that
$$
\left\|\mathcal{A}^\alpha_{q,W^\mathfrak{H}_t;B}(f)\right\|_{L^p(\mathbb R^n)} \leq C\|f\|_{L^p(\mathbb R^n,B)},\;\;f \in L^p(\mathbb R^n,B).
$$
\end{enumerate}
\end{thm}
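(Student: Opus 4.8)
The plan is to deduce the theorem from Theorem \ref{thm2.1} by combining an elementary averaging identity between the area integral and the Littlewood--Paley function with a Calder\'on--Zygmund argument transferring $L^q$-boundedness to every $L^p$, $1<p<\infty$; the scheme is exactly the one used in the proof of Theorem \ref{thm4} for the classical heat semigroup, now run with the Hermite kernel. For the averaging identity: given $f\in C_c^\infty(\mathbb{R}^n)\otimes B$ both sides below are finite (by the Gaussian bounds \eqref{H1}, as in the proof of Theorem \ref{thm2.1}), and Fubini's theorem together with the change of variables $s=t^2$ (so $ds/s=2\,dt/t$) gives
\begin{equation*}
\|\mathcal{A}^\alpha_{q,W_t^{\mathfrak{H}};B}(f)\|_{L^q(\mathbb{R}^n)}^q
=\int_{\mathbb{R}^n}\int_0^\infty\|(s^\alpha\partial_s^\alpha W_s^{\mathfrak{H}}(f)(y))_{|s=t^2}\|^q\Big(\int_{|x-y|<t}dx\Big)\frac{dt\,dy}{t^{n+1}}
=\frac{c_n}{2}\,\|g^\alpha_{q,W_t^{\mathfrak{H}};B}(f)\|_{L^q(\mathbb{R}^n)}^q,
\end{equation*}
where $c_n$ is the Lebesgue measure of the unit ball of $\mathbb{R}^n$.

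\emph{Proof of $(a)\Rightarrow(b)$.} Assume $(a)$. Theorem \ref{thm2.1} with the exponent $q$ gives $\|g^\alpha_{q,W_t^{\mathfrak{H}};B}(f)\|_{L^q(\mathbb{R}^n)}\le C\|f\|_{L^q(\mathbb{R}^n,B)}$, so the averaging identity yields $\|\mathcal{A}^\alpha_{q,W_t^{\mathfrak{H}};B}(f)\|_{L^q(\mathbb{R}^n)}\le C\|f\|_{L^q(\mathbb{R}^n,B)}$ on the dense class $C_c^\infty(\mathbb{R}^n)\otimes B$. To reach an arbitrary $p$ one realizes $\mathcal{A}^\alpha_{q,W_t^{\mathfrak{H}};B}$ as a Banach-valued singular integral exactly as in the proof of Theorem \ref{thm4}: with $\Gamma_N(0)=\{(y,t):\ t>1/N,\ |y|<t\}$, $H_N=L^q_B(\Gamma_N(0),\frac{dy\,dt}{t^{n+1}})$, and $T_\alpha f$ the operator sending $f$ to the $H_N$-valued function whose value at $x$ is $(y,t)\mapsto(s^\alpha\partial_s^\alpha W_s^{\mathfrak{H}}(f)(x+y))_{|s=t^2}$, one checks (differentiating under the integral sign as in Theorem \ref{thm4}) that $(T_\alpha f)(x)=\int_{\mathbb{R}^n}K(x,z)f(z)\,dz$ with $[K(x,z)](y,t)=(s^\alpha\partial_s^\alpha W_s^{\mathfrak{H}}(x+y,z))_{|s=t^2}$, and that \eqref{H1} together with its spatial-gradient analogue give, uniformly in $N$, the size bound $\|K(x,z)\|_{H_N}\le C|x-z|^{-n}$ and the H\"ormander bound $\|K(x,z)-K(x',z)\|_{H_N}\le C|x-x'|\,|x-z|^{-n-1}$ when $|x-z|>2|x-x'|$. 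Calder\'on--Zygmund theory for Banach-valued singular integrals (\cite{RRT}) then upgrades the $L^q$-estimate to an $L^r$-estimate for every $1<r<\infty$ with constant independent of $N$, and letting $N\to\infty$ by Fatou's lemma gives $(b)$ for the prescribed $p$ (in fact for all $p$).

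\emph{Proof of $(b)\Rightarrow(a)$.} Assume $(b)$ for some $p\in(1,\infty)$. The singular-integral representation and the kernel estimates of the previous step, together with \cite{RRT}, show that $\mathcal{A}^\alpha_{q,W_t^{\mathfrak{H}};B}$ is bounded from $L^r(\mathbb{R}^n,B)$ into $L^r(\mathbb{R}^n)$ for every $1<r<\infty$, in particular for $r=q$. The averaging identity then gives $\|g^\alpha_{q,W_t^{\mathfrak{H}};B}(f)\|_{L^q(\mathbb{R}^n)}=(2/c_n)^{1/q}\|\mathcal{A}^\alpha_{q,W_t^{\mathfrak{H}};B}(f)\|_{L^q(\mathbb{R}^n)}\le C\|f\|_{L^q(\mathbb{R}^n,B)}$, and the implication $(b)\Rightarrow(a)$ of Theorem \ref{thm2.1} (with exponent $q$) produces a norm on $B$, equivalent to $\|\cdot\|$, that is $q$-uniformly convex.

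\emph{Main obstacle.} The only substantive point is the verification of the Calder\'on--Zygmund estimates for the Hermite heat kernel: since $W_s^{\mathfrak{H}}(x,y)$ is not of convolution type, one must control $\|K(x,z)\|_{H_N}$ and the regularity of $x\mapsto K(x,z)$ uniformly in $N$, which forces the use of pointwise Gaussian bounds for $\partial_t^k W_s^{\mathfrak{H}}$ and $\partial_{x_i}\partial_t^k W_s^{\mathfrak{H}}$. These are precisely the bounds already exploited in the proof of Theorem \ref{thm2.1} (through \eqref{H1} and the auxiliary kernels $H_{\alpha,i}$ there), so the computation is a routine adaptation of that argument, and of the corresponding one in Theorem \ref{thm4}, rather than a genuinely new difficulty; in particular no use is made of the Markovian property, which $\{W_t^{\mathfrak{H}}\}_{t>0}$ lacks.
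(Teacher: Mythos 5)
Your proposal is correct and follows essentially the same route as the paper: the averaging identity $\|\mathcal{A}^\alpha_{q,W_t^{\mathfrak H};B}(f)\|_{L^q(\mathbb R^n)}^q=\frac{c_n}{2}\|g^\alpha_{q,W_t^{\mathfrak H};B}(f)\|_{L^q(\mathbb R^n)}^q$ linking the theorem to Theorem \ref{thm2.1} at the exponent $q$, plus the realization of $\mathcal{A}^\alpha_{q,W_t^{\mathfrak H};B}$ as an $H_N$-valued singular integral with kernel bounds $\|K_\alpha(x,z)\|_{H_N}\le C|x-z|^{-n}$ and $\|\partial_{x_i}K_\alpha(x,z)\|_{H_N}\le C|x-z|^{-n-1}$ uniform in $N$, so that Calder\'on--Zygmund theory (\cite{RRT}) transfers the estimate to every $L^r$ and one passes to the limit $N\to\infty$, in both directions of the equivalence. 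This is exactly the paper's argument (modeled on Theorem \ref{thm4} with the Gaussian bounds \eqref{H1} replacing the explicit convolution kernel), so no further comment is needed.
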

\begin{proof}
Let $N \in \mathbb N$. We define $\Gamma_N(0)=\{(y,t) \in \mathbb R^n \times (\frac{1}{N},\infty):|x-y|< t\}$ and $H_N=L^q(\Gamma_{N}(0); \frac{dydt}{t^{n+1}},B)$. We consider the operator $T_\alpha$ defined as follows: for every $f  \in C_c^\infty(\mathbb R^n)\otimes B$, $T_\alpha f=F$ where, for every $x \in \mathbb R^n$, $[F(x)](y,t)= (s^\alpha\partial_s^\alpha W^\mathfrak{H}_s(f)(x+y))_{|s=t^2}$, $y\in \mathbb R^n$ and $t>0$. By taking in account \eqref{H1} and by proceeding as in the proof of Theorem \ref{thm4} we can see that, for every $f\in C_c^\infty(\mathbb R^n) \otimes B$,
$$
(T_\alpha f)(x) = \int_{\mathbb R^n} K_\alpha(x,z) f(z) dz,\;\; x \in \mathbb R^n,
$$
where $[K_\alpha(x,z)](y,t) = (s^\alpha\partial^\alpha_s W^\mathfrak{H}_s(x+y,z))_{|s=t^2}$, $x,z,y \in \mathbb R^n$ and $t>0$. Here the integral is understood in the $H_N$-B\"{o}chner sense.
Also, we have that
$$
\|K_\alpha(x,y)\|_{L^q(\Gamma_N(0), \frac{dydt}{t^{n+1}})}\leq \frac{C}{|x-y|^n},\;\; x,y \in \mathbb R^n,\;\; x \not= y,
$$
and, for $i=1,\ldots,n$
$$
\|\partial_{x_i}K_\alpha(x,y)\|_{L^q(\Gamma_N(0), \frac{dydt}{t^{n+1}})}\leq \frac{C}{|x-y|^{n+1}},\;\; x,y \in \mathbb R^n,\;\; x \not= y,
$$
where $C>0$ does not depend on $N$.

We are going to see our result.

$(a) \Longrightarrow (b)$. Suppose that $(a)$ holds. According to Theorem \ref{thm2.1} there exists $C>0$ such that
$$
\left\|g^{\alpha}_{q,W^\mathfrak{H}_t;B}(f)\right\| \leq C \|f\|_{L^q(\mathbb R^n,B)},\;\; f \in L^q(\mathbb R^n,B).
$$
Then,
$$
\left\|\mathcal{A}^\alpha_{q,W^\mathfrak{H}_t;B}(f)\right\|_{L^q(\mathbb R^n)}\leq C\left\|g^{\alpha}_{q,W^\mathfrak{H}_t;B}(f)\right\|_{L^q(\mathbb R^n)} \leq C\|f\|_{L^q(\mathbb R^n,B)},\;\; f \in L^q(\mathbb R^n,B).
$$
Hence, for every $N\in \mathbb N$,
$$
\|T_\alpha f \|_{L^q(\mathbb R^n, H_N)} \leq C\|f\|_{L^q(\mathbb R^n,B)},\;\; f \in C_c^\infty(\mathbb R^n)\otimes B,
$$
where $C$ is independent of $N$.

By using now Calder\'on-Zygmund theorem for Banach-valued singular integrals and tacking limits as $N \rightarrow \infty$, we conclude that there exists $C>0$ such that
$$
\|\mathcal{A}_{q,W_t^\mathfrak{H};B}^\alpha (f)\| _{L^p(\mathbb R^n)} \leq C\|f\|_{L^p(\mathbb R^n,B)}, \;\; f \in L^p(\mathbb R^n,B).
$$

$(b) \Longrightarrow (a)$. Suppose that $(b)$ is true. Then, for every $N\in \mathbb N$,
$$
\|T_\alpha f\|_{L^p(\mathbb R^n,H_N)} \leq C\|f\|_{L^p(\mathbb R^n,B)},\;\; f \in C_c^\infty(\mathbb R^n) \otimes B,
$$
where $C>0$ is not depending on $N$.

By using again Calder\'on-Zygmund theorem for Banach valued singular integrals we deduce that, for every $N \in \mathbb N$,
$$
\|T_\alpha f\|_{L^q(\mathbb R^n,H_N)} \leq C\|f\|_{L^q(\mathbb R^{n},B)},\;\;f \in C_c^\infty(\mathbb R^n \otimes B,
$$
where $C>0$ is independent of $N$.

By tacking limit as $N \rightarrow \infty$ we get
$$
\left\|\mathcal{A}_{q,W_t^\mathfrak{H};B}^\alpha (f)\right\| _{L^q(\mathbb R^n)} \leq C\|f\|_{L^q(\mathbb R^n,B)}, \;\; f \in L^q(\mathbb R^n,B).
$$
Then,
\begin{eqnarray*}
\left\|g^\alpha_{q,W^\mathfrak{H}_t;B}(f) \right\|_{L^q(\mathbb R^n)} &\leq & C \left\|\mathcal{A}_{q,W_t^\mathfrak{H},B}^\alpha (f)\right\| _{L^q(\mathbb R^n)}\\
&\leq & C\|f\|_{L^q(\mathbb R^n,B)},\;\; f \in L^q(\mathbb R^n,B).
\end{eqnarray*}

According to Theorem \ref{thm2.1} we deduce that $(a)$ holds.
\end{proof}

For the heat semigroup $\{W_t^\mathfrak{H}\}_{t>0}$ defined by the Hermite operator the subspace $F$ of fixed points in $L^p(\mathbb R^n)$, $1<p<\infty$, reduces to $\{0\}$.

\begin{thm}\label{thm2.3}
Let $B$ be a Banach space , $1<q \leq 2$, $1<p< \infty$, and $\alpha >0$. The following assertions are equivalent.
\begin{enumerate}
\item[(a)] There exists a norm $\left\VERT\cdot\right\VERT$ on $B$ that is equivalent to $\|\cdot\|$ and such that $(B,\left\VERT\cdot\right\VERT)$ is $q$-uniformly smooth.
\item[(b)] There exists $C>0$ such that
$$
\|f\|_{L^p(\mathbb R^n,B)} \leq C\left\|g^\alpha_{q,W^\mathfrak{H}_t;B}(f)\right\|_{L^p(\mathbb R^n)},\;\; f \in L^p(\mathbb R^n) \otimes B.
$$
\end{enumerate}
\end{thm}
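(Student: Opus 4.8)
The plan is to derive the equivalence $(a)\Leftrightarrow(b)$ from the already established convex case, Theorem \ref{thm2.1}, applied to the dual space $B^{*}$: after renorming, $B$ is $q$-uniformly smooth if and only if $B^{*}$ is $q'$-uniformly convex (equivalently, $B$ has martingale type $q$ iff $B^{*}$ has martingale cotype $q'$, see \cite[Lemma 3.2]{OX}), and here $2\le q'<\infty$ because $1<q\le 2$. Note also that the fixed-point projection $\mathcal F$ of $\{W_t^{\mathfrak H}\}_{t>0}$ is $0$, since the spectrum of the Hermite operator lies in $[2n,\infty)$; this is what makes the spectral identities below genuinely useful.

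\textbf{$(a)\Rightarrow(b)$.} First I would record the polarization identity: for $f\in\CcR\otimes B$ and $g\in\CcR\otimes B^{*}$,
\[
\int_{\Rn}\langle f(x),g(x)\rangle_{B,B^{*}}\,dx
=\frac{2^{2\alpha}}{\Gamma(2\alpha)}\int_{\Rn}\int_{0}^{\infty}
\langle t^{\alpha}\partial_{t}^{\alpha}W_{t}^{\mathfrak H}(f)(x),\,t^{\alpha}\partial_{t}^{\alpha}W_{t}^{\mathfrak H}(g)(x)\rangle_{B,B^{*}}\,\tfrac{dt}{t}\,dx .
\]
This is obtained exactly as in the proof of Theorem \ref{thm6}: spectral calculus for the self-adjoint operator $\mathfrak H$ gives the scalar version (with $\mathcal F=0$), the Gaussian bounds \eqref{H1} guarantee the absolute convergence needed for Fubini, and one passes to the bilinear pairing by writing $f,g$ as finite sums of elementary tensors. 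Two applications of Hölder's inequality (in $((0,\infty),dt/t)$ with exponents $q,q'$, then in $\Rn$ with exponents $p,p'$) give $\big|\int_{\Rn}\langle f,g\rangle\big|\le C\,\|g_{q,W_{t}^{\mathfrak H};B}^{\alpha}(f)\|_{\LpR}\,\|g_{q',W_{t}^{\mathfrak H};B^{*}}^{\alpha}(g)\|_{L^{p'}(\Rn)}$. Since $B^{*}$ is $q'$-uniformly convex after renorming, Theorem \ref{thm2.1} applied to $B^{*}$ bounds the last factor by $C\|g\|_{L^{p'}(\Rn,B^{*})}$, and taking the supremum over such $g$ (which norm $\LpRB$) yields (b) for $f\in\CcR\otimes B$; the usual density/Fatou considerations then give the stated estimate on $\LpR\otimes B$.

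\textbf{$(b)\Rightarrow(a)$.} Here I would run the adjoint argument of the proof of Theorem \ref{thm7} with $\{W_{t}^{\mathfrak H}\}_{t>0}$ replacing the classical heat semigroup; this is legitimate because the only analytic input there consists of Gaussian-type bounds on the time derivatives of the kernel, which \eqref{H1} supplies verbatim, together with the semigroup identity $\int_{\Rn}W_{s}^{\mathfrak H}(x,z)W_{t}^{\mathfrak H}(z,y)\,dz=W_{s+t}^{\mathfrak H}(x,y)$. By \eqref{A1} we may assume $\alpha\in\N$. Given $f\in\CcR\otimes B^{*}$ and $\eps>0$, choose $h$ of norm one in $L^{p}(\Rn,L^{q}((0,\infty),\tfrac{dt}{t};B))$ almost attaining $\|g_{q',W_{t}^{\mathfrak H};B^{*}}^{\alpha}(f)\|_{L^{p'}(\Rn)}$ via the pairing against $t^{\alpha}\partial_{t}^{\alpha}W_{t}^{\mathfrak H}(f)$, and set $S_{\alpha}(H)(y)=\int_{0}^{\infty}\int_{\Rn}t^{\alpha}\partial_{t}^{\alpha}W_{t}^{\mathfrak H}(x,y)H(x,t)\,dx\,\tfrac{dt}{t}$, which lies in $L^{\infty}(\Rn,B)$ by \eqref{H1}. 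Then $\big|\int\!\int\langle t^{\alpha}\partial_{t}^{\alpha}W_{t}^{\mathfrak H}(f),H\rangle\big|=\big|\int\langle f,S_{\alpha}H\rangle\big|\le\|f\|_{L^{p'}(\Rn,B^{*})}\|S_{\alpha}H\|_{\LpRB}\le C\|f\|_{L^{p'}(\Rn,B^{*})}\|g_{q,W_{t}^{\mathfrak H};B}^{\alpha}(S_{\alpha}H)\|_{\LpR}$ by (b). Using the semigroup property, $s^{\alpha}\partial_{s}^{\alpha}W_{s}^{\mathfrak H}(S_{\alpha}H)(x)=\int_{0}^{\infty}\int_{\Rn}K_{\alpha}^{\mathfrak H}(x,s;y,t)H(y,t)\tfrac{dt}{t}dy$ with $K_{\alpha}^{\mathfrak H}(x,s;y,t)=(st)^{\alpha}\partial_{s}^{\alpha}\partial_{t}^{\alpha}W_{s+t}^{\mathfrak H}(x,y)$, so the associated operator $T_{\alpha}$ is an $\mathcal L(L^{q}((0,\infty),\tfrac{dt}{t};B))$-valued singular integral. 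Its $L^{q}(\Rn,L^{q})$-boundedness follows from the one-variable estimate $\int_{\Rn}\int_{0}^{\infty}|\partial_{s}^{\alpha}\partial_{t}^{\alpha}W_{s+t}^{\mathfrak H}(x,y)|(st)^{\alpha}\tfrac{dt}{t}dy<\infty$, uniformly in $(x,s)$, computed from \eqref{H1} and \eqref{7.2} exactly as in Theorem \ref{thm7}; the size bound $\|K_{\alpha}^{\mathfrak H}(x,\cdot;y,\cdot)\|\le C|x-y|^{-n}$ follows likewise, and the Hörmander smoothness $\|K_{\alpha}^{\mathfrak H}(x,\cdot;y,\cdot)-K_{\alpha}^{\mathfrak H}(x,\cdot;x_{0},\cdot)\|\le C|y-x_{0}|\,|x-x_{0}|^{-n-1}$ for $|x-x_{0}|>2|x_{0}-y|$ from the additional spatial gradient estimate $|\partial_{x_{i}}\partial_{z}^{k}W_{z}^{\mathfrak H}(x,y)|\le Ce^{-c|x-y|^{2}/z}z^{-n/2-k-1/2}$ for the Hermite heat kernel (see \cite{CD}). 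Calderón--Zygmund theory for Banach-valued singular integrals (\cite{RRT}) then gives $\|T_{\alpha}h\|_{L^{p}(\Rn,L^{q}((0,\infty),\tfrac{dt}{t};B))}\le C$, and letting $\eps\to0$ we obtain $\|g_{q',W_{t}^{\mathfrak H};B^{*}}^{\alpha}(f)\|_{L^{p'}(\Rn)}\le C\|f\|_{L^{p'}(\Rn,B^{*})}$. By Theorem \ref{thm2.1} applied to $B^{*}$, there is an equivalent norm making $B^{*}$ $q'$-uniformly convex, hence by \cite[Lemma 3.2]{OX} an equivalent norm making $B$ $q$-uniformly smooth, which is (a).

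\textbf{Main obstacle.} The delicate part is the kernel analysis in $(b)\Rightarrow(a)$: justifying the interchanges of fractional differentiation with integration through the Weyl-derivative representation and \eqref{7.2}, carrying out the multi-fold integral estimates that yield the \emph{uniform} $L^{q}\to L^{q}$ bound for $T_{\alpha}$, and verifying that the Hermite heat kernel obeys the spatial gradient bounds required for the Hörmander condition. None of this is conceptually new relative to Theorems \ref{thm7} and \ref{thm2.1}, but it is where essentially all the work lies. It is worth emphasizing that, since $\{W_{t}^{\mathfrak H}\}_{t>0}$ is not Markovian, Theorem \ref{thm1.4} (and Theorems \ref{thm5}, \ref{thm6}) cannot be invoked directly; the route above sidesteps this, relying only on the Gaussian bounds \eqref{H1} and the already proved Theorem \ref{thm2.1}.
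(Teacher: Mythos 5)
Your proposal is correct and follows essentially the same route as the paper: for $(a)\Rightarrow(b)$ the spectral polarization/duality argument of Theorem \ref{thm6} (with $\mathcal F=0$ and Theorem \ref{thm2.1} applied to $B^{*}$ in place of Corollary \ref{cor1}, since the Hermite semigroup is not a symmetric diffusion semigroup), and for $(b)\Rightarrow(a)$ the adjoint Calder\'on--Zygmund argument of Theorem \ref{thm7} run with the Hermite kernel bounds \eqref{H1}, concluding via Theorem \ref{thm2.1} and \cite[Lemma 3.2]{OX}. This is exactly the paper's (terse) proof, with the details filled in.
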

\begin{proof}
$(a) \Rightarrow (b)$ In order to prove this we can proceed as in the proof of Theorem \ref{thm6}.

$(b) \Rightarrow (a)$. By taking into account \eqref{H1}, to see this property we can argue as in the proof of Theorem \ref{thm7} by using Theorem \ref{thm2.1}.
\end{proof}

\begin{thm}\label{thm2.4}
Let $B$ be a Banach space, $1<q \leq 2$, $1<p<\infty$, and $\alpha >0$. The following assertions are equivalent.
\begin{enumerate}
\item[(a)] There exists a norm $\left\VERT \cdot\right\VERT$ on $B$ that is equivalent to $\|\cdot\|$ and and such that $(B,\left\VERT \cdot\right\VERT)$ is $q$-uniformly smooth.
\item[(b)] There exists $C>0$ such that
$$
\|f\|_{L^p(\mathbb R^n,B)} \leq C\left\|\mathcal{A}^\alpha_{q,W^\mathfrak{H}_t;B}(f)\right\|_{L^p(\mathbb R^n)}, \;\; f \in L^p(\mathbb R^n) \otimes B.
$$
\end{enumerate}
\end{thm}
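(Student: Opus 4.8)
The plan is to follow the pattern of the proof of Theorem~\ref{thm9} (the heat version of Theorem~\ref{thm8}), replacing the classical heat semigroup by the Hermite heat semigroup $\{W_{t}^{\mathfrak{H}}\}_{t>0}$ and invoking Theorem~\ref{thm2.2} wherever Theorem~\ref{thm4} was used. Recall that $\{W_{t}^{\mathfrak{H}}\}_{t>0}$ has no nonzero fixed points, so no projection term appears.

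For the implication $(a)\Rightarrow(b)$, assume $(B,\left\VERT\cdot\right\VERT)$ is $q$-uniformly smooth for an equivalent norm. Since $\{h_{k}\}$ is an orthonormal basis of $L^{2}(\Rn)$ and $\lambda_{k}>0$ for every $k$, the spectral computation made in the proof of Theorem~\ref{thm6} (which uses only that $\mathfrak{H}$ is selfadjoint and nonnegative on $L^{2}(\Rn)$) gives $t^{\alpha}\partial_{t}^{\alpha}W_{t}^{\mathfrak{H}}(f)=(-1)^{m}\sum_{k}(t\lambda_{k})^{\alpha}e^{-\lambda_{k}t}c_{k}(f)h_{k}$, and hence, for $f,g\in\CcR$ (whose Hermite coefficients decay rapidly, so every integral below converges absolutely),
\[
\int_{0}^{\infty}\int_{\Rn}t^{\alpha}\partial_{t}^{\alpha}W_{t}^{\mathfrak{H}}(f)(x)\,t^{\alpha}\partial_{t}^{\alpha}W_{t}^{\mathfrak{H}}(g)(x)\,dx\,\frac{dt}{t}=\frac{\Gamma(2\alpha)}{2^{2\alpha}}\int_{\Rn}f(x)g(x)\,dx.
\]
By bilinearity this identity carries over to $f\in\CcR\otimes B$, $g\in\CcR\otimes B^{*}$ with the duality pairing, and then, exactly as in Theorem~\ref{thm9}, the averaging trick of \cite[p.~316]{CMS} rewrites the left-hand side as an integral over the cones $\Gamma(x)$; H\"older's inequality in the mixed norm then yields
\[
\Big|\int_{\Rn}\langle f(x),g(x)\rangle_{B,B^{*}}\,dx\Big|\leq C\,\|\mathcal{A}_{q,W_{t}^{\mathfrak{H}};B}^{\alpha}(f)\|_{\LpR}\,\|\mathcal{A}_{q',W_{t}^{\mathfrak{H}};B^{*}}^{\alpha}(g)\|_{L^{p'}(\Rn)}.
\]
Since $1<q\leq2$, the space $B^{*}$ has martingale cotype $q'$ with $q'\geq2$ (\cite[Lemma 3.2]{OX}), so Theorem~\ref{thm2.2}, applied to $B^{*}$ with exponents $q'$ and $p'$, bounds the last factor by $C\|g\|_{L^{p'}(\Rn,B^{*})}$. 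Taking the supremum over $g\in\CcR\otimes B^{*}$ with $\|g\|_{L^{p'}(\Rn,B^{*})}\leq1$ gives $\|f\|_{\LpRB}\leq C\|\mathcal{A}_{q,W_{t}^{\mathfrak{H}};B}^{\alpha}(f)\|_{\LpR}$, first for $f\in\CcR\otimes B$ and then, by density, for all $f\in\LpR\otimes B$.

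For the implication $(b)\Rightarrow(a)$, I would split into two cases. If $q\leq p<\infty$, then, proceeding as in \cite[Proposition 2.1, $(a)$]{AHM}, one has $\|\mathcal{A}_{q,W_{t}^{\mathfrak{H}};B}^{\alpha}(f)\|_{\LpR}\leq C\|g_{q,W_{t}^{\mathfrak{H}};B}^{\alpha}(f)\|_{\LpR}$, so $(b)$ forces $\|f\|_{\LpRB}\leq C\|g_{q,W_{t}^{\mathfrak{H}};B}^{\alpha}(f)\|_{\LpR}$ and Theorem~\ref{thm2.3} yields $(a)$. If $1<p<q$, I would instead show that $B^{*}$ has $q'$-martingale cotype, following the second half of the proof of Theorem~\ref{thm9}: form the adjoint $S_{\alpha}$ of the Hermite area operator and then the operator $T_{\alpha}$, the composition of $S_{\alpha}$ with that area operator, whose operator-valued kernel involves $(t\,t_{1})^{2\alpha}\,\partial_{u}^{\alpha}\partial_{v}^{\alpha}W_{u+v}^{\mathfrak{H}}$ (evaluated at $u=t^{2}$, $v=t_{1}^{2}$ and at the relevant spatial points), obtained from the semigroup identity $\int_{\Rn}W_{s}^{\mathfrak{H}}(x,z)W_{t}^{\mathfrak{H}}(z,y)\,dz=W_{s+t}^{\mathfrak{H}}(x,y)$. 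Using the Gaussian bounds \eqref{H1}, together with their counterparts for derivatives mixing space and time, one checks the $L^{q}$-boundedness of $T_{\alpha}$ and the size and smoothness conditions of the kernel exactly as in Theorem~\ref{thm9} (with the usual truncation of the cone and a Fatou passage to the limit); Calder\'on--Zygmund theory for Banach-valued singular integrals (\cite{RRT}) then extends $T_{\alpha}$ boundedly on $L^{r}(\Rn,L^{q}(\Gamma(0),\frac{dy\,dt}{t^{n+1}};B))$ for every $1<r<\infty$, and a duality argument produces
\[
\|\mathcal{A}_{q',W_{t}^{\mathfrak{H}};B}^{\alpha}(f)\|_{L^{p'}(\Rn)}\leq C\,\|f\|_{L^{p'}(\Rn,B^{*})}.
\]
Since $q'\geq2$, Theorem~\ref{thm2.2} makes $B^{*}$ $q'$-uniformly convex for an equivalent norm, and \cite[Lemma 3.2]{OX} then gives that $B$ is $q$-uniformly smooth for an equivalent norm.

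The main obstacle will be the kernel analysis in the case $1<p<q$ of $(b)\Rightarrow(a)$: one must justify passing the two fractional derivatives inside the composition integral defining the kernel, verify absolute convergence at each step, and propagate the estimates \eqref{H1} through the double fractional integration and the spatial differentiation; the fact that $W_{t}^{\mathfrak{H}}(x,y)$ is not a convolution kernel is harmless here, since once the semigroup identity has reduced the composition to $W_{u+v}^{\mathfrak{H}}$ only upper Gaussian bounds are needed. A secondary point requiring care, in $(a)\Rightarrow(b)$, is the absolute convergence needed to justify the averaging trick and the interchange of sums and integrals in the spectral identity, which follows from the rapid decay of the Hermite coefficients of functions in $\CcR$.
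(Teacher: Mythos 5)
Your proposal is correct and takes essentially the same route as the paper: for $(a)\Rightarrow(b)$ the Hermite spectral/polarization identity followed by the averaging trick of \cite[p. 316]{CMS}, H\"older's inequality and Theorem~\ref{thm2.2} applied to $B^{*}$ with exponents $q',p'$; for $(b)\Rightarrow(a)$ the duality and Calder\'on--Zygmund kernel argument of the second half of the proof of Theorem~\ref{thm9}, carried out for the Hermite kernel via the Gaussian bounds \eqref{H1} and the semigroup identity, concluding with Theorem~\ref{thm2.2} (and Theorem~\ref{thm2.3} in the easy range) together with \cite[Lemma 3.2]{OX}. This is precisely how the paper argues, which simply refers back to Theorems~\ref{thm9}, \ref{thm2.1}--\ref{thm2.3} for these steps.
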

\begin{proof}
$(a) \Rightarrow (b)$. Let $f,g \in span\{h_k\}_{k \in \mathbb N^n}$. Here, for every $k \in \mathbb N^n$, $h_k$ denotes the k-th Hermite function.

We have that
$$
W^\mathfrak{H}_t(f) = \sum_{k \in \mathbb N^n} e^{-\lambda_kt} c_k(f)h_k,\;\;t>0,
$$
where for every $k=(k_1,\ldots k_n)\in \mathbb N^n$, $\lambda_k$ denotes the eigenvalue for the Hermite operator associated to $h_k$, i.e., $\lambda_k=2(k_1 + \ldots+k_n)+n$, and $c_k(f)= \int_{\mathbb R^n} f(x) h_k(x)\,dx$. Note that the series is actually a finite sum.

Let $m \in \mathbb N$ such that $m-1 \leq \alpha < m$. We can write
\begin{eqnarray*}
\partial_t^\alpha W^\mathfrak{H}_t(f)(x) & = & \sum_{k \in \mathbb N^n} c_k(f) h_k(x) \frac{1}{\Gamma(m-\alpha)}\int_t^\infty \partial_u^m(e^{-\lambda_ku})(u-t)^{m-\alpha -1} du\\
&=& \sum_{k \in \mathbb N^n} c_k(f) h_k(x) \frac{(-1)^m\lambda^m_k}{\Gamma(m-\alpha)}\int_t^\infty e^{-\lambda_ku}(u-t)^{m-\alpha -1} du\\
&=& \sum_{k \in \mathbb N^n} c_k(f) h_k(x) \frac{(-1)^m\lambda^m_k}{\Gamma(m-\alpha)}\int_{0}^\infty e^{-\lambda_ku}u^{m-\alpha -1} du e^{-\lambda_k t}\\
&=& (-1)^m\sum_{k \in \mathbb N^n} c_k(f) h_k(x) \lambda_k^\alpha e^{-\lambda_kt}, \;\; x \in \mathbb R^n\;\; and \;\; t>0.
\end{eqnarray*}
Orthonormality properties of Hermite functions lead to
\begin{eqnarray*}
&& \int_0^\infty \int_{\mathbb R^n} t^\alpha \partial_t^\alpha W_t^\mathfrak{H}(f)(x) t^\alpha \partial_t^\alpha W_t^\mathfrak{H}(g)(x)\,dx\frac{dt}{t} \\
&& \hspace{1cm}= \sum_{k,m \in \mathbb N^n} c_k(f) c_m(g)(\lambda_k\lambda_m)^\alpha \int_0^\infty \int_{\mathbb R^n} t^{2\alpha -1}h_k(x)h_m(x) e^{-(\lambda_k + \lambda_m)t} dxdt\\
&& \hspace{1cm}=\sum_{k\in \mathbb N^n} c_k(f)c_k(g) \lambda_k^{2\alpha}\int_0^\infty t^{2\alpha-1}e^{-2\lambda_kt} dt \\
&& \hspace{1cm} =\frac{\Gamma(2\alpha)}{2^{2\alpha}} \sum_{k\in \mathbb N^n} c_k(f)c_k(g) = \frac{\Gamma(2\alpha)}{2^{2\alpha}} \int_{\mathbb R^n} f(x)g(x) dx.
\end{eqnarray*}

According to Theorem \ref{thm2.1} with $B=\mathbb C$ we get
\begin{eqnarray*}
\left| \int_0^\infty \int_{\mathbb R^n} t^\alpha \partial_t^\alpha W_t^\mathfrak{H}(f)(x) t^\alpha \partial_t^\alpha W_t^\mathfrak{H}(g)(x)\,dx\frac{dt}{t}\right|
& \leq & \int_{\mathbb R^n} g^\alpha_{2,W^\mathfrak{H}_t}(f) g^\alpha_{2,W_t^\mathfrak{H}}(g)(x)dx \\
&\leq &\|g^\alpha_{2,W^\mathfrak{H}_t;\mathbb C}(f)\|_{L^2(\mathbb R^n)} \| g^\alpha_{2,W^{\mathfrak{H}}_t;\mathbb C}(g)\|_{L^2(\mathbb R^n)} \\
&\leq &\:C\|f\|_{L^2(\mathbb R^n)} \|g\|_{L^2(\mathbb R^n)}, \;\; f,g \in L^2(\mathbb R^n).
\end{eqnarray*}

Since span $\{h_n\}_{n \in \mathbb N}$ is a dense subspace of $L^2(\mathbb R^n)$, we deduce that
$$
\int_0^\infty\int_{\mathbb R^n} t^\alpha\partial_t^\alpha W_t^\mathfrak{H}(f)(x)  t^\alpha\partial_t^\alpha W_t^\mathfrak{H}(g)(x) dx \frac{dt}{t} = \frac{\Gamma(2\alpha)}{2^{2\alpha}}\int_{\mathbb R^n}f(x)g(x)dx,\;\; f,g \in L^{2}(\mathbb R^n)\otimes B.
$$
We now prove that $(a)$ implies $(b)$ by proceeding as in the proof of the corresponding part in Theorem \ref{thm9}

$(b) \Rightarrow (a)$. By using \eqref{H1} we can prove this property following the same ideas than in the second part of the proof of Theorem \ref{thm9}.
\end{proof}

\section{Results involving heat semigroup for Laguerre operators.}

In this section we prove characterizations for uniformly convex and smooth Banach spaces by using $g$-functions and area integrals involving heat semigroups for Laguerre operators (Theorems \ref{thm1.11} and \ref{thm1.12}).
Let $\beta > -\frac{1}{2}$. By $\mathcal{L}_\beta$ we denote the Laguerre operator. We recall that the semigroup $\left\{W_t^{\mathcal{L}_\beta}\right\}_{t>0}$ is not Markovian
\begin{thm}\label{thm3.1}
Let $B$ be a Banach space, $2 \leq q < \infty$, $1<p<\infty$, $\beta>-\frac{1}{2}$ and $\alpha > 0$. The following assertion are equivalent.
\begin{enumerate}
\item[(a)] There exists a norm $\left\VERT \cdot \right\VERT$ on $B$ that is equivalent to $\|\cdot\|$ and such that $(B,\left\VERT\cdot \right\VERT)$ is $q$-uniformly convex.
\item[(b)] There exists $C>0$ such that
$$
\left\| g^\alpha_{q,W_t^{\mathcal{L}_\beta};B}(f)\right\|_{L^p(0,\infty)} \leq C\|f\|_{L^p((0,\infty),B)},\;\; f \in L^p((0,\infty),B).
$$
\end{enumerate}
\end{thm}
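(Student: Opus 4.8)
The plan is to follow, mutatis mutandis, the scheme used for the Hermite heat semigroup in the proof of Theorem~\ref{thm2.1}, with the Laguerre heat kernel $W_t^{\mathcal L_\beta}(x,y)$ in place of the Hermite one and with the Bessel (Hankel) operator as an auxiliary model near the origin. First I would show that (b) is equivalent to the statement (b$'$) obtained by replacing $\alpha$ with $1$. The explicit expression for $W_t^{\mathcal L_\beta}(x,y)$, together with standard bounds for the modified Bessel function $I_\beta$ (see also \cite{Wr}), yields the derivative estimates $|\partial_t^k W_t^{\mathcal L_\beta}(x,y)|\le C\,e^{-c|x-y|^2/t}/t^{1/2+k}$, $k\in\N$. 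With these in hand one repeats the bootstrapping argument from the proof of Theorem~\ref{thm2.1}: the semigroup property and the composition rule for Weyl derivatives produce $s^{\alpha}t^{k\alpha}\partial_u^{(k+1)\alpha}W_u^{\mathcal L_\beta}(f)(x)_{|u=s+t}$ as in \eqref{A5}--\eqref{A6}; the Calder\'on--Zygmund theorem for $B$-valued singular integrals (\cite{RRT}), with the kernel size and smoothness estimates verified in $H_N=L^q((1/N,\infty),dt/t;B)$ and $N\to\infty$, upgrades boundedness of $g^{\alpha}_{q,W_t^{\mathcal L_\beta};B}$ to that of $g^{k\alpha}_{q,W_t^{\mathcal L_\beta};B}$; and finally \eqref{A1} together with a choice of $k$ with $k\alpha\ge1$ gives (b$'$). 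The same machinery gives (b$'$)$\Rightarrow$(b).

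\emph{Proof of $(a)\Rightarrow(b)$.} Assuming (a), it suffices to establish (b$'$). By Theorem~\ref{thm1.4}, $g^1_{q,W_t;B}$ is bounded from $L^p(\R,B)$ into $L^p(\R)$ for the classical heat semigroup. Extending $f\in L^p((0,\infty),B)$ by zero, I would compare $g^1_{q,W_t^{\mathcal L_\beta};B}(f)$ with $g^1_{q,W_t;B}(f)$, using the one-sided critical radius $\rho(x)=\min\{1,1/x\}$ and splitting $f$ into its restrictions to $(x-\rho(x),x+\rho(x))\cap(0,\infty)$ and to the complement. The global pieces of both $g$-functions are dominated by $C\mathcal M(\|f\|_B)(x)$, via the extra exponential factor $e^{-c(|x|+|y|)|x-y|}$ enjoyed by $W_t^{\mathcal L_\beta}(x,y)$ off the diagonal and the Gaussian decay of $W_t(x-y)$, together with a Whitney-type covering adapted to $\rho$, exactly as in the treatment of \eqref{H9}. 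For the local difference one uses the perturbation (Duhamel) identity for $\partial_t[W_t-W_t^{\mathcal L_\beta}]$; since the potential $\tfrac12\bigl(z^2+(\beta^2-1/4)/z^2\bigr)$ of $\mathcal L_\beta$ is singular at the origin, I would interpose the Bessel heat semigroup $\{W_t^{\Delta_\beta}\}_{t>0}$, with $\Delta_\beta=\tfrac{d^2}{dx^2}-\tfrac{\beta^2-1/4}{x^2}$, treating $W_t^{\Delta_\beta}-W_t^{\mathcal L_\beta}$ (which involves only the smooth potential $\tfrac12 z^2$) as in the Hermite case and $W_t-W_t^{\Delta_\beta}$ through the explicit Bessel kernel (cf. \cite{BFMT}, \cite{BCFR}). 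Each resulting term gains a power of $|x-y|$ or $\rho(x)$, hence is $\le C\mathcal M(\|f\|_B)(x)$ over the local interval; collecting everything and invoking the boundedness of $g^1_{q,W_t;B}$ and the maximal theorem yields (b$'$).

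\emph{Proof of $(b)\Rightarrow(a)$.} Assuming (b), equivalently (b$'$), let $\{P_t^{\mathcal L_\beta}\}_{t>0}$ be the Poisson semigroup subordinated to $\{W_t^{\mathcal L_\beta}\}_{t>0}$. The subordination formula together with the kernel bounds from the first step give, through Minkowski's inequality, the pointwise comparison $g^1_{q,P_t^{\mathcal L_\beta};B}(f)\le C\,g^1_{q,W_t^{\mathcal L_\beta};B}(f)$, so $g^1_{q,P_t^{\mathcal L_\beta};B}$ is bounded from $L^p((0,\infty),B)$ into $L^p(0,\infty)$. The characterization of $q$-uniformly convex spaces by the Laguerre Poisson $g$-function (\cite{BFRST}) then gives (a).

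The genuinely new difficulty, compared with Theorem~\ref{thm2.1}, is the local comparison near $x=0$ in the step $(a)\Rightarrow(b)$: the term $(\beta^2-1/4)/x^2$ is not a bounded perturbation and cannot be Duhamel-expanded directly, which forces both the use of the Bessel model and sharp pointwise estimates for $W_t^{\mathcal L_\beta}(x,y)$, $W_t^{\Delta_\beta}(x,y)$ and their $x$-derivatives that remain uniform as $x,y\to 0^+$ and $t\to 0^+$. One should also observe that, although $\{W_t^{\mathcal L_\beta}\}_{t>0}$ is $L^p$-contractive only for $\beta\in\{-1/2\}\cup[1/2,+\infty)$, it is a positive, $L^p$-bounded semigroup for every $\beta>-1/2$, so the tensor extension to $L^p((0,\infty),B)$ and all the estimates above are available in the full stated range of $\beta$.
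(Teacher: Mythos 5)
Your reduction of (b) to the case $\alpha=1$ and your proof of (b)$\Rightarrow$(a) via subordination and \cite{BFRST} are exactly the paper's steps (one small difference: the paper does not differentiate the explicit kernel to get $|\partial_t^k W_t^{\mathcal L_\beta}(x,y)|\le Ct^{-k-1/2}e^{-c(x-y)^2/t}$; it deduces these bounds abstractly from the Gaussian upper bound for $W_t^{\mathcal L_\beta}(x,y)$, obtained from \eqref{L1}, together with analyticity of the semigroup, citing \cite{CS} and \cite[Theorem 6.17]{Ou} — a cleaner route than tracking Bessel asymptotics through $t$-derivatives). Where you genuinely diverge is in (a)$\Rightarrow$(b$'$). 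The paper does not go back to the classical heat semigroup at all: it invokes its own Hermite result (Theorem \ref{thm2.1}) to get boundedness of $g^1_{q,W_t^{\mathfrak H/2};B}$ in one dimension, and then compares $t\partial_t W_t^{\mathcal L_\beta}(x,y)$ with $t\partial_t W_t^{\mathfrak H/2}(x,y)$ directly: off the region $x/2<y<2x$ both kernels are simply bounded by $C/|x-y|$ in $L^q(dt/t)$, which gives control by the Hardy operators $H_0$ and $H_\infty$; on $x/2<y<2x$ the pointwise difference estimates of \cite[(35),(36),(38)]{BCFR} give a Schur-type kernel $\frac1y\bigl(1+\sqrt{y/|x-y|}\bigr)$, handled by Jensen. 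This completely sidesteps the inverse-square singularity, since the difference of the two operators is exactly $(\beta^2-1/4)/(2x^2)$ and the cancellation is encoded in the quoted kernel estimates; no Duhamel expansion, no critical radius, no Bessel intermediary is needed. Your route (classical heat $\leftrightarrow$ Bessel heat $\leftrightarrow$ Laguerre heat) is plausible in spirit of \cite{BFMT}, but as written it needs repair at several points: the radius $\rho(x)=\min\{1,1/x\}$ is the one adapted to the potential $x^2$ and not to the Laguerre potential, so near the origin your ``local'' interval contains $0$ and the classical-vs-Bessel comparison there is not a small perturbation; the resulting terms are of Hardy-operator type ($H_0$, $H_\infty$, and Schur kernels in $y$) rather than being pointwise dominated by $\mathcal M(\|f\|_B)$ as you claim; and the estimates for $W_t^{\Delta_\beta}-W_t^{\mathcal L_\beta}$ and $W_t-W_t^{\Delta_\beta}$ would have to be proved uniformly for $-1/2<\beta<1/2$, where the inverse-square coefficient is negative and the Schrödinger/critical-radius machinery does not apply. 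So your architecture can be made to work, but at the cost of essentially redoing the local estimates that \cite{BCFR} already provides for the Hermite--Laguerre pair, which is precisely what the paper exploits.
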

\begin{proof}
According to \cite[p. 123]{Le}, for every $k \in \mathbb N$,
\begin{equation}\label{L1}
\sqrt{2\pi z} e^{-z} I_\beta(z) = \sum^k_{l=0} \frac{(-1)^l[\beta,l]}{(2z)^l} + \mathcal{O}\left(\frac{1}{z^{k+1}}\right),\;\;z\in (0,\infty),
\end{equation}
where $[\beta,0] =1$ and
$$
[\beta,l]= \frac{(4\beta^2-1)(4\beta^2-3^2)\ldots(4\beta^2-(2l-1)^2)}{2^{2l}\Gamma(l+1)},\;\; l \in \mathbb N,\;l\geq 1.
$$
By \eqref{L1} with $k=0$ we get
\begin{eqnarray*}
0\leq W_t^{\mathcal{L}_\beta}(x,y) &\leq & C\left(\frac{e^{-t}}{1-e^{-2t}}\right)^{\frac{1}{2}} e^{-\frac{1}{2}(x^2+y^2)\frac{1+e^{-2t}}{1-e^{-2t}}+2xy\frac{e^{-t}}{1-e^{-2t}}}\\
&\leq & \frac{C}{t^{1/2}}e^{-\frac{(x-y)^2(1+e^{-2t})+2xy(1-e^{-t})^2}{1-e^{-2t}}}\\
&\leq & \frac{C}{t^{1/2}}e^{-c\frac{(x-y)^2}{t}},\;\;x,y,t \in (0,\infty).
\end{eqnarray*}
The Laguerre operator $\mathcal{L}_\beta$ is a self adjoint and positive operator in $L^2(0,\infty)$. Then,
according to \cite[3.0.1]{CS}, $-\mathcal{L}_\beta$ generates bounded analytic semigroup in $L^2(0,\infty)$. By \cite[Theorem 6.17]{Ou}, for every $k \in \mathbb N$,
\begin{equation}\label{L2}
\left|\frac{d^k}{dt^k} W_t^{\mathcal L_\beta}(x,t)\right| \leq \frac{C}{t^{k+\frac{1}{2}}} e^{-c\frac{(x-y)^2}{t}},\;\; x,y,t \in (0,\infty).
\end{equation}
By using \eqref{L2} and proceeding as in the proof of Theorem \ref{thm2.1} we can see that the property $(b)$ is equivalent than the following one:

$(b')$ There exists $C>0$ such that
$$
\left\|g^1_{q,W_t^{\mathcal{L}_\beta};B}(f)\right\|_{L^p(\mathbb R^n)} \leq \|f\|_{L^p(\mathbb R^n,B)},\;\; f\in L^p(\mathbb R^n,B).
$$

$(b') \Rightarrow (a)$. By $\{P_t^{\mathcal{L}_\alpha}\}_{t>0}$ we denote the Poisson semigroup associated with $\mathcal{L}_\alpha$. From the subordination formula we deduce that, for every $f \in L^r(0,\infty)$, $1 \leq r \leq \infty$,
$$
g^1_{q,P_t^{\mathcal{L}_\beta};B}(f) \leq g^1_{q,W_t^{\mathcal{L}_\beta};B}(f).
$$
From \cite[Theorem 1]{BFRST} it follows $(a)$ provided that $(b')$ holds.

$(a) \Rightarrow (b')$. Suppose that $(a)$ is satisfied. According to Theorem \ref{thm2.1} there exists $C>0$ such that
$$
\left\|g^1_{q,W_t^{\mathcal{H}};B}(f)\right\|_{L^p(\mathbb R^n)} \leq C\|f\|_{L^p(\mathbb R^n,B)},\;\; f\in L^p(\mathbb R,B).
$$
Then, it is clear that
\begin{equation}\label{L3}
\left\|g^1_{q,W_t^{\mathfrak{H}/2};B}(f)\right\|_{L^p(\mathbb R^n)} \leq C\|f\|_{L^p(\mathbb R^n,B)},\;\; f\in L^p(\mathbb R,B).
\end{equation}
In order to prove $(b')$ we follow the ideas \cite[\S 4]{BCFR}.

We define, for every $f \in L^p(\mathbb R,B)$,
$$
G_{W_t^{\mathfrak{H}/2}}(f)(x,t) = t\partial_tW_{t}^{\mathfrak{H}/2}(f)(x),\;\; x\in \mathbb R \;and\;\; t>0,
$$
and, for every $f \in L^p((0,\infty),B)$,
$$
G_{W_t^{\mathcal{L}_\beta}}(f)(x,t) = t\partial_tW_t^{\mathcal{L}_\beta}(f)(x),\;\; x\in (0,\infty) \;and\;\; t>0.
$$
If $f$ is a measurable function on $(0,\infty)$ we define
$$
\tilde{f}(x)= \left\{\begin{array}{ll}
                      f(x), & x\in (0,\infty)\\
                      0, & x\in(-\infty,0]
                      \end{array}\right.
$$
Let $f\in L^p((0,\infty),B)$. Minkowski integral inequality allows us to write
\begin{eqnarray*}
\left\|G_{W_t^{\mathcal{L}_\beta}}(f)(x,\cdot)\right.& - &\left.G_{W_t^{\mathfrak{H}/2}}(\tilde{f})(x,\cdot)\right\|_{L^q((0,\infty)\frac{dt}{t},B)}\\
&\leq& \int \limits_{(0,\frac{x}{2}) \cup(2x,\infty)} \hspace{-8mm}\|f(y)\|_B\left(\|t\partial_tW_t^{\mathcal{L}_\beta}(x,y)\|_{L^q((0,\infty),\frac{dt}{t})}+\|t\partial_tW^{\mathfrak{H}/2}(x,y)\|_{L^q((0,\infty),\frac{dt}{t})}\right)dy  \\
 &&\hspace{4mm} +\int^{2x}_{\frac{x}{2}} \|f(y)\|_B\|t\partial_tW_t^{\mathfrak{H}/2}(x,y) - t \partial_tW_t^{\mathcal{L}_\beta}(x,y)\|_{L^q((0,\infty),\frac{dt}{t})}dy\\
&=&T_1(f)(x) + T_2(f)(x),\;\; x \in (0,\infty).
\end{eqnarray*}

According to \eqref{H1} and \eqref{L1} we get
\begin{eqnarray*}
\|t\partial_tW_t^{\frac{\mathcal{H}}{2}} (x,y)\|_{L^q((0,\infty),\frac{dt}{t})}&+&\|t\partial_tW_t^{\mathcal{L}_\beta}(x,y)\|_{L^q((0,\infty),\frac{dt}{t})}\\
&\leq & C\left(\int_0^\infty \frac{e^{-c\frac{|x-y|^2}{t}}}{t^{n\frac{q}{2}+1}}dt \right)^{\frac{1}{q}} \leq \frac{C}{|x-y|},\;\;x,y \in (0,\infty),\;\;x\not= y.
\end{eqnarray*}
Then,
$$
T_1(f) \leq C(H_0(\|f\|) + H_\infty(\|f\|),
$$
where $H_0$ and $H_\infty$ denotes the classical Hardy operators defined by
$$
H_0(g)(x)= \frac{1}{x}\int_0^x g(y)dy, \;\;{\rm{and}}\;\; H_\infty(g)(x)= \int_x^\infty \frac{g(y)}{y} dy,\;\; x \in (0,\infty).
$$
As it is wellknown $H_0$ and $H_\infty$ are bounded operators from $L^p(0,\infty)$ into itself.

We obtain
$$
\|T_1(f)\|_{L^p(0,\infty)} \leq C \|f\|_{L^p((0,\infty),B)}.
$$
According to \cite[(35), (36) and (38)]{BCFR} we have that
$$
\left|t\partial_tW_t^{\mathfrak{H}/2}(x,y) - t \partial_tW_t^{\mathcal{L}_\beta}(x,y)\right| \leq C \frac{e^{-c\frac{x^2+y^2}{t}}}{\sqrt{t}},\; t,x,y \in (0,\infty)\;\;{\rm{and}}\;\; \frac{xy}{t} \leq 1,
$$
and
$$
\left|t\partial_tW_t^{\frac{H}{2}}(x,y) - t \partial_tW_t^{\mathcal{L}_\beta}(x,y)\right| \leq C \frac{e^{-c\frac{(x-y)^2}{t}}}{(xyt)^{1/4}},\; t,x,y \in (0,\infty)\;\;{\rm{and}}\;\; \frac{xy}{t} \geq 1,
$$
By using these estimates we get
\begin{eqnarray*}
&& \left\|t\partial_t W_t^{\mathfrak{H}/2}(x,y)- t \partial_tW_t^{\mathcal{L}_\beta}(x,y)\right\|_{L^q((0,\infty),\frac{dt}{t})}\\
&&\hspace{1cm}\leq \left(\int_{0,xy \leq t}^\infty\left(\left|t\partial_tW_t^{\mathfrak{H}/2}(x,y)\right| +\left|t\partial_tW_t^{\mathcal{L}_\beta}(x,y)\right|\right)^q \frac{dt}{t}\right.\\
&&\hspace{15mm} +\left.\int_{0,xy \geq t}^\infty\left|t\partial_tW_t^{\mathfrak{H}/2}(x,y)-t\partial_tW_t^{\mathcal{L}_\beta}(x,y)\right|^q \frac{dt}{t}\right)^{\frac{1}{q}}\\
&&\hspace{1cm}\leq C \left(\int_0^\infty \frac{e^{-c\frac{(x^2+y^2)}{t}}}{t^{q/2+1}}dt + \frac{1}{(xy)^{q/4}}\int_0^\infty \frac{e^{-c\frac{(x-y)^2}{t}}}{t^{q/4+1}}dt\right)^{\frac{1}{q}}\\
&&\hspace{1cm}\leq C \left(\frac{1}{\sqrt{x^2+y^2}}+\frac{1}{(xy)^{1/4}\sqrt{|x-y|}}\right),\;\; x,y\in(0,\infty).
\end{eqnarray*}
Then,
$$
\left\|t\partial_tW^{\mathfrak{H}/2}_t(x,y) - t \partial_tW^{\mathcal{L}_\beta}_t(x,y)\right\|_{L^q((0,\infty),\frac{dt}{t})} \leq C\frac{1}{y}\left(1+\sqrt{\frac{y}{|x-y|}}\right),\;\; 0 <\frac{x}{2}<y<2x.
$$
We define the operator
$$
N(g)(x) = \int_{\frac{x}{2}}^{2x} \frac{1}{y}\left(1+\sqrt{\frac{y}{|x-y|}}\right)g(y) dy,\;\; x \in (0,\infty).
$$
We have that
$$
\int_{\frac{x}{2}}^{2x}\frac{1}{y} \left(1+\sqrt{\frac{y}{|x-y|}}\right)dy = \int_{\frac{1}{2}}^2\left(1+\sqrt{\frac{1}{|1-u|}}\right)du < \infty,\;\; x \in (0,\infty).
$$
Jensen inequality leads to
$$
\|N(g)\|_{L^p(0,\infty)} \leq C\|g\|_{L^p(0,\infty)},\;\; g \in L^p(0,\infty).
$$
It follows that
$$
\|T_2(f)\|_{L^p(0,\infty)} \leq C\|f\|_{L^p((0,\infty),B)}.
$$

We conclude that
\begin{equation}\label{L4}
\left\| G_{W_t^{\mathcal{L}_\beta}}(f) - G_{W_t^{\mathfrak{H}/2}}(\tilde{f}) \right\|_{L^p((0,\infty),L^q((0,\infty),\frac{dt}{t},B))} \leq C\|f\|_{L^p((0,\infty),B)}.
\end{equation}
By combining \eqref{L3} and \eqref{L4} we obtain
$$
\left\|g^1_{q,W_t^{\mathcal{L}_\beta};B}(f)\right\|_{L^p(0,\infty)} \leq C\|f\|_{L^p((0,\infty),B)}.
$$
Thus $(b')$ is established.
\end{proof}
\begin{thm}\label{thm3.2}
Let $B$ be a Banach space, $2 \leq q \leq \infty$, $1<p<\infty$, $ \beta > -\frac{1}{2}$ and $\alpha >0$.
The following assertions are equivalent
\begin{enumerate}
\item[(a)] There exists  a norm $\left\VERT\cdot\right\VERT$ in $B$ that is equivalent to $\|\cdot\|$ and such that $(B,\left\VERT\cdot\right\VERT)$ is $q$-uniformly convex.
\item[(b)] There exists $C>0$ such that
$$
\left\| \mathcal{A}^\alpha_{q,W_t^{\mathcal{L}_\beta};B}f\right\|_{L^p(0,\infty)} \leq C \|f\|_{L^p((0,\infty),B)},\;\; f \in L^p((0,\infty),B).
$$
\end{enumerate}
\end{thm}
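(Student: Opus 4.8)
The plan is to follow, almost verbatim, the scheme of the proof of Theorem~\ref{thm2.2}, replacing the Hermite heat kernel by the Laguerre heat kernel $W_t^{\mathcal L_\beta}(x,y)$ and using the Gaussian-type bounds \eqref{L2} (together with their spatial-derivative analogues, which follow from the asymptotic expansion \eqref{L1} of $I_\beta$ and the positivity and Gaussian decay of $W_t^{\mathcal L_\beta}$ recorded in the proof of Theorem~\ref{thm3.1}) in the role played by \eqref{H1}. For $N\in\mathbb N$ set $\Gamma_N(0)=\{(y,t):\ |y|<t,\ t>\tfrac1N\}$ (truncated to the half-line when $x+y$ must stay in $(0,\infty)$) and $H_N=L^q(\Gamma_N(0),\tfrac{dy\,dt}{t^{2}},B)$, and define $T_\alpha$ on $C_c^\infty(0,\infty)\otimes B$ by $T_\alpha f=F$, $[F(x)](y,t)=\big(s^\alpha\partial_s^\alpha W_s^{\mathcal L_\beta}(f)(x+y)\big)_{|s=t^2}$. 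Arguing exactly as in Theorems~\ref{thm4} and \ref{thm2.2}, one writes $(T_\alpha f)(x)=\int_0^\infty K_\alpha(x,z)f(z)\,dz$ with $[K_\alpha(x,z)](y,t)=\big(s^\alpha\partial_s^\alpha W_s^{\mathcal L_\beta}(x+y,z)\big)_{|s=t^2}$, and \eqref{L2} (and the spatial-derivative bounds) give, uniformly in $N$,
\[
\|K_\alpha(x,z)\|_{H_N}\le\frac{C}{|x-z|},\qquad \|\partial_x K_\alpha(x,z)\|_{H_N}\le\frac{C}{|x-z|^{2}},\qquad x\neq z .
\]

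For the implication $(a)\Rightarrow(b)$, assume $(a)$. Theorem~\ref{thm3.1} gives $\|g^\alpha_{q,W_t^{\mathcal L_\beta};B}(f)\|_{L^q(0,\infty)}\le C\|f\|_{L^q((0,\infty),B)}$, and the Fubini (averaging) identity from the proof of Theorem~\ref{thm4}, which on the half-line still yields $\|\mathcal{A}^\alpha_{q,W_t^{\mathcal L_\beta};B}(f)\|_{L^q(0,\infty)}\le C\|g^\alpha_{q,W_t^{\mathcal L_\beta};B}(f)\|_{L^q(0,\infty)}$ (the intersection of a ball of radius $t$ with $(0,\infty)$ always has measure between $t$ and $2t$), shows $\|T_\alpha f\|_{L^q((0,\infty),H_N)}\le C\|f\|_{L^q((0,\infty),B)}$ with $C$ independent of $N$. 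The Calder\'on--Zygmund theorem for Banach-valued singular integrals (\cite{RRT}), applied with the kernel estimates above, then extends $T_\alpha$ to a bounded operator from $L^p((0,\infty),B)$ into $L^p((0,\infty),H_N)$ for every $1<p<\infty$, with constant independent of $N$; letting $N\to\infty$ and using Fatou's lemma yields $\|\mathcal{A}^\alpha_{q,W_t^{\mathcal L_\beta};B}(f)\|_{L^p(0,\infty)}\le C\|f\|_{L^p((0,\infty),B)}$.

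For $(b)\Rightarrow(a)$, condition $(b)$ says $\|T_\alpha f\|_{L^p((0,\infty),H_N)}\le C\|f\|_{L^p((0,\infty),B)}$ uniformly in $N$; the Calder\'on--Zygmund theorem again upgrades this to the corresponding $L^q$ estimate, uniformly in $N$, and letting $N\to\infty$ gives $\|\mathcal{A}^\alpha_{q,W_t^{\mathcal L_\beta};B}(f)\|_{L^q(0,\infty)}\le C\|f\|_{L^q((0,\infty),B)}$. Using the reverse averaging inequality $\|g^\alpha_{q,W_t^{\mathcal L_\beta};B}(f)\|_{L^q(0,\infty)}\le C\|\mathcal{A}^\alpha_{q,W_t^{\mathcal L_\beta};B}(f)\|_{L^q(0,\infty)}$ (same Fubini computation), we obtain $\|g^\alpha_{q,W_t^{\mathcal L_\beta};B}(f)\|_{L^q(0,\infty)}\le C\|f\|_{L^q((0,\infty),B)}$, which is exactly condition $(b)$ of Theorem~\ref{thm3.1} with $p=q$; hence $(a)$ holds.

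The step needing genuine care, and the only place where the Laguerre case differs from the Hermite one, is the verification of the Calder\'on--Zygmund size and, especially, smoothness estimates for $K_\alpha$ uniformly in $N$, i.e.\ the bound $\|\partial_x K_\alpha(x,z)\|_{H_N}\le C|x-z|^{-2}$: since $W_t^{\mathcal L_\beta}$ is a genuine (non-convolution) kernel built from the Bessel function $I_\beta$, the pointwise bound \eqref{L2} on the $t$-derivatives does not by itself control $\partial_x\partial_t^m W_t^{\mathcal L_\beta}(x,y)$, and one has to combine it with \eqref{L1} (differentiating the asymptotic expansion term by term and using the Gaussian factor $e^{-c(x-y)^2/t}$ from the proof of Theorem~\ref{thm3.1}). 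Once these pointwise kernel bounds are established the remainder of the argument is routine and parallels Theorems~\ref{thm4} and \ref{thm2.2}.
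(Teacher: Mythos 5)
Your proposal is correct and follows essentially the same route as the paper, whose proof of this theorem is itself only a sketch: invoke the Gaussian bounds \eqref{L2}, run the vector-valued Calder\'on--Zygmund scheme of Theorems \ref{thm4} and \ref{thm2.2} for the operator $f\mapsto \big(s^{\alpha}\partial_s^{\alpha}W_s^{\mathcal L_\beta}(f)(x+y)\big)_{|s=t^2}$ with values in the truncated cone spaces $H_N$, and close the loop through Theorem \ref{thm3.1} via the Fubini comparison of $\mathcal{A}^{\alpha}_{q,W_t^{\mathcal L_\beta};B}$ and $g^{\alpha}_{q,W_t^{\mathcal L_\beta};B}$ at $p=q$. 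Your added remark that the spatial-derivative (smoothness) estimate for the Laguerre kernel is not a formal consequence of \eqref{L2} and must be extracted from the Bessel asymptotics \eqref{L1} is a fair and useful observation about a point the paper leaves implicit.
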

\begin{proof}
This theorem can be proved by taking into account \eqref{L2} and by using Calder\'on-Zygmund theorem for Banach valued singular integrals. We can proceed following the procedure developed in the proof of Theorem \ref{thm4} and Theorem \ref{thm2.2} by applying Theorem \ref{thm3.1}
\end{proof}

The subspace of fixed points for the semigroup $\{W_t^{\mathcal{L}_\beta}\}_{t>0}$ in $L^p(0,\infty)$, $1<p<\infty$, reduces to $\{0\}$.

\begin{thm}\label{thm3.3}
Let $B$ be a Banach space, $1<q\leq 2$, $1<p<\infty$, $\beta >-\frac{1}{2}$, and $\alpha >0$. The following assertion are equivalent.
\begin{enumerate}
\item[(a)] There exists a norm $\left\VERT\cdot\right\VERT$ on $B$ that is equivalent to $\|\cdot\|$ and such that $(B,\left\VERT\cdot\right\VERT)$ is $q$-uniformly smooth.
\item[(b)] There exists $C>0$ such that
$$
\|f\|_{L^p((0,\infty),B)} \leq C \left\|g^\alpha_{q,W_t^{\mathcal{L}_\beta};B}(f)\right\|_{L^p(0,\infty)},\;\; f \in L^p(0,\infty)\otimes B.
$$
\end{enumerate}
\end{thm}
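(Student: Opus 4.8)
The plan is to follow the scheme already used for the Hermite operator in Theorem \ref{thm2.3}, replacing the Hermite heat kernel bound \eqref{H1} by the Laguerre estimate \eqref{L2}, the role of Theorem \ref{thm2.1} by Theorem \ref{thm3.1}, and using the simplification that the fixed point space of $\{W_t^{\mathcal{L}_\beta}\}_{t>0}$ in $L^p(0,\infty)$ is $\{0\}$, so the projection $\mathcal{F}$ disappears. $(a)\Rightarrow(b)$: I would argue by duality as in the proof of Theorem \ref{thm6}. Since $B$ carries an equivalent $q$-uniformly smooth norm, $B^{*}$ carries an equivalent $q'$-uniformly convex norm, hence is of martingale cotype $q'$ (\cite[Lemma 3.2]{OX}), and by Theorem \ref{thm3.1} applied to $B^{*}$ the operator $g^{\alpha}_{q',W_t^{\mathcal{L}_\beta};B^{*}}$ is bounded from $L^{p'}((0,\infty),B^{*})$ into $L^{p'}(0,\infty)$. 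The operator $\mathcal{L}_\beta$ is self-adjoint and positive on $L^{2}(0,\infty)$ with spectrum $\{2k+\beta+1:k\in\mathbb N\}\subset(0,\infty)$, so $E_{\mathcal{L}_\beta}(\{0\})=0$, and spectral calculus gives $\partial_t^{\alpha}W_t^{\mathcal{L}_\beta}f=-\int_{(0,\infty)}\lambda^{\alpha}e^{-\lambda t}\,E_{\mathcal{L}_\beta}(d\lambda)f$ for $f\in L^{2}(0,\infty)$. Repeating the computation in the proof of Theorem \ref{thm6} yields
\[
\int_{0}^{\infty}\langle t^{\alpha}\partial_t^{\alpha}W_t^{\mathcal{L}_\beta}f,\,t^{\alpha}\partial_t^{\alpha}W_t^{\mathcal{L}_\beta}g\rangle_{L^{2}(0,\infty)}\,\frac{dt}{t}=\frac{\Gamma(2\alpha)}{2^{2\alpha}}\langle f,g\rangle_{L^{2}(0,\infty)},\quad f,g\in L^{2}(0,\infty).
\]
Polarizing this identity for $f\in(L^{p}(0,\infty)\cap L^{2}(0,\infty))\otimes B$ and $g\in(L^{p'}(0,\infty)\cap L^{2}(0,\infty))\otimes B^{*}$, applying H\"older's inequality in the $x$ and $t$ variables, and using the boundedness of $g^{\alpha}_{q',W_t^{\mathcal{L}_\beta};B^{*}}$, one obtains $\|f\|_{L^{p}((0,\infty),B)}\leq C\|g^{\alpha}_{q,W_t^{\mathcal{L}_\beta};B}(f)\|_{L^{p}(0,\infty)}$ on this dense subspace, hence on all of $L^{p}(0,\infty)\otimes B$.

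$(b)\Rightarrow(a)$: Here I would follow the proof of Theorem \ref{thm7}, the goal being to show that $B^{*}$ is of martingale cotype $q'$, which by \cite[Lemma 3.2]{OX} yields an equivalent $q$-uniformly smooth norm on $B$. Testing $\int_0^\infty\int_0^\infty\langle t^{\alpha}\partial_t^{\alpha}W_t^{\mathcal{L}_\beta}f,h\rangle_{B^{*},B}\,\frac{dt}{t}\,dx$ for $f\in C_c^{\infty}(0,\infty)\otimes B^{*}$ against $h\in C_c^{\infty}(0,\infty)\otimes C_c^{\infty}(0,\infty)\otimes B$, one expresses it through the adjoint-type operator $S_\alpha(H)(z)=\int_0^\infty\int_0^\infty t^{\alpha}\partial_t^{\alpha}W_t^{\mathcal{L}_\beta}(z,y)\,H(y,t)\,\frac{dt}{t}\,dy$; the composition of $S_\alpha$ with the $g$-function operator is the operator-valued singular integral with kernel $K_\alpha(x,s;y,t)=(st)^{\alpha}\partial_s^{\alpha}\partial_t^{\alpha}W_{s+t}^{\mathcal{L}_\beta}(x,y)$, obtained from the semigroup identity $W_{s+t}^{\mathcal{L}_\beta}(x,y)=\int_0^\infty W_s^{\mathcal{L}_\beta}(x,z)W_t^{\mathcal{L}_\beta}(z,y)\,dz$. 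Using \eqref{L2} together with the companion spatial-derivative bound for the Laguerre heat kernel (available from the kernel analysis in \cite{Wr,BMR}), one checks the size and H\"ormander-type smoothness estimates for the associated operator-valued kernel $\mathbb{K}_\alpha(x,y)$ on $L^{q}((0,\infty),\frac{dt}{t};B)$ exactly as in \eqref{7.1'} and the lines following it; Calder\'on-Zygmund theory for Banach-valued singular integrals (\cite{RRT}) then extends the operator boundedly to $L^{r}(0,\infty;L^{q}((0,\infty),\frac{dt}{t};B))$ for every $1<r<\infty$. Feeding this back into the duality inequality gives $\|g^{\alpha}_{q',W_t^{\mathcal{L}_\beta};B^{*}}(f)\|_{L^{p'}(0,\infty)}\leq C\|f\|_{L^{p'}((0,\infty),B^{*})}$, and Theorem \ref{thm3.1} applied to $B^{*}$ completes the argument.

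The main obstacle should be in the direction $(b)\Rightarrow(a)$: verifying the H\"ormander smoothness condition for $\mathbb{K}_\alpha$ requires a spatial-derivative heat kernel estimate of the form $|\partial_x W_t^{\mathcal{L}_\beta}(x,y)|\le C\,t^{-1}e^{-c(x-y)^2/t}$, together with its iterated $t$-derivative versions, and this has to be isolated with some care near $x=0$, where the potential $(\beta^2-1/4)/x^2$ of $\mathcal{L}_\beta$ is singular; once such bounds are in hand, the remaining one-dimensional integral manipulations are routine and parallel those in the proofs of Theorems \ref{thm7} and \ref{thm2.1}. A secondary point to watch is that, since $\{W_t^{\mathcal{L}_\beta}\}_{t>0}$ need not be $L^{p}$-contractive when $\beta\in(-1/2,1/2)$, the reduction to integer $\alpha$ via \eqref{A1} and the analyticity of the semigroup must be justified directly from \eqref{L2} rather than from the symmetric diffusion framework.
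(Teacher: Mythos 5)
Your plan is essentially the proof the paper intends: the authors' own argument for this theorem is only a pointer to the scheme of Theorem \ref{thm6} together with the bound \eqref{L2}, with the converse implicitly handled as in the Hermite analogue (Theorem \ref{thm2.3}), i.e.\ by the Calder\'on--Zygmund duality scheme of Theorem \ref{thm7} combined with Theorem \ref{thm3.1}. Your (a)$\Rightarrow$(b) reproduces that faithfully: since the spectrum of $\mathcal{L}_\beta$ is $\{2k+\beta+1\}\subset(0,\infty)$ one has $E_{\mathcal{L}_\beta}(\{0\})=0$, the polarization identity holds without the projection $\mathcal{F}$, and the cotype-$q'$ bound for $B^{*}$ is supplied by Theorem \ref{thm3.1} in place of Corollary \ref{cor1}. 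Your secondary worry is also resolved exactly as in the paper: \eqref{L2} is derived from analyticity of the semigroup on $L^{2}(0,\infty)$ plus the Gaussian kernel bound, with no appeal to Markovianity or $L^{p}$-contractivity, so the reduction via \eqref{A1} is legitimate for all $\beta>-\tfrac12$. (The extension of the resulting inequality from $(L^{p}\cap L^{2})\otimes B$ to all of $L^{p}(0,\infty)\otimes B$ is glossed in your write-up, but the paper is no more careful on this point.)

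The one step that would fail as written is in (b)$\Rightarrow$(a): the pointwise spatial-derivative estimate $|\partial_xW_t^{\mathcal{L}_\beta}(x,y)|\le Ct^{-1}e^{-c(x-y)^2/t}$, on which you base the H\"ormander smoothness of the operator-valued kernel $\mathbb{K}_\alpha$, is not merely delicate near the boundary, it is false when $-\tfrac12<\beta<\tfrac12$: for $z=2xye^{-t}/(1-e^{-2t})$ small one has $\sqrt{z}\,I_\beta(z)\sim cz^{\beta+1/2}$, so $W_t^{\mathcal{L}_\beta}(x,y)\approx c(t,y)\,x^{\beta+1/2}$ and $\partial_xW_t^{\mathcal{L}_\beta}(x,y)\approx c\,x^{\beta-1/2}\to\infty$ as $x\to0^{+}$, while the claimed right-hand side stays bounded; the same obstruction affects the $y$-derivative of the composed kernel $(st)^\alpha\partial_s^\alpha\partial_t^\alpha W_{s+t}^{\mathcal{L}_\beta}(x,y)$ near $y=0$. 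Hence the regularity hypothesis of the Banach-valued Calder\'on--Zygmund theorem cannot be checked ``exactly as in the proof of Theorem \ref{thm7}'' via a gradient bound. The repair is the device the paper itself uses in the proof of Theorem \ref{thm3.1}: split into the global region $y\in(0,\tfrac x2)\cup(2x,\infty)$, where size estimates and the Hardy operators $H_0,H_\infty$ suffice, and the local region $\tfrac x2<y<2x$, where one replaces $W_t^{\mathcal{L}_\beta}$ by $W_t^{\mathfrak{H}/2}$ (whose space and time derivatives do satisfy Gaussian bounds, so the Theorem \ref{thm7} argument applies verbatim) and controls the difference kernel by the estimates quoted from \cite{BCFR}; equivalently, one may verify the integral H\"ormander condition directly instead of a pointwise gradient bound. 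With that modification the rest of your argument, and the final appeal to Theorem \ref{thm3.1} for $B^{*}$ followed by \cite[Lemma 3.2]{OX}, goes through.
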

\begin{proof}
This result can be proved by using \eqref{L2} following the lines in the proof of Theorem \ref{thm6}.
\end{proof}

\begin{thm}\label{thm3.4}
Let $B$ be a Banach space, $1<q\leq 2$, $1<p<\infty$, $\beta > -\frac{1}{2}$, and $\alpha >0$. The following assertions are equivalent.
\begin{enumerate}
\item[(a)] There exists a norm $\left\VERT\cdot\right\VERT$ on $B$ that is equivalent to $\|\cdot\|$ and such that $(B;\left\VERT\cdot\right\VERT)$ is $q$-uniformly smooth.
\item[(b)] There exists $C>0$ such that
$$
\|f\|_{L^p((0,\infty),B)} \leq C \left\|\mathcal{A}^\alpha_{q,W_t^{\mathcal{L}_\beta};B}(f)\right\|_{L^p(0,\infty)},\;\; f \in L^p(0,\infty)\otimes B.
$$
\end{enumerate}
\end{thm}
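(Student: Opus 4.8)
\emph{Strategy.} The statement is the Laguerre, area–integral analogue of the uniform smoothness results, and the plan is to prove it along the lines of Theorem \ref{thm9} (heat, area integral) and Theorem \ref{thm2.4} (Hermite, area integral), using Theorem \ref{thm3.3} for the $g$-function on $(0,\infty)$ and the already–proved Theorem \ref{thm3.2} for the convexity counterpart. The only facts special to $\{W_t^{\mathcal{L}_\beta}\}_{t>0}$ that enter are the Gaussian–type bounds \eqref{L2} for $\partial_t^kW_t^{\mathcal{L}_\beta}(x,y)$ and the semigroup identity $\int_0^\infty W_s^{\mathcal{L}_\beta}(x,z)W_{s_1}^{\mathcal{L}_\beta}(z,y)\,dz=W_{s+s_1}^{\mathcal{L}_\beta}(x,y)$; these replace the corresponding properties of the classical heat kernel used in Theorem \ref{thm9}. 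Since $1<q\le2$ forces $2\le q'<\infty$, Theorem \ref{thm3.2} is applicable with exponents $q',p'$ and the dual space $B^{*}$, a point used twice below.

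\emph{Proof of $(a)\Rightarrow(b)$.} For $p\le q$ this is immediate: Theorem \ref{thm3.3} gives $\|f\|_{L^p((0,\infty),B)}\le C\|g_{q,W_t^{\mathcal{L}_\beta};B}^{\alpha}(f)\|_{L^p(0,\infty)}$, and, arguing as in \cite[Proposition 2.1 (a)]{AHM}, $\|g_{q,W_t^{\mathcal{L}_\beta};B}^{\alpha}(f)\|_{L^p(0,\infty)}\le C\|\mathcal{A}_{q,W_t^{\mathcal{L}_\beta};B}^{\alpha}(f)\|_{L^p(0,\infty)}$ when $p\le q$. For general $p$ I would follow the proof of Theorem \ref{thm2.4}: expanding $W_t^{\mathcal{L}_\beta}$ in the Laguerre system $\{\varphi_k^\beta\}$ and choosing $m\in\N$ with $m-1\le\alpha<m$, the same computation as there (now with $\lambda_k^\beta=2k+\beta+1>0$) gives $\partial_t^\alpha W_t^{\mathcal{L}_\beta}(f)=(-1)^m\sum_k c_k^\beta(f)(\lambda_k^\beta)^\alpha e^{-\lambda_k^\beta t}\varphi_k^\beta$ for $f$ in the linear span of $\{\varphi_k^\beta\}$, hence the polarization identity
\[
\int_0^\infty\int_0^\infty t^{\alpha}\partial_t^\alpha W_t^{\mathcal{L}_\beta}(f)(x)\;t^{\alpha}\partial_t^\alpha W_t^{\mathcal{L}_\beta}(g)(x)\,dx\,\frac{dt}{t}=\frac{\Gamma(2\alpha)}{2^{2\alpha}}\int_0^\infty f(x)g(x)\,dx,
\]
which extends to $f,g\in L^2(0,\infty)$ by density and then componentwise to $f\in L^2(0,\infty)\otimes B$, $g\in L^2(0,\infty)\otimes B^{*}$. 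The averaging trick of \cite[p. 316]{CMS}, adapted to the half-line by inserting $1=\psi(x,t)^{-1}\int_{\{y>0:\,|x-y|<\sqrt t\}}dy$ and using $\sqrt t\le\psi(x,t)\le 2\sqrt t$, together with H\"older's inequality, then leads to
\[
\Big|\int_0^\infty\langle f(x),g(x)\rangle_{B,B^{*}}\,dx\Big|\le C\,\|\mathcal{A}_{q,W_t^{\mathcal{L}_\beta};B}^{\alpha}(f)\|_{L^p(0,\infty)}\,\|\mathcal{A}_{q',W_t^{\mathcal{L}_\beta};B^{*}}^{\alpha}(g)\|_{L^{p'}(0,\infty)}.
\]
As $(a)$ makes $B$ of martingale type $q$, $B^{*}$ is of martingale cotype $q'$ (\cite[Lemma 3.2]{OX}) and so has an equivalent $q'$-uniformly convex norm; Theorem \ref{thm3.2} bounds the last factor by $C\|g\|_{L^{p'}((0,\infty),B^{*})}$, and the supremum over such $g$ gives $(b)$.

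\emph{Proof of $(b)\Rightarrow(a)$.} For $p\ge q$ one has $\|\mathcal{A}_{q,W_t^{\mathcal{L}_\beta};B}^{\alpha}(f)\|_{L^p}\le C\|g_{q,W_t^{\mathcal{L}_\beta};B}^{\alpha}(f)\|_{L^p}$ (again as in \cite{AHM}), so $(b)$ produces the $g$-function lower bound and Theorem \ref{thm3.3} yields $(a)$. For the remaining exponents I would run the duality argument of the second half of the proof of Theorem \ref{thm9}: for $f\in C_c^\infty(0,\infty)\otimes B^{*}$, test $\mathcal{A}_{q',W_t^{\mathcal{L}_\beta};B^{*}}^{\alpha}(f)$ against a normalized $h\in C_c^\infty(0,\infty)\otimes C_c^\infty(\Gamma(0))\otimes B$, use $(b)$ to pass the estimate to the adjoint operator $S_\alpha$, and compose $S_\alpha$ with $f\mapsto(s^\alpha\partial_s^\alpha W_s^{\mathcal{L}_\beta}(f))_{|s=t^2}$ to obtain an operator $T_\alpha$ whose kernel, by the semigroup identity, is $(ss_1)^\alpha\big(\partial_u^\alpha\partial_v^\alpha W_{u+v}^{\mathcal{L}_\beta}\big)\big|_{u=s,\,v=s_1}$ up to the cone bookkeeping. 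With \eqref{L2} in place of the heat–kernel bounds one checks, as in Theorems \ref{thm9} and \ref{thm3.2}, that $T_\alpha$ is bounded on $L^q\big((0,\infty),L^q(\Gamma(0),\frac{dy\,dt}{t^{2}};B)\big)$ and that its operator-valued kernel satisfies the size and H\"ormander conditions; Calder\'on--Zygmund theory for Banach-valued singular integrals (\cite{RRT}) then extends $T_\alpha$ to every $L^r$, $1<r<\infty$, and unwinding the pairing gives $\|\mathcal{A}_{q',W_t^{\mathcal{L}_\beta};B^{*}}^{\alpha}(f)\|_{L^{p'}(0,\infty)}\le C\|f\|_{L^{p'}((0,\infty),B^{*})}$. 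By the $(b)\Rightarrow(a)$ part of Theorem \ref{thm3.2} (with exponent $q'\ge 2$ and space $B^{*}$) this forces $B^{*}$ to carry an equivalent $q'$-uniformly convex norm, i.e.\ to be of martingale cotype $q'$; by \cite[Lemma 3.2]{OX} $B$ is then of martingale type $q$, equivalently it admits an equivalent $q$-uniformly smooth norm.

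\emph{Main obstacle.} As in the heat case the technical core is the final step of $(b)\Rightarrow(a)$: showing that $T_\alpha$ is a Calder\'on--Zygmund operator valued in $\mathcal{L}\big(L^q(\Gamma(0),\frac{dy\,dt}{t^{2}};B)\big)$, i.e.\ establishing $\|\mathbb{K}_\alpha(x,x_1)\|\lesssim|x-x_1|^{-1}$ and $\|\mathbb{K}_\alpha(x,x_1)-\mathbb{K}_\alpha(x,x_2)\|\lesssim|x_1-x_2|\,|x-x_2|^{-2}$ for $|x-x_2|>2|x_1-x_2|$. On the half-line $W_t^{\mathcal{L}_\beta}$ is not translation invariant and $\Gamma(x)$ meets the boundary of $(0,\infty)$, but neither feature causes difficulty: the nested fractional-derivative integrals are estimated exactly as in the proof of Theorem \ref{thm9} (leading to a convergent integral of the type $\int_0^\infty t^{\alpha-1}(1+t)^{-2\alpha}\,dt$), with every Gaussian factor supplied by \eqref{L2} and the half-line truncation only improving the bounds; the minor secondary point of making the averaging identity of $(a)\Rightarrow(b)$ precise on $(0,\infty)$ is handled by the two-sided bound $\sqrt t\le\psi(x,t)\le 2\sqrt t$ noted above.
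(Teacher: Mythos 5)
Your proposal is correct and follows essentially the route the paper intends: its proof of this theorem is just ``argue as in the Hermite case (Theorem \ref{thm2.4}) with Laguerre functions $\varphi_k^\beta$ in place of Hermite functions,'' i.e.\ the spectral polarization identity plus the averaging/duality argument of Theorem \ref{thm9} for $(a)\Rightarrow(b)$, and the duality--Calder\'on--Zygmund argument combined with Theorem \ref{thm3.2} and \cite[Lemma 3.2]{OX} for $(b)\Rightarrow(a)$, exactly as you outline. Your explicit treatment of the half-line averaging weight $\psi(x,t)\asymp\sqrt{t}$ and of the non-convolution kernel via self-adjointness and the semigroup identity, with the bounds \eqref{L2}, only makes precise details the paper leaves implicit.
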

\begin{proof}
This result can be proved as Theorem \ref{thm2.4} by using Laguerre functions associated with $\mathcal{L}_\alpha$ instead of Hermite functions
\end{proof}

\end{document}